\theoremstyle{plain}
\newtheorem{theorem}{Theorem}[section]
\newtheorem{thm}[theorem]{Theorem}
\newtheorem{cor}[theorem]{Corollary}
\newtheorem{lem}[theorem]{Lemma}
\newtheorem{prop}[theorem]{Proposition}
\newcounter{kludge}
\newcounter{kludgeb}
\theoremstyle{definition}
\newtheorem{defn}[theorem]{Definition}
\newtheorem{ques}[theorem]{Question}
\newtheorem{rmk}[theorem]{Remark}
\newtheorem{notat}[theorem]{Notation}
\newtheorem{conj}[theorem]{Conjecture}
\theoremstyle{remark}
\newcommand{\marpar}[1]{}
\newcommand{\mni}{\medskip\noindent}
\newcommand{\mbb}{\mathbb}
\newcommand{\QQ}{\mbb{Q}}
\newcommand{\ZZ}{\mbb{Z}}
\newcommand{\CC}{\mbb{C}}
\newcommand{\RR}{\mbb{R}}
\newcommand{\PP}{\mbb{P}}
\newcommand{\mc}{\mathcal}
\newcommand{\mcA}{\mc{A}}
\newcommand{\mcF}{\mc{F}}
\newcommand{\mcL}{\mc{L}}
\newcommand{\mcY}{\mc{Y}}
\newcommand{\mf}{\mathfrak}
\newcommand{\mfM}{\mathfrak{M}}
\newcommand{\fm}[1]{\mfM_{#1}}
\newcommand{\OO}{\mc{O}}
\newcommand{\wt}{\widetilde}
\newcommand{\ol}{\overline}
\newcommand{\ul}{\underline}
\newcommand{\M}{\overline{\mc{M}}}
\newcommand{\Mo}{\mc{M}}
\newcommand{\Kgn}[1]{\Mo_{#1}}
\newcommand{\Kgnb}[1]{\M_{#1}}
\newcommand{\So}{S}
\newcommand{\Ll}{\mathcal{L}}
\newcommand{\Cc}{C}
\newcommand{\mm}{X}
\newcommand{\nn}{Y}
\newcommand{\oom}{\omega}
\newcommand{\jj}{J}
\newcommand{\eb}{\beta}
\newcommand{\NEE}{{\text{NE}}}
\newcommand{\NEB}{\overline{\NEE}}
\newcommand{\nee}[2]{\NEE_{#1}^{#2}}
\newcommand{\neb}[2]{\NEB_{#1}^{#2}}
\newcommand{\nef}[1]{\nee{\text{f}}{#1}}
\newcommand{\ner}[1]{\nee{\text{r}}{#1}}
\newcommand{\snef}[1]{\nee{\text{s.f}}{#1}}
\newcommand{\sner}[1]{\nee{\text{s.r}}{#1}}
\newcommand{\snep}[1]{\nee{\text{s.p.f}}{#1}}
\newcommand{\lt}{\left}
\newcommand{\rt}{\right}
\newsavebox{\sembox}
\newlength{\semwidth}
\newlength{\boxwidth}
\newsavebox{\semrbox}
\newlength{\semrwidth}
\newlength{\boxrwidth}
\title
{Symplectic Invariance of Rational Surfaces on K\"{a}hler Manifolds} 
\author[Starr]{Jason Michael Starr}
\address{Department of Mathematics \\
  Stony Brook University \\ Stony Brook, NY 11794}
\email{jstarr@math.stonybrook.edu} 
\date{\today}
\begin{document}


\begin{abstract}
  Koll\'{a}r and Ruan proved symplectic deformation invariance for
  uniruledness of K\"{a}hler manifolds.  Zhiyu Tian proved the same
  for rational connectedness in dimension $\leq 3$.  Koll\'{a}r
  conjectured this in all dimensions.  We prove Koll\'{a}r's
  conjecture, as well as existence of a covering family of rational
  surfaces, for all K\"{a}hler manifolds that are symplectically
  deformation equivalent to $G/P$ or to a low degree complete
  intersection in such.
\end{abstract}


\maketitle



\section{Statement of Results} \label{sec-sot}  
\marpar{sec-sot}

\mni
Theorem \ref{thm-PG} produces rational curves and surfaces via
Gromov-Witten theory for $G/P$.  Theorem \ref{thm-CI} extends this to
complete intersections.

\begin{ques} \label{ques-first} \marpar{ques-first}
  For a connected, compact manifold $\mm$ with (integrable) complex
  structure $\jj$ and with the symplectic form $\oom$ of a K\"{a}hler
  metric, which holomorphic properties depend only on $\oom$?  Which
  depend only on the deformation class of $\oom$?
\end{ques}

\mni
The (topological) complex vector bundle $T^{1,0}_{\mm,\oom}$
underlying the holomorphic tangent bundle $T^{1,0}_{\mm,\jj}$ depends
only on the deformation class, as do its Chern classes.  Gromov-Witten
invariants also depend only on the deformation class.

\begin{thm}\cite[Theorem 4.2.10]{KLow},
  \cite[Proposition 4.9]{RuanVirt}  \label{thm-KR1} \marpar{thm-KR1} 
  For connected, compact, K\"{a}hler manifolds, uniruledness is
  invariant under symplectic deformation.
\end{thm}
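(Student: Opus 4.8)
The plan is to deduce the theorem from a purely Gromov--Witten-theoretic
reformulation of uniruledness. Say that a connected compact symplectic
manifold $(M,\omega)$ is \emph{GW-uniruled} (symplectically uniruled) if there
exist a nonzero class $\beta\in H_2(M;\ZZ)$, an integer $m\ge 1$, and
cohomology classes $\gamma_2,\dots,\gamma_m\in H^*(M;\QQ)$ such that the
genus-$0$ Gromov--Witten invariant
\[
  \langle\, [\mathrm{pt}],\gamma_2,\dots,\gamma_m\,\rangle^{M}_{0,\beta}\ \neq\ 0 ,
\]
the insertions $\gamma_i$ being absent when $m=1$. Since genus-$0$
Gromov--Witten invariants depend only on the deformation class of $\omega$,
GW-uniruledness is tautologically a symplectic deformation invariant. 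Hence it
suffices to prove, for every connected compact K\"ahler manifold $X$, that $X$
is uniruled if and only if $X$ is GW-uniruled: given symplectically
deformation equivalent compact K\"ahler manifolds $X_0$ and $X_1$ with $X_0$
uniruled, $X_0$ is then GW-uniruled, hence so is $X_1$, hence $X_1$ is
uniruled. Projectivity of $X$ plays no essential role, since all the geometry
below happens near the rational curves themselves.

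The implication from GW-uniruledness to uniruledness is the elementary half.
If $\langle[\mathrm{pt}],\gamma_2,\dots,\gamma_m\rangle_{0,\beta}\neq 0$ with
$\beta\neq 0$, then pushing the virtual fundamental class of
$\overline{M}_{0,m}(X,\beta)$ forward along $\mathrm{ev}_1$, after capping with
$\prod_{i\ge 2}\mathrm{ev}_i^*\gamma_i$, yields a nonzero multiple of the
fundamental class of $X$; in particular, for a general point $p\in X$ there is
an honest genus-$0$ stable map $f\colon C\to X$ of class $\beta$ with
$f(x_1)=p$. As $\beta\neq 0$, some component of the tree $C$ is nonconstant,
and since $f(C)$ is a connected union of rational curves containing $p$, the
point $p$ lies on a nonconstant rational curve. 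Thus rational curves cover a
dense open subset of $X$, i.e.\ $X$ is uniruled.

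The substantive half is ``uniruled $\Rightarrow$ GW-uniruled'', which is
Koll\'ar's theorem, and it is also where I expect the real obstacle: one must
pass from the bare existence of rational curves to the nonvanishing of a
\emph{virtual} enumerative count. The strategy is to begin with a free
rational curve $f\colon\PP^1\to X$ through a general point $p=f(x_1)$ --- which
exists because $X$ is uniruled --- and to use that freeness, meaning
$f^*T_X\cong\bigoplus_i\mcO(a_i)$ with all $a_i\ge 0$, makes
$h^1(\PP^1,f^*T_X)=0$, so that $\overline{M}_{0,m}(X,\beta)$ is smooth of the
expected dimension near $[f]$; there the virtual class coincides with the usual
fundamental class, and any transverse enumerative contribution is counted with
a \emph{positive} sign by the complex orientation. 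The delicate point is to
choose $\beta$ --- enlarged, if necessary, by attaching general free teeth to
$f$ and smoothing the resulting comb while holding the marked point over $p$, so
as to give $f^*T_X$ whatever positivity is needed --- together with a number
$m$ of marked points and cutting classes $\gamma_2,\dots,\gamma_m$, represented
by general algebraic cycles $Z_i$ adapted to the deformation family of $f$, so
that simultaneously: (a) the constrained problem
$\mathrm{ev}_1^{-1}(p)\cap\bigcap_{i\ge 2}\mathrm{ev}_i^{-1}(Z_i)$ has expected
dimension $0$; (b) near the curves produced by this construction it is a finite
transverse set of free maps; and (c) by the generality of the $Z_i$ no boundary
stratum of $\overline{M}_{0,m}(X,\beta)$ meets the constraints. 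Granted
(a)--(c), the Gromov--Witten invariant equals an honest count of free rational
curves, which is a positive integer, so $X$ is GW-uniruled. Carrying out
(a)--(c) when the minimal free curves are only mildly positive is exactly the
hard core of the argument, and it is there that the comb constructions of the
Koll\'ar--Miyaoka--Mori theory of rational curves, and the non-negativity of
complex contributions to Gromov--Witten invariants, are essential.
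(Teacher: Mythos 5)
Your overall skeleton --- uniruledness is equivalent to the nonvanishing of some genus-$0$ Gromov--Witten invariant with a point insertion, and the latter is tautologically a symplectic deformation invariant --- is exactly the framing under which the paper cites this theorem (it is quoted from Koll\'ar and Ruan, not reproved here; the relevant notion is ``symplectically pseudo-free class,'' Definition \ref{defn-sfree}). Your easy direction is fine. The gap is in the hard direction, and it sits precisely where you place your trust in genericity. The mechanism that makes the Koll\'ar--Ruan argument work is not ``enlarge $\beta$ by attaching teeth and smoothing combs, then choose the cutting cycles $Z_i$ generically so that no boundary stratum meets the constraints.'' It is the opposite: one takes $\beta$ of \emph{minimal} $\oom$-degree among $\text{ev}$-dominant classes (the paper's $\jj$-irreducibility). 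Minimality forces every genus-$0$ stable map of class $\beta$ through a general point to have irreducible domain --- any bubbling would exhibit a nonzero summand of strictly smaller degree that is already $\text{ev}$-dominant --- and then the fiber of $\text{ev}$ over a general point is a smooth, compact complex manifold of the expected dimension $m_\beta$, containing no boundary points at all (this is the content of Proposition \ref{prop-free} and Theorem \ref{thm-simpFano}). The invariant $\langle \tau_{m_\beta}(\eta_\mm)\rangle^{\mm,\oom}_{0,\beta}$ is then an honest positive integral over that fiber, with no need to choose cycles $Z_i$ or to verify sign positivity of isolated contributions.

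Without minimality, your conditions (b) and (c) cannot be rescued by genericity of the $Z_i$: the boundary strata $\Delta_{\beta',\beta''}$ of $\Kgnb{0,m}((\mm,\jj),\beta)$ can have excess dimension, and when the evaluation image of such a stratum is larger than expected, cycles of the complementary codimension meet it no matter how generically they are chosen; those components then contribute to the virtual count with uncontrolled sign. Enlarging $\beta$ by comb smoothing only aggravates this, since larger classes admit more decompositions $\beta=\beta'+\beta''$. (This failure of transversality in the presence of two or more point constraints is exactly why the rational-connectedness analogue, Conjecture \ref{conj-K}, remains open; see Remark \ref{rmk-surf}.) A further small point: for a non-projective K\"ahler $\mm$ the comb constructions and generic-cycle arguments of Koll\'ar--Miyaoka--Mori are not directly available; the paper sidesteps this by working on fibers of the rational quotient, which are projective (Proposition \ref{prop-frq}).
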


\begin{conj}[Koll\'{a}r] \label{conj-K} \marpar{conj-K}
  For connected, compact, K\"{a}hler manifolds, rational connectedness
  is invariant under symplectic deformation.
\end{conj}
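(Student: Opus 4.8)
The plan is to detect rational connectedness by the non-vanishing of a genus-zero Gromov--Witten invariant carrying two point insertions, and then to conclude from the invariance of Gromov--Witten invariants under symplectic deformation. Let $(X_0,\omega_0)$ and $(X_1,\omega_1)$ be connected, compact, K\"{a}hler and symplectically deformation equivalent, with $X_0$ rationally connected; we must show $X_1$ is rationally connected. One implication is immediate: if
\[
\big\langle [\mathrm{pt}],[\mathrm{pt}],\gamma_1,\dots,\gamma_k\big\rangle^{X}_{0,k+2,\beta}\ \neq\ 0
\]
for some $\beta\in H_2(X;\ZZ)$ and cohomology classes $\gamma_1,\dots,\gamma_k$, then evaluation at the first two marked points dominates $X\times X$, so a general pair of points of $X$ lies on a connected nodal curve of arithmetic genus zero, i.e.\ a tree of copies of $\PP^1$; by Koll\'ar--Miyaoka--Mori this forces the smooth variety $X$ to be rationally connected. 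Since all genus-zero Gromov--Witten invariants of a compact symplectic manifold depend only on the deformation class of the symplectic form, once such a non-vanishing is established for $X_0$ it holds verbatim for $X_1$, and we are done. Thus Conjecture~\ref{conj-K} follows from the purely algebro-geometric assertion
\[
(\ast)\qquad\text{every rationally connected smooth projective }X\text{ admits a class }\beta\text{ with }\big\langle [\mathrm{pt}],[\mathrm{pt}],\gamma_\bullet\big\rangle^{X}_{0,\bullet,\beta}\neq 0.
\]

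To prove $(\ast)$ I would manufacture $\beta$ from very free rational curves. Rational connectedness provides a very free $f_0\colon\PP^1\to X$, one with $f_0^*T_X$ ample. Attaching many general very free teeth to an embedded anchor $\PP^1$ and invoking the standard comb-smoothing argument, one obtains stable maps of a suitable comb class $\beta$ that deform to smooth very free curves through $m+2$ prescribed general points, where $m\geq 0$ and the number of teeth are chosen so that $\overline{M}_{0,m+2}(X,\beta)$ has virtual dimension $(m+2)n$, $n=\dim X$. The locus $Z$ of such very free curves is a component of $\overline{M}_{0,m+2}(X,\beta)$ that is generically smooth of the expected dimension and on which the evaluation $\mathrm{ev}\colon Z\to X^{m+2}$ is dominant and generically finite; it contributes the positive integer $\deg(\mathrm{ev}|_Z)$ to $\langle [\mathrm{pt}],\dots,[\mathrm{pt}]\rangle^{X}_{0,m+2,\beta}$. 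Moreover this invariant may be computed over the open locus lying above general points of $X^{m+2}$, so only components of $\overline{M}_{0,m+2}(X,\beta)$ on which evaluation is dominant contribute at all, and every such component that is generically smooth of the expected dimension contributes a positive count. Hence $(\ast)$ --- and with it the conjecture --- would follow once one shows that the remaining dominant components, namely those of excess dimension or non-reduced structure, whose contribution is an Euler-class correction $\int c_{\mathrm{top}}(\mathrm{Obs})$ of indefinite sign, cannot cancel the main term.

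The hard part will be exactly this last point: ruling out obstructed components of $\overline{M}_{0,m+2}(X,\beta)$ that dominate $X^{m+2}$, for a cleverly chosen $\beta$. For Fano $X$ this is controlled by the positivity of $-K_X$, which bounds the anticanonical degrees of the components of a reducible or multiply covered stable map and thereby excludes such loci when $\beta$ has small degree. A rationally connected variety need not be Fano, however, so in general no such bound is available, and an $\mathrm{ev}$-dominant bad stratum of the correct virtual dimension could in principle carry a negative virtual contribution exactly cancelling $\deg(\mathrm{ev}|_Z)$. I expect that removing this obstacle requires either a structural theorem on rationally connected varieties strong enough to select $\beta$ with $\overline{M}_{0,m+2}(X,\beta)$ generically smooth of the expected dimension along \emph{every} dominant component, or a direct positivity statement for iterated quantum products of the point class on a rationally connected variety. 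This is why Conjecture~\ref{conj-K} is presently known only in low dimension, where Zhiyu Tian classifies the degenerate loci by hand, and why the targets treated in this paper --- symplectic deformations of $G/P$, or of low-degree complete intersections in such --- are within reach: for $G/P$ the relevant two-point genus-zero invariants are computed explicitly by quantum Schubert calculus and a nonzero one is exhibited outright, and a quantum Lefschetz argument carries this non-vanishing to low-degree complete intersections, so $(\ast)$ holds for those $X$ with no analysis of bad strata at all.
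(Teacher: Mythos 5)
There is a genuine gap, and it sits exactly where you locate it — but you should be aware that your reduction $(\ast)$ is not a lemma the paper proves; it is posed verbatim as the open Question \ref{ques-GW}, and Remark \ref{rmk-surf} gives concrete evidence that it cannot be settled by the comb-smoothing strategy you outline: for the Hirzebruch surfaces $\Sigma_n$ with $n\geq 4$, \emph{every} class $\eb$ whose two-point evaluation map dominates $\mm\times\mm$ has fibers with excess irreducible components, so the transversality you need along every dominant component fails already for rational surfaces. The correct direction of your argument (nonzero two-point invariant implies rational connectedness, plus deformation invariance of Gromov--Witten invariants) is fine, but the converse is the whole difficulty, and your own final paragraph concedes that you cannot rule out cancellation from obstructed $\text{ev}$-dominant strata. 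So what you have written is a correct identification of a sufficient criterion together with an honest admission that the criterion is not known to hold; it is not a proof, and indeed the statement remains a conjecture in the paper.

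Your account of how the paper handles $G/P$ and complete intersections is also not what the paper does. No two-point invariant is ever computed, and quantum Schubert calculus plays no role. Instead, the paper works entirely with \emph{one-point} invariants, where the Koll\'ar--Ruan transversality result does apply to $\jj$-irreducible classes, and routes the argument through the rational quotient (MRC fibration): by Part 1 of Theorem \ref{thm-surf}, if $\snef{\oom}(\mm)$ is nonzero and the $\QQ$-span of the symplectically pseudo-free classes $\snep{\oom}(\mm)$ is all of $H_2(\mm;\QQ)$, then the pullback of any nonzero class in $H^2(Q;\QQ)$ from the quotient $Q$ would be orthogonal to a spanning set of $H_2(\mm;\QQ)$, hence zero, forcing $Q$ to be a point and $\mm$ to be rationally connected; both hypotheses are symplectic deformation invariants. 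For $G/P$, Proposition \ref{prop-simp} verifies that the symplectically free cone equals the simplicial Mori cone, which spans $H_2$, and Theorem \ref{thm-CI} transports this to low-degree complete intersections via a Lefschetz argument on $H_2$ rather than quantum Lefschetz. This is why the paper can prove rational connectedness for these targets without ever confronting the bad strata of two-point moduli spaces that block your approach.
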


\mni
The best result on this conjecture is a theorem of Zhiyu Tian.

\begin{thm}[Zhiyu Tian] \cite{Zhiyu} \label{thm-ZT} \marpar{thm-ZT}
  In complex dimension $\leq 3$, rational connectedness is preserved
  by symplectic deformation equivalence.
\end{thm}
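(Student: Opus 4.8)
The plan is to reduce Theorem~\ref{thm-ZT} to a Gromov--Witten characterization of rational connectedness for threefolds: \emph{for a connected smooth projective threefold $X$, $X$ is rationally connected if and only if there exist a curve class $\beta\in H_2(X;\ZZ)$, an integer $n\ge 2$, and classes $\gamma_{n+1},\dots,\gamma_m\in H^*(X)$ with $\langle [\mathrm{pt}]^{\,n},\gamma_{n+1},\dots,\gamma_m\rangle^{X}_{0,\beta}\neq 0$.} Granting this, the theorem follows at once in dimension three: a symplectic deformation between $X_0$ and $X_1$ induces an isomorphism $H^*(X_0)\cong H^*(X_1)$ carrying $[\mathrm{pt}]$ to $[\mathrm{pt}]$ and identifying all genus-zero Gromov--Witten invariants, so the displayed condition is preserved; if $X_0$ is rationally connected --- hence, being a compact K\"ahler manifold with $h^{2,0}=0$, projective --- the invariant is nonzero for $X_0$, hence for $X_1$, so $X_1$ is rationally connected. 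In complex dimension $\le 2$ the theorem is handled directly: by Theorem~\ref{thm-KR1} the manifold $X_1$ is uniruled, the number $b_1$ is a topological invariant, $b_1(X_0)=0$ because a rationally connected surface is rational, and a uniruled surface with $b_1=0$ is rational; the case of curves is trivial.

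The implication ``nonzero invariant $\Rightarrow$ rationally connected'' is dimension-independent. If the invariant above is nonzero then, representing the insertions by cycles in general position, through any $n$ general points of $X$ there is a genus-zero stable map $f\colon C\to X$ of class $\beta$ whose marked points map to those points; since $C$ is a connected nodal curve of arithmetic genus zero, $f(C)$ is a connected union of rational curves containing all $n$ points. In particular two general points of $X$ lie on a connected chain of rational curves, so $X$ is rationally chain connected, hence --- being smooth and projective over $\CC$, by Koll\'{a}r--Miyaoka--Mori --- rationally connected.

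The reverse implication, that every smooth projective rationally connected threefold carries such a nonzero genus-zero invariant, is the heart of the matter and the place where dimension $\le 3$ is essential; this is where I expect the main difficulty to lie. A nonzero genus-zero invariant with a single point insertion, which is what uniruledness alone provides, only witnesses that rational curves cover $X$, not that they join general pairs of points, and the obstruction is that the virtual count defining an invariant with two point insertions can vanish even when rational curves through two general points are abundant, because of excess-dimensional or boundary strata in the moduli of stable maps. To control this I would run the minimal model program on $X$ (which is uniruled, so $K_X$ is not pseudo-effective), reducing birationally to a Mori fiber space $X'\to S$ with $S$ rationally connected of dimension $\le 2$; thus $X'$ is a Fano threefold, a del Pezzo fibration over $\PP^1$, or a conic bundle over a rational surface. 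In each case I would produce an explicit class $\beta'$ and an explicit choice of $n\ge 2$ point insertions, supplemented by divisor- or curve-class insertions so that the virtual dimension vanishes, for which the Gromov--Witten invariant of $X'$ computes a genuinely enumerative, strictly positive count of rational curves through general points: one uses that $X'$ carries very free rational curves --- so the relevant incidence locus is nonempty and, near the very free locus, smooth of the expected dimension --- and then checks that the remaining contributions (multiple covers, reducible configurations forced to meet all the incidence cycles, and any excess component of $\overline{M}_{0,n}(X',\beta')$) do not affect the generic count. The Fano case can be organized through the classification of Fano threefolds; the conic-bundle case is the most delicate, because of the degenerate fibers over the discriminant locus, and there one must take $\beta'$ to be a horizontal class realized by very free curves while excluding interference from vertical and reducible configurations.

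Finally, because genus-zero Gromov--Witten invariants are not birational invariants, non-vanishing on the Mori model $X'$ does not formally descend to the original $X$; I would bridge this either by propagating non-vanishing along the steps of the minimal model program --- which in dimension three are blow-ups and blow-downs of points and of smooth curves --- using the genus-zero blow-up formulas of Gathmann, Hu, J.~Li and others, or, more conceptually, by proving that the property ``there is a nonzero genus-zero Gromov--Witten invariant with at least two point insertions'' is preserved under blow-up at a smooth center of a threefold, which together with the birational invariance of rational connectedness reduces the whole reverse implication to the Mori fiber space case. The bookkeeping in this transfer, together with the conic-bundle analysis, is where I expect the real work to be. In higher dimensions the analogous reductions lose their grip --- Fano varieties are unclassified, the bases of Mori fiber spaces are themselves higher-dimensional rationally connected varieties, and boundary contributions are no longer controllable --- which is why Conjecture~\ref{conj-K} remains open.
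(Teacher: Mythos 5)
This theorem is not proved in the paper: it is quoted from \cite{Zhiyu}, and the only thing the paper offers is a one-paragraph description (in Section \ref{sec-uni}) of Zhiyu Tian's strategy --- an explicit run through the Mori program for threefolds, the Mori--Mukai classification of extremal contractions, and the Hu--Li--Ruan symplectic birational cobordism invariance of Gromov--Witten invariants to move nonvanishing statements across divisorial and flipping contractions. Your proposal reconstructs exactly this strategy: reduce to the assertion that a smooth projective threefold is rationally connected if and only if it carries a nonzero genus-zero invariant with at least two point insertions, observe that such nonvanishing is a symplectic deformation invariant, and prove the hard direction by descending to a Mori fiber space and propagating nonvanishing back up through the birational modifications. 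The easy direction (nonzero two-point invariant implies a connected genus-zero stable map through two general points, hence rational chain connectedness, hence rational connectedness for a smooth projective threefold in characteristic zero) and the dimension $\leq 2$ reduction are correctly and completely argued.

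The genuine gap is that the two steps you yourself flag as ``where the real work lies'' are precisely the content of Tian's theorem, and your proposal does not carry them out. First, the existence of a nonzero genus-zero invariant with two point insertions on every rationally connected projective threefold is not a routine consequence of the existence of very free curves: as Remark \ref{rmk-surf} in this paper points out, already for Hirzebruch surfaces $\Sigma_n$ with $n\geq 4$ every class whose two-point evaluation map is dominant has fibers with excess irreducible components, so the ``check that the remaining contributions do not affect the generic count'' is exactly the obstruction, not a verification. Establishing positivity case by case for Fano threefolds, del Pezzo fibrations, and conic bundles is a substantial classification argument. Second, the transfer of nonvanishing along the steps of the threefold MMP is not simply a matter of applying blow-up formulas, since flips occur and the relevant invariance statement is the Hu--Li--Ruan birational cobordism machinery; your proposal names plausible tools but does not verify that the specific class of invariants you need (two point insertions, genus zero) is preserved under each type of step. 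As it stands the proposal is a correct and well-motivated plan that matches the published proof's architecture, but it is a plan, not a proof.
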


\mni
The formulation below has no explicit mention of Gromov-Witten invariants.

\begin{thm}[Symplectic rational curves and surfaces on
  $G/P$] \label{thm-PG} \marpar{thm-PG}
  \textbf{1.}
  Every generalized complex flag variety $G/P$ is a fiber type Fano
  manifold: the Mori cone is a free $\ZZ_{\geq 0}$-semigroup generated
  by extremal rays $\RR_{> 0}\cdot \eb_i$ of classes $\eb_i$ of free
  rational curves.

  \noindent
  \textbf{2.}
  Koll\'{a}r's conjecture holds for $G/P$: every K\"{a}hler manifold
  $Z$ that is symplectically deformation equivalent to $G/P$ is
  rationally connected.

  \noindent
  \textbf{3.}
  Finally, $Z$ has a covering family of rational surfaces, except if
  $Z \cong \CC\PP^1$.
\end{thm}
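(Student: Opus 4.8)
The plan is to treat the three parts in turn: part \textbf{1} is classical structure theory for $G/P$, while parts \textbf{2} and \textbf{3} rest on the enumerative geometry of genus-zero stable maps to $G/P$ together with the symplectic deformation invariance of genus-zero Gromov--Witten invariants. For part \textbf{1}, after replacing $G$ by its simply connected cover (which does not change $G/P$) the nodes of the Dynkin diagram outside the Levi of $P$ index a $\ZZ$-basis of $\mathrm{Pic}(G/P)$ given by the globally generated line bundles $\mcO(\omega_i)$, with dual $\ZZ$-basis of $H_2(G/P,\ZZ)$ the Schubert line classes $\eb_i$. Every effective curve class pairs non-negatively with each $\mcO(\omega_i)$, so $\NEB(G/P)=\sum_i\RR_{\geq 0}\eb_i$ is simplicial and its lattice points form the free semigroup $\bigoplus_i\ZZ_{\geq 0}\eb_i$; and $-K_{G/P}$ is a positive combination of the $\mcO(\omega_i)$, hence ample, so $G/P$ is Fano. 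Transitivity of the $G$-action makes $T^{1,0}_{G/P}$ globally generated, hence its pullback along any non-constant morphism $\PP^1\to G/P$ is globally generated; so every such morphism is free, in particular the Schubert lines of class $\eb_i$. Finally the extremal contraction of $\RR_{>0}\eb_i$ is the projection $G/P\to G/P_i$, where $P_i\supsetneq P$ re-adjoins the $i$-th node; its fibers $P_i/P$ are positive-dimensional generalized flag varieties, so this contraction is of fiber type.

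For part \textbf{2}, fix a symplectic deformation equivalence between $Z=(\mm,\jj,\oom)$ and $G/P$, so that their genus-zero Gromov--Witten invariants coincide; by Theorem~\ref{thm-KR1}, $Z$ is uniruled. On $G/P$, homogeneity and Kleiman--Bertini transversality make genus-zero GW invariants with point and Schubert-cycle insertions enumerative, equal to honest non-negative counts computed on the boundary-free locus of free maps. Since $G/P$ is Fano it carries a very free rational curve of some class $\beta'$, along which $ev_1\times ev_2\colon\ol{M}_{0,2}(G/P,\beta')\to G/P\times G/P$ is submersive, hence dominant, so cutting down by general ample-divisor insertions yields a non-zero invariant $\langle[\mathrm{pt}],[\mathrm{pt}],\dots\rangle_{0,\bullet,\beta'}\neq 0$. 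Transported to $Z$, this says the virtual count of the zero-dimensional moduli space of genus-zero stable maps of class $\beta'$ through two general points $p_1,p_2\in Z$ is non-zero; hence that space is non-empty, and a stable map in it has image a connected tree of rational curves joining $p_1$ to $p_2$. Let $\pi\colon Z\dashrightarrow W$ be the maximal rationally connected fibration, which exists for compact Kähler $Z$ (Campana): $W$ is not uniruled, and by maximality any connected chain of rational curves meeting the general fiber through $p_1$ is contained in it, so $p_2\in\pi^{-1}(\pi(p_1))$ and $\pi(p_1)=\pi(p_2)$ --- impossible for general $p_1,p_2$ unless $\dim W=0$. Hence $Z$ is rationally connected; and then $h^{2,0}(Z)=0$, so $Z$ is in fact projective.

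For part \textbf{3}, observe that on $G/P$ the parameter space of $\eb_i$-lines is again a generalized flag variety --- in particular rational, hence uniruled --- and the universal curve over a rational curve $R$ in this parameter space is a ruled surface over $R$, hence a rational surface, carrying a morphism to $G/P$; as $R$ varies these ruled surfaces dominate $G/P$, their union being the union of all $\eb_i$-lines, which is $G/P$. What must be encoded in Gromov--Witten theory is therefore that the moduli space $\ol{M}_{0,\bullet}(G/P,\eb_i)$ of genus-zero stable maps of an extremal line class is uniruled; this is detected by a nonvanishing higher genus-zero invariant --- a rational-simple-connectedness datum for $G/P$ --- which is a symplectic deformation invariant, so the corresponding moduli space for $Z$ is uniruled as well. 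A rational curve in the latter produces, via its universal curve, a morphism to $Z$ from a ruled --- hence rational --- surface, and varying the rational curve these surfaces sweep out $Z$: the desired covering family. Since a surface cannot dominate a curve, the single exception is $\dim Z=1$, i.e. $Z\cong\CC\PP^1$.

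The main obstacle is precisely this input for part \textbf{3}: showing, in a symplectic-deformation-invariant fashion, that the space of rational curves of an extremal line class on $G/P$ is uniruled (ideally rationally connected), and that on $Z$ the resulting stable-map family genuinely sweeps a two-dimensional locus rather than collapsing to a curve or spreading to a divisor. This is where the rational simple connectedness of $G/P$, and the unobstructedness and enumerativity of its Gromov--Witten theory --- both consequences of convexity of the homogeneous space --- do the work. By contrast, the passage in part \textbf{2} from rational chain connectedness of a general pair of points to rational connectedness of $Z$ is formal once the maximal rationally connected fibration is available in the compact Kähler category.
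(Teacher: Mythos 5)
\textbf{Part 1} is correct and coincides with the paper's Proposition \ref{prop-simp}. \textbf{Part 2} is correct for $G/P$ but takes a genuinely different route. The paper never invokes a two-point invariant: it deduces Part 2 from Part 1 of Theorem \ref{thm-surf} by observing that the generators $\eb_i$ are symplectically free (one-point descendants $f_{\eb_i}>0$) and span $H_2(G/P;\QQ)$, so that for any K\"{a}hler $Z$ in the deformation class a nonzero class in $H^2(Q;\QQ)$ of the rational quotient would pull back to a class orthogonal to every symplectically pseudo-free class, forcing $Q$ to be a point. Your route --- a nonzero two-point invariant on $G/P$ made enumerative by convexity and Kleiman--Bertini, transported to $Z$ to produce a chain of rational curves through two general points --- is legitimate precisely because $G/P$ is convex and homogeneous, but note that the paper flags (Question \ref{ques-GW}, Remark \ref{rmk-surf}) that nonvanishing of two-point invariants is \emph{not} known to follow from rational connectedness in general, so this shortcut does not extend to the complete-intersection setting of Theorem \ref{thm-CI}, whereas the one-point criterion does. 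Also, a small slip: by the divisor axiom, ample-divisor insertions do not cut down the dimension of the count; you need insertions of codimension at least two (e.g.\ powers of the K\"{a}hler class or Schubert cycles), as you in fact indicate earlier.

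\textbf{Part 3} contains the genuine gap, and it is exactly the one you name at the end without closing. Saying that uniruledness of $\Kgnb{0,1}(G/P,\eb_i)$ ``is detected by a nonvanishing higher genus-zero invariant'' is the statement to be proved, not an argument: you must exhibit a specific symplectic deformation invariant whose positivity forces the general $\text{ev}$-fiber $F_{\eb_i,x}$ \emph{on the deformed manifold $Z$} to be uniruled, and you must control that fiber well enough to apply Mori's theorem to it. The paper does this in three steps that have no counterpart in your proposal: (a) for a $\jj$-irreducible class the fiber $F_{\eb_i,x}$ over a general point is a smooth projective manifold of dimension $m_{\eb_i}$ on which the psi class is \emph{ample} (Propositions \ref{prop-free} and \ref{prop-psi}); (b) by the formula of \cite{dJS3}, $c_1(T_{F_{\eb_i,x}})$ equals $\pi_*u^*(\text{ch}_2(T^{1,0})+\tfrac{m_{\eb_i}}{2(m_{\eb_i}+2)^2}c_1(T^{1,0})^2)$, so its degree against the moving curve class $c_1(\psi)^{m_{\eb_i}-1}$ is the gravitational descendant $s_{\eb_i}$, a deformation invariant; positivity of $s_{\eb_i}$ then gives uniruledness of $F_{\eb_i,x}$ by Mori, and rational curves in $F_{\eb_i,x}$ sweep out rational surfaces in $Z$ (Proposition \ref{prop-mainA}); (c) $s_{\eb_i}>0$ is verified on $G/P$ by showing $F_{\eb_i,x}$ is weakly Fano using the finitely many $G$-orbits on $\Kgnb{0,2}(G/P,\eb_i)$ (Proposition \ref{prop-2free}). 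You also omit the case $m_{\eb_i}=0$, where $F_{\eb_i,x}$ is a single point and cannot be uniruled; there the rational surface must come from a pencil of $\eb_i$-curves over a free curve in the base of the associated $\CC\PP^1$-fibration, which is what the notion ``symplectically ruling'' and Proposition \ref{prop-mainA.5} encode. Without (a)--(c) and the ruling case, Part 3 is not proved.
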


\mni
The basic notions of Mori theory depend sensitively on $\jj$ rather
than $\oom$.  

\begin{defn}[Mori cone] \label{defn-Moricone} \marpar{defn-Moricone}
  The \textbf{Mori cone}, $\neb{}{\jj}(\mm)_{\RR}$, is the closure in
  $H_2(\mm;\RR)$ of the $\RR_{\geq 0}$-semigroup
  $\nee{}{\jj}(\mm)_{\RR}$ spanned by classes of $\jj$-holomorphic
  curves.  These classes are $\jj$-\textbf{effective}, and the
  $\ZZ_{\geq 0}$-semigroup spanned by these classes is
  $\nee{}{\jj}(\mm)$.

  \noindent
  A class $\eb\in \nee{}{\jj}(\mm)$ is $\jj$-\textbf{extremal} if
  $0<\langle c_1(T^{1,0}_{\mm,\oom}), \eb \rangle \leq 1 +
  \text{dim}_{\CC}(\mm)$ and if, for some K\"{a}hler
  $[D]\in H^2(\mm;\RR)$, the function
  $$
  \neb{}{\jj}(\mm)_{\RR}\setminus \{0\} \to \RR,\ \ \alpha \mapsto
  \langle c_1(T^{1,0}_{\mm,\oom})-[D],\alpha \rangle 
  /
  \langle [D],\alpha \rangle,
  $$
  has a nonnegative maximum precisely on the ray $\RR_{>0}\cdot \eb$.

  \noindent
  An element of a $\ZZ_{\geq 0}$-semigroup $N$ in $H_2(\mm;\ZZ)$ is
  $\jj$-\textbf{reducible} if it equals a nonzero sum of elements of
  $N$ plus a nonzero sum of classes of $\jj$-holomorphic spheres.
  Otherwise, it is $\jj$-\textbf{irreducible}.  A nonzero element of
  $N$ is \textbf{decomposable} if it equals a sum of two nonzero
  classes in $N$. Otherwise, it is \textbf{indecomposable}.
\end{defn}

\mni
Our focus is exclusively on extremal rays whose associated contraction
is fiber type.  This occurs if and only if the ray is generated by a
$\jj$-extremal class that is \emph{free}.

\begin{defn}[Free classes] \label{defn-free} \marpar{defn-free}
  A non-constant $\jj$-holomorphic map $u:\CC\PP^1\to \mm$ is
  $\jj$-\textbf{free}, resp. $\jj$-\textbf{very free}, if the
  holomorphic vector bundle $u^*T^{1,0}_{\mm,\jj}$ on $\CC\PP^1$ is
  semi-ample, resp. ample.  Then, the class $u_*[\CC\PP^1]$ is
  $\jj$-\textbf{free}, resp.  $\jj$-\textbf{very free}.

  \noindent
  Denote the $\ZZ_{\geq 0}$-span of such classes by $\nef{\jj}(\mm)$.
  \noindent
  A $\jj$-extremal class is a $\jj$-\textbf{ruling class} if the
  associated contraction is a ruling by conics.  These classes
  generate the $\ZZ_{\geq 0}$-semigroup $\ner{\jj}(\mm)$.
\end{defn}

\mni
The next theorem uses symplectic ``freeness'' via Gromov-Witten
invariants (for $\oom$).

\begin{defn}[Symplectically free classes] \label{defn-sfree}
  \marpar{defn-sfree}
  A class $\eb\in H_2(\mm;\ZZ)$ is \textbf{symplectically effective in
    genus} $g$ if there exists $n\in \ZZ_{\geq 0}$ such that the full
  Gromov-Witten invariant is not identically zero,
  $$
  \text{GW}^{\mm,\oom}_{g,n,\eb}:
  H^*(\mfM_{g,n};\QQ) \otimes H^*(\mm^{n};\QQ)\to \QQ.
  $$

  \noindent
  Denote by $\eta_\mm$ the Poincar\'{e} dual of the homology class of
  a point.

  \noindent
  The class $\eb$ is \textbf{symplectically pseudo-free} if there
  exists $n\in \ZZ_{\geq 0}$, if there exists
  $u\in H^*(\mfM_{0,n+1};\QQ)$, and if there exists
  $v\in H^*(\mm^n;\QQ)$ with
  $GW^{\mm,\oom}_{0,n+1,\eb}(u\otimes w)\neq 0$ for
  $w:=\text{pr}_1^*\eta_{\mm}\smile \text{pr}_2^*v \in H^*(\mm\times
  \mm^n;\QQ)$.

  \noindent
  Denote the $\ZZ_{\geq 0}$-span of such classes by
  $\snep{\oom}(\mm)$.
  
  \noindent
  The \textbf{normal degree} is
  $m_\eb=m_\eb(\mm):= \langle c_1(T^{1,0}_{\mm,\oom}), \eb \rangle -
  2.$ If $\eb$ is free, then $m_\eb\geq 0$.

  \noindent
  For $m_\eb\geq 0$, the \textbf{first gravitational descendant} is
  $$
  f_\eb = f_\eb(\mm) := \langle \tau_{m_\eb}(\eta_{\mm}) \rangle^{\mm,\oom}_{0,\eb}
  = \text{GW}^{\mm,\oom}_{0,1,\eb}(c_1(\psi)^{m_\eb}\otimes \eta_\mm). 
  $$
  If $0\leq m_\eb\leq \text{dim}_{\CC}(\mm)-1$ and if $f_\eb>0$, then
  $\eb$ is \textbf{symplectically free}.

  \noindent
  Denote the $\ZZ_{\geq 0}$-span of such classes by
  $\snef{\oom}(\mm)$.

  \noindent
  If $m_\eb =0$, if $f_\eb=1$, and if there exists a symplectically
  pseudo-free class that is $\QQ$-linearly independent from $\eb$,
  then $\eb$ is \textbf{symplectically ruling}.

  \noindent
  Denote the $\ZZ_{\geq 0}$-span of such classes by
  $\sner{\oom}(\mm)$.

  \noindent
  If $m_\eb(\mm)>0$ and $f_\eb(\mm)>0$, then the \textbf{second
    gravitational descendant} and \textbf{quotient invariant} are
  $$
  s_{\eb} = s_{\eb}(\mm) := \lt \langle \tau_{m_\eb-1}(\eta_m),
  \text{ch}_2(T^{1,0}_{\mm,\oom}) +
  \frac{m_\eb}{2(m_\eb+2)^2}c_1(T^{1,0}_{\mm,\oom})^2
  \rt\rangle^{\mm,\oom}_{0,\eb}, \ \ q_{\eb}(\mm) := \frac{s_\eb}{f_\eb}.
  $$
  A symplectically free class $\eb$ is \textbf{symplectically
    $2$-free} if $m_\eb>0$ and $s_\eb>0$.
\end{defn}

\mni
The relevance of $f_\eb$ and $s_\eb$ is a sharp version of the Theorem
\ref{thm-KR1}.  Koll\'{a}r and Ruan prove that for every $\jj$-free
$\eb\in \nef{\jj}(\mm)$ that is $\jj$-irreducible, the evaluation map,
$$
\text{ev}:\Kgnb{0,1}((\mm,\jj),\eb) \to \mm,
$$
has fiber $F=\text{ev}^{-1}(\{q\})$ that is smooth, projective, with
pure dimension $m_\eb$ and parameterizing only maps with smooth domain
for every $q$ contained in a dense ``Zariski'' open
$\mm_\eb\subset \mm$.  Thus, the $\jj$-irreducible $\jj$-free classes
equal the $\jj$-irreducible symplectically pseudo-free classes,
$\eb \in \snep{\oom}(\mm)$.

\begin{thm}[Relative ampleness of $\psi$] \label{thm-simpFano}
  \marpar{thm-simpFano}
  \textbf{1.}
  The $\jj$-irreducible $\jj$-free classes also equal the
  $\jj$-irreducible symplectically free classes,
  $\eb \in \snef{\oom}(\mm)$, and these exist if and only if
  $\snef{\oom}(\mm)$ is nonzero.

  \noindent
  \textbf{2.}
  For every such class $\eb$, the $\psi$-line bundle is ample on the
  fiber $F$ of $\text{ev}$ over every point of $\mm_\eb$.  If
  $m_\eb>0$, the two leading coefficients of the Hilbert polynomial
  are,
  $$
  \chi(F,\psi^{\otimes d}) = \frac{f_\eb}{m_\eb!}d^{m_\eb} +
  \frac{s_\eb}{2(m_\eb-1)!} d^{m_\eb-1} + \dots =
  \frac{f_\eb d^{m_\eb}}{m_\eb!}\lt( 1 + \frac{m_\eb q_\eb}{2d} +
  \dots \rt).
  $$

  \noindent
  \textbf{3.}
  For every $[D_0],\dots,[D_r]\in H^2(\mm;\RR)$, there is an equality
  of Gysin classes,
  $$
  \pi_*u^*(D_0\smile \dots\smile D_r) =
  \langle D_0,\eb \rangle \cdots \langle D_r,\eb \rangle
  c_1(\psi)^r \in H^{2r}(F_{\eb,x};\RR), \text{ thus}
  $$
  $$
  \langle \tau_s(\eta_\mm), D_0\smile \dots \smile D_r, -
  \rangle^{\mm,\oom}_{0,\eb} =
  \langle D_0,\eb \rangle \cdots \langle D_r,\eb \rangle
  \langle \tau_{r+s}(\eta_\mm),-\rangle^{\mm,\oom}_{0,\eb}.  
  $$
  In particular, $\mm$ is uniruled if and only if $\snef{\oom}(\mm)$
  is nonzero.
\end{thm}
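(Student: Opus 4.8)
Fix a $\jj$-irreducible, $\jj$-free class $\eb$, put $n=\dim_{\CC}\mm$, and for $x$ in the Koll\'ar--Ruan open $\mm_\eb$ let $\pi\colon\mcC\to F$ be the universal curve over $F=\text{ev}^{-1}(x)\subseteq\Kgnb{0,1}((\mm,\jj),\eb)$, with marked-point section $\sigma\colon F\to\mcC$ and universal map $u\colon\mcC\to\mm$. Every curve in $F$ has smooth irreducible domain, is not a multiple cover (as $\eb$ is $\jj$-irreducible), and is in fact free (since $F$ has the expected dimension $m_\eb$), so $\pi$ is a $\PP^1$-bundle with genuine section $\sigma$, $u\circ\sigma$ is the constant map to $x$, and $\sigma^*[\sigma(F)]=c_1(N_{\sigma(F)/\mcC})=-\psi$, where $\psi$ is the first Chern class of the $\psi$-line bundle on $F$. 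The first move is the Gysin identity of Part 3: writing $u^*D-\langle D,\eb\rangle[\sigma(F)]=\pi^*\gamma_D$ for $D\in H^2(\mm;\RR)$ (possible since $u^*D$ has fibrewise degree $\langle D,\eb\rangle$) and applying $\sigma^*$, using $\sigma^*u^*D=0$, forces $\gamma_D=\langle D,\eb\rangle\psi$, so $u^*D=\langle D,\eb\rangle([\sigma(F)]+\pi^*\psi)$; then $[\sigma(F)]^2=-[\sigma(F)]\pi^*\psi$ gives $[\sigma(F)]\cdot([\sigma(F)]+\pi^*\psi)=0$, hence $\pi_*(([\sigma(F)]+\pi^*\psi)^{r+1})=c_1(\psi)^r$, and multiplying the formula for $u^*D_0,\dots,u^*D_r$ and pushing forward yields the stated equality of Gysin classes. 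The descendant identity follows by transporting this to $\Kgnb{0,n+1}((\mm,\jj),\eb)$ and pushing down the marked points carrying $D_0,\dots,D_r$, the descendant divisor equation collapsing to its leading term because $\eta_\mm\smile D=0$.

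The crux, which I expect to be the hardest step, is the ampleness of $\psi$ on $F$ (Part 2). I would prove it by the Nakai--Moishezon criterion, reducing to $\int_V\psi^{\dim V}>0$ for every irreducible closed $V\subseteq F$. Replacing $V$ by a resolution $\hat V$ and forming the induced $\PP^1$-bundle $\mcC_V\to\hat V$ with section $\sigma_V$ and universal map $h_V\colon\mcC_V\to\mm$, one has $\int_V\psi^{\dim V}=(-1)^{\dim V}(\sigma_V(\hat V))^{\dim\mcC_V}$ in $\mcC_V$, with $h_V$ contracting $\sigma_V(\hat V)$ to the point $x$. The essential geometric input is that $h_V$ is generically finite onto its image. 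Otherwise, after excluding the case that $h_V(\mcC_V)$ is a curve (which would force $\eb$ to be a multiple of a rational curve class, or the family to be trivial), a general fibre of $h_V$ meets the image curves finitely, producing a positive-dimensional family of irreducible curves of class $\eb$ through $x$ and through a general --- hence distinct --- point $y$ of $h_V(\mcC_V)$, and Mori's bend-and-break then produces a genus-$0$ stable map of class $\eb$ with reducible or non-reduced domain, contradicting $\jj$-irreducibility. With $h_V$ generically finite, its Stein factorisation has birational part contracting $\sigma_V(\hat V)$ to a point, and the negativity lemma for a prime divisor contracted to a point gives $(-1)^{\dim V}(\sigma_V(\hat V))^{\dim\mcC_V}>0$. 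For $\dim V=1$ this last step is just the Hodge-index fact that a section of a ruled surface which is numerically nonzero and orthogonal to the big nef class $h_V^*A$ ($A$ any ample divisor on $\mm$) has negative self-intersection, and it already shows $\psi$ is nef. Thus $\psi$ is ample, and in particular $f_\eb=\int_F\psi^{m_\eb}>0$.

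For the Hilbert polynomial, Hirzebruch--Riemann--Roch on the smooth projective $F$ gives $\chi(F,\psi^{\otimes d})=\int_Fe^{d\psi}\,\mathrm{td}(T_F)$, whose $d^{m_\eb}$ and $d^{m_\eb-1}$ coefficients are $\tfrac1{m_\eb!}\int_F\psi^{m_\eb}$ and $\tfrac1{2(m_\eb-1)!}\int_F\psi^{m_\eb-1}c_1(T_F)$. Over $\mm_\eb$ the moduli stack is smooth of the expected dimension and $\text{ev}^*\eta_\mm$ restricts to $[F]$, so $\int_F\psi^{m_\eb}=\langle\tau_{m_\eb}(\eta_\mm)\rangle^{\mm,\oom}_{0,\eb}=f_\eb$. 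For $c_1(T_F)$ I would feed the deformation sequence $0\to\pi_*(T_{\mcC/F}(-\sigma(F)))\to\pi_*(u^*T^{1,0}_{\mm,\jj}(-\sigma(F)))\to T_F\to0$ into Grothendieck--Riemann--Roch for $\pi$, using $c_1(T_{\mcC/F})=2[\sigma(F)]+\pi^*\psi$, $[\sigma(F)]^2=-[\sigma(F)]\pi^*\psi$ and $u^*D=\langle D,\eb\rangle([\sigma(F)]+\pi^*\psi)$; a short computation yields $c_1(T_F)=\pi_*(u^*\mathrm{ch}_2(T^{1,0}_{\mm,\oom}))+\tfrac{m_\eb}2\psi$. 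Since $\sigma^*u^*\mathrm{ch}_2=0$ and hence $[\sigma(F)]\cdot u^*\mathrm{ch}_2=0$, the projection formula together with the identification $\mcC\cong\Kgnb{0,2}((\mm,\jj),\eb)|_{\text{ev}_1=x}$, under which $\psi_1|_{\mcC}=\pi^*\psi+[\sigma(F)]$ and $\text{ev}_2=u$, gives $\int_F\psi^{m_\eb-1}\pi_*(u^*\mathrm{ch}_2)=\langle\tau_{m_\eb-1}(\eta_\mm),\mathrm{ch}_2(T^{1,0}_{\mm,\oom})\rangle^{\mm,\oom}_{0,\eb}$. Combining this with $\int_F\psi^{m_\eb}=f_\eb$ and the $r=1$, $D_0=D_1=c_1(T^{1,0}_{\mm,\oom})$ case of the descendant identity, which reads $\langle\tau_{m_\eb-1}(\eta_\mm),c_1(T^{1,0}_{\mm,\oom})^2\rangle^{\mm,\oom}_{0,\eb}=(m_\eb+2)^2f_\eb$, reproduces precisely the definition of $s_\eb$, so $\int_F\psi^{m_\eb-1}c_1(T_F)=s_\eb$; ampleness of $\psi$ then makes $\chi(F,\psi^{\otimes d})$ the Hilbert polynomial of the stated shape.

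Finally, for Part 1 and the last sentence of Part 3: if $\eb$ is $\jj$-irreducible and $\jj$-free then $m_\eb\ge0$, bend-and-break gives $\langle c_1(T^{1,0}_{\mm,\oom}),\eb\rangle\le n+1$ and hence $m_\eb\le n-1$, and $f_\eb>0$ by the ampleness step, so $\eb$ is symplectically free; conversely if $\eb$ is $\jj$-irreducible and symplectically free then $f_\eb\ne0$ forces $\text{ev}$ to be dominant, so a genus-$0$ stable map of class $\eb$ passes through a general point, and $\jj$-irreducibility makes it an irreducible rational curve of class $\eb$, a general one being free by generic smoothness in characteristic $0$, so $\eb$ is $\jj$-free. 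Such classes exist if and only if $\mm$ is uniruled if and only if $\snef{\oom}(\mm)\ne0$: a nonzero $\snef{\oom}(\mm)$ supplies a symplectically free class, hence as above a free rational curve through a general point, hence uniruledness; and uniruledness supplies a minimal free rational curve class, which is $\jj$-irreducible and $\jj$-free, hence symplectically free by the ampleness step. The whole argument rests on the ampleness assertion, and within it on converting $\jj$-irreducibility of $\eb$ into generic finiteness of the universal map on every subfamily of curves through the fixed general point, via Mori's bend-and-break.
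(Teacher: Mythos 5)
Your overall architecture matches the paper's: the identity $u^*D=\langle D,\eb\rangle([\sigma(F)]+\pi^*\psi)$ coming from the normalized section bundle, Nakai--Moishezon for ampleness with the key geometric input being generic finiteness of the universal map away from the base point $x$, and Riemann--Roch plus the formula for $c_1(T_F)$ for the Hilbert polynomial. (The paper establishes generic finiteness by the two-disjoint-sections argument of Proposition \ref{prop-NK} rather than bend-and-break, and cites \cite{dJS3} for $c_1(T_F)$ rather than rederiving it by GRR, but those are cosmetic differences.) There are, however, two genuine gaps. First, for $\dim V\geq 2$ your positivity $\int_V\psi^{\dim V}>0$ rests on ``the negativity lemma for a prime divisor contracted to a point,'' which is not a theorem in that generality: already for a threefold obtained by blowing up a point and then a point on the exceptional divisor, the strict transform $E_1'$ of the first exceptional divisor is a prime divisor contracted to a point with $(E_1')^3=0$. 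The correct argument is one you already have all the pieces for: since $[\sigma_V]+\pi^*\psi=\langle\oom,\eb\rangle^{-1}h_V^*[\oom]$ is the pullback of a K\"ahler class under a generically finite map of compact spaces of pure dimension $r+1$, its top self-intersection is positive, and expanding $([\sigma_V]+\pi^*\psi)^{r+1}$ using $\sigma_V^*([\sigma_V]+\pi^*\psi)=0$ and $(\pi^*\psi)^{r+1}=0$ gives exactly $\int_V\psi^r$. This is what the paper does (Corollary \ref{cor-psi1} and Proposition \ref{prop-NK}); your detour through the sign of $(\sigma_V(\hat V))^{r+1}$ is both unnecessary and unsupported as stated (your $r=1$ Hodge-index argument is fine).

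Second, you apply Nakai--Moishezon to $F$ and later invoke Hirzebruch--Riemann--Roch ``on the smooth projective $F$'' without establishing that $F$ is projective, or even Moishezon. On a compact complex manifold that is merely K\"ahler, positivity of all intersection numbers of a line bundle does not by itself yield ampleness; the Nakai--Moishezon criterion is a criterion on Moishezon spaces. The paper spends real effort here: for general $x$ the fiber $F_{\eb,x}$ lands in the space of stable maps to a fiber of the rational quotient, which is a projective rationally connected manifold, so $F_{\eb,x}$ is projective; for arbitrary $x\in\mm_\eb$ one then uses constancy of Hodge numbers in the family and \cite{Popovici} to conclude every fiber is Moishezon before applying Nakai--Moishezon. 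A related smaller omission: your existence claim that uniruledness supplies a \emph{minimal} (hence $\jj$-irreducible) free class needs the discreteness of $\oom$-degrees of $\text{ev}$-dominant classes, which again the paper obtains by passing to the projective fibers of the rational quotient. None of these gaps is fatal to your approach, but each requires the rational-quotient input that your write-up omits.
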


\mni
The general form of Theorem \ref{thm-PG} is as follows.

\begin{thm}[Rational curves and surfaces via
  $\snef{\oom}(\mm)$] \label{thm-surf} \marpar{thm-surf}
  \textbf{1.}
  For K\"{a}hler $\mm$ and $Z$ that are symplectically deformation
  equivalent, both are rationally connected whenever
  $\snef{\oom}(\mm)$ is nonzero and the $\QQ$-span of
  $\snep{\oom}(\mm)$ equals $H_2(\mm;\QQ)$.

  \noindent
  \textbf{2.}
  A K\"{a}hler $Z$ that is symplectically equivalent,
  resp. symplectically deformation equivalent, to $\mm$ is covered by
  rational surfaces if $\snef{\oom}(\mm)$ is nonzero and if at least
  one nonzero $\oom$-minimal class, resp. every nonzero indecomposable
  class, in $\snef{\oom}(\mm)$ is symplectically ruling or
  symplectically $2$-free.

  \noindent
  \textbf{3.}
  Conversely, for a $\jj$-irreducible $\eb\in\snef{\oom}(\mm)$ with
  $m_\eb>0$, if a general $\text{ev}$-fiber $F$ has
  $c_1(T_F)\in H^2(F;\RR)$ nonzero and pseudo-effective, then $\eb$ is
  symplectically $2$-free.
\end{thm}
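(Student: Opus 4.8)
\noindent\textbf{Proof proposal for Theorem \ref{thm-surf}.3.}
The plan is to identify $s_\eb$ with (a multiple of) the subleading coefficient of the $\psi$-Hilbert polynomial of a general $\text{ev}$-fiber, and then to deduce its positivity from the ampleness of $\psi$ on that fiber together with the pseudo-effectivity hypothesis. First I would check that the hypotheses place $\eb$ in the scope of Theorem \ref{thm-simpFano}: every symplectically free class has $f_\eb=\text{GW}^{\mm,\oom}_{0,1,\eb}(c_1(\psi)^{m_\eb}\otimes\eta_\mm)>0$ and is therefore symplectically pseudo-free (take $n=0$ in that definition), so $\snef{\oom}(\mm)\subseteq\snep{\oom}(\mm)$; thus $\eb$ is a $\jj$-irreducible class of $\snep{\oom}(\mm)$, hence $\jj$-free by the discussion preceding Theorem \ref{thm-simpFano}, and then Theorem \ref{thm-simpFano}.1 shows $\eb$ is symplectically free. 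Since $m_\eb>0$ is assumed, it remains only to prove $s_\eb>0$, which is precisely the additional content that makes $\eb$ symplectically $2$-free.

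Next I would combine Theorem \ref{thm-simpFano}.2 with Hirzebruch--Riemann--Roch. A general point $x\in\mm$ lies in the dense open $\mm_\eb$, so by the theorem of Koll\'ar and Ruan quoted above the fiber $F=F_{\eb,x}$ of $\text{ev}$ over $x$ is smooth, projective, of pure dimension $m_\eb$, with $\psi$ ample on $F$ and
$$
\chi(F,\psi^{\otimes d})=\frac{f_\eb}{m_\eb!}\,d^{m_\eb}+\frac{s_\eb}{2(m_\eb-1)!}\,d^{m_\eb-1}+\cdots.
$$
On the other hand $\chi(F,\psi^{\otimes d})=\int_F e^{d\,c_1(\psi)}\,\text{td}(T_F)$ with $\text{td}(T_F)=1+\tfrac12 c_1(T_F)+\cdots$, so its coefficients of $d^{m_\eb}$ and $d^{m_\eb-1}$ are $\tfrac1{m_\eb!}\int_F c_1(\psi)^{m_\eb}$ and $\tfrac1{2(m_\eb-1)!}\int_F c_1(\psi)^{m_\eb-1}\smile c_1(T_F)$. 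Comparing coefficients,
$$
f_\eb=\int_F c_1(\psi)^{m_\eb}>0,\qquad s_\eb=\int_F c_1(\psi)^{m_\eb-1}\smile c_1(T_F).
$$

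Finally I would prove $s_\eb>0$ from the hypothesis that $c_1(T_F)$ is nonzero and pseudo-effective. If $m_\eb=1$, then $F$ is a smooth projective curve carrying the nonzero pseudo-effective class $c_1(T_F)=-K_F$, which forces $F\cong\CC\PP^1$, so $s_\eb=\deg(-K_F)=2$. If $m_\eb\geq 2$, I would cut down to a surface: for $k\gg 0$ the system $|\psi^{\otimes k}|$ is very ample, a general intersection $S\subset F$ of $m_\eb-2$ of its members is a smooth irreducible surface, and $\int_F c_1(\psi)^{m_\eb-1}\smile c_1(T_F)$ is a positive multiple of $(c_1(\psi)|_S)\cdot(c_1(T_F)|_S)$; for $S$ very general, $c_1(\psi)|_S$ is ample, $c_1(T_F)|_S$ is pseudo-effective, and $c_1(T_F)|_S$ is nonzero in $N^1(S)_\RR$ (by the Lefschetz hyperplane theorem, which for $m_\eb\geq 3$ gives an injection $N^1(F)_\RR\hookrightarrow N^1(S)_\RR$; when $m_\eb=2$ simply take $S=F$). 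On $S$, writing a Zariski decomposition $c_1(T_F)|_S=P+N$ with $P$ nef and $N$ effective, the ample class $c_1(\psi)|_S$ has strictly positive intersection with any nonzero effective divisor and, via the Hodge index theorem, with any nonzero nef class; since $c_1(T_F)|_S\neq 0$ at least one of $P$, $N$ is nonzero, whence $(c_1(\psi)|_S)\cdot(c_1(T_F)|_S)>0$ and $s_\eb>0$. (Alternatively, one may invoke the duality between the pseudo-effective cone of divisors and the movable cone of curves.) This shows $\eb$ is symplectically $2$-free.

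The routine parts are the reduction in the first paragraph and the Riemann--Roch bookkeeping. The step I expect to be the main obstacle is the \emph{strict} positivity above: nonnegativity of $\int_F c_1(\psi)^{m_\eb-1}\smile c_1(T_F)$ is immediate by approximating $c_1(T_F)$ by effective $\QQ$-divisors, but excluding equality needs the Hodge-index (or cone-duality) input, and in the cut-down argument one must also verify that the restricted class $c_1(T_F)|_S$ remains nonzero --- this is exactly where the Lefschetz hyperplane theorem enters.
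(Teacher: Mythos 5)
Your proposal addresses only Part 3 of the theorem; Parts 1 (rational connectedness via the rational quotient) and 2 (covering families of rational surfaces from $\oom$-minimal/indecomposable classes) are not touched, and in the paper those are the bulk of the work (Propositions \ref{prop-mainA} and \ref{prop-mainA.5} plus the deformation-invariance bookkeeping at the end of Section \ref{sec-Mori}).

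For Part 3 itself, your argument is correct and lands on the same two pillars as the paper's (the converse direction of Proposition \ref{prop-mainA}): first identify $s_\eb$ with $\int_F c_1(\psi)^{m_\eb-1}\smile c_1(T_F)$ on a general $\text{ev}$-fiber, then show a nonzero pseudo-effective class pairs strictly positively against the complete-intersection curve classes cut out by $|\psi^{\otimes d}|$. The routes differ in two places. (i) You obtain the identity from Theorem \ref{thm-simpFano}.2 plus Hirzebruch--Riemann--Roch; the paper instead computes $c_1(T_F)=\pi_*u^*(\text{ch}_2(T^{1,0}_{\mm,\oom})+\tfrac{m_\eb}{2(m_\eb+2)^2}c_1(T^{1,0}_{\mm,\oom})^2)$ directly from the \cite{dJS3} formula for the relative canonical class of $\text{ev}$ together with Corollary \ref{cor-psi1}. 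Since the paper's own proof of Theorem \ref{thm-simpFano}.2 goes through exactly this identification before invoking asymptotic Riemann--Roch, your derivation is equivalent rather than circular, just a detour through a stated theorem. (ii) The paper asserts in one line that a nonzero pseudo-effective class has strictly positive intersection with the moving class $[A]$; you supply the justification (cutting to a very general surface $S$, Lefschetz injectivity of $N^1(F)_\RR\hookrightarrow N^1(S)_\RR$, Zariski decomposition and Hodge index, or alternatively the duality of the pseudo-effective and movable cones). That is a genuine and worthwhile expansion of the step the paper leaves implicit; you correctly identify it as the only non-routine point. Two small cautions: a general fiber $F$ need not be connected, so the $m_\eb=1$ case should read "nonzero pseudo-effective of total degree positive" rather than "$F\cong\CC\PP^1$"; and you should note why the restriction of a pseudo-effective class to a very general member of $|\psi^{\otimes k}|$ remains pseudo-effective.
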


\begin{ques}
  \label{ques-GW} \marpar{ques-GW}
  Does every rationally connected $\mm$ have a nonzero $2$-point
  invariant $\text{GW}^{\mm,\oom}_{0,n+2,\eb}(u\otimes w)$ for some
  $\eb\in H_2(\mm;\ZZ)$, for some $u\in H^*(\mfM_{0,n+2};\QQ)$, and
  for some $v\in H^*(\mm^n;\QQ)$, where
  $w:=\text{pr}_1^*(\eta_{\mm\times \mm})\smile \text{pr}_2^*(v)\in
  H^*(\mm^2\times \mm^n;\QQ)$?
\end{ques}

\begin{rmk} \label{rmk-surf} \marpar{rmk-surf}
  Every Hirzebruch surface $\mm=\Sigma_{n}$, satisfies (1) above.
  However, for $n\geq 4$, for every $\eb$ such that the 2-point
  evaluation map to $\mm\times \mm$ is dominant, every fiber has
  excess irreducible components.  Thus the \emph{proof method} of
  Theorem \ref{thm-KR1} does not imply nonzero $2$-point invariants.
\end{rmk}

\mni
Larger than the class of generalized complex flag varieties $G/P$ are
the \emph{smooth complete intersections},
$\mm~=~Y_1\cap \dots \cap Y_c$, of analytic hypersurfaces $Y_i$ in
$G/P$.

\begin{ques} \label{ques-second} \marpar{ques-second}
  Which complete intersections $\mm$ in $G/P$ are fiber type Fano?
  Which have the above property of existence of a covering family of
  rational surfaces?
\end{ques}

\begin{notat}[Invariants of complete intersections] \label{notat-rel}
  \marpar{notat-rel}
  For K\"{a}hler $\nn$, for every smooth, codimension-$c$ intersection
  $\mm$ of nef, analytic hypersurfaces $\nn_1,\dots,\nn_c$ in $\nn$,
  for every $\jj$-effective curve class $\eb$ of $Y$, denote
  $$
  m_{\eb}(\nn,\mm):= \sum_{j=1}^c \langle [\nn_j],\eb \rangle, \ \
  f_{\eb}(\nn,\mm):= \prod_{j=1}^c\lt(\langle [\nn_j],\eb\rangle\rt)!,
  $$
  $$
  q_{\eb}(\nn,\mm):= \sum_{j=1}^c \frac{\langle [Y_j],\eb \rangle\lt(
  1+ \langle [Y_j],\eb \rangle\rt)}{2}, \ \ 
  s_{\eb}(Y,\mm):= q_{\eb}(\nn,\mm) f_{\eb}(Y).
  $$
\end{notat}

\begin{rmk} \label{rmk-ruling} \marpar{rmk-ruling}
  For a smooth analytic hypersurface $\mm$ in a compact, K\"{a}hler
  manifold $\nn$, for a \emph{ruling class} $\eb$ on $Y$ that is a
  pushforward of an extremal class on $\mm$, if
  $\langle [\mm],\eb \rangle >0$, then $\eb$ is not fiber type on
  $\mm$.  We need to \emph{contract} the ruling classes.
\end{rmk}

\mni
\begin{defn}[Contraction of ruling classes] \label{defn-qtt} \marpar{defn-qtt}
  A \textbf{contraction of ruling classes} on $\nn$ is a surjective
  holomorphic map, $\pi:\nn \to \nn'$, with $\nn'$ normal of Fujiki
  class $C$ such that every fiber $\nn_q=\pi^{-1}(\{q\})$ is connected
  and the image of $H_2(\nn_q;\ZZ)$ in $H_2(\nn;\ZZ)$ equals the
  $\ZZ$-span of $\ner{\jj}(Y)$.
\end{defn}

\begin{defn} \label{defn-simpFano} \marpar{defn-simpFano}
  A K\"{a}hler $\mm$ is \textbf{integrally fiber type Fano} if the
  primitive generator in $H_2(\mm;\ZZ)$ of every extremal ray of
  $\nef{\jj}(\mm)_{\RR}$ is $\jj$-free.  It is \textbf{simplicially
    Fano} if also $\nef{\jj}(\mm)$ is a \emph{free}
  $\ZZ_{\geq 0}$-semigroup,
  $\prod_i \ZZ_{\geq 0}\cdot \eb_i \cong \ZZ_{\geq 0}^{b_2(\mm)}$.
\end{defn}

\begin{thm}[Complete intersections] \label{thm-CI} \marpar{thm-CI}
  \textbf{1.}
  Assume that the compact, K\"{a}hler manifold $\nn$ has a submersive
  contraction of ruling classes, $\pi:\nn\to \nn'$.  Let
  $\mm=\pi^{-1}(\mm')$ be the inverse image of a complete intersection
  $\mm'=\nn'_1\cap \dots \cap \nn'_c$ of positive, complex analytic
  hypersurfaces $\nn'_i\subset \nn'$ with
  $\text{dim}_{\CC}(\mm')\geq 3$.

  \noindent
  \textbf{2.}
  Assume further that $\nn$ is integrally fiber type Fano,
  resp. simplicially Fano.  Then also $\mm$ is integrally fiber type
  Fano, resp. simplicially Fano, and the pushforward map induces an
  isomorphism $\nef{\jj}(\mm) \to \nef{\jj}(\nn)$ if and only if
  $m_{\eb_i}(\nn,\mm)\leq m_{\eb_i}(\nn)$ for every primitive
  generator $\eb_i\in \nef{\jj}(Y)$ of an extremal ray.  In this case,
  there are identities
  $$
  m_{\eb_i}(\mm)
  =m_{\eb_i}(\nn)-m_{\eb_i}(\nn,\mm), \ \ f_{\eb_i}(\mm)
  =f_{\eb_i}(\nn)\cdot f_{\eb_i}(\nn,\mm).
  $$

  \noindent
  \textbf{3.}
  For $\mm$ and $\nn$ satisfying the conditions in 1 and 2, a
  K\"{a}hler $Z$ that is symplectically equivalent to $\mm$,
  resp. symplectically deformation equivalent to $\mm$, is covered by
  rational surfaces if at least one $\oom$-minimal
  $\eb_i\in \snef{\oom}(\nn)$, resp.  every indecomposable
  $\eb_i\in \snef{\oom}(\nn)$, satisfies either
  \begin{enumerate}
  \item[(i)] $\eb_i$ is symplectically ruling, or
  \item[(ii)] $\eb_i$ is symplectically $2$-free,
  $m_{\eb_i}(\nn,\mm) < m_{\eb_i}(\nn)$, and
  $s_{\eb_i}(\nn,\mm)<s_{\eb_i}(\nn)$.
  \end{enumerate}
  In Case (i), $\eb_i$ is symplectically ruling for $\mm$.  In Case
  (ii), $\eb_i$ is symplectically $2$-free for $\mm$ and
  $$
  q_{\eb_i}(\mm) = q_{\eb_i}(\nn) -
  q_{\eb_i}(\nn,\mm), \ \ 
  s_{\eb_i}(\mm) = f_{\eb_i}(\nn,\mm)\cdot\lt( s_{\eb_i}(\nn) -
  s_{\eb_i}(\nn,\mm)\rt).
  $$
\end{thm}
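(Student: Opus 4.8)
\mni
The plan is to prove parts 1 and 2 by algebraic geometry --- a Lefschetz hyperplane theorem, adjunction, and a deformation argument for free rational curves --- and then to derive the Gromov--Witten identities in parts 2 and 3 from a single ``quantum Lefschetz'' computation on the space of stable maps, before reading off part 3 from Theorem \ref{thm-surf}. Write $\mm = \pi^{-1}(\mm')$ as the common zero locus in $\nn$ of the pullbacks $s_j = \pi^*s'_j$ of the defining sections of $\mm' = \nn'_1\cap\dots\cap\nn'_c$, put $L_j = \mcO_\nn(\nn_j) = \pi^*\mcO_{\nn'}(\nn'_j)$, and abbreviate $d_j = d_j(\eb) = \langle[\nn_j],\eb\rangle = \langle[\nn'_j],\pi_*\eb\rangle$, so that $m_\eb(\nn,\mm) = \sum_j d_j$ and $f_\eb(\nn,\mm) = \prod_j d_j!$ in the sense of Notation \ref{notat-rel}. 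Since $\pi$ is a submersion and each $\nn'_j$ is positive, $\mm$ is a smooth compact K\"{a}hler manifold and $\pi$ restricts to a submersive contraction of ruling classes $\mm\to\mm'$ with the same fibres as $\nn\to\nn'$. Applying the Lefschetz hyperplane theorem to the positive complete intersection $\mm'\subset\nn'$ (using $\dim_\CC\mm'\geq 3$) and lifting through the submersion $\pi$ gives an isomorphism $H_2(\mm;\ZZ)\to H_2(\nn;\ZZ)$ that identifies $\nee{}{\jj}(\mm)$ with $\nee{}{\jj}(\nn)$ and the $\ZZ$-span of $\ner{\jj}(\mm)$ with that of $\ner{\jj}(\nn)$. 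Adjunction, $c_1(T^{1,0}_{\mm,\jj}) = \bigl(c_1(T^{1,0}_{\nn,\jj}) - \sum_j[\nn_j]\bigr)|_{\mm}$, gives $m_\eb(\mm) = m_\eb(\nn) - m_\eb(\nn,\mm)$ for every class $\eb$ supported on $\mm$. Fixing a primitive generator $\eb_i$ of an extremal ray of $\nef{\jj}(\nn)$, represented by a $\jj$-free rational curve $u\colon\CC\PP^1\to\nn$ through a general point of $\mm$: by genericity of $u$ in its deformation family and positivity of the $L_j$, a general such $u$ factors through $\mm$, and the normal bundle sequence $0\to N_{u/\mm}\to N_{u/\nn}\to\bigoplus_j u^*L_j\to 0$ shows that $u^*T^{1,0}_{\mm,\jj}$ is semi-ample --- i.e. $\eb_i$ is $\jj$-free on $\mm$ --- exactly when $N_{u/\mm}$ carries no negative line subbundle, which holds if and only if $m_{\eb_i}(\mm)\geq 0$, i.e. $m_{\eb_i}(\nn,\mm)\leq m_{\eb_i}(\nn)$. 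Because $H_2$ and $\nee{}{\jj}$ are unchanged, no new extremal free ray is created, so in this case $\nef{\jj}(\mm)\to\nef{\jj}(\nn)$ is an isomorphism of $\ZZ_{\geq 0}$-semigroups and ``integrally fiber type Fano'' (resp. ``simplicially Fano'', since also $b_2$ and the semigroup structure are preserved) transfers from $\nn$ to $\mm$; the asserted identity for $m_{\eb_i}(\mm)$ is the adjunction identity, and the one for $f_{\eb_i}(\mm)$ drops out of the next step.

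\mni
\emph{The quantum Lefschetz computation.} The moduli space $\Kgnb{0,1}(\mm,\eb)$ is the zero scheme in $\Kgnb{0,1}(\nn,\eb)$ of the tautological section of $\bigoplus_j p_*\mu^*L_j$, where $p\colon\mcC\to\Kgnb{0,1}(\nn,\eb)$ is the universal curve, $\mu$ the universal map, and $\Sigma_1\subset\mcC$ the section of the marked point. Restrict to the fibre $F = F_\nn$ of $\text{ev}$ over a general point $q\in\mm$ (in the dense open $\mm_\eb$ of Theorem \ref{thm-simpFano}, for both $\mm$ and $\nn$): since the marked point maps to $q\in\mm$, this section lands in the rank-$m_\eb(\nn,\mm)$ subbundle $W = \bigoplus_j W_j$ with $W_j := p_*\bigl(\mu^*L_j\otimes\mcO_\mcC(-\Sigma_1)\bigr)$, and $F_\mm\subset F_\nn$ is the regular zero locus of a section of $W$, of codimension $m_\eb(\nn,\mm)$. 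Over $\mm_\eb$ the domain curves are smooth, so $\mcC|_F$ is a $\CC\PP^1$-bundle with section $\Sigma_1$; computing $\mu^*c_1(L_j)$ by part 3 of Theorem \ref{thm-simpFano} and running Grothendieck--Riemann--Roch on $\mcC|_F$ shows that $W_j$ has Chern roots $c_1(\psi),2c_1(\psi),\dots,d_j\,c_1(\psi)$, hence
$$
e(W) = f_\eb(\nn,\mm)\,c_1(\psi)^{m_\eb(\nn,\mm)},\qquad \text{Td}(W)^{-1} = 1 - \tfrac12\,q_\eb(\nn,\mm)\,c_1(\psi) + \cdots .
$$
The Koszul resolution of $\mcO_{F_\mm}$ and Hirzebruch--Riemann--Roch on $F_\nn$ give, for the $\psi$-line bundle (whose restriction to $F_\mm$ is the $\psi$-line bundle of $\mm$),
$$
\chi\bigl(F_\mm,\psi^{\otimes d}\bigr) = \int_{F_\nn} e(W)\,\text{Td}(W)^{-1}\,e^{d\,c_1(\psi)}\,\text{Td}(T_{F_\nn}).
$$
Matching the two leading coefficients in $d$ against the expansion of $\chi(F_\nn,\psi^{\otimes d})$ from part 2 of Theorem \ref{thm-simpFano} --- using $\int_{F_\nn}c_1(\psi)^{m_\eb(\nn)} = f_\eb(\nn)$ and $\int_{F_\nn}c_1(\psi)^{m_\eb(\nn)-1}c_1(T_{F_\nn}) = f_\eb(\nn)\,q_\eb(\nn)$ --- yields $f_\eb(\mm) = f_\eb(\nn)\,f_\eb(\nn,\mm)$ and $q_\eb(\mm) = q_\eb(\nn) - q_\eb(\nn,\mm)$; multiplying and using $s_\eb(\nn,\mm) = q_\eb(\nn,\mm)\,f_\eb(\nn)$ gives $s_\eb(\mm) = f_\eb(\nn,\mm)\bigl(s_\eb(\nn) - s_\eb(\nn,\mm)\bigr)$. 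This completes part 2 and supplies the descendant identities needed in part 3.

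\mni
\emph{Part 3.} Under the isomorphism of part 2 together with part 1 of Theorem \ref{thm-simpFano}, $\snef{\oom}(\mm)$ is nonzero and its indecomposable classes (resp., since $\oom_\mm = \oom_\nn|_\mm$ preserves symplectic areas, its $\oom$-minimal classes) correspond to those of $\snef{\oom}(\nn)$. In Case (i) a ruling class $\eb_i$ of $\nn$ is $\pi$-contracted, so $d_j(\eb_i) = \langle[\nn'_j],\pi_*\eb_i\rangle = 0$ for all $j$; hence $m_{\eb_i}(\mm) = m_{\eb_i}(\nn) = 0$, $f_{\eb_i}(\mm) = f_{\eb_i}(\nn) = 1$, and the symplectically pseudo-free class on $\nn$ that is $\QQ$-independent of $\eb_i$ carries over to $\mm$, so $\eb_i$ is symplectically ruling for $\mm$. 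In Case (ii) the hypotheses $m_{\eb_i}(\nn,\mm) < m_{\eb_i}(\nn)$ and $s_{\eb_i}(\nn,\mm) < s_{\eb_i}(\nn)$ give $m_{\eb_i}(\mm) = m_{\eb_i}(\nn) - m_{\eb_i}(\nn,\mm) > 0$, $f_{\eb_i}(\mm) > 0$, and $s_{\eb_i}(\mm) = f_{\eb_i}(\nn,\mm)\bigl(s_{\eb_i}(\nn) - s_{\eb_i}(\nn,\mm)\bigr) > 0$, so $\eb_i$ is symplectically free with $m_{\eb_i}(\mm)>0$ and $s_{\eb_i}(\mm)>0$, i.e. symplectically $2$-free for $\mm$, with $q_{\eb_i}(\mm) = q_{\eb_i}(\nn) - q_{\eb_i}(\nn,\mm)$ as computed above. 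In either case the hypotheses of part 2 of Theorem \ref{thm-surf} hold for $\mm$, so every K\"{a}hler $Z$ that is symplectically equivalent (resp. symplectically deformation equivalent) to $\mm$ is covered by rational surfaces, which proves part 3.

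\mni
The main obstacle is twofold. First, one must establish, in the compact-K\"{a}hler (Fujiki class $C$) setting and across the intervening contraction $\pi$, both the Lefschetz isomorphism of second homology and effective cones and the precise deformation statement that $m_{\eb_i}(\nn,\mm)\leq m_{\eb_i}(\nn)$ is exactly the condition for $\jj$-freeness of $\eb_i$ to descend from $\nn$ to $\mm$. Second, one must make the Grothendieck--Riemann--Roch and Koszul computation on $\Kgnb{0,1}(\nn,\eb)$ rigorous even though the universal curve is only a $\CC\PP^1$-bundle over the dense open $\mm_\eb$: the point is that the higher-codimension boundary of $F_\mm$ cannot affect the two leading coefficients of the Hilbert polynomial, and this is exactly what the relative ampleness of $\psi$ from Theorem \ref{thm-simpFano} guarantees, since it ensures the Hilbert polynomial is genuinely computed on the projective fibre $F_\mm$.
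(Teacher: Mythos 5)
Your overall architecture coincides with the paper's: a Lefschetz argument for $H_2$ lifted through the submersion $\pi$ (the paper's Propositions \ref{prop-Serre} and \ref{prop-Lefschetz}, via the Hamm--L\^{e} theorem for co-Stein flags), the identification of the bundle $W_j=p_*(\mu^*L_j\otimes\mcO(-\Sigma_1))$ over $F^{\nn}_{\eb,x}$ with ``Chern roots'' $c_1(\psi),2c_1(\psi),\dots,d_jc_1(\psi)$ (this is exactly the filtration of $\ker(\mc{E}_{m_j}\to\OO)$ with graded pieces $\psi^{\otimes 1},\dots,\psi^{\otimes m_j}$ in Corollary \ref{cor-psi} and Lemma \ref{lem-pseudo}), the Euler class $f_\eb(\nn,\mm)c_1(\psi)^{m_\eb(\nn,\mm)}$ and normal-bundle first Chern class giving the $f$, $q$, $s$ identities (Proposition \ref{prop-chern2}), and Part 3 read off from Theorem \ref{thm-surf}. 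Your Koszul/HRR packaging of the descendant identities is a cosmetic variant of the paper's direct cycle-pushforward computation.

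There is, however, a genuine gap in your justification that $\jj$-freeness of $\eb_i$ descends to $\mm$ exactly when $m_{\eb_i}(\nn,\mm)\leq m_{\eb_i}(\nn)$. Two assertions in that step are false as stated. First, ``by genericity of $u$ in its deformation family and positivity of the $L_j$, a general such $u$ factors through $\mm$'' has the logic backwards: a general deformation of a free curve through a point of $\mm$ leaves $\mm$; the curves lying in $\mm$ are a special, a priori possibly empty, subfamily, and their existence is precisely what must be proved. Second, a vector bundle on $\CC\PP^1$ of nonnegative degree can certainly contain negative line subbundles (e.g. $\OO(2)\oplus\OO(-1)$), so ``$N_{u/\mm}$ carries no negative line subbundle \dots\ if and only if $m_{\eb_i}(\mm)\geq 0$'' is wrong; nonnegativity of the degree is necessary but not sufficient for freeness. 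The paper closes both gaps at once through the moduli space rather than through an individual curve: $F^{\mm}_{\eb,x}$ is cut out of the positive-dimensional projective manifold $F^{\nn}_{\eb,x}$ by an \emph{ample} flag of pseudodivisors (the sections of $\psi^{\otimes k}$ of Lemma \ref{lem-pseudo}), so it is nonempty of dimension at least $m_\eb(\nn)-m_\eb(\nn,\mm)$ whenever that number is nonnegative; combined with the fact that for general $x$ it parameterizes only irreducible curves, the standard generic-smoothness argument forces these curves to be free and $F^{\mm}_{\eb,x}$ to have exactly the expected dimension. This same point is what you silently assume when you declare $F_\mm\subset F_\nn$ to be the \emph{regular} zero locus of a section of $W$: regularity is equivalent to the dimension statement above and is not automatic. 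Once you replace your normal-bundle argument by the ampleness-of-$\psi$ nonemptiness argument, the rest of your proof goes through.
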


\begin{prop}[$\snef{\oom}(Y)$ and fiber type Fano
  manifolds] \label{prop-fiber} \marpar{prop-fiber}   
  \textbf{1.}
  A K\"{a}hler $\mm$ is fiber type Fano if and only if
  $\snef{\oom}(\mm)_{\RR}$ is dual to the closure of the K\"{a}hler
  cone.

  \noindent
  \textbf{2.}
  Then it is integrally fiber type Fano if and only if a general fiber
  of the contraction of each primitive, $\jj$-extremal $\eb_i$ has
  Fano index equal to the pseudo-index, $m_{\eb_i}+2$.
\end{prop}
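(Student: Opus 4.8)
The strategy is to combine the dictionary of Theorem~\ref{thm-simpFano}(1) between symplectically free and $\jj$-free classes with classical Mori theory on the projective Fano manifold $\mm$. Recall that a fiber type Fano manifold is Fano, hence projective, so Kleiman's criterion and the Cone and Contraction Theorems apply; for the converse in Part~1 projectivity is recovered from Theorem~\ref{thm-surf}(1), as explained below.

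For Part~1 I would first prove the identity of closed cones $\snef{\oom}(\mm)_{\RR}=\nefb{\jj}(\mm)_{\RR}$, the closed cone generated by classes of $\jj$-free rational curves. One inclusion follows from Theorem~\ref{thm-simpFano}(1) applied to the $\jj$-irreducible generators, together with the observation that the primitive generator of an extremal ray is automatically $\jj$-irreducible (a decomposition would force it to dominate twice itself, impossible against a Kähler class). The reverse inclusion follows by degenerating a general free curve into a chain or comb of $\jj$-irreducible free curves and invoking Theorem~\ref{thm-simpFano}(1) again. Next I record the always-valid chain
$$
\snef{\oom}(\mm)_{\RR}\ \subseteq\ \neb{}{\jj}(\mm)_{\RR}\ \subseteq\ (\ol K)^{\vee},
$$
with $\ol K$ the closure of the Kähler cone, the first inclusion because a nonzero Gromov--Witten invariant forces $\jj$-effectivity, the second because $\jj$-effective classes pair nonnegatively with nef classes. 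By definition $\mm$ is fiber type Fano precisely when every extremal ray of the Mori cone contains a $\jj$-free class, i.e. $\neb{}{\jj}(\mm)_{\RR}=\nefb{\jj}(\mm)_{\RR}$; via the chain and Kleiman duality $(\ol K)^{\vee}=\neb{}{\jj}(\mm)_{\RR}$ (valid since $\mm$ is projective) this is equivalent to $\snef{\oom}(\mm)_{\RR}=(\ol K)^{\vee}$. For the nontrivial direction of the converse, the equality $\snef{\oom}(\mm)_{\RR}=(\ol K)^{\vee}$ makes this cone full-dimensional, its extremal rays being generated by $\jj$-irreducible symplectically pseudo-free classes, so Theorem~\ref{thm-surf}(1) applies and gives rational connectedness of $\mm$, hence projectivity, and the chain collapses.

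For Part~2 I assume $\mm$ is fiber type Fano, fix a primitive $\jj$-extremal $\eb_i$, let $\phi_i\colon\mm\to\mm_i$ be the (elementary, fiber type) contraction of $\RR_{>0}\eb_i$, and let $F_i$ be a general fiber: smooth, Fano, with trivial normal bundle, so for $j\colon F_i\hookrightarrow\mm$ one has $T_\mm|_{F_i}\cong T_{F_i}\oplus\mcO_{F_i}^{\oplus c}$ and $-K_{F_i}=j^\ast(-K_\mm)$. Because $F_i$ is contracted by $\phi_i$, the pushforward $j_\ast\colon N_1(F_i;\ZZ)\to N_1(\mm;\ZZ)$ has image in $\ZZ\eb_i$; hence every rational curve in $F_i$ has $-K_{F_i}$-degree in $(m_{\eb_i}+2)\ZZ_{\geq 1}$, so the pseudo-index of $F_i$ is at least $m_{\eb_i}+2$, and $-K_{F_i}$ is divisible by $m_{\eb_i}+2$ in $\mathrm{Pic}(F_i)$, so the Fano index of $F_i$ is at least $m_{\eb_i}+2$; since the Fano index never exceeds the pseudo-index, the Fano index of $F_i$ is exactly $m_{\eb_i}+2$ with no further hypothesis. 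Thus the condition in Part~2 amounts to the pseudo-index of $F_i$ being $m_{\eb_i}+2$ for every $i$. If $\eb_i$ is $\jj$-free, the family of free rational curves in class $\eb_i$ has dominant evaluation map, so a general $F_i$ contains one; restricting the splitting of $T_\mm|_{F_i}$ shows it is free, in particular of minimal degree, in $F_i$, whence the pseudo-index of $F_i$ is $m_{\eb_i}+2$. Conversely, if the pseudo-index of $F_i$ is $m_{\eb_i}+2$, a minimal rational curve of $F_i$ lies in class $\eb_i$ and moves in a covering family of $F_i$ with free general member; restricting the splitting of $T_\mm|_{F_i}$ to such a member makes it $\jj$-free in $\mm$, so $\eb_i$ is $\jj$-free. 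Comparing with Definition~\ref{defn-simpFano} over all $i$ yields Part~2.

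I expect the main obstacle to be the identity $\snef{\oom}(\mm)_{\RR}=\nefb{\jj}(\mm)_{\RR}$ of Part~1: Theorem~\ref{thm-simpFano}(1) is stated only for $\jj$-irreducible classes, so one must reduce general (possibly $\jj$-reducible) symplectically free or $\jj$-free classes to $\jj$-irreducible pieces — via a degeneration-of-free-curves argument in the Kähler category — while keeping control of the cone they generate. The remaining ingredients — Kleiman duality, the Cone and Contraction Theorems, dominance of the evaluation map for a fixed free class, and the standard fact that a minimal covering family of rational curves on a Fano has a free general member — are classical.
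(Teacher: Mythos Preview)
Your Part~1 works, but you have manufactured your own obstacle. The paper never proves or uses the general identity $\snef{\oom}(\mm)_\RR = \nefb{\jj}(\mm)_\RR$. For the converse it argues directly: the chain $\snef{\oom}(\mm)_\RR \subseteq \neb{}{\jj}(\mm)_\RR \subseteq (\ol K)^\vee$ collapses by hypothesis; since every symplectically free generator has strictly positive $c_1$-degree, $c_1(T^{1,0}_{\mm,\oom})$ lies in the interior of $((\ol K)^\vee)^\vee = \ol K$, so $\mm$ is Fano and hence projective, with no detour through Theorem~\ref{thm-surf}(1) or rational connectedness. The Cone Theorem then makes the Mori cone rational polyhedral; since the integral classes of $\snef{\oom}(\mm)$ are discrete and their closed cone is this polyhedron, each extremal ray carries a symplectically free class, which is $\jj$-irreducible (being primitive extremal) and therefore $\jj$-free by Theorem~\ref{thm-simpFano}(1). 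Your degeneration of an arbitrary $\jj$-free curve into $\jj$-irreducible free pieces---the step you correctly flag as delicate---is simply bypassed.

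In Part~2 your biconditional is fine, but one sentence is a non sequitur: from ``Fano index $\geq m_{\eb_i}+2$'' and ``Fano index $\leq$ pseudo-index'' you cannot conclude the Fano index equals $m_{\eb_i}+2$; both invariants could a priori be $d(m_{\eb_i}+2)$ for some $d>1$, reflecting $j_*$ hitting only $d\ZZ\eb_i\subset\ZZ\eb_i$. This slip is harmless because the proposition's hypothesis already names the value $m_{\eb_i}+2$, and your forward direction recovers it via the squeeze $m_{\eb_i}+2 \leq i_F \leq j_F \leq m_{\eb_i}+2$ once a free $\eb_i$-curve sits in a general fiber. The paper's argument for Part~2 takes the dual viewpoint, working in $H_2(\mm;\ZZ)$ rather than on the fiber: integral fiber type fails exactly when the minimal free class on the ray is $r\gamma$ with $r>1$ and $\gamma$ primitive, and then $\langle c_1,\gamma\rangle$---a multiple of the fiber's Fano index by Poincar\'e duality---is strictly smaller than the pseudo-index $m_{\eb_i}+2$.
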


\begin{ques} \label{ques-simpFano} \marpar{ques-simpFano}
  Is a Fano manifold simplicial whenever it is integrally fiber type?
\end{ques}

\mni
All evidence points to a positive answer.  I am grateful to Cinzia
Casagrande who taught me about positive results in this direction.
Wi{\'s}niewski bounds the number of extremal rays of fiber type Fano
manifolds, \cite{Wis91}.  Druel proves that when this bound is
attained, then the Mori cone is simplicial, \cite{Druel}.  Ou studies
this problem when the number of extremal rays is one smaller that the
maximal bound, \cite{Ou}.  Finally, Casagrande proves a positive
answer whenever the complex dimension is $\leq 5$, \cite{Casagrande}.


\section{Approaches to uniruledness, rational connectedness, and   
  rational surfaces} \label{sec-uni}   
\marpar{sec-uni}      

\mni
Gromov-Witten invariants of an almost complex, symplectic manifold
rely on a \emph{virtual structure}.  This is a trace of transversality
on the solution space of the $\overline{\partial}$-equation defining
$\jj$-holomorphic curves.  This virtual structure is independent of
deformations of $(\jj,\oom)$.

\mni
The Koll\'{a}r-Ruan Theorem establishes transversality of the space of
$\jj$-holomorphic spheres containing a general point of a K\"{a}hler
manifold when the free curve class is $\jj$-irreducible, i.e., when
``bubbling'' does not occur.  Thus, a K\"{a}hler manifold is uniruled
if and only if there exists a symplectically pseudo-free curve class.
Therefore, uniruledness is independent of deformations of
$(\jj,\oom)$.

\mni
Unfortunately, transversality fails for $\jj$-holomorphic spheres
containing two or more general points.  Zhiyu Tian's approach to
Koll\'{a}r's conjecture compensates for non-transversality via an
explicit study of the Mori program applied to a K\"{a}hler $\mm$ that
is symplectically deformation equivalent to a rationally connected,
projective manifold.  Since divisorial and flipping contractions are
birational transformations, the work of Hu-Li-Ruan on symplectic
birational cobordism invariance of Gromov-Witten invariants plays a
key role, \cite{HuLiRuan}.  Zhiyu Tian's tour-de-force proof of
Koll\'{a}r's conjecture in dimension $3$ uses both Gromov-Witten
theory and the explicit classification of threefold extremal
contractions by Mori and Mukai, \cite{MoriMukai}.

\mni
There are several theorems giving alternative characterizations and
extensions of uniruledness with applications to other questions in
symplectic topology.  For instance, for symplectic manifolds with a
Hamiltonian $\mathbb{S}^1$-action (an analogue of K\"{a}hler manifolds
with a nontrivial holomorphic $\CC^\times$-action), McDuff uses Seidel
elements to prove uniruledness via a ring-theoretic property of the
quantum cohomology ring, \cite{McDuff}.  Also, McLean proves that for
smooth affine varieties, uniruledness of projective compactifications
is a symplectic invariant of the underlying symplectic manifold of the
smooth affine variety, \cite{McLean}.

\mni
Following Voisin, \cite{VoisinRC}, and Zhiyu Tian, \cite{Zhiyu}, our
approach to Koll\'{a}r's conjecture instead relies on the maximally
rationally connected fibration.  For a compact, K\"{a}hler manifold
$(\mm,\jj,\oom)$, denote the Barlet space by $\text{Barlet}(\mm,\jj)$,
cf. \cite{Barlet}.

\begin{defn} \label{defn-frq} \marpar{defn-frq}
  A \textbf{Zariski open subset} of $\mm$ is the open complement
  $\mm^o$ of a closed complex analytic subspace of $\mm$.

  \noindent
  A \textbf{meromorphic function} on $\mm$ is an equivalence class of
  pairs $(\mm^o,\phi)$ of a dense Zariski open subset $\mm^o$ of $\mm$
  and a holomorphic function,
  $$
  \mm \supseteq \mm^o \xrightarrow{\phi} Q^o \subseteq Q,
  $$
  to a dense Zariski open subset $Q^o$ of a compact complex analytic
  space $Q$ in Fujiki class $C$ such that the closure of the graph of
  $\phi$ is a complex analytic subspace of $\mm\times Q$.

  \noindent
  An \textbf{almost holomorphic fibration} of $\mm$ is a finite (and
  proper) holomorphic map with normal domain,
  $$
  \Phi:Q\to \text{Barlet}(\mm,\jj),
  $$
  such that the pullback universal cycle in $\mm\times Q$ is the
  closure of the graph of a meromorphic function,
  $$
  \phi:\mm\supseteq \mm^o\to Q^o\subseteq Q,
  $$
  that is a proper holomorphic submersion between dense Zariski opens
  $\mm^o$ of $\mm$ and $Q^o$ of $Q$.

  \noindent
  A \textbf{rational quotient} of $(\mm,\jj)$ is an almost holomorphic
  fibration with the following property.  For every pair
  $$
  (\pi:\Cc \to M, u:\Cc \to \mm),
  $$
  of (1) a proper, flat, holomorphic map $\pi$ to a normal, connected
  analytic space $M$ whose fibers are trees of rational curves and (2)
  a holomorphic map $u$ such that the composition $\phi\circ u$ is
  defined and dominant, there exists a dense, Zariski open
  $M^o\subset M$ and a commutative diagram of complex analytic spaces,
  $$
  \begin{CD}
  C^o @> u^o >> \mm^o \\
  @V \pi^o VV  @VV \phi V \\
  M^o @>> v^o > Q^o
  \end{CD},
  $$
  where $C^o$ equals $\pi^{-1}(M^o)$.
\end{defn}

\begin{rmk} \label{rmk-frq} \marpar{rmk-frq}
  The existence of the rational quotient of each connected, compact
  K\"{a}hler manifold was proved in \cite{Ca}.  When $(\mm,\jj)$ is a
  complex projective manifold, this is the same notion as the
  \textbf{maximally rationally connected fibration,} which was
  constructed for normal projective schemes over arbitrary fields
  (including positive characteristic fields) in \cite{KMM}.
\end{rmk}

\mni
Since the fiber dimension of $\phi$ is at least $m_\eb+1$ for every
$\jj$-irreducible $\eb\in \snef{\oom}(\mm)$, the fiber dimension is at
least the maximum of all $m_\eb$.

\begin{notat} \label{notat-mmax} \marpar{notat-mmax}
  Denote by $m$, resp. by $m'$, the maximum, resp. minimum, of $m_\eb$
  for every $\jj$-irreducible $\eb\in \snef{\oom}(\mm)$.
\end{notat}

\mni
The pullback by $\phi$ (rather, the corresponding graph closure
considered as a correspondence) is a $\ZZ$-linear map of cohomology
that is compatible with the pullback map on global sections of
reflexive coherent analytic sheaves,
$$
H^{\ell}(Q;\ZZ) \to H^{\ell}(\mm;\ZZ), \text{ resp. }
H^0(Q,(\Omega_{Q/\CC}^{\otimes \ell})^{\vee \vee}) \to
H^0(\mm,\Omega_{\mm/\CC}^{\otimes \ell}).
$$

\begin{prop} \label{prop-ratqtt} \marpar{prop-ratqtt}
  A compact, connected, K\"{a}hler manifold $\mm$ is rationally
  connected if for every non-uniruled $Q$ as above with
  $1\leq \text{dim}_{\CC}(Q) \leq \dim_{\CC}(\mm)-m-1$, for at least
  one $\ell$ positive,
  $h^0(Q,(\Omega_{Q/\CC}^{\otimes \ell})^{\vee
    \vee})>h^0(\mm,\Omega_{\mm/\CC}^{\otimes \ell})$.
\end{prop}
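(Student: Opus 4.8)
The plan is to argue by contradiction, using the rational quotient $\phi\colon \mm\supseteq \mm^o\to Q^o\subseteq Q$ of $(\mm,\jj)$, whose existence for compact K\"{a}hler manifolds is Campana's theorem (Remark \ref{rmk-frq}). We may assume $\snef{\oom}(\mm)$ is nonzero, since otherwise $\mm$ is not uniruled by Theorem \ref{thm-simpFano} and the claim is immediate. Suppose now that $\mm$ is \emph{not} rationally connected; then $Q$ is not a point, so $\dim_{\CC}(Q)\geq 1$. For every $\jj$-irreducible $\eb\in\snef{\oom}(\mm)$, Theorem \ref{thm-simpFano}(1) shows $\eb$ is $\jj$-free and $\jj$-irreducible, so by the Koll\'{a}r--Ruan analysis recalled before Theorem \ref{thm-simpFano} the fibre $F=\text{ev}^{-1}(\{q\})$ of $\Kgnb{0,1}((\mm,\jj),\eb)$ over a general $q\in\mm$ is pure of dimension $m_\eb$ and parameterizes only maps with irreducible domain; hence, as noted before Notation \ref{notat-mmax}, the corresponding universal curves sweep out an $(m_\eb+1)$-dimensional subset of the rational-quotient fibre through $q$, and the general fibre of $\phi$ has dimension $\geq m_\eb+1$, thus $\geq m+1$. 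Therefore $1\leq\dim_{\CC}(Q)=\dim_{\CC}(\mm)-\dim_{\CC}(\text{general }\phi\text{-fibre})\leq\dim_{\CC}(\mm)-m-1$.

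Next I would record that $Q$ is non-uniruled, which is precisely what lets the hypothesis of the Proposition be invoked for it. Were $Q$ uniruled, a covering family of rational curves on $Q$ would lift, over the dense Zariski open $Q^o$ where $\phi$ is a proper submersion with rationally connected fibres, to a family $(\pi\colon\Cc\to M,\ u\colon\Cc\to\mm)$ of trees of rational curves with $\phi\circ u$ defined and dominant and with general member not contracted by $\phi$. This violates the universal property in Definition \ref{defn-frq}, which forces $\phi\circ u$ to factor through a holomorphic $v^o\colon M^o\to Q^o$, that is, forces general members to \emph{be} $\phi$-contracted. Equivalently, one may cite directly the non-uniruledness of the base of the rational quotient from \cite{Ca}, which by Remark \ref{rmk-frq} reduces to the Graber--Harris--Starr theorem in the projective case.

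Finally comes the cohomological contradiction. As recorded just before the Proposition, the correspondence given by the closure of the graph of $\phi$ induces a pullback $H^0(Q,(\Omega_{Q/\CC}^{\otimes\ell})^{\vee\vee})\to H^0(\mm,\Omega_{\mm/\CC}^{\otimes\ell})$ for every $\ell$. This map is injective: over the dense open where $\phi$ is a submersion onto the smooth locus of $Q$, the codifferential $\phi^*\Omega_{Q/\CC}\to\Omega_{\mm/\CC}$ is a fibrewise injective bundle map, its $\ell$-th tensor power is again fibrewise injective, and $\phi$ is surjective there, so a section pulling back to $0$ vanishes on a dense open of $Q$ and hence vanishes, $Q$ being normal and the sheaf reflexive, hence torsion-free. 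Thus $h^0(Q,(\Omega_{Q/\CC}^{\otimes\ell})^{\vee\vee})\leq h^0(\mm,\Omega_{\mm/\CC}^{\otimes\ell})$ for every positive $\ell$. But $Q$ is a non-uniruled ``$Q$ as above'' with $1\leq\dim_{\CC}(Q)\leq\dim_{\CC}(\mm)-m-1$, so the hypothesis supplies some positive $\ell$ with the opposite strict inequality --- a contradiction. Hence $\mm$ is rationally connected.

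I expect the main obstacle to be the non-uniruledness of $Q$ in the K\"{a}hler category: over projective bases this is the Graber--Harris--Starr theorem, but here one must either quote Campana's analogue or extract it from the universal property of Definition \ref{defn-frq}, and the accompanying bookkeeping of trees of rational curves, dense Zariski opens, and Barlet cycle spaces requires care. A secondary, more routine point is the strict inequality $\dim_{\CC}(\phi\text{-fibre})\geq m+1$ rather than merely $\geq m$: this rests on the Koll\'{a}r--Ruan statement that the general evaluation fibre parameterizes maps with \emph{irreducible} domain, so that the universal curves genuinely add one dimension \emph{inside a single} rational-quotient fibre rather than escaping it.
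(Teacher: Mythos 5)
Your proof is correct and follows the paper's (very terse) argument in essentially the same way: the paper simply cites \cite{GHS} for the dichotomy that $Q$ is either a point or non-uniruled of positive dimension, takes the fiber-dimension bound $\geq m+1$ from the paragraph preceding the Proposition, and concludes from injectivity of the pullback on holomorphic contravariant tensors over the submersive locus. One caution: your first argument for non-uniruledness of $Q$ --- that a covering family of rational curves on $Q$ ``would lift'' to a family of trees of rational curves on $\mm$ dominating $Q$ --- is circular as stated, since producing such a lift over a rational curve in the base of a fibration with rationally connected fibers is precisely the content of Graber--Harris--Starr; your fallback of citing \cite{GHS} (or \cite{Ca}) directly, as the paper does, is the correct move.
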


\begin{proof}
  By \cite{GHS}, the quotient $Q$ is either a point or it is
  non-uniruled of positive dimension.  Since $\phi$ is a holomorphic
  submersion on a dense open $\mm^o$, the pullback map on holomorphic
  contravariant tensors is injective.
\end{proof}

\mni
This is particularly useful if the dimension inequalities imply that
$Q$ has small dimension, e.g., if $Q$ is a curve.  In this case,
$h^{\ell,0}(Q)$ is positive for some $\ell>0$. If $b_\ell(\mm)\leq 1$,
this contradicts the Hodge inequality $b_\ell \geq 2h^{\ell,0}$.
Conjecturally, every non-uniruled $Q$ of positive dimension has
positive $h^0(Q,(\Omega_{Q/\CC}^{\otimes \ell})^{\vee\vee})$ for all
$\ell \gg 0$.  The variant of this technique proved at the beginning
of Section \ref{sec-Mori} is even simpler, and it is not contingent
upon conjectures.


\subsection{Rational surfaces} \label{ssec-surf}
\marpar{ssec-surf}      

\mni
There is no transversality for the spaces of holomorphic rational
surfaces in a K\"{a}hler manifold: the $\overline{\partial}$-equation
is over-determined.  However, the virtual structure on the space of
$\jj$-holomorphic spheres determines a \emph{virtual first Chern
  class} on each fiber $F$ of the evaluation map,
$$
\text{ev}:\Kgnb{0,1}((\mm,\jj),\eb) \to \mm.
$$

\mni
This is computed in \cite{dJS3}.  If $F$ is pure of complex dimension
$m_\eb$, the virtual first Chern class equals the first Chern class of
the dual of the cotangent complex of $F$ (which is perfect of
amplitude $[-1,0]$).  When the fiber $F$ is also smooth, this virtual
first Chern class equals the actual first Chern class of $F$.

\mni
The formula from \cite{dJS3} for the virtual first Chern class is a
pullback of an element of $H^*(\mf{M}_{0,1};\QQ)\otimes H^*(\mm;\QQ)$.
Thus, for each element
$\gamma\in H^*(\mf{M}_{0,1};\QQ)\otimes H^*(\mm;\QQ)$ in complementary
degree $2(m_\eb-1)$, the Gromov-Witten invariant of the cup product of
the virtual first Chern class and $\gamma$ is a gravitational
descendant.  Thus, it is a symplectic deformation invariant.

\mni
When the pullback of $\gamma$ is the Poincar\'{e} dual cohomology
class of the curve class of a moving family of curves in $F$, this
gravitational descendant equals the anticanonical degree of a moving
family of curves in $F$.  Whenever this degree is positive, $F$ is
uniruled by Mori's theorem.  Finally, for a big divisor class
$[D]\in H^2(F;\QQ)$, the class $\gamma = [D]^{m_\eb-1}$ is a
$\QQ_{>0}$-multiple of the Poincar\'{e} dual of a moving curve class
on $F$.  Thus, $F$ is uniruled if the gravitational descendant is
positive for some big divisor class $[D]$.  This is the essence of our
approach: find enough pairs $(\eb,[D])$ of an indecomposable
$\eb\in \snef{\oom}(\mm)$ and an element
$[D]\in H^2(\mf{M}_{0,1}\times \mm;\QQ)$ with positive gravitational
descendant so that for every K\"{a}hler structure $(J,\oom)$ in the
deformation class, there exists at least one such pair with $\eb$ a
$\jj$-irreducible class and with $[D]$ a big class on $F$.

\mni
The earlier work, \cite{dJS9}, followed a similar strategy to prove
uniruledness of \emph{all} spaces $\Kgnb{0,0}((\mm,\jj),\eb)$, not
merely those with $\eb$ a $\jj$-irreducible element of
$\snef{\oom}(\mm)$.  In that case, the class $[D]$ is a ``quasi-map
divisor class''.  The pairing of the virtual first Chern class with
the appropriate power $[D]^n$ is positive if
$c_1(T^{1,0}_{\mm,\oom})^2$, resp.  $\text{ch}_2(T^{1,0}_{\mm,\oom})$,
has positive pairing, resp. nonnegative pairing, with the homology
class of every $\jj$-holomorphic, irreducible surface in $(\mm,\jj)$.
The K\"{a}hler $\mm$ is \textbf{Fano} if $c_1(T^{1,0}_{\mm,\oom})$ is
positive, and it is \textbf{weakly $2$-Fano} if also
$\text{ch}_2(T^{1,0}_{\mm,\oom})$ has nonnegative pairing with
$\jj$-holomorphic, irreducible surfaces.

\begin{thm}\cite{dJS9} \label{thm-2Fano} \marpar{thm-2Fano}
  For every weakly $2$-Fano manifold $\mm$ with pseudo-index $\geq 3$,
  free rational curves parameterized by an irreducible component $M_i$
  of $\Kgnb{0,0}((\mm,\jj),\eb)$ move in pencils on a covering family
  of rational surfaces provided that every component $M_j$
  intersecting $M_i$ parameterizes some maps with irreducible domain.
\end{thm}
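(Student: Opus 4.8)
The plan is to prove the stronger assertion that the component $M_i$ is \emph{uniruled}: there is a covering family of rational curves $\{B'\}$ in $M_i$ so that, reading a member $B'\cong\CC\PP^1$ as a one-parameter family of stable maps, its total space is a rational surface on which the free curves of $M_i$ form a pencil. Since the general member of $M_i$ is free, the universal curve $\pi:\mcC\to M_i$ and the universal map $u:\mcC\to\mm$ have $u$ dominant, so as $B'$ sweeps out $M_i$ the corresponding rational surfaces, pushed to $\mm$ by $u$, sweep out $\mm$. The pseudo-index hypothesis $\langle c_1(T^{1,0}_{\mm,\oom}),\eb\rangle\geq 3$ gives $m_\eb\geq 1$, so the fibers of $\text{ev}:\Kgnb{0,1}((\mm,\jj),\eb)\to\mm$ over general points are positive-dimensional; this is what makes rational \emph{surfaces}, rather than merely rational curves, appear.

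To obtain rational curves in $M_i$ I would apply Mori's theorem on the proper space $\Kgnb{0,0}((\mm,\jj),\eb)$, which requires a covering family of irreducible curves of positive virtual anticanonical degree. Along the dense open locus of free maps with irreducible domain this space is smooth of the expected dimension $N=\langle c_1(T^{1,0}_{\mm,\oom}),\eb\rangle+\dim_{\CC}\mm-3$, since free curves are unobstructed, so its virtual canonical class there is the honest one. As in \cite{dJS9}, I would pair the virtual first Chern class with $[D]^{N-1}$ for a \emph{quasi-map divisor class} $[D]$, which represents, up to a positive multiple, a covering family of curves $B\subset M_i$. By the formula of \cite{dJS3}, the virtual first Chern class is the pullback from $\mf{M}_{0,1}\times\mm$ of a universal class built from $c_1(T^{1,0}_{\mm,\oom})$, $\text{ch}_2(T^{1,0}_{\mm,\oom})$ and $c_1(\psi)$; evaluating the relevant product rewrites the virtual anticanonical degree of $B$ as a $\QQ_{>0}$-combination of $\langle c_1(T^{1,0}_{\mm,\oom})^2,[\Sigma]\rangle$ and $\langle\text{ch}_2(T^{1,0}_{\mm,\oom}),[\Sigma]\rangle$, where $[\Sigma]$ is the class of the $u$-image of the ruled surface $\pi^{-1}(B)$. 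The weakly $2$-Fano hypothesis makes the first term positive and the second nonnegative, so this degree is positive.

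Mori's bend-and-break then produces, through a general point $[f_0]\in M_i$, a rational curve $B'$ in $\Kgnb{0,0}((\mm,\jj),\eb)$. Pulling the universal curve and map back along the normalization of $B'$ gives a surface $S\to B'\cong\CC\PP^1$ and a map $g:S\to\mm$ whose general fiber over $B'$ is a free curve of class $\eb$; hence these free curves move in a pencil on $S$, and $S$ is rational, being a modification of a $\CC\PP^1$-bundle or conic bundle over $\CC\PP^1$. Choosing $[f_0]$ over a general point of $\mcC$ and invoking dominance of $u$, the surfaces $g(S)$ cover $\mm$, which yields the covering family of rational surfaces.

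The main obstacle — and the place where the hypothesis on the components $M_j$ meeting $M_i$ is used — is boundary control. The bend-and-break curve $B'$ need not remain in $M_i$: it can slide into an adjacent component $M_j$ of $\Kgnb{0,0}((\mm,\jj),\eb)$ along which the domains degenerate, and if some such $M_j$ consisted only of maps with reducible domain, then the general fiber of $S\to B'$ could fail to be an irreducible curve of class $\eb$, collapsing the pencil, while simultaneously the support of the virtual class and the locus where the \cite{dJS3} formula computes the actual first Chern class could miss $B'$. The hypothesis that each such $M_j$ also contains maps with irreducible domain forces $M_j$ to be the closure of a locus of unobstructed, generically free maps as well, so the smoothness and positivity facts above persist along $B'$; then smoothing the finitely many reducible members of the family over $B'$ transversally inside the moduli space — possible because the components are free, hence unobstructed — keeps $S\to B'$ a family whose general member is an irreducible free curve of class $\eb$. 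Making these transversality and deformation statements precise at the boundary, rather than the Fano-type positivity estimate, is the technical heart of the argument.
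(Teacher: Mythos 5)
The paper does not actually prove this theorem: it is imported as a black box from \cite{dJS9}, and the only internal material is the paragraph in Section \ref{ssec-surf} sketching that paper's strategy. Your outline agrees with that sketch --- uniruledness of $\Kgnb{0,0}((\mm,\jj),\eb)$ via Mori's theorem, with positive anticanonical degree obtained by pairing the virtual first Chern class of \cite{dJS3} against a power of the quasi-map divisor class, positivity coming from the weakly $2$-Fano hypothesis, and the hypothesis on adjacent components $M_j$ invoked to keep the bend-and-break curve in the locus where the virtual class is the honest one --- so there is nothing in this paper to check your proposal against beyond that description, with which it is consistent. The genuinely hard content (the exact boundary-divisor coefficients in the virtual canonical bundle formula, the precise role of pseudo-index $\geq 3$ in controlling them, and the transversality at reducible members), which you correctly identify and defer, lives entirely in \cite{dJS9}.
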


\mni
There are many examples of such $\mm$.  The best classification
results are due to Araujo and Castravet, \cite{AC}, \cite{AC2}.  They
also prove a sharper version of this theorem.  Although the
transversality hypothesis on $M_j$ is valid in many cases, there are
also cases where it fails.  Moreover, the cone of $\jj$-effective
homology classes of surfaces is certainly not a symplectic deformation
invariant.  Thus, neither the hypotheses nor conclusion of this
theorem are invariant under deformation of $\jj$.

\mni
Our method here produces a covering family of rational surfaces from
positivity of gravitational descendants, precisely by restricting to
indecomposable $\eb\in \snef{\oom}(\mm)$ and by replacing the
quasi-map divisor $[D]$ by the psi class $\psi$.


\subsection{Next steps} \label{ssec-next}
\marpar{ssec-next}      

\mni
This approach is the first step in a larger program to construct
``very twisting'' ruled surfaces on K\"{a}hler manifolds with
sufficient positivity of $c_1(T^{1,0}_{\mm,\oom})$ and
$\text{ch}_2(T^{1,0}_{\mm,\oom})$.  These surfaces come from covering
families of rational curves in parameter spaces
$\Kgnb{0,1}((\mm,\jj),\eb)$ for $\jj$-irreducible classes $\eb$, just
as the rational surfaces produced in this paper.  However, the very
twisting ruled surfaces also satisfy additional constraints.  By
\cite{dJHS} and \cite{Zhu}, very twisting surfaces are the key
ingredient in proving existence of rational sections for fibrations
over a surface whose general fiber is deformation equivalent to
$(\mm,\jj)$.  By \cite{dJS10}, cf. also \cite{DeLand} and
\cite{Minoccheri}, existence of very twisting surfaces also implies
the Weak Approximation Conjecture of Hassett and Tschinkel for
$(\mm,\jj)$, cf. \cite{HT06} and \cite{HassettWA2}.  In each earlier
paper, \cite{dJHS}, \cite{Zhu}, \cite{DeLand}, and \cite{Minoccheri},
the construction of each very twisting surface uses special properties
of the particular K\"{a}hler manifold $(\mm,\jj,\oom)$.  The goal here
is to construct covering ruled rational surfaces in the most robust
manner possible: using Gromov-Witten invariants rather than special
properties.

\mni
Finally, a leading open case for existence of very twisting ruled
surfaces is when $(\mm,\jj,\oom)$ is the wonderful compactification
orbifold $\widehat{G}$ of a simply connected semisimple complex Lie
group.  Existence of very twisting ruled surfaces in this case is the
key missing ingredient in a type-free proof of Serre's ``Conjecture
II'' over function fields for all simply connected, semisimple
algebraic groups (not merely the split and quasi-split groups).  The
generalized complex flag manifolds appear as closed orbits in the
orbit decomposition of $\widehat{G}$.  It would be useful to either
compute directly the relevant gravitational descendants $f_\eb$ and
$s_\eb$ for $\widehat{G}$, or to use virtual localization to reduce
this to the computation of gravitational descendants on those
generalized complex flag manifolds appearing as closed orbits in
$\widehat{G}$.


\subsection{Changes from previous versions} \label{ssec-changes}
\marpar{ssec-changes}

\mni
A previous version of this paper included a description of the
``quasi-map divisor class'' $[D]$, as well as extensions of the main
theorems to include the case when the big divisor class on the
$\text{ev}$-fiber $F$ is the quasi-map class rather than the psi
class.  As mentioned, those extensions are not invariant under
deformation of the almost complex structure $\jj$, both because
transversality of the moduli space of $\jj$-holomorphic curves is not
invariant, and because the cone of $\jj$-effective surfaces is not
invariant.  This version focuses on consequences of positivity of the
psi class and results that are invariant under deformation of $\jj$.


\section{Gromov-Witten invariants} \label{sec-GW}
\marpar{sec-GW}

\mni
The numbers $m_\eb$, $f_\eb$, and $s_\eb$ are defined via
Gromov-Witten invariants.  Here is a quick review of the relevant
aspects of Gromov-Witten invariants for compact, K\"{a}hler manifolds.

\mni
For integers $g,n\geq 0$, denote by $\fm{g,n}$ both the Artin
$\CC$-stack (for $\CC$-schemes with the fppf topology) and the
associated complex analytic stack parameterizing flat, proper families
of $n$-pointed curves
$$
(\Cc,(p_1,\dots,p_n)),
$$
that are \textbf{prestable}, i.e., 
$\Cc$ is a proper, connected, reduced, at-worst-nodal curve 
of arithmetic genus $g$ together with an ordered $n$-tuple 
$(p_1,\dots,p_n)$ 
of distinct, smooth points of $\Cc$.  
For every homology
class $\eb\in H_2(\mm;\ZZ)$, the
\textbf{virtual dimension} of the space of genus-$g$, $n$-pointed
stable maps of class $\eb$ is 
$$
d=d^{\mm,\oom}_{g,n,\eb} = \langle c_1(T^{1,0}_{\mm,\oom}), \eb
\rangle + (\text{dim}_\CC(\mm)-3)(1-g) + n.
$$
For every connected, closed, symplectic manifold $(\mm,\oom)$, the
\textbf{Gromov-Witten invariant} is a $\QQ$-linear functional on
cohomology,
$$
\text{GW}^{\mm,\oom}_{g,n,\eb}:H^{2d}(\fm{g,n}\times \mm^n;\QQ)\to \QQ.
$$

\mni
For $n\geq 1$, for every $i=1,\dots,n$, denote by $\psi_i$ the
$\text{i}^{\text{th}}$ \textbf{psi class}, i.e., the relative
dualizing sheaf of the $1$-morphism forgetting the $i^\text{th}$
marked point,
$$
\pi_i:\fm{g,n}\to \fm{g,n-1}.
$$

\mni
For every $n\geq 0$, for every $n$-tuple of nonnegative integers,
$\ul{m}=(m_1,\dots,m_n)$, for every $n$-tuple
$(\gamma_1,\dots,\gamma_n)\in H^*(\mm,\QQ)^n$ of homogeneous classes
of degrees $\text{deg}(\gamma_i)=e_i$ with $2|\ul{m}| + |\ul{e}|$
equal to $2d$, the associated \textbf{gravitational descendant} equals
$$
\langle \tau_{m_1}(\gamma_1),\dots,\tau_{m_n}(\gamma_n)
\rangle^{\mm,\oom}_{g,\eb} := 
\text{GW}^{\mm,
  \oom}_{g,n,\eb}\lt( \prod_{i=1}^n q^*c_1(\psi_i)^{m_i}\smile
\text{pr}_i^*\gamma_i\rt).
$$
In particular, 
$$
f_\eb= \langle \tau_{m_\eb}(\eta_M) \rangle^{\mm,\oom}_{0,\eb},
\ \
s_\eb = \langle \tau_{m_\eb-1}(\eta_M),
\text{ch}_2(T^{1,0}_{\mm,\oom})
+  
\frac{m_\eb}{2(m_\eb+2)^2}c_1(T^{1,0}_{\mm,\oom})
\rangle^{\mm,\oom}_{0,\eb}. 
$$

\mni
For a connected, closed, symplectic manifold $(\mm,\oom,\jj)$ with an
$\oom$-tame almost complex structure $\jj$, one construction of the
Gromov-Witten invariant uses the $\QQ$-linear functional of pairing
against a \emph{virtual fundamental class} (in homology) for elements
of the cohomology of a moduli space $\Kgnb{g,n}((\mm,\jj),\eb)$.  Here
$\Kgnb{g,n}((\mm,\jj),\eb)$ is the moduli stack parameterizing
genus-$g$, $n$-pointed $\jj$-holomorphic \textbf{stable maps} of class
$\eb$ to $\mm$,
$$
(\Cc,(p_1,\dots,p_r),u:\Cc\to \mm).
$$
The datum $(\Cc,(p_1,\dots,p_r))$ is an object of $\fm{g,n}$. The map
$u$ is $\jj$-holomorphic with pushforward class $u_*[\Cc] = \eb$.
Finally, the datum is \textbf{stable}, i.e., the log dualizing sheaf
$\omega_{\Cc}(\underline{p}_1+\dots+\underline{p}_r)$ on $\Cc$ is
ample on every irreducible component $\Cc_i$ such that $u_*[\Cc_i]$
vanishes.

\mni
The \textbf{forgetful map} associates to each stable map the
underlying object of $\fm{g,n}$,
$$
\Phi:\Kgnb{g,n}((\mm,\jj),\eb) \to \fm{g,n}, \ \
(\Cc,(p_1,\dots,p_r),u:\Cc\to \mm) \mapsto (\Cc,(p_1,\dots,p_r)).
$$
Similarly, the \textbf{evaluation map} is the map
$$
\text{ev}:\Kgnb{g,n}((\mm,\jj),\eb)\to \mm^n, \ \
(\Cc,(p_1,\dots,p_r),u:\Cc\to \mm) \mapsto (u(p_1),\dots,u(p_n)).
$$

\mni
There are natural embeddings of $\Kgnb{g,n}((\mm,\jj),\eb)$ into
(infinite-dimensional) function spaces of stable $L^p_1$ maps that are
not necessarily $\jj$-holomorphic, cf. \cite{LiTian} and
\cite{Zinger}, and this gives $\Kgnb{g,n}((\mm,\jj),\eb)$ a
topological and metric structure for which $\Phi$ and $\text{ev}$ are
continuous.  When $(\mm,\oom,\jj)$ is K\"{a}hler, then
$\Kgnb{g,n}((\mm,\jj),\eb)$ is even a complex analytic Deligne-Mumford
stack whose coarse moduli space is in Fujiki class $C$ and compact.
In particular, it has finitely many irreducible components, each of
which is a compact complex analytic space in Fujiki class $C$.

\mni
Li and Tian give an analytic construction of the virtual fundamental
class in the homology of $\Kgnb{g,n}((\mm,\jj),\eb)$.  There is a
construction within algebraic geometry if $\mm$ is projective using
perfect obstruction theories.  That construction also computes the
tangent bundle on the smooth locus of the moduli space by \cite{dJS3}.
The algebraic construction applies for classes $\eb$ that are
$\text{ev}$-dominant, since the rational quotient is relatively
projective,
$$
\phi:\mm\supseteq \mm^o\to Q^o\subseteq Q.
$$

\begin{defn} \label{defn-dom} \marpar{defn-dom}
  A class $\eb\in H_2(\mm;\ZZ)$ is $\text{ev}$-\textbf{dominant} if
  the associated evaluation map is surjective,
  $$
  \text{ev}_{0,1,\eb}:\Kgnb{0,1}((\mm,\jj),\eb)\to \mm.
  $$
  An irreducible component of $\Kgnb{0,0}((\mm,\jj),\eb)$ is
  $\text{ev}$-\textbf{dominant} if the inverse image in
  $\Kgnb{0,1}((\mm,\jj),\eb)$ surjects to $\mm$ under the evaluation
  map.
\end{defn}

\begin{prop} \label{prop-frq} \marpar{prop-frq}
  \textbf{1.}
  A connected, compact, K\"{a}hler manifold is projective if it is
  rationally connected, e.g., this holds for the fibers of $\phi$.

  \noindent
  \textbf{2.}
  For every $\eb\in H_2(\mm;\ZZ)$, shrinking $Q^o$ and $\mm^o$ if
  necessary, there exists a Zariski open $\Kgnb{0,0}((\mm,\jj),\eb)^o$
  of $\Kgnb{0,0}((\mm,\jj),\eb)$ whose inverse image
  $\Kgnb{0,1}((\mm,\jj),\eb)$ in $\Kgnb{0,1}((\mm,\jj),\eb)$ equals
  $\text{ev}^{-1}(\mm^o)$ and such that there is a commutative
  diagram,
  $$
  \begin{CD}
    \Kgnb{0,1}((\mm,\jj),\eb)^o @>>> \mm^o \\
    @VVV  @VVV \\
    \Kgnb{0,0}((\mm,\jj),\eb)^o @>>> Q^o
  \end{CD},
  $$
  that identifies $\Kgnb{0,0}((\mm,\jj),\eb)^o$ with the relative
  moduli space $\Kgnb{0,0}(\mm^o/Q^o,\eb)$.

  \noindent
  \textbf{3.}
  The $Q^o$-fibers of $\Kgnb{0,0}((\mm,\jj),\eb)^o$ are algebraic
  Deligne-Mumford stacks whose coarse moduli spaces are complex
  projective varieties.  Up to shrinking $Q^o$ further, both
  $\Kgnb{0,0}((\mm,\jj),\eb)^o$ and the coarse moduli space are flat
  and projective over $Q^o$.
\end{prop}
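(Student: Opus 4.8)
The plan is to establish the three parts in order, Part~2 being the crux, and to work throughout in the complex-analytic (Fujiki class $C$) category. For Part~1, first I would show that a rationally connected, compact K\"{a}hler manifold $X$ carries no nonzero holomorphic tensor $\sigma\in H^0(X,(\Omega^1_X)^{\otimes m})$ for $m\geq 1$: choosing a very free rational curve $f:\CC\PP^1\to X$ through a point where $\sigma\neq 0$, the bundle $f^*(\Omega^1_X)^{\otimes m}$ is a direct sum of line bundles of negative degree, so $f^*\sigma$ vanishes identically, while $(f^*\sigma)(0)=\sigma(f(0))\neq 0$ — a contradiction. Hence $h^{0,2}(X)=h^{2,0}(X)=0$, so $H^2(X;\RR)=H^{1,1}(X)$; the K\"{a}hler cone is therefore open in $H^2(X;\RR)$, meets $H^2(X;\QQ)$, and Kodaira's embedding theorem makes $X$ projective. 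Applied to $\phi$: a fiber is compact (properness of $\phi$ on $\mm^o$), connected, K\"{a}hler (restrict $\oom$), and rationally connected by the defining property of the rational quotient (\cite{Ca}), hence projective.

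For Part~2, after shrinking $\mm^o$ and $Q^o$ I would arrange that $\mm^o=\phi^{-1}(Q^o)$, that $\phi:\mm^o\to Q^o$ is a proper holomorphic submersion, and that it is relatively projective with a relatively ample line bundle $L$ (each fiber is projective by Part~1, and relative projectivity of the rational quotient over a dense Zariski open is part of its structure, \cite{Ca}). The heart of the matter is the verticality of rational curves: a genus-$0$ stable map $u:\Cc\to\mm$ whose image meets $\mm^o$ has image contained in a single fiber of $\phi$, since a rational curve — and a connected chain of rational curves — meeting a fiber of the rational quotient over a general point is contained in that fiber (\cite{Ca}, cf.\ \cite{GHS}; this is already used implicitly after Definition~\ref{defn-frq}, and after shrinking $Q^o$ we may take ``general'' to mean ``every point of $Q^o$''). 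Since $u(\Cc)$ meets $\mm^o=\phi^{-1}(Q^o)$, the fiber it meets is $\phi^{-1}(q)$ for a unique $q\in Q^o$, whence $u(\Cc)\subseteq\phi^{-1}(q)\subseteq\mm^o$. Thus ``image meets $\mm^o$'' and ``image lies in $\mm^o$'' cut out the same substack $\Kgnb{0,0}((\mm,\jj),\eb)^o\subseteq\Kgnb{0,0}((\mm,\jj),\eb)$, and it is Zariski open: its complement is the image in $\Kgnb{0,0}((\mm,\jj),\eb)$, under the proper projection from the universal curve, of the closed analytic locus where the universal map meets $\mm\setminus\mm^o$. Its preimage in $\Kgnb{0,1}((\mm,\jj),\eb)$ is $\text{ev}^{-1}(\mm^o)$ — the inclusion $\subseteq$ is trivial and $\supseteq$ is the verticality statement again. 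Finally I would use the universal property to identify $\Kgnb{0,0}((\mm,\jj),\eb)^o$ with the relative Kontsevich space $\Kgnb{0,0}(\mm^o/Q^o,\eb)$: any family of stable maps into $\mm^o$, composed with $\phi$, contracts the tree-of-rational-curves fibers of its universal curve, hence factors through the base to a map to $Q^o$, so the family factors through the base change $\mm^o\times_{Q^o}(-)$; conversely a fiberwise stable map into $\mm^o/Q^o$ is a stable map into $\mm$ with image in a fiber. This identification is the asserted commutative square.

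For Part~3, for $q\in Q^o$ the fiber of $\Kgnb{0,0}((\mm,\jj),\eb)^o$ is the Kontsevich space of the smooth projective variety $\mm^o_q$, an algebraic Deligne--Mumford stack with projective coarse moduli space. Using $L$ and the standard Fulton--Pandharipande polarization (built from $L$, the $\psi$-classes, and the boundary), $\Kgnb{0,0}(\mm^o/Q^o,\eb)$ is projective over $Q^o$; generic flatness over the reduced space $Q$ then gives flatness after one more shrinking of $Q^o$, and likewise for the coarse moduli space.

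The step I expect to fight hardest for is the relative-projectivity input in Part~2: since $Q$ is only of Fujiki class $C$, not algebraic, one must arrange $\phi|_{\mm^o}$ to be a projective morphism over a dense Zariski open of $Q$ and check that relative Kontsevich spaces, their Fulton--Pandharipande polarizations, and generic flatness all behave as in the algebraic setting over such a base — relying on the analytic structure theory of the rational quotient (\cite{Ca}) rather than on facts that require $\mm$ itself to be projective.
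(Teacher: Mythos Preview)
Your proposal is correct and follows essentially the same route as the paper: for Part~1 both use $h^{2,0}=0$ to get $H^2_{\RR}=H^{1,1}$, a rational K\"{a}hler class, and Kodaira; for Part~2 both invoke the defining property of the rational quotient to force verticality of $\eb$-curves meeting $\mm^o$ and describe $\Kgnb{0,0}((\mm,\jj),\eb)^o$ as the complement of $\Kgnb{0,0}(\mm\setminus\mm^o,\eb)$; for Part~3 both pass from relative projectivity of $\mm^o/Q^o$ to that of the relative Kontsevich stack and finish with generic flatness. Your explicit worry about arranging relative projectivity of $\phi$ over a Fujiki-class-$C$ base is exactly the point the paper also takes for granted, and your citation of the Fulton--Pandharipande polarization makes the projectivity of the relative Kontsevich space slightly more explicit than the paper does.
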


\begin{proof}
  The fibers of $\phi$ are connected, compact K\"{a}hler manifolds
  that are rationally connected, where the restriction of $\oom$ is a
  K\"{a}hler class in $H^2_{\RR}$.  For every such manifold, all small
  perturbations of the K\"{a}hler class in $H^2_{\RR}$ are symplectic
  classes.  Every rationally connected manifold has vanishing
  $h^{\ell,0}$ for every $\ell>0$, so that $H^2_\CC$ equals $H^{1,1}$.
  Thus, sufficiently small perturbations of the K\"{a}hler class are
  K\"{a}hler $(1,1)$-classes in $H^2_\RR$.  Since $H^2_\QQ$ is dense
  in $H^2_\RR$, some of these K\"{a}hler $(1,1)$-classes are rational.
  Therefore, every rationally connected, K\"{a}hler manifold is a
  complex projective manifold by the Kodaira embedding theorem.

\mni
By the defining property of the rational quotient, up to shrinking
$Q^o$, every $\eb$-curve that intersects $\mm^o$ is contained in a
fiber of $\phi$.  Thus, the relative moduli space equals the open
subset $\Kgnb{0,0}((\mm,\jj),\eb)^o$ parameterizing maps that
intersect $\mm^o$, i.e., the Zariski open complement of the closed
analytic subspace of $\Kgnb{0,0}(\mm\setminus \mm^o,\eb)$ of
$\Kgnb{0,0}(\mm,\eb)$.

\mni
Since $\mm^o$ is relatively projective over $Q^o$, also
$\Kgnb{0,0}(\mm^o/Q^o,\eb)$ is $Q^o$-relatively an algebraic
Deligne-Mumford stack whose coarse moduli space is relatively
projective.  By generic flatness, up to shrinking $Q^o$, these are
even $Q^o$-flat.
\end{proof}

\mni
Because $\Kgnb{0,0}(\mm^o/Q^o,\eb)$ is an algebraic Deligne-Mumford
stack over $Q^o$ whose coarse moduli space is projective, and both are
flat over $Q^o$, the algebraic results of \cite{dJS3} apply.  This
gives a formula for the canonical divisor class on the maximal open
that has pure dimension $d$.


\section{Ampleness of the Psi Class} 
\label{sec-psi}  \marpar{sec-psi}

\mni
One of the fundamental facts about the evaluation map for genus $0$
stable maps is that obstructedness of the evaluation map is
independent of the choice of the marked point, at least when the
domain is irreducible.  This follows from facts about vanishing of
higher cohomology of coherent sheaves on a genus $0$ curve.  The
fastest way to formulate these facts is via the ``universal
extension'' of the structure by the relative dualizing sheaf; roughly
just the direct sum of two copies of the dual Serre twisting sheaf
$\mathcal{O}(-1)$.

\begin{defn}\cite[Section 3]{dJS3} \label{defn-Q} \marpar{defn-Q}
  For every proper, flat map of complex analytic spaces,
  $\pi:\Cc\to \So,$ whose fibers are reduced, at-worst-nodal,
  connected, genus-$0$ curves, the \textbf{universal extension} of the
  relative dualizing sheaf $\omega_\pi$ is the short exact sequence of
  coherent analytic sheaves on $\Cc$,
  $$
  \Xi_\pi: \ \ 0\to \omega_\pi \to E_\pi \to \OO_{\Cc} \to 0,
  $$
  unique up to $\OO_{\So}^\times$ and compatible with arbitrary base
  change, such that the coherent analytic sheaves $\pi_*E_\pi$ and
  $R^1\pi_*E_\pi$ are both zero.  For every finite rank, locally free
  $\OO_{\Cc}$-module $\mcF$, for the associated short exact sequence
  $$
  \textit{Hom}_{\OO_{\Cc}}(\mcF,\Xi_\pi): \ \ 0\to
  \textit{Hom}_{\OO_{\Cc}}(\mcF,\omega_\pi) \to
  \textit{Hom}_{\OO_{\Cc}}(\mcF,E_\pi)  \to
  \textit{Hom}_{\OO_{\Cc}}(\mcF,\OO_{\Cc}) \to 0, 
  $$
  the \textbf{adjoint map} is the connecting map of higher direct
  image sheaves,
  $$
  \delta_{\Xi,\mcF}:\pi_*\mcF^\vee \to
  R^1\pi_* \textit{Hom}_{\OO_{\Cc}}(\mcF,\omega_\pi),
  $$
  whose fiber map at each $s\in\So$, via Serre duality, equals
  $$
  \delta_{\Xi,\mcF,s}: (\pi_*\mcF^\vee)_s \to H^0(\Cc_s,\mcF_s)^\vee.
  $$
\end{defn}

\mni
Up to taking duals, vanishing of a higher direct image sheaf involving
the universal extension allows us to encode both vanishing of higher
direct image of our original sheaf, and also surjectivity of the
adjoint map.

\begin{lem} \label{lem-Q} \marpar{lem-Q}
  For every finite rank, locally free $\OO_{\Cc}$-module $\mcF$, the
  higher direct image sheaf
  $R^1\pi_*\textit{Hom}_{\OO_{\Cc}}(\mcF,E_\pi)$ is zero if and only
  if both $R^1\pi_*(\mcF^\vee)$ is zero and for every $s\in \So$, the
  adjoint map $\delta_{\Xi,\mcF,s}$ is surjective.
\end{lem}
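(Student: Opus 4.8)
The plan is to read everything off from the long exact sequence of higher direct images attached to $\textit{Hom}_{\OO_\Cc}(\mcF,\Xi_\pi)$, using only that $\pi$ has one-dimensional fibers and that $\mcF$ is locally free. Since $\mcF$ is locally free of finite rank, $\textit{Hom}_{\OO_\Cc}(\mcF,-)$ is exact and identifies that sequence with $0\to\mcF^\vee\otimes\omega_\pi\to\mcF^\vee\otimes E_\pi\to\mcF^\vee\to 0$. Because every fiber $\Cc_s$ is a curve, $R^{i}\pi_*$ kills coherent sheaves for $i\geq 2$, so the higher-direct-image sequence terminates as
$$
\pi_*\mcF^\vee \xrightarrow{\ \delta_{\Xi,\mcF}\ } R^1\pi_*\textit{Hom}_{\OO_\Cc}(\mcF,\omega_\pi) \xrightarrow{\ a\ } R^1\pi_*\textit{Hom}_{\OO_\Cc}(\mcF,E_\pi) \xrightarrow{\ b\ } R^1\pi_*\mcF^\vee \to 0 ,
$$
where the first arrow is, by definition, the adjoint map.

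Next I would extract the two implications purely from exactness of this four-term sequence. If $R^1\pi_*\textit{Hom}_{\OO_\Cc}(\mcF,E_\pi)=0$, then surjectivity of $b$ gives $R^1\pi_*\mcF^\vee=0$, and exactness at the middle term forces $a=0$, hence $\delta_{\Xi,\mcF}$ is surjective as a map of sheaves; conversely, if $R^1\pi_*\mcF^\vee=0$ then $b=0$, so $a$ is surjective, and if moreover $\delta_{\Xi,\mcF}$ is surjective then $\mathrm{im}(a)=\mathrm{im}(a)$ with $\ker a=\mathrm{im}(\delta_{\Xi,\mcF})$ everything, whence $a=0$ and $R^1\pi_*\textit{Hom}_{\OO_\Cc}(\mcF,E_\pi)=\mathrm{im}(a)=0$. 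Thus the lemma reduces to the assertion that $\delta_{\Xi,\mcF}$ is surjective as a morphism of sheaves if and only if its fiber map $\delta_{\Xi,\mcF,s}$ is surjective for every $s\in\So$. That is Nakayama's lemma together with right-exactness of $-\otimes_{\OO_{\So}}k(s)$: the cokernel $\mcC$ of $\delta_{\Xi,\mcF}$ is coherent, $\mcC\otimes k(s)=\mathrm{coker}(\delta_{\Xi,\mcF,s})$ for each $s$, and a coherent sheaf vanishes exactly when all of its fibers $\otimes k(s)$ vanish. Here one also uses that formation of the top direct image $R^1\pi_*\textit{Hom}_{\OO_\Cc}(\mcF,\omega_\pi)$ commutes with base change to points, so that its fiber is $H^1(\Cc_s,\mcF_s^\vee\otimes\omega_{\Cc_s})$ and hence, by Serre duality on the Gorenstein curve $\Cc_s$, is $H^0(\Cc_s,\mcF_s)^\vee$, matching the target in the definition of $\delta_{\Xi,\mcF,s}$.

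The only step that wants care is this last identification: matching the fiber of the abstract connecting map $\delta_{\Xi,\mcF}$ with the concrete map $\delta_{\Xi,\mcF,s}\colon(\pi_*\mcF^\vee)\otimes k(s)\to H^0(\Cc_s,\mcF_s)^\vee$ of Definition \ref{defn-Q}, which is just compatibility of connecting homomorphisms with base change, of the base-change map for $R^1\pi_*$ in top degree, and of Serre duality. Alternatively one can avoid the abstract fiber map altogether by restricting the sequence $0\to\mcF^\vee\otimes\omega_\pi\to\mcF^\vee\otimes E_\pi\to\mcF^\vee\to 0$ to a fixed fiber $\Cc_s$ — legitimate since $\Xi_\pi$ is compatible with base change (Definition \ref{defn-Q}) and $\mcF$ is locally free — and running the identical four-term argument on $\Cc_s$, with $R^1\pi_*(-)=0$ replaced by $H^1(\Cc_s,-)=0$ via cohomology and base change in top degree for the flat family $\pi$; this reproves the equivalence fiber by fiber. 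I do not expect a genuine obstacle: all inputs (vanishing $R^{\geq 2}\pi_*=0$ for a relative curve, base change in top degree, Nakayama, and Serre duality for nodal genus-$0$ curves) are standard.
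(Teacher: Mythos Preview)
Your argument is correct and follows the same approach as the paper: both proofs read off the equivalence from the long exact sequence of higher direct images attached to $\textit{Hom}_{\OO_\Cc}(\mcF,\Xi_\pi)$, using that $R^{\geq 2}\pi_*$ vanishes on a relative curve. You supply more detail than the paper does on the final step (passing from sheaf surjectivity of $\delta_{\Xi,\mcF}$ to fiberwise surjectivity via Nakayama, base change in top degree, and Serre duality), whereas the paper simply asserts this reduction in one sentence.
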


\begin{proof}
  Since $R^q\pi_*$ of a coherent sheaf vanishes for every $q\geq 2$,
  the long exact sequence of higher direct images associated to the
  short exact sequence $\textit{Hom}_{\OO_{\Cc}}(\mcF,\Xi_\pi)$ yields
  $$
  \pi_*(\mcF^\vee) \xrightarrow{\delta} R^1\pi_*(\mcF^\vee\otimes
  \omega_\pi) \to R^1\pi_*\textit{Hom}_{\OO_{\Cc}}(\mcF,E_\pi) \to
  R^1\pi_*(\mcF^\vee) \to 0.
  $$
  Thus, $R^1\pi_*\textit{Hom}_{\OO_{\Cc}}(\mcF,E_\pi)$ vanishes if and
  only if $R^1\pi_*(\mcF^\vee)$ vanishes and $\delta_{\Xi,\mcF}$ is
  surjective.  Finally, $\delta_{\Xi,\mcF}$ is surjective if and only
  if for every $s\in \So$, the fiber map $\delta_{\Xi,\mcF,s}$ is
  surjective.
\end{proof}

\mni
The locus of maps from a family of smooth curves has an
$\text{ev}$-obstructed locus that can be detected after base change,
e.g., to a relative Hom space.

\begin{defn}\cite[Section 10]{Douady} \label{defn-H} \marpar{defn-H}
  For $\pi$ as above, for every smooth map of complex analytic spaces,
  $$
  \rho:\mcY\to \So,
  $$
  the \textbf{relative Hom space}, 
  $$
  H_{\pi,\rho} = \text{Hom}_{\So}(\Cc,\mcY) \to \So,
  $$
  is the maximal open subset of the $\So$-relative Douady space of the
  fiber product $\Cc\times_{\So}\mcY$ parameterizing holomorphic, flat
  families of closed analytic subspaces of $\Cc\times_{\So}\mcY$ that
  are proper over the base and such that the first projection map to
  the pullback of $\Cc$ is an isomorphism.
\end{defn}

\mni
Inverting this isomorphism and composing with the second projection
map gives a universal $\So$-morphism,
$$
u:H_{\pi,\rho}\times_{\So} \Cc \to \mcY.
$$
With respect to this relative Hom space, the $\text{ev}$-obstructed
locus can be formulated in terms of the associated extension sheaf.

\begin{defn} \label{defn-Qtoo} \marpar{defn-Qtoo}
  The \textbf{associated extension sheaf} is the coherent analytic
  sheaf on $H_{\pi,\rho}\times_{\So}\Cc$,
  $$
  E_{\pi,\rho}:=\textit{Hom}_{\OO}(u^*\Omega_{\rho},\text{pr}_2^*E_\pi).
  $$ 
  Relative to the first projection morphism,
  $$
  \wt{\pi}:H_{\pi,\rho}\times_{\So} \Cc \to H_{\pi,\rho},
  $$
  the \textbf{obstruction sheaf} is the coherent analytic sheaf on
  $H_{\pi,\rho}$,
  $$
  O_{\pi,\rho} := R^1\wt{\pi}_* E_{\pi,\rho}.
  $$
  The \textbf{non-free locus} $H_{\pi,\rho}^{\text{nf}}$ is the
  support of $O_{\pi,\rho}$, and the \textbf{free locus}
  $H_{\pi,\rho}^{\text{f}}$ is the open complement in $H_{\pi,\rho}$
  of this closed analytic subspace.  For every morphism of complex
  analytic spaces,
  $$
  \zeta:T\to H_{\pi,\rho},
  $$
  the \textbf{pullback non-free locus} $T^{\text{nf}},$ resp, the
  \textbf{pullback free locus} $T^{\text{f}},$ is the inverse image
  under $\zeta$ of $H_{\pi,\rho}^{\text{nf}}$, resp. of
  $H_{\pi,\rho}^{\text{f}}.$
\end{defn}

\begin{lem} \label{lem-free} \marpar{lem-free}
  If $\pi$ is smooth, then the open subset
  $H^{\text{f}}_{\pi,\rho}\times_{\So} \Cc$ of
  $H_{\pi,\rho}\times_{\So}\Cc$ is the smooth locus of $u$.  The
  restriction of $u$ to every analytic subspace of
  $H_{\pi,\rho}^{\text{nf}}\times_{\So}\Cc$ is non-submersive,
  compatibly with arbitrary base-change of $\So$.
\end{lem}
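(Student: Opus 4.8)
\mni
\emph{Proof strategy.} The plan is to reduce both assertions to two computations on the fibers of $\pi$ together with standard deformation theory of maps. Fix a point $x$ of $H_{\pi,\rho}$ over $s\in\So$. Since $\pi$ is smooth the fiber $\Cc_s=\pi^{-1}(s)$ is a connected smooth genus-$0$ curve, hence $\Cc_s\cong\PP^1$, the fiber $\wt{\pi}^{-1}(x)$ is canonically this curve, and $u$ restricts there to a holomorphic map $f_x\colon\Cc_s\to\mcY_s$ into the smooth fiber $\mcY_s=\rho^{-1}(s)$; write $f_x^*T_{\mcY_s}\cong\bigoplus_i\OO_{\Cc_s}(d_i)$. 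Because $\pi$ is smooth, $\omega_\pi|_{\Cc_s}\cong\OO_{\Cc_s}(-2)$, so the universal extension of Definition \ref{defn-Q} restricts on $\Cc_s$ to the extension of $\OO_{\Cc_s}$ by $\OO_{\Cc_s}(-2)$ with vanishing cohomology, which is forced to be $E_\pi|_{\Cc_s}\cong\OO_{\Cc_s}(-1)^{\oplus 2}$; hence $E_{\pi,\rho}|_{\wt{\pi}^{-1}(x)}\cong f_x^*T_{\mcY_s}\otimes\OO_{\Cc_s}(-1)^{\oplus 2}\cong\bigoplus_i\OO_{\Cc_s}(d_i-1)^{\oplus 2}$. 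Now $\wt{\pi}$ is proper with one-dimensional fibers and $E_{\pi,\rho}$ is locally free, so $R^q\wt{\pi}_*E_{\pi,\rho}=0$ for $q\geq 2$, and cohomology-and-base-change for the top direct image gives $O_{\pi,\rho}\otimes k(x)\cong H^1(\Cc_s,E_{\pi,\rho}|_{\wt{\pi}^{-1}(x)})$, which by Nakayama controls the support of $O_{\pi,\rho}$. From the splitting type this group vanishes exactly when every $d_i\geq 0$, i.e. exactly when $f_x^*T_{\mcY_s}$ is globally generated, i.e. exactly when $f_x$ is free; so $H_{\pi,\rho}^{\text{nf}}$ is the locus where $f_x$ is not free and $H_{\pi,\rho}^{\text{f}}$ the locus where $f_x$ is free.

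\mni
Next I would compute the differential of $u$ at a point $(x,p)$. Relative to $\So$, its component along the $H_{\pi,\rho}$-directions is the evaluation map $\text{ev}_p\colon H^0(\Cc_s,f_x^*T_{\mcY_s})\to (f_x^*T_{\mcY_s})_p$ and its component along the $\Cc$-direction is $df_x|_p\colon T_p\Cc_s\to (f_x^*T_{\mcY_s})_p$; since a homomorphism $T_{\Cc_s}\cong\OO_{\Cc_s}(2)\to\OO_{\Cc_s}(d_i)$ vanishes unless $d_i\geq 2$, the image of $df_x|_p$ lies in the globally generated part of $f_x^*T_{\mcY_s}$ at $p$, hence inside the image of $\text{ev}_p$. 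Therefore $du_{(x,p)}$ is surjective if and only if $\text{ev}_p$ is, i.e. if and only if $f_x$ is free, independently of $p$. If $f_x$ is free then $H^1(\Cc_s,f_x^*T_{\mcY_s})=0$, so by standard deformation theory of the relative Hom space $H_{\pi,\rho}\to\So$, and hence $W:=H_{\pi,\rho}\times_\So\Cc\to\So$, is smooth at $(x,p)$; together with surjectivity of $du_{(x,p)}$ this forces $u$ to be smooth at $(x,p)$ (equivalently: deforming $f_x$ subject to a single incidence condition at $p$ is unobstructed, since $H^1(\Cc_s,f_x^*T_{\mcY_s}(-p))=0$). Conversely a smooth morphism has surjective differential, so $u$ smooth at $(x,p)$ forces $f_x$ free. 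Combining with the first paragraph, the smooth locus of $u$ equals $\{(x,p):f_x\text{ free}\}=\wt{\pi}^{-1}(H_{\pi,\rho}^{\text{f}})=H_{\pi,\rho}^{\text{f}}\times_\So\Cc$, proving the first assertion.

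\mni
For the second assertion, fix $x\in H_{\pi,\rho}^{\text{nf}}$; by the analysis above the image of $du_{(x,p)}$ is a proper subspace of $T_{f_x(p)}\mcY$ for every $p\in\Cc_s$. For any analytic subspace $Z\subseteq H_{\pi,\rho}^{\text{nf}}\times_\So\Cc$ and any $z\in Z$, the Zariski tangent space $T_zZ$ is contained in $T_zW$ and the differential of $u|_Z$ at $z$ is the restriction of $du_z$, so its image is again a proper subspace of the tangent space of $\mcY$; hence $u|_Z$ is submersive at no point of $Z$. Finally every ingredient used — the splitting type, cohomology-and-base-change for the top direct image, the identification $E_\pi|_{\Cc_s}\cong\OO_{\Cc_s}(-1)^{\oplus 2}$ together with the fact that the universal extension commutes with base change, and the evaluation map — is computed on the fibers of $\pi$, hence survives any base change $\So'\to\So$, under which $H_{\pi,\rho}$, $\Cc$, $\mcY$, $E_\pi$, $E_{\pi,\rho}$, $O_{\pi,\rho}$ and the two loci all pull back; this gives the asserted compatibility with base change.

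\mni
The step I expect to be the main obstacle is not the fiberwise cohomology, which is routine once one knows $E_\pi|_{\Cc_s}\cong\OO_{\Cc_s}(-1)^{\oplus 2}$, but the passage over the free locus from surjectivity of $du_{(x,p)}$ to the statement that $u$ is an honest smooth morphism, in the complex-analytic category and at points where $H_{\pi,\rho}$ may be singular (free maps lying next to obstructed ones). The hard part will be to carry this out cleanly, either via the exact triangle relating $L_{W/\mcY}$, $L_{W/\So}$ and $u^*L_{\mcY/\So}$ (all terms being locally free over the free locus), or directly via the infinitesimal lifting criterion applied to the unobstructed deformation problem for a free map carrying a single incidence condition.
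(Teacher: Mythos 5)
Your proposal is correct and follows essentially the same route as the paper: identify the support of $O_{\pi,\rho}$ with the locus of non-free maps by a fiberwise $H^1$ computation, decompose $du$ into the evaluation map plus $df_x$, observe that the image of $df_x$ lands in the globally generated part, and conclude smoothness over the free locus from unobstructedness of the Hom space. The only cosmetic difference is that you compute the obstruction fiber directly from the splitting $E_\pi|_{\Cc_s}\cong\OO_{\CC\PP^1}(-1)^{\oplus 2}$ with cohomology-and-base-change, whereas the paper packages the same information in Lemma \ref{lem-Q} (vanishing of $R^1\wt{\pi}_*E_{\pi,\rho}$ is equivalent to vanishing of $R^1\wt{\pi}_*u^*T_\rho$ together with surjectivity of the adjoint map); these are equivalent.
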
  

\begin{proof}
  Denote the dual $\text{Hom}_{\OO_{\mcY}}(\Omega_\rho,\OO_{\mcY})$ by
  $T_{\rho}$.  By Lemma \ref{lem-Q}, the open $H_{\pi,\rho}$ is
  contained in the open complement $H_{\pi,\rho}^o$ of the support of
  $$
  R^1\wt{\pi}_*u^*T_{\rho}.
  $$
  The open $H_{\pi,\rho}^o$ is the maximal open that is smooth over
  $\So$ of relative dimension equal to the expected dimension, and the
  $\So$-relative tangent sheaf on $H_{\pi,\rho}^o$ equals the
  restriction of
  $$
  \wt{\pi}_*\textit{Hom}_{\OO}(u^*\Omega_{\rho},\OO).
  $$
  Moreover, $H_{\pi,\rho}^{\text{f}}$ is the maximal open in
  $H_{\pi,\rho}^o$ such that for every point $s\in \So$ and for every
  point $[v:\Cc_s\to \mcY_s]$ in the fiber $H_{\pi,\rho,s}$, the
  induced map of Zariski tangent spaces,
  $$
  H^0(\Cc_s,v^*T_{\mcY_s}) \to H^0(\Cc_s,v^*\Omega_{\mcY_s})^\vee
  $$
  is surjective.

\mni
The fiber $\Cc_s$ is isomorphic to $\CC\PP^1$.  Thus the locally free
sheaf $v^*\Omega_{\mcY_s}$ is isomorphic to a direct sum of invertible
sheaves $\OO(-a_j)$ for integers $a_j\in \ZZ$.  Of course the induced
map,
$$
H^0(\CC\PP^1,\OO(a_j))\to H^0(\CC\PP^1,\OO(-a_j))^\vee,
$$
is surjective if and only if $a_j\geq 0$.  Thus,
$\pi^{-1}H_{\pi,\rho}^{\text{f}}$ is the unique open subscheme of
$H_{\pi,\rho}^o$ whose points $(s,[v])$ are precisely those maps such
that $v^*T_{\mcY/\So}$ is globally generated.

\mni
Let $(s,[v])$ be a point in $H_{\pi,\rho}^{\text{nf}}$.  Since
constant morphisms have $v^*T_{\mcY/\So}$ isomorphic to a direct sum
of copies of the structure sheaf $\OO_{\CC\PP^1}$, which is globally
generated, the morphism $v$ is not a constant morphism.  Thus, the
derivative map,
$$
dv:T_{\CC\PP^1} \to v^*T_{\mcY/\So},
$$
is an injective homomorphism of coherent analytic sheaves on
$\CC\PP^1$.  Since $T_{\CC\PP^1}$ is globally generated, the
derivative map of $v$ on the factor $T_{\CC\PP^1}$ factors through the
adjointness homomorphism $\alpha$,
$$
\alpha:H^0(\CC\PP^1,v^*T_{\CC\PP^1})\otimes_{\CC} \OO_{\CC\PP^1} \to
v^*T_{\CC\PP^1}.  
$$
Since $(s,[v])$ is not in $H_{\pi,\rho}^{\text{f}}$, the cokernel of
$\alpha$ is a locally free sheaf of positive rank.  Thus, for every
$q\in \CC\PP^1$, the image of the derivative map of $u$ at $(s,[v],q)$
is not surjective.  Thus, the restriction of $u$ to each complex
analytic subvariety of $H_{\pi,\rho}\times_\So\Cc$ containing
$(s,[v],q)$ is not submersive.
\end{proof}
  
\mni
For every $\text{ev}$-dominant class $\eb$, denote by $\mm'_\eb$ the
maximal open subset (possibly empty) over which the following
evaluation map is smooth, i.e., submersive,
$$
\text{ev}_{0,1,\eb}:\Kgnb{0,1}((\mm,\jj),\eb)\to \mm.
$$
Denote by $\mm_\eb$ the open subset of $\mm'_\eb$ over which the
evaluation map is smooth, and every fiber of the evaluation map
parameterizes only maps with smooth domain.

\begin{prop} \label{prop-free} \marpar{prop-free}
  For every connected, compact K\"{a}hler manifold $(\mm,\jj,\oom)$,
  for every $\jj$-irreducible, $\text{ev}$-dominant class $\eb$, the
  complements of both $\mm'_\eb$ and $\mm_\eb$ are proper, closed,
  analytic subvarieties of $\mm$.
\end{prop}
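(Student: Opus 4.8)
The plan is to realize each of $\mm\setminus\mm'_\eb$ and $\mm\setminus\mm_\eb$ as the image under the (proper) evaluation map of an explicit closed analytic ``non-free'' locus, deduce closedness from Remmert's proper mapping theorem, and then use $\text{ev}$-dominance together with generic smoothness in characteristic $0$ to see this image is a \emph{proper} subset. First I would observe that $\jj$-irreducibility eliminates bubbling. A genus-$0$ stable map has domain a tree of copies of $\CC\PP^1$; a tree with at least two vertices has at least two leaves; a leaf component carries exactly one node, so if it were contracted, stability would force it to carry at least two of the marked points, impossible since $n=1$. Hence every leaf is non-contracted, and two non-contracted leaves would exhibit $\eb$ as a sum of two nonzero classes of $\jj$-holomorphic spheres (plus further $\jj$-effective terms), contradicting $\jj$-irreducibility. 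So $\Kgnb{0,1}((\mm,\jj),\eb)$ parameterizes only maps with smooth domain; in particular $\mm_\eb=\mm'_\eb$, and it suffices to treat $\mm'_\eb$.

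On $\mcM:=\Kgnb{0,1}((\mm,\jj),\eb)$ the universal curve $p:\Cc\to\mcM$ is then a smooth family of copies of $\CC\PP^1$ with the marked section $\sigma$, and I write $\mcE:=u^*T^{1,0}_{\mm,\jj}$ for the pullback along the universal map $u:\Cc\to\mm$. For $[u]\in\mcM$ (necessarily with domain $\CC\PP^1$), standard genus-$0$ deformation theory -- or Lemma \ref{lem-free} applied to a relative $\text{Hom}$ presentation -- identifies the Zariski tangent space $T_{[u]}\mcM$ with $H^0(\CC\PP^1,\mcE)/du\bigl(H^0(\CC\PP^1,T_{\CC\PP^1}(-p_1))\bigr)$ and the differential $d\text{ev}_{[u]}$ with evaluation of sections of $\mcE$ at $p_1$; hence $\text{ev}$ is smooth at $[u]$ iff $d\text{ev}_{[u]}$ is surjective, iff $H^1(\CC\PP^1,\mcE\otimes\OO_{\CC\PP^1}(-1))=0$, iff every summand $a_i$ of $u^*T^{1,0}_{\mm,\jj}\cong\bigoplus_i\OO_{\CC\PP^1}(a_i)$ is nonnegative, i.e.\ $u$ is $\jj$-free. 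Thus the non-smooth locus of $\text{ev}$ is the closed analytic set $Z:=\mathrm{supp}\,R^1p_*(\mcE\otimes\OO_\Cc(-\sigma))$, and $\mm\setminus\mm'_\eb=\text{ev}(Z)$. Since the coarse space of $\mcM$ is compact and of Fujiki class $C$, the map $\text{ev}$ is proper, so by Remmert's proper mapping theorem $\text{ev}(Z)$ is a closed analytic subvariety of $\mm$.

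It remains to show $\text{ev}(Z)\neq\mm$. Suppose not. Since $\eb$ is $\text{ev}$-dominant, Proposition \ref{prop-frq} lets me pass over the rational quotient: $\mcM^o:=\text{ev}^{-1}(\mm^o)=\Kgnb{0,1}(\mm^o/Q^o,\eb)$ is, over $Q^o$, an algebraic Deligne-Mumford stack, flat and projective with projective coarse space, and $\text{ev}:\mcM^o\to\mm^o$ is algebraic. As $\text{ev}^{-1}(\mm^o)=\mcM^o$, the assumption gives $\text{ev}(Z\cap\mcM^o)=\text{ev}(Z)\cap\mm^o=\mm^o$, so some irreducible component $W$ of the closed algebraic set $Z\cap\mcM^o$ dominates $\mm^o$. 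Working over $\CC$, generic smoothness applied to the smooth locus $W^{\mathrm{sm}}$ produces a point $[u]\in W^{\mathrm{sm}}$ at which $\text{ev}|_W$ is smooth; then $d\text{ev}_{[u]}$ restricted to $T_{[u]}W\subseteq T_{[u]}\mcM$ is surjective, hence so is $d\text{ev}_{[u]}$ itself, so by the previous paragraph $u$ is $\jj$-free, contradicting $[u]\in Z$. Therefore $\text{ev}(Z)\subsetneq\mm$, and since $\mm\setminus\mm_\eb=\mm\setminus\mm'_\eb=\text{ev}(Z)$, both complements are proper closed analytic subvarieties of $\mm$.

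The step I expect to be the real obstacle is the last one: a priori the non-$\jj$-free curves of class $\eb$ might dominate $\mm$ -- which would make $\mm'_\eb$ empty and the Proposition false -- so ruling this out is the entire content. The argument rests on the infinitesimal identification of $d\text{ev}$ with evaluation of sections of $u^*T^{1,0}_{\mm,\jj}$ at the marked point together with generic smoothness in characteristic $0$, which is why it seems essential to descend, via Proposition \ref{prop-frq}, to the algebraic and projective situation over $Q^o$ rather than argue purely analytically, where Sard-type statements are weaker. (If one dropped $\jj$-irreducibility, one would in addition have to bound the images of the boundary strata of reducible-domain stable maps; it is precisely $\jj$-irreducibility that makes those strata empty.)
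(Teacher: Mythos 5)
There is a genuine gap, and it sits exactly where your argument starts: the claim that $\jj$-irreducibility eliminates bubbling, so that $\Kgnb{0,1}((\mm,\jj),\eb)$ parameterizes only maps with smooth domain and $\mm_\eb=\mm'_\eb$. This misreads Definition \ref{defn-Moricone}. A class is $\jj$-reducible only if it equals a \emph{nonzero sum of elements of the distinguished semigroup $N$} (here, the $\text{ev}$-dominant/free classes) \emph{plus} a nonzero sum of sphere classes. A decomposition $\eb=\eb'+\eb''$ into two nonzero $\jj$-holomorphic sphere classes neither of which is $\text{ev}$-dominant does \emph{not} witness reducibility, so reducible stable maps of class $\eb$ can perfectly well exist; the paper's own formulation (``parameterizing only maps with smooth domain for every $q$ contained in a dense Zariski open $\mm_\eb$'') presupposes this. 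What irreducibility actually buys — and what the paper's proof extracts — is that in any such decomposition neither $\eb'$ nor $\eb''$ is $\text{ev}$-dominant, so each boundary stratum $\Delta_{\eb',\eb''}$ evaluates into a \emph{proper} closed analytic subset $Z_{\eb'}$ of $\mm$, and $\mm_\eb$ is cut out of $\mm'_\eb$ by removing the finitely many such images. Your proof never addresses $\mm_\eb$ at all once the (false) identification $\mm_\eb=\mm'_\eb$ is made, and the subsequent deformation-theoretic analysis (treating the universal curve as a $\CC\PP^1$-bundle and identifying the non-submersive locus of $\text{ev}$ with $\mathrm{supp}\,R^1p_*(\mcE\otimes\OO_\Cc(-\sigma))$) is only valid away from the boundary, so even the statement about $\mm'_\eb$ needs the boundary strata to be excised first.

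The remainder of your argument is sound and is essentially the paper's: descend via Proposition \ref{prop-frq} to the algebraic, relatively projective situation over the rational quotient, identify the non-submersive locus of $\text{ev}$ on the irreducible-domain locus with the non-free locus, and use generic smoothness (the paper uses Sard together with Lemma \ref{lem-free}) to conclude that this locus cannot dominate $\mm$. To repair the proof, insert the boundary analysis before that step: list the finitely many pairs $(\eb',\eb'')$ with $\eb'+\eb''=\eb$ whose strata cover the boundary over $Q^o$, use irreducibility to see that the evaluation images $Z_{\eb'}$ and $Z_{\eb',\eb''}$ are proper closed analytic subsets, remove them to obtain an open $U$ over which all parameterized maps have irreducible domain, and only then run your generic-smoothness argument on $\text{ev}^{-1}(U)$.
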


\begin{proof}
  The critical locus of $\text{ev}_{0,1,\eb}$ in
  $\Kgnb{0,1}((\mm,\jj),\eb)$ is a closed analytic subspace.  Since
  the evaluation morphism is proper and holomorphic, the image of this
  closed analytic subspace is a closed analytic subspace of $\mm$.
  The goal is to prove that this closed analytic subspace is proper,
  i.e., it does not equal all of $\mm$.  By definition, $\mm'_\eb$ is
  the open complement of this closed analytic subspace.

\mni
By Proposition \ref{prop-frq}, there exists a nowhere dense, closed
analytic subvariety $Z$ of $\mm$ whose open complement $\mm^o$ is
submersive and relatively projective over $Q^o$, and such that the
open complement $\Kgnb{0,0}((\mm,\jj),\eb)^o$ of $\Kgnb{0,0}(Z,\eb)$
in $\Kgnb{0,0}((\mm,\jj),\eb)$ equals the relative moduli space
$\Kgnb{0,0}(\mm^o/Q^o,\eb)$.  Since $Z$ is nowhere dense, it suffices
to prove that the critical set of $\text{ev}_{0,1,\eb}$ in $\mm^o$ is
nowhere dense.

\mni
Denote by $\Delta^o \subset \Kgnb{0,1}((\mm,\jj),\eb)^o$ the boundary
divisor.  As a closed analytic subspace of
$\Kgnb{0,1}((\mm,\jj),\eb)^o$, this is proper over $Q^o$.  Hence it is
a union of finitely many compact complex analytic spaces each
bimeromorphic to a connected K\"{a}hler manifold that is proper over
$Q^o$.  Each of these finitely many ``irreducible components'' is in
the image $\Delta_{\eb',\eb''}$ of the natural $1$-morphism,
$$
\Kgnb{0,2}((\mm,\jj),\eb')\times_{\text{ev}_2,\mm,\text{ev}}
\Kgnb{0,1}((\mm,\jj),\eb'') \to
\Kgnb{0,1}((\mm,\jj),\eb).
$$
for an ordered pair $(\eb',\eb'')$ of nonzero homology classes with
$\eb'+\eb''=\eb$.  Thus, there are finitely many such pairs
$(\eb',\eb'')$ such that the images $\Delta_{\eb',\eb''}$ cover all of
$\Delta^o$.  The relative complement of $\mm_\eb$ in $\mm'_\eb$ is the
intersection of $\mm'_\eb$ with the union over these finitely many
pairs $(\eb',\eb'')$ of the image $Z_{\eb'}$ of the proper holomorphic
map,
$$
\Kgnb{0,2}((\mm,\jj),\eb')\times_{\text{ev}_2,\mm,\text{ev}}
\Kgnb{0,1}((\mm,\jj),\eb'')
\xrightarrow{\text{pr}_1}
\Kgnb{0,2}((\mm,\jj),\eb')
\xrightarrow{\text{ev}_1}
\mm.
$$
Thus, it suffices to prove that $\mm_\eb$ is nonempty.

\mni
Since $\eb$ is a $\jj$-irreducible $\text{ev}$-dominant class, neither
of $\eb'$ nor $\eb''$ is $\text{ev}$-dominant.  Thus $Z_{\eb'}$ is a
proper closed analytic subspace of $\mm$.  Similarly, the image
$Z_{\eb',\eb''}$ of the following evaluation map is a proper, closed
analytic subspace of $\mm$,
$$
\text{pr}_1\circ \text{ev}_{0,2,\eb'}\circ \text{pr}_1:
\Kgnb{0,2}((\mm,\jj),\eb')\times_{\text{ev}_2,\mm,\text{ev}}
\Kgnb{0,1}((\mm,\jj),\eb'') \to
\mm.
$$
The union in $\mm$ of $Z$ and the proper, closed analytic subspaces
$Z_{\eb'}$ and $Z_{\eb',\eb''}$ for the finitely many pairs
$(\eb',\eb'')$ is a proper, closed analytic subspace $Z'$ of $\mm$,
and $\mm_\eb$ equals $\mm'_\eb\setminus Z'$.  Over the dense, Zariski
open complement $U$ of $Z'$, the evaluation morphism
$$
\text{ev}=\text{ev}_{0,1,\eb}:\Kgnb{0,1}((\mm,\jj),\eb)\to \mm,
$$
parameterizes only stable maps with irreducible domain.

\mni
For the universal family of genus-$0$ stable maps of class $\eb$,
$$
\pi:\Kgnb{0,1}((\mm,\jj),\eb) \to \Kgnb{0,0}((\mm,\jj),\eb), \ \
\text{ev}:\Kgnb{0,1}((\mm,\jj),\eb) \to \mm,
$$
for the holomorphic submersion that is the projection morphism,
$$
\rho:\mm\times \Kgnb{0,0}((\mm,\jj),\eb) \to \Kgnb{0,0}((\mm,\jj),\eb),
$$
the universal map defines a section of the relative Douady space,
$$
\zeta:\Kgnb{0,0}((\mm,\jj),\eb) \to H_{\pi,\rho}.
$$
Denote by $\Kgnb{0,0}((\mm,\jj),\eb)^{\text{nf}}$ the pullback under
$\zeta$ of the non-free locus $H_{\pi,\rho}^{\text{nf}}$.  This is a
closed, analytic subvariety of $\Kgnb{0,0}((\mm,\jj),\eb)$.  Thus, the
inverse image under $\pi$,
$$
\Kgnb{0,1}((\mm,\jj),\eb)^{\text{nf}} \subset \Kgnb{0,1}((\mm,\jj),\eb),
$$
is a closed analytic subvariety.  Since $\text{ev}$ is a proper map of
complex analytic spaces, the image of
$\Kgnb{0,1}((\mm,\jj),\eb)^{\text{nf}}$ in $\mm$ is a closed analytic
subvariety $W$ of $\mm$.  By Lemma \ref{lem-free}, the intersection of
$W$ with the open $U$ is precisely the image in $U$ of the non-smooth
locus of the restricted morphism,
$$
\text{ev}:\text{ev}^{-1}(U)\to U.
$$
This morphism is everywhere non-submersive on the non-smooth locus.
Thus the closed analytic subspace $U\cap W$ of $U$ is proper.  The
dense open complement in $U$ of this closed analytic subspace is
contained in $\mm_\eb$.  Therefore $\mm_\eb$ is nonempty.
\end{proof}

\mni
Ampleness of the psi class in Theorem \ref{thm-simpFano} follows from
the identity of the Gysin classes.  This identity, in turn, follows
from simple facts about the low degree cohomology groups and Picard
group of the total space of a $\mathbb{CP}^1$-fibration.

\begin{defn} \label{def-nsb} \marpar{def-nsb}
  For every proper, holomorphic submersion, $\pi:\Cc\to \So$, of
  connected complex analytic spaces whose fibers are connected,
  genus-$0$ curves, for every section, $\sigma:\So \to \Cc$, of $\pi$,
  the \textbf{normalized section bundle} is
  $\pi^*\psi(\underline{\sigma})$, where $\psi$ is
  $\sigma^*\omega_\pi$.
\end{defn}

\mni
The pullback by $\sigma$ of the normalized section bundle is
isomorphic to $\OO_{\So}$.  Thus, there is a complex,
$$
\ZZ[\pi^*\psi(\underline{\sigma})] \to \text{Pic}(\Cc)
\xrightarrow{\sigma^*} \text{Pic}(\So).
$$
Similarly, via the first Chern class, there is a complex of singular
cohomology groups,
$$
\ZZ\cdot c_1(\pi^*\psi(\underline{\sigma})) \to H^2(\Cc;\ZZ)
\xrightarrow{\sigma^*} H^2(\So;\ZZ).
$$

\begin{lem} \label{lem-psi} \marpar{lem-psi}
  Both of the complexes above are short exact sequences that are split
  by the pullback map $\pi^*$.
\end{lem}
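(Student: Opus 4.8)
The plan is to treat $\pi$ as a $\CC\PP^1$-bundle equipped with a section and to exhibit both complexes as the expected direct-sum decomposition, handling $\text{Pic}$ first and then bootstrapping to $H^2(-;\ZZ)$ through the exponential sequence.

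\textbf{Setup.} Since $\pi\circ\sigma=\mathrm{id}_{\So}$, the pullback $\pi^*$ is a right inverse to $\sigma^*$ on $\text{Pic}$, on $H^i(-;\OO)$, and on $H^i(-;\ZZ)$; hence in each case $\sigma^*$ is split surjective, $\pi^*$ is split injective, and the group on $\Cc$ decomposes as $\pi^*(\text{group on }\So)\oplus\ker(\sigma^*)$, compatibly with the exponential maps. So it suffices to identify $\ker(\sigma^*)$. First, the normalized section bundle lies in $\ker(\sigma^*)$: writing $\underline{\sigma}\subset\Cc$ for the divisor $\sigma(\So)$ (an effective Cartier divisor, as $\sigma$ is locally a graph), the relation $d(\pi\circ\sigma)=\mathrm{id}$ exhibits $T_{\sigma(\So)}$ as a complement to $T_\pi$ along $\sigma(\So)$, so $N_{\sigma(\So)/\Cc}\cong T_\pi|_{\sigma(\So)}$ and therefore $\sigma^*\OO_\Cc(\underline{\sigma})\cong\sigma^*T_\pi\cong(\sigma^*\omega_\pi)^{-1}=\psi^{-1}$, whence $\sigma^*(\pi^*\psi(\underline{\sigma}))\cong\psi\otimes\psi^{-1}\cong\OO_{\So}$. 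Second, restricting to a fiber $\Cc_s\cong\CC\PP^1$, the divisor $\underline{\sigma}$ meets $\Cc_s$ in a single reduced point while $\pi^*\psi$ is trivial there, so $\pi^*\psi(\underline{\sigma})$ restricts to $\OO_{\CC\PP^1}(1)$; hence no nonzero power of it is trivial and no nonzero multiple of its first Chern class vanishes, i.e. the first maps $\ZZ[\pi^*\psi(\underline{\sigma})]\to\text{Pic}(\Cc)$ and $\ZZ\cdot c_1(\pi^*\psi(\underline{\sigma}))\to H^2(\Cc;\ZZ)$ are injective.

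\textbf{The $\text{Pic}$ case.} Given $\mcL\in\text{Pic}(\Cc)$ with $\sigma^*\mcL\cong\OO_{\So}$, let $n$ be its fiber degree (constant since $\So$ is connected) and set $\mcL'=\mcL\otimes(\pi^*\psi(\underline{\sigma}))^{-n}$, still in $\ker(\sigma^*)$ and now of fiber degree $0$. A degree-$0$ line bundle on $\CC\PP^1$ is trivial, so by Grauert's base-change theorem $\pi_*\mcL'$ is invertible and the counit $\pi^*\pi_*\mcL'\to\mcL'$ is an isomorphism on each fiber, hence an isomorphism; thus $\mcL'\cong\pi^*\pi_*\mcL'$ and $\pi_*\mcL'\cong\sigma^*\pi^*\pi_*\mcL'\cong\sigma^*\mcL'\cong\OO_{\So}$, giving $\mcL'\cong\OO_\Cc$ and $\mcL\cong(\pi^*\psi(\underline{\sigma}))^{n}$. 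Therefore $\ker(\sigma^*\colon\text{Pic}(\Cc)\to\text{Pic}(\So))=\ZZ[\pi^*\psi(\underline{\sigma})]$, which together with the Setup gives the first short exact sequence, split by $\pi^*$.

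\textbf{The $H^2$ case.} Both $\sigma^*$ and $\pi^*$ are morphisms of the exponential long exact sequences (of $\Cc$ and of $\So$) with $\sigma^*\circ\pi^*=\mathrm{id}$; since the boundary maps commute with $\pi^*$ and $\sigma^*$, the exponential sequence of $\Cc$ splits as the direct sum of $\pi^*$ of that of $\So$ and the termwise kernel of $\sigma^*$, so the latter sequence is itself exact. Now $\pi_*\OO_\Cc=\OO_{\So}$ and $R^{\geq1}\pi_*\OO_\Cc=0$ (genus-$0$ fibers), so the Leray spectral sequence makes $\pi^*\colon H^i(\So;\OO_{\So})\to H^i(\Cc;\OO_\Cc)$ an isomorphism, hence $\ker(\sigma^*)$ vanishes on every $H^i(-;\OO)$. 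Feeding this and the $\text{Pic}$ computation into the exact $\ker(\sigma^*)$-sequence extracted from $H^1(-;\OO)\to\text{Pic}\xrightarrow{c_1}H^2(-;\ZZ)\to H^2(-;\OO)$ yields
$$0\to\ZZ[\pi^*\psi(\underline{\sigma})]\xrightarrow{\,c_1\,}\ker\bigl(\sigma^*\colon H^2(\Cc;\ZZ)\to H^2(\So;\ZZ)\bigr)\to 0,$$
so $c_1$ carries $\ZZ[\pi^*\psi(\underline{\sigma})]$ isomorphically onto that kernel; with the Setup this gives the second short exact sequence, split by $\pi^*$.

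\textbf{Main obstacle.} No step is deep; the care points are remaining within the stated analytic generality (so one invokes Grauert's coherence and base-change theorems, valid over an arbitrary — possibly non-reduced, non-compact — base, rather than projectivity or GAGA), verifying that the section bundle is genuinely $\sigma$-normalized via $\sigma^*\omega_\pi\cong N_{\sigma(\So)/\Cc}^{-1}$, and checking that the "new part" $\ker(\sigma^*)$ of the two exponential sequences is again exact as claimed.
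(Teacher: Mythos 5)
Your proof is correct. The $\text{Pic}$ half is essentially the paper's argument: twist by a power of the normalized section bundle to reach relative degree $0$, apply cohomology-and-base-change to see that the counit $\pi^*\pi_*\mcL'\to\mcL'$ is a fiberwise (hence global) isomorphism, and conclude triviality from triviality of $\sigma^*\mcL'$. You also supply two points the paper leaves implicit, namely the adjunction computation $\sigma^*\OO_{\Cc}(\underline{\sigma})\cong\sigma^*T_\pi\cong\psi^{-1}$ showing the normalized section bundle really lies in $\ker(\sigma^*)$, and the fiber-restriction argument for left exactness. Where you genuinely diverge is the $H^2(-;\ZZ)$ statement: the paper runs the Leray--Serre spectral sequence for $\pi$ with $\ZZ$-coefficients and uses the section class, which restricts to the generator of $H^2$ of the fiber, to kill the transgression $d_3\colon E_3^{0,2}\to H^3(\So;\ZZ)$; you instead split the exponential long exact sequences of $\Cc$ and $\So$ along the retraction $(\pi^*,\sigma^*)$, note that $\sigma^*$ is an isomorphism on $H^i(-;\OO)$ because $R^{\geq 1}\pi_*\OO_{\Cc}=0$, and deduce that $c_1$ maps $\ker(\sigma^*)$ on $\text{Pic}$ isomorphically onto $\ker(\sigma^*)$ on $H^2(-;\ZZ)$. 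Your route buys a purely formal deduction of the topological statement from the holomorphic one (and the "retract of an exact sequence is exact" step is clean); the paper's route is purely topological for that half and does not pass through the exponential sequence. Both are valid in the stated analytic generality (complex spaces are locally contractible, so sheaf and singular cohomology of $\ZZ$ agree, and the exponential sequence is functorial), so I see no gap.
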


\begin{proof}
  Pullback by the inclusion of the fiber establishes left exactness.
  Since $\pi\circ \sigma$ is the identity, the composition
  $\sigma^*\circ \pi^*$ is the identity.  This establishes right
  exactness as well as the splitting property.  It remains to prove
  exactness in the middle.

\mni
Up to twisting by an appropriate integer power of
$\pi^*\psi(\underline{\sigma})$, every holomorphic invertible sheaf
$\mcL$ on $\Cc$ has relative degree $0$ over $\So$.  By cohomology and
base change, and by vanishing of $h^1(\CC\PP^1,\OO_{\CC\PP^1})$, the
higher direct images $R^q\pi_*\mcL$ are zero for $q>0$, and
$\pi_*\mcL$ is an invertible sheaf $\mcA$.  Moreover, by adjointness
of $\pi_*$ and $\pi^*$, there is a natural morphism of coherent
sheaves,
$$
\pi^*\mcA = \pi^*\pi_*\mcL \to \mcL,
$$
compatible with arbitrary base change of $\So$.  In particular,
restricting to fibers of $\pi$, this morphism is an isomorphism.
Therefore, the invertible sheaf $\mcL$ is trivial if and only if
$\mcA=s^*\mcL$ is also trivial.  That establishes exactness in the
middle for the sequence of Picard groups.  Exactness in the middle for
cohomology follows from the Leray-Serre spectral sequence: existence
of the adjusted section class proves vanishing of the transgression
map to $H^3(\So;\ZZ)$.
\end{proof}

\begin{cor} \label{cor-psi1} \marpar{cor-psi1}
  For every integer $r\geq 1$, with the same hypotheses as above,
  $$
  c_1(\pi^*\psi(\underline{\sigma}))^r =\sigma_*(c_1(\psi)^{r-1}) +
  \pi^* c_1(\psi)^r.
  $$  
  For cohomology classes $D_i\in H^2(\Cc;\ZZ), i=0,\dots,r,$ in the
  kernel of $\sigma^*$ having $\pi$-relative degrees
  $\langle D_i,\eb \rangle \in H^2(\CC\PP^1;\ZZ) = \ZZ$, the Gysin
  pushforward equals
  $$
  \pi_*(D_0\smile \dots \smile D_r) = \langle D_0, \eb \rangle
  \cdots \langle D_r, \eb \rangle c_1(\psi)^r.
  $$  
  By multilinearity, the same holds for $D_i\in H^2(\Cc;\QQ)$,
  resp. $D_i\in H^2(\Cc;\RR)$.
\end{cor}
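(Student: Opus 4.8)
The plan is to express everything in terms of the single class $\xi:=c_1(\pi^*\psi(\underline{\sigma}))=\pi^*c_1(\psi)+[\underline{\sigma}]\in H^2(\Cc;\ZZ)$, where $[\underline{\sigma}]=\sigma_*(1)$ is the fundamental class of the section divisor, and then to run a short binomial expansion. The only inputs are Lemma \ref{lem-psi}, which identifies $\ker(\sigma^*)$ with $\ZZ\cdot\xi$, and the standard Gysin identities for the embedding $\sigma$ and the submersion $\pi$.

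First I would record the excess-intersection identities. The projection formula for $\sigma$ gives $[\underline{\sigma}]\smile\alpha=\sigma_*\sigma^*\alpha$ for all $\alpha\in H^*(\Cc)$, and $\pi\circ\sigma=\mathrm{id}$ gives $\sigma^*\pi^*\beta=\beta$. Since the normalized section bundle pulls back to $\OO_{\So}$ along $\sigma$ (as noted before Lemma \ref{lem-psi}), we have $\sigma^*\xi=0$, hence $\sigma^*[\underline{\sigma}]=-c_1(\psi)$ --- i.e. the section has normal bundle $\psi^\vee$; extracting the sign this way sidesteps any normal-bundle sign convention. Consequently $[\underline{\sigma}]^{k+1}=\sigma_*\big((\sigma^*[\underline{\sigma}])^k\big)=(-1)^k\sigma_*(c_1(\psi)^k)$ for $k\ge 0$, and more generally $\pi^*(c_1(\psi)^{r-k})\smile[\underline{\sigma}]^{k}=(-1)^{k-1}\sigma_*(c_1(\psi)^{r-1})$ for $1\le k\le r$.

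Part 1 then follows by expanding $\xi^r=\sum_{k=0}^{r}\binom{r}{k}\pi^*(c_1(\psi)^{r-k})\smile[\underline{\sigma}]^{k}$: the $k=0$ term is $\pi^*(c_1(\psi)^r)$, and the remaining terms collapse to $\big(\sum_{k=1}^{r}(-1)^{k-1}\binom{r}{k}\big)\,\sigma_*(c_1(\psi)^{r-1})=\sigma_*(c_1(\psi)^{r-1})$, using $\sum_{k=0}^{r}(-1)^k\binom{r}{k}=0$. For Part 2, observe that $\xi$ has $\pi$-relative degree $1$ (the section meets each fiber once), so Lemma \ref{lem-psi} forces $D_i=\langle D_i,\eb\rangle\,\xi$ for $D_i\in\ker\sigma^*$; hence $D_0\smile\cdots\smile D_r=\langle D_0,\eb\rangle\cdots\langle D_r,\eb\rangle\,\xi^{r+1}$, and it remains to apply $\pi_*$ to $\xi^{r+1}$. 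Using Part 1 with $r+1$ in place of $r$, $\pi_*(\xi^{r+1})=\pi_*\pi^*(c_1(\psi)^{r+1})+\pi_*\sigma_*(c_1(\psi)^r)$; the first summand is $c_1(\psi)^{r+1}\smile\pi_*(1)=0$ because $\pi_*$ lowers cohomological degree by $2$ and the fibers are $1$-dimensional, while $\pi_*\sigma_*=(\pi\circ\sigma)_*=\mathrm{id}$ gives $\pi_*(\xi^{r+1})=c_1(\psi)^r$. The $\QQ$- and $\RR$-coefficient versions follow by multilinearity.

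I do not expect a genuine obstacle: the mathematical content is carried entirely by Lemma \ref{lem-psi} and the two Gysin identities. The only places needing a little care are the sign $\sigma^*[\underline{\sigma}]=-c_1(\psi)$ (cleanest to deduce from $\sigma^*\xi=0$ rather than from an independent normal-bundle computation) and the degree bookkeeping making $\pi_*\pi^*$ vanish, which relies on the fibers being proper of complex dimension $1$ so that the Gysin map is defined and drops cohomological degree by exactly $2$.
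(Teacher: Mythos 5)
Your proof is correct and rests on exactly the same ingredients as the paper's: the decomposition $c_1(\pi^*\psi(\underline{\sigma}))=\pi^*c_1(\psi)+\sigma_*(1)$, the vanishing $\sigma^*c_1(\pi^*\psi(\underline{\sigma}))=0$ (equivalently $\sigma^*\sigma_*\alpha=-c_1(\psi)\smile\alpha$), Lemma \ref{lem-psi} to write each $D_i$ as $\langle D_i,\eb\rangle\, c_1(\pi^*\psi(\underline{\sigma}))$, and the projection formula together with $\pi_*\pi^*=0$ and $\pi_*\sigma_*=\mathrm{id}$ for the Gysin pushforward. The only difference is organizational: the paper proves the first identity by induction on $r$, whereas you obtain it in one step from the binomial expansion of $(\pi^*c_1(\psi)+[\underline{\sigma}])^r$ and the alternating sum of binomial coefficients; both computations are valid.
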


\begin{proof}
  The first formula is proved by induction on $r$.  For $r=1$, this is
  the Whitney sum formula for $c_1$.  By way of induction, let
  $r\geq 1$ be an integer such that the formula holds.  By the
  Projection Formula, $\sigma^*\sigma_*(\alpha)$ equals
  $-c_1(\psi)\smile \alpha$.  Thus,
  $\sigma^*c_1(\pi^*\psi(\underline{\sigma}))^r$ equals
  $$
  \sigma^*\sigma_*(c_1(\psi)^{r-1}) + \sigma^*\pi^*c_1(\psi)^r =
  -c_1(\psi)\smile c_1(\psi)^{r-1} + c_1(\psi)^r = 0.
  $$  
  Therefore,
  $$
  c_1(\pi^*\psi(\underline{\sigma}))^{r+1} = (\sigma_*(1) +
  \pi^*c_1(\psi)) \smile c_1(\pi^*\psi(\underline{\sigma}))^{r+1} =
  $$
  $$
  \sigma_*(\sigma^*c_1(\pi^*\psi(\underline{\sigma}))^r) +
  \pi^*c_1(\psi)\smile (\sigma_*(c_1(\psi)^{r-1}) + \pi^*c_1(\psi)^r) =
  $$
  $$
  \sigma_*0 + \sigma_*(c_1(\psi)^r) + \pi^*c_1(\psi)^{r+1}.
  $$
  So the formula is proved by induction on $r$.

\mni
Since every $D_i$ is in the kernel of $\sigma^*$, by Lemma
\ref{lem-psi}, $D_i$ equals
$\langle D_i, \eb \rangle c_1(\pi^*\psi(\underline{\sigma}))$.  Thus
$D_0\smile \dots \smile D_r$ equals
$\langle D_0,\eb \rangle \cdots \langle D_r,\eb \rangle
c_1(\pi^*\psi(\underline{\sigma}))^{r+1}$.  Combined with the formula
from the last paragraph and the Projection Formula, this gives the
formula for the Gysin pushforward class.
\end{proof}

\begin{prop} \label{prop-pushforward} \marpar{prop-pushforward}
  For every integer $r\geq -1$, with the same hypotheses as above, the
  $q^{\text{th}}$ higher direct image under $\pi$ of the $r$-times
  twisted invertible sheaf
  $\mc{F}_r := [\pi^*\psi(\ul{\sigma})]^{\otimes r}$ vanishes for
  $q> 0$.  Also, the pushforward $\mc{E}_{r}:= \pi_*\mc{F}_r$ is
  locally free of rank $r+1$ and compatible with arbitrary base change
  of $S$.  For every integer $r\geq 0$, there is a short exact
  sequence,
  $$
  \begin{CD}
  0 @>>> \psi\otimes_{\OO_{\So}} \mc{E}_{r-1} \to \mc{E}_r \to \OO_{\So} \to 0,
  \end{CD}
  $$
  where the second homomorphism is evaluation along $\sigma$.
\end{prop}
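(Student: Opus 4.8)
The plan is to reduce everything to the cohomology of line bundles on $\CC\PP^1$, applied fiberwise, together with the ideal-sheaf sequence of the section $\sigma$. First, since $\pi$ is a submersion with connected, genus-$0$ fibers, each fiber $\Cc_s$ is isomorphic to $\CC\PP^1$; since $\pi^*\psi$ restricts trivially to every fiber while $\sigma(\So)\cap\Cc_s$ is a single reduced point, the restriction $\mc{F}_r|_{\Cc_s}$ is isomorphic to $\OO_{\CC\PP^1}(r)$. Moreover $\sigma(\So)\subset\Cc$ is a relative effective Cartier divisor, and its conormal bundle is $\mcI_{\sigma(\So)}/\mcI_{\sigma(\So)}^2\cong\sigma^*\omega_\pi=\psi$, so that $N_{\sigma(\So)/\Cc}=\OO_\Cc(\sigma(\So))|_{\sigma(\So)}\cong\psi^\vee$; equivalently $\sigma^*\bigl(\pi^*\psi(\ul{\sigma})\bigr)\cong\OO_\So$, as already observed after Definition~\ref{def-nsb}, whence $\sigma^*\mc{F}_r\cong\OO_\So$ for every $r$.

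For part~(1): as in the proof of Lemma~\ref{lem-Q}, $R^q\pi_*$ of any coherent sheaf vanishes for $q\geq 2$ because the fibers are one-dimensional; hence $R^1\pi_*$ is the top direct image of a $\So$-flat sheaf and commutes with arbitrary base change. Therefore $R^1\pi_*\mc{F}_r\otimes k(s)\cong H^1(\CC\PP^1,\OO(r))=0$ for every $s\in\So$ whenever $r\geq -1$, and since $R^1\pi_*\mc{F}_r$ is coherent (Grauert's direct image theorem), Nakayama's lemma gives $R^1\pi_*\mc{F}_r=0$. Consequently $\pi_*\mc{F}_r$ commutes with arbitrary base change of $\So$ as well.

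For parts~(2) and~(3), I would construct $\mc{E}_r$ directly through the asserted sequence, by induction on $r$. Twisting the ideal-sheaf sequence $0\to\OO_\Cc(-\sigma(\So))\to\OO_\Cc\to\sigma_*\OO_\So\to 0$ by $\mc{F}_r$ and using the normal-bundle computation above yields, for $r\geq 0$, a short exact sequence on $\Cc$
$$
0\to \pi^*\psi\otimes\mc{F}_{r-1}\to \mc{F}_r\to \sigma_*\OO_\So\to 0,
$$
which is compatible with arbitrary base change because $\sigma(\So)$ is a relative Cartier divisor. Pushing forward and applying the projection formula, the vanishing $R^1\pi_*\mc{F}_{r-1}=0$ (legitimate since $r-1\geq -1$) together with $\pi_*\sigma_*\OO_\So=\OO_\So$ collapses the long exact sequence to
$$
0\to \psi\otimes_{\OO_\So}\mc{E}_{r-1}\to \mc{E}_r\to \OO_\So\to 0,
$$
with the second arrow the restriction to $\sigma(\So)$, i.e.\ evaluation along $\sigma$; this sequence remains exact after any base change, since the relevant $R^1$-term vanishes after base change. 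For the base of the induction, $\mc{E}_{-1}=\pi_*\mc{F}_{-1}=0$, because its fibers $H^0(\CC\PP^1,\OO(-1))$ vanish and $\pi_*\mc{F}_{-1}$ commutes with base change, while $\mc{E}_0=\pi_*\OO_\Cc=\OO_\So$ (split injective via $\sigma$, surjective by Nakayama and base change). Then, if $\mc{E}_{r-1}$ is locally free of rank $r$, the displayed sequence exhibits $\mc{E}_r$ as an extension of the line bundle $\OO_\So$ by the locally free sheaf $\psi\otimes\mc{E}_{r-1}$ of rank $r$, hence $\mc{E}_r$ is locally free of rank $r+1$; this completes the induction.

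The only delicate point — the main, and fairly mild, obstacle — is pinning the cokernel term down as exactly $\OO_\So$ rather than a twist of it: this is precisely the normal-bundle identity $N_{\sigma(\So)/\Cc}\cong\psi^\vee$, equivalently $\sigma^*\mc{F}_r\cong\OO_\So$; and one must also verify that the ideal-sheaf sequence of $\sigma(\So)$ stays exact under every base change and that the $R^1$-term in its pushforward remains zero, which is what makes every assertion "compatible with arbitrary base change".
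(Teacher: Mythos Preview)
Your proof is correct and follows essentially the same route as the paper: both arguments hinge on the short exact sequence $0\to\pi^*\psi\otimes\mc{F}_{r-1}\to\mc{F}_r\to\sigma_*\OO_\So\to 0$ obtained from the ideal of $\sigma(\So)$, push it forward, and induct on $r$ to get local freeness and the exact sequence for $\mc{E}_r$. The only cosmetic difference is that you establish $R^1\pi_*\mc{F}_r=0$ for all $r\geq -1$ in one stroke (top direct image commutes with base change, and $H^1(\CC\PP^1,\OO(r))=0$), whereas the paper obtains the same vanishing inductively from the long exact sequence; either way the induction then proceeds identically.
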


\begin{proof}
  All parts are proved by induction on $r$.  When $r$ equals $-1$,
  since all cohomology on fibers is zero, the result follows by
  cohomology and base change.  For every $r\geq 0$, restriction to
  $\sigma$ defines a short exact sequence,
  $$
  \begin{CD}
  0 @>>> \pi^*(\psi)\otimes_{\OO_{\Cc}}\mc{F}_{r-1} @>>> \mc{F}_r @>>>
  \sigma_*\OO_{\So} @>>> 0.
  \end{CD}
  $$
  By the long exact sequence of higher direct images,
  $R^q\pi_*\mc{F}_r$ vanishes if both $R^q\pi_*\sigma_*\OO_{\So}$
  vanishes and $R^q\pi_*\mc{F}_{r-1}$ vanishes.  Since
  $\sigma\circ \pi$ is the identity, of course
  $R^q\pi_*\sigma_*\OO_{\So}$ vanishes for all $q>0$.  By the
  induction hypothesis, also $R^q\pi_*\mc{F}_{r-1}$ vanishes for all
  $q>0$.  Thus, also $R^1\pi_*\mc{F}_r$ vanishes for all $q>0$.  Thus,
  the long exact sequence reduces to a short exact sequence,
  $$
  \begin{CD}
  0 @>>> \psi\otimes_{\OO_{\So}} \mc{E}_{r-1} \to \mc{E}_r \to \OO_{\So} \to 0.
  \end{CD}
  $$
  By the induction hypothesis, the first term is locally free of rank
  $r$, so that also the third term is locally free of rank $r+1$.
\end{proof}

\begin{notat} \label{notat-free} \marpar{notat-free}
  For every connected, compact K\"{a}hler manifold $(\mm,\jj,\oom)$,
  for every $\jj$-irreducible, $\text{ev}$-dominant class $\eb$, for
  the evaluation morphism
  $$
  \text{ev}:\Kgnb{0,1}((\mm,\jj),\eb) \to \mm,
  $$
  denote the inverse image open subset $\text{ev}^{-1}(\mm_\eb)$ by
  $\Kgnb{0,1}^o((\mm,\jj),\eb)$.  Denote by
  $\pi_{\eb}^o:\Cc_{\eb}^o\to \Kgnb{0,1}^o((\mm,\jj),\eb)$ the
  following fiber product,
  $$
  \begin{CD}
  \Cc_{\eb}^o @>\text{pr}_1 >> \Kgnb{0,1}((\mm,\jj),\eb) \\
  @V \pi_{\eb}^o VV @VV \pi V \\
  \Kgnb{0,1}^o((\mm,\jj),\eb) @>> \pi|_{F} > \Kgnb{0,0}((\mm,\jj),\eb)
  \end{CD}.
  $$
  The graph of the inclusion morphism
  $\Kgnb{0,1}^o((\mm,\jj),\eb) \hookrightarrow
  \Kgnb{0,1}((\mm,\jj),\eb)$ is a section of $\pi_{\eb}^o$, denoted
  $$
  \sigma_{\eb}^o:\Kgnb{0,1}^o((\mm,\jj),\eb) \to \Cc_{\eb}^o.
  $$
  The composition $\text{ev}\circ \text{pr}_1$ is a morphism of complex
  analytic spaces, denoted
  $$
  u_{\eb}^o:\Cc_{\eb}^o \to \mm.
  $$
  For every complex analytic space $\So$ and for every morphism of
  complex analytic spaces $\zeta:\So\to \Kgnb{0,1}^o((\mm,\jj),\eb)$,
  denote the pullback diagram by
  $$
  \zeta: \ \ (\Cc_\zeta \xrightarrow{\pi_\zeta} \So, \sigma_\zeta:\So
  \to \Cc_\zeta, u_\zeta: \Cc_\zeta \to \mm).
  $$
\end{notat}

\begin{prop} \label{prop-NK} \marpar{prop-NK}
  For every connected, compact K\"{a}hler manifold $(\mm,\jj,\oom)$,
  for every $\jj$-irreducible, $\text{ev}$-dominant class $\eb$, for
  every morphism $\zeta:\So \to \Kgnb{0,1}^o((\mm,\jj),\eb)$ of
  complex analytic spaces such that $\text{ev}\circ \zeta$ is constant
  with image $x\in\mm$, the pullback class $u_\zeta^*[\oom]$ equals
  $\langle \oom,\beta \rangle c_1(\pi^*\psi(\underline{\sigma}))$.  If
  there exists a section $\wt{\sigma}:\So \to \Cc_\zeta$ that is
  disjoint from $\sigma$ and with $u_\zeta\circ\wt{\sigma}$ constant,
  then $\zeta$ is locally constant.  Conversely, if $\So$ is compact
  of (pure) complex dimension $r$ and if $\zeta$ is generically finite
  to its (closed analytic) image, then $\int_{\So}\zeta^* c_1(\psi)^r$
  is positive.
\end{prop}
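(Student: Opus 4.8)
\mni
The three parts are largely independent, and the plan is to take them in order.  For the first identity, reduce to the case that $\So$, hence $\Cc_\zeta$, is connected (otherwise work on each component).  Since $\text{ev}\circ\zeta$ is constant, so is $u_\zeta\circ\sigma_\zeta$, hence $\sigma_\zeta^*(u_\zeta^*[\oom])=0$ in $H^2(\So;\RR)$; by Lemma \ref{lem-psi} the class $u_\zeta^*[\oom]\in H^2(\Cc_\zeta;\RR)$ is then a real multiple of $c_1(\pi^*\psi(\underline{\sigma}))$.  Restricting to a fiber of $\pi_\zeta$ — a $\CC\PP^1$ on which $u_\zeta$ represents $\eb$ and which meets $\sigma_\zeta$ transversally once — the first class has degree $\langle[\oom],\eb\rangle$ and the second degree $1$, which identifies the multiple.

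\mni
Both remaining parts I would reduce to one rigidity statement, proved by the Hodge index theorem: there is no $\CC\PP^1$-bundle $V\to B$ over a smooth projective curve carrying two \emph{distinct} sections $\Sigma_1,\Sigma_2$, each contracted to a point by a holomorphic map $f:V\to\mm$ with two-dimensional image.  Indeed $f^*[\oom]$ is then nef with $(f^*[\oom])^2>0$, so the intersection form is negative definite on its orthogonal complement in $H^{1,1}(V;\RR)$; the nonzero classes $[\Sigma_i]$ lie there because $f_*[\Sigma_i]=0$, whence $\Sigma_1^2<0$ and $\Sigma_2^2<0$; but two distinct sections of a $\CC\PP^1$-bundle over a curve satisfy $\Sigma_1^2+\Sigma_2^2=2(\Sigma_1\cdot\Sigma_2)\geq0$, a contradiction.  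When $f$ has one-dimensional image, $\jj$-irreducibility of $\eb$ forces every fiber of $V\to B$ to map birationally onto the common image curve (a multiple cover would contradict $\jj$-irreducibility), and then only finitely many points of $\Kgnb{0,1}((\mm,\jj),\eb)$ occur, so a connected $B$ goes to a point there; a zero-dimensional image is excluded by $\eb\neq0$.

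\mni
For the second part, suppose $\zeta$ is not locally constant and pick a smooth connected curve $B$ mapping to $\So$ on which $\zeta$ is non-constant (normalize an irreducible curve through a point where $d\zeta\neq0$); pull back the whole configuration.  Because $x\in\mm_\eb$ and $\text{ev}\circ\zeta\equiv x$, the restricted $\zeta$ factors through the compact projective fiber $F=\text{ev}^{-1}(x)$, so it extends over a smooth projective compactification $\ol{B}$; as the universal curve over $F$ is a $\CC\PP^1$-bundle, this produces a $\CC\PP^1$-bundle $\Cc_{\ol{B}}\to\ol{B}$ with a holomorphic map $\ol{u}$ to $\mm$.  The marked-point section extends to a section $\ol{\sigma}$, still contracted to $x$; the closure of $\wt{\sigma}$ is, by Zariski's main theorem, a section $\wt{\sigma}'$, still contracted to $\wt{x}:=u_\zeta(\wt{\sigma}(\So))$; over $B$ the two are disjoint, hence distinct on $\ol{B}$.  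By the rigidity statement the image of $\ol{u}$ is not a surface, is not a point ($\eb\neq0$), and if it is a curve then $\zeta|_B$ is constant — each alternative contradicting the choice of $B$.  Hence $\zeta$ is locally constant.

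\mni
For the last part, take $\So$ connected of pure dimension $r$ (the case $r=0$ is trivial).  By the first part $u_\zeta^*[\oom]=\langle[\oom],\eb\rangle\,c_1(\pi^*\psi(\underline{\sigma}))$ lies in the kernel of $\sigma_\zeta^*$ with fiber-degree $\langle[\oom],\eb\rangle$, so Corollary \ref{cor-psi1} gives $\pi_{\zeta*}\big((u_\zeta^*[\oom])^{r+1}\big)=\langle[\oom],\eb\rangle^{r+1}\zeta^*c_1(\psi)^r$, and integration over $\So$ yields
$$
\langle[\oom],\eb\rangle^{r+1}\int_{\So}\zeta^*c_1(\psi)^r=\int_{\Cc_\zeta}\big(u_\zeta^*\oom\big)^{r+1}\geq 0,
$$
since $u_\zeta^*\oom$ is a closed semipositive $(1,1)$-form on the compact $(r+1)$-fold $\Cc_\zeta$; this integral is strictly positive precisely when $u_\zeta$ is generically finite onto its image.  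It remains to show that generic finiteness of $\zeta$ onto its image forces the same for $u_\zeta$: each fiber of $\pi_\zeta$ maps birationally onto its image curve ($\jj$-irreducibility again), and, the marked point being fixed, that image curve determines the pointed map up to finitely many choices, so the image curves of the fibers form an $r$-dimensional family of rational curves through $x$ with union $u_\zeta(\Cc_\zeta)$.  If this union had dimension $\leq r$, a general point $z\neq x$ of it would lie on a positive-dimensional subfamily, and marking a preimage of $x$ and a preimage of $z$ on the members would build a $\CC\PP^1$-bundle over a smooth projective curve with two disjoint sections contracted to $x$ and to $z$, violating the rigidity statement; hence $u_\zeta(\Cc_\zeta)$ has dimension $r+1$, $u_\zeta$ is generically finite, and the integral is positive.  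I expect the main obstacles to be exactly these reductions — producing and compactifying a curve in part two, and the dimension count for the swept family in part three — while the Hodge-index rigidity does the essential work throughout.
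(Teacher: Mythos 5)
Your part~1 is the paper's argument verbatim, and the outer shell of your part~3 (the Gysin computation via Corollary \ref{cor-psi1} and the identification of $\int_{\So}\zeta^*c_1(\psi)^r$ with $\int_{\Cc_\zeta}(u_\zeta^*\oom)^{r+1}$ up to the factor $\langle\oom,\eb\rangle^{r+1}$) also matches. The genuine gap is in part~2, and it propagates into the finiteness step of part~3. Your reduction to the Hodge-index rigidity statement on a $\CC\PP^1$-bundle over a \emph{smooth projective curve} requires you to produce such a curve inside $\So$ and to extend the whole configuration over its compactification. But the proposition is stated, and is later applied, for an arbitrary complex analytic space $\So$: in the proof of the third assertion it is applied to the analytic fibers $u_\zeta^{-1}(\{\wt{x}\})$, and in Corollary \ref{cor-NK} to fibers of $(\text{ev},\xi)$. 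Such an $\So$ need not contain any compactifiable curve at all (think of a polydisk, or of a compact analytic space with no compact curves), and even when a curve $B$ exists, a holomorphic map from a non-compact curve into a compact analytic space need not extend across a compactification $\ol{B}$, so ``it extends over a smooth projective compactification'' is unjustified. Worse, your appeal to ``the compact \emph{projective} fiber $F=\text{ev}^{-1}(x)$'' is circular: projectivity of $F_{\eb,x}$ for every $x\in \mm_\eb$, indeed ampleness of $\psi$ there, is exactly what Proposition \ref{prop-psi} establishes, and its proof (via Nakai--Moishezon) invokes the present proposition; at this stage $F_{\eb,x}$ is only known to be a compact complex manifold.

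The paper avoids all of this by staying cohomological on $\So$ itself: from part~1 and the disjointness of $\wt{\sigma}$ from $\sigma$ one gets $\wt{\sigma}^*u_\zeta^*[\oom]=\langle\oom,\eb\rangle\,c_1(\psi)$, which vanishes because $u_\zeta\circ\wt{\sigma}$ is constant; hence $c_1(\psi)$ is torsion in $H^2(\So;\ZZ)$, and after passing to a finite unbranched cover $\psi$ is trivial, so $\OO_{\Cc_\zeta}(\ul{\wt{\sigma}})\cong\OO_{\Cc_\zeta}(\ul{\sigma})$ and the pencil spanned by the two disjoint sections trivializes $\Cc_\zeta\cong\CC\PP^1\times\So$; then $u_\zeta$ factors through the projection to $\CC\PP^1$ and $\zeta$ is constant. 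Your Hodge-index rigidity lemma is correct and is indeed the classical statement in the projective category, but to deploy it here you would first need to know that $\So$ (or the fibers $u_\zeta^{-1}(\{\wt{x}\})$ appearing in part~3) carries enough compact algebraic curves, which is not available before Proposition \ref{prop-psi}. Replace the curve-hunting in part~2 by the one-line pullback computation along $\wt{\sigma}$, and part~3 can then proceed exactly as you wrote it, with the finiteness of $u_\zeta^{-1}(\{\wt{x}\})$ supplied by the corrected part~2 rather than by a dimension count on an incidence variety.
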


\begin{proof}
  Since $u_\zeta\circ \sigma$ equals $\text{ev}\circ \zeta$, also
  $\sigma^*u_\zeta^*$ is the zero homomorphism.  Thus, by Lemma
  \ref{lem-psi}, $u_\zeta^*[\omega]$ equals a multiple of the first
  Chern class of the adjusted section bundle.  Restricting to a fiber
  of $\pi_\zeta$, that multiple equals $\langle \oom, \beta\rangle$.

\mni
If there exists a section $\wt{\sigma}$ that is disjoint from
$\sigma$, then $\wt{\sigma}^*u_\zeta^*[\omega]$ equals
$\langle \oom,\beta \rangle c_1(\psi)$ in $H^2(\So;\RR)$.  If
$u_\zeta\circ \wt{\sigma}$ is constant, then $c_1(\psi)$ is contained
in the torsion kernel of $H^2(\So;\ZZ) \to H^2(\So;\RR)$.  This
torsion class determines a finite, unbranched cover of $\So$.  Up to
replacing $\So$ by this cover, $c_1(\psi)$ is zero.  By the Lefschetz
$(1,1)$ Theorem, the holomorphic invertible sheaf $\psi$ is trivial.
By Lemma \ref{lem-psi}, the invertible sheaf
$\OO_{\Cc_\zeta}(\underline{\wt{\sigma}})$ equals
$\OO_{\Cc_\zeta}(\underline{\sigma})$.  Thus, the two members $\sigma$
and $\wt{\sigma}$ span a pencil of divisors associated to this common
invertible sheaf, and that pencil defines an isomorphism of
$\Cc_\zeta$ with $\CC\PP^1\times \So$ over $\So$ such that the two
sections $\sigma$ and $\wt{\sigma}$ are the zero and infinity
sections.  Since $u_\zeta^*[\oom]$ is the pullback of a class via the
projection
$$
\text{pr}_{\CC\PP^1}: \Cc_\zeta \to \CC\PP^1,
$$
there exists a unique morphism $u:\CC\PP^1\to \mm$ such that $u_\zeta$
equals $u\circ \text{pr}_{\CC\PP^1}$.  Thus, the morphism $\zeta$ is
constant.

\mni
By Corollary \ref{cor-psi1},
$$
u_\zeta^*[\oom]^r = \langle \oom, \beta
\rangle^r\lt(\sigma_*(\zeta^*c_1(\psi)^{r-1}) + \pi^*\zeta^*c_1(\psi)^r\rt),
$$
where $\sigma_*$ is the Gysin pushforward homomorphism.  Assume now
that $\So$ is compact and connected of dimension $r$ and that $\zeta$
is generically finite to its image.  To prove that the integral is
positive, it suffices to replace $\So$ by the image of $\zeta$, since
the integral on $\So$ equals the generic degree of $\zeta$ times the
integral on the image of $\So$.  Thus, assume that $\So$ is a closed
analytic subspace of $F_{\eb,x}$.  The claim is that the associated
morphism
$$
u_\zeta:\Cc_\zeta\to \mm
$$
has finite fibers except over $x$.  For any
$\wt{x}\in \mm\setminus\{x\}$, for the inverse image closed analytic
subspace $\wt{\So} = u_\zeta^{-1}(\{\wt{x}\})$ of $\Cc_\zeta$, for the
composition
$$
\zeta\circ \pi_\zeta:\wt{\So} \to \So \to F_{\eb,x},
$$
the base change family has a section $\wt{\sigma}$ with
$u_{\wt{\zeta}}\circ \wt{\sigma}$ constant to $\wt{x}$.  By the
previous paragraph, $\wt{\zeta}$ is locally constant, i.e., $\wt{\So}$
is a finite set.  This proves the claim.

\mni
Since $\Cc_\zeta$ is a compact complex analytic space of pure
dimension $r+1$, and since $u_\zeta$ is generically finite, the
integral
$$
\int_{\Cc_\zeta} u_\zeta^*[\oom]^{r+1}
$$
is positive.  Since $\zeta^*c_1(\psi)^{r+1}$ is zero on the
$r$-dimensional complex analytic space $\So$, the inductive formula
above gives
$$
\int_{\Cc_\zeta} u_\zeta^*[\oom]^{r+1} = \langle \oom,\eb
\rangle^{r+1}
\int_{\Cc_\zeta} s_* \zeta^*c_1(\psi)^r = \langle \oom,\eb
\rangle^{r+1} \int_{\So} \zeta^* c_1(\psi)^r.
$$
Thus, the integral of $\zeta^*c_1(\psi)^r$ is positive.
\end{proof}

\begin{cor} \label{cor-NK} \marpar{cor-NK}
  For every connected, compact K\"{a}hler manifold $(\mm,\jj,\oom)$,
  for every $\jj$-irreducible, $\text{ev}$-dominant class $\eb$, for
  the Barlet space $\text{Barlet}(\mm,\jj)$, and for the family $\xi$
  of $1$-cycles in $(\mm,\jj)$ that is the image of
  $$
  (\pi_{\eb}^o,u_{\eb}^o):\Cc_{\eb}^o \to \Kgnb{0,1}^o((\mm,\jj),\eb)
  \times \mm,
  $$
  the associated morphism of complex analytic spaces,
  $$
  (\text{ev},\xi):\Kgnb{0,1}^o((\mm,\jj),\eb) \to \mm_\eb \times
  \text{Barlet}(\mm,\jj), 
  $$ 
  is finite.  In particular, for every $x\in \mm_\eb$, the fiber
  $F_{\eb,x}$ of $\text{ev}$ is in Fujiki class $C$.
\end{cor}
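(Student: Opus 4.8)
The plan is to check separately that $(\text{ev},\xi)$ is proper and that it has finite fibres, and then to read off the statement about $F_{\eb,x}$. Properness is formal: composing $(\text{ev},\xi)$ with the first projection $\text{pr}_1\colon \mm_\eb\times\text{Barlet}(\mm,\jj)\to\mm_\eb$ recovers $\text{ev}$, which is proper because it is the restriction over the open subset $\mm_\eb$ of the evaluation morphism from the compact space $\Kgnb{0,1}((\mm,\jj),\eb)$; and $\text{pr}_1$ is separated because the Barlet space is a (Hausdorff) complex analytic space. Hence $(\text{ev},\xi)$ is proper, and a proper morphism of complex analytic spaces with finite fibres is finite, so everything reduces to bounding the fibres.

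The crucial input is that a free $\jj$-holomorphic stable map $u\colon\CC\PP^1\to\mm$ whose class $\eb=u_*[\CC\PP^1]$ is $\jj$-irreducible must be birational onto its image. Indeed, set $C:=u(\CC\PP^1)$, let $\nu\colon\wt{C}\to C$ be the normalisation, and factor $u=\nu\circ v$ with $v\colon\CC\PP^1\to\wt{C}$ finite of degree $d\geq 1$; since $\wt{C}$ is a smooth curve admitting a surjection from $\CC\PP^1$, it is isomorphic to $\CC\PP^1$. Now $u^*T^{1,0}_{\mm,\jj}=v^*\bigl(\nu^*T^{1,0}_{\mm,\jj}\bigr)$, and pullback along $v$ multiplies each splitting degree of $\nu^*T^{1,0}_{\mm,\jj}$ by $d$; so $u^*T^{1,0}_{\mm,\jj}$ is globally generated if and only if $\nu^*T^{1,0}_{\mm,\jj}$ is, that is, if and only if the class $[C]:=\nu_*[\CC\PP^1]$ is $\jj$-free. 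If $d\geq 2$, then $\eb=d[C]=(d-1)[C]+[C]$ would express $\eb$ as a nonzero sum of $\jj$-free classes together with the class of the $\jj$-holomorphic sphere $\nu$, contradicting $\jj$-irreducibility of $\eb$. Hence $d=1$.

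Now fix a point $(x,Z)$ with $x\in\mm_\eb$ and suppose that the fibre of $(\text{ev},\xi)$ over it contains a positive-dimensional compact analytic subset. Normalising a positive-dimensional irreducible component of that fibre produces a connected, normal, compact, positive-dimensional space $\So$ together with a nonconstant morphism $\zeta\colon\So\to\Kgnb{0,1}^o((\mm,\jj),\eb)$ for which $(\text{ev},\xi)\circ\zeta$ is constant, say equal to $(x,Z)$. Pulling the universal curve back along $\zeta$ gives a $\CC\PP^1$-bundle $\pi_\zeta\colon\Cc_\zeta\to\So$ (so $\Cc_\zeta$ is normal), its marked-point section $\sigma$, and a morphism $u_\zeta\colon\Cc_\zeta\to\mm$; since $\text{ev}\circ\zeta$ is the constant point $x\in\mm_\eb$, the definition of $\mm_\eb$ and Lemma \ref{lem-free} force $u_\zeta$ to restrict to a free map on every fibre of $\pi_\zeta$, hence, by the previous paragraph, to a map birational onto its image. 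As $\xi\circ\zeta$ is the constant cycle $Z$, that common image is $C:=|Z|$, so $u_\zeta$ factors as $\Cc_\zeta\xrightarrow{g}\wt{C}\xrightarrow{\nu}C\subseteq\mm$ with $\wt{C}\cong\CC\PP^1$ and with $g$ restricting to an isomorphism on each fibre of $\pi_\zeta$. In particular $g$ is everywhere submersive, so for $y\in\wt{C}$ with $\nu(y)\neq x$ the subspace $g^{-1}(y)$ is a smooth divisor meeting each fibre of $\pi_\zeta$ in a single reduced point; thus $\pi_\zeta$ restricts to a proper bijection $g^{-1}(y)\to\So$, which is an isomorphism because $\So$ is normal, and its inverse is a section $\wt{\sigma}$ of $\pi_\zeta$. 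It is disjoint from $\sigma$ (on $g^{-1}(y)$ one has $u_\zeta=\nu(y)\neq x=u_\zeta\circ\sigma$) and satisfies $u_\zeta\circ\wt{\sigma}\equiv\nu(y)$ constant, so Proposition \ref{prop-NK} forces $\zeta$ to be locally constant, hence constant since $\So$ is connected — contradicting nonconstancy. Therefore every fibre of $(\text{ev},\xi)$ is finite, so $(\text{ev},\xi)$ is finite; restricting to $F_{\eb,x}=\text{ev}^{-1}(x)$ gives a finite morphism from the compact space $F_{\eb,x}$ to $\{x\}\times\text{Barlet}(\mm,\jj)$, and since the irreducible components of the Barlet space of a compact K\"{a}hler manifold lie in Fujiki class $C$, and Fujiki class $C$ is inherited by compact complex spaces finite over a class-$C$ space, $F_{\eb,x}$ lies in Fujiki class $C$.

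The main obstacle is the birationality claim of the second paragraph: it is precisely what excludes multiple-cover maps and makes the fibres finite, and it rests on combining global generation of $u^*T^{1,0}_{\mm,\jj}$ with the definition of $\jj$-irreducibility. The remaining ingredients — properness, extraction of the test family $\So$, construction of the section $\wt{\sigma}$, and the appeal to Proposition \ref{prop-NK} — are routine; a pleasant feature of running the birationality argument fibrewise is that $g$ being a fibrewise isomorphism makes $g^{-1}(y)$ a genuine section of $\pi_\zeta$ with no auxiliary base change needed.
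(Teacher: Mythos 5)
Your proof is correct, and its skeleton (properness plus finiteness of fibres, with the finiteness ultimately delivered by the ``two disjoint sections with constant images force $\zeta$ to be locally constant'' criterion of Proposition \ref{prop-NK}) matches the paper's; but the mechanism by which you produce the second section is genuinely different. The paper's proof of Corollary \ref{cor-NK} simply invokes the claim established inside the proof of Proposition \ref{prop-NK}: for any closed analytic $\So\subseteq F_{\eb,x}$, the map $u_\zeta$ has finite fibres away from $x$, because base-changing the family along $u_\zeta^{-1}(\{\wt{x}\})\to\So$ carries a tautological section mapping constantly to $\wt{x}\neq x$; hence only finitely many maps in $F_{\eb,x}$ pass through any second point of $\mm$, and a fortiori only finitely many share a prescribed image cycle. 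You instead first prove that every free map with $\jj$-irreducible class is birational onto its image --- a lemma the paper never states, but which is sound, since $d\geq 2$ would yield the decomposition $\eb=(d-1)[C]+[C]$ with $[C]$ free (hence in the relevant semigroup) and equal to the class of a $\jj$-holomorphic sphere, exactly the kind of decomposition Definition \ref{defn-Moricone} forbids --- and then use constancy of the cycle to factor $u_\zeta$ through the normalization $\wt{C}\cong\CC\PP^1$ of the common image, so that $g^{-1}(y)$ is an honest section of the \emph{original} family, disjoint from the marked-point section, and Proposition \ref{prop-NK} applies with no auxiliary base change. The paper's route is shorter because the needed finiteness is already packaged in Proposition \ref{prop-NK}'s proof; your route costs an extra lemma but is more self-contained at the level of the stated propositions, yields the (useful, and stronger-looking) fact that the maps parameterized by $F_{\eb,x}$ are birational onto their images, and keys the finiteness directly on the Barlet-space datum rather than on passage through a second point.
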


\begin{proof}
  By Proposition \ref{prop-NK}, for every $x\in \mm_\eb$, for every
  genus-$0$, $1$-pointed stable map,
  $$
  (\Cc,p,v:\Cc\to \mm_\eb)
  $$ 
  parameterized by a point of $F_{\eb,x}$, for every
  $\wt{x}\in v(\Cc)\setminus \{x\}$ there are at most finitely many
  other points of $F_{\eb,x}$ parameterizing maps with image
  containing $\wt{x}$. In particular, there are at most finitely many
  points whose image cycle coincides with $v_*[C]$.  Thus the morphism
  $(\text{ev},\xi)$ has finite fibers.  Since $\text{ev}$ is a proper
  morphism of complex analytic spaces, the morphism $(\text{ev},\xi)$
  is a finite morphism of complex analytic spaces.  By \cite{Fujiki},
  every irreducible component of the Barlet space of the K\"{a}hler
  manifold $(\mm,\jj)$ is in Fujiki class $C$.  Since $F_{\eb,x}$
  admits a finite morphism of complex analytic spaces to a compact
  complex analytic space in Fujiki class $C$, also $F_{\eb,x}$ is in
  Fujiki class $C$.
\end{proof}

\begin{defn} \label{defn-ah} \marpar{defn-ah}
  For each normalization $Q\to \text{Barlet}(\mm,\jj)$ of an
  irreducible, reduced closed analytic subspace of the Barlet space,
  the \textbf{associated cycle} $Z\subset Q\times \mm$ is the pullback
  to $Q$ of the universal cycle.  The \textbf{isomorphic open}
  $\mm_Q\subset \mm$ is the maximal open subscheme of $\mm$ (possibly
  empty) over which the projection $\text{pr}_{\mm}:Z\to \mm$ is an
  isomorphism.  The \textbf{fundamental locus} $\text{Fund}(Q)$ is the
  image in $Q$ of the closed analytic subspace
  $Z\setminus \text{pr}_{\mm}^{-1}(\mm_Q)$ of $Z$, and $Q^o$ is the
  open complement.
\end{defn}

\begin{rmk} \label{rmk-ah} \marpar{rmk-ah}
  The family of cycles is an almost holomorphic fibration in the sense
  of Definition \ref{defn-frq} if and only if $\mm_Q$ and $Q^o$ are
  nonempty.
\end{rmk}

\mni
For an almost holomorphic fibration, the complement $Q^o$ of the
fundamental locus is a dense open of $Q$, there is a unique open
$\mm_Q^o\subset \mm_Q$ such that $\text{pr}_Q^{-1}(Q^o)$ equals
$\text{pr}_{\mm}^{-1}(\mm_Q^o)$, and the composition
$\phi=\text{pr}_Q\circ \text{pr}_{\mm}^{-1}$,
$$
\phi : \mm_Q^o \to \text{pr}_{\mm}^{-1}(\mm_Q^o)\to Q^o
$$
is a surjective, proper morphism of irreducible, normal complex
analytic spaces of Fujiki class C whose fibers are all
pure-dimensional.  By Sard's Theorem, the image in $Q^o$ of the
non-smooth locus of $\phi$ is a proper closed analytic subspace whose
open complement $Q^{\text{sm}}$ is smooth, and such that the
restriction of $\phi$ to the inverse image
$\mm_Q^{\text{sm}} = \phi^{-1}(Q^{\text{sm}})$,
$$
\phi^{\text{sm}}: \mm_Q^{\text{sm}} \to Q^{\text{sm}},
$$
is a surjective, proper, smooth morphism, i.e., a ``holomorphic
fibration'' (in the sense of Ehresmann's theorem).  Thus, $\phi$ is an
``almost holomorphic fibration''.

\mni
For every connected, compact K\"{a}hler manifold $(\mm,\jj,\oom)$, for
every $\jj$-irreducible, $\text{ev}$-dominant class $\eb$, recall that
$\mm_\eb\subset \mm$ is the maximal open over which
$\text{ev}_{0,1,\eb}$ is smooth and disjoint from the boundary
$\Delta$.  By Proposition \ref{prop-free}, this is a dense open whose
closed complement is an analytic subvariety of $\mm$.  Also recall
that $\Kgnb{0,1}^o((\mm,\jj),\eb) = \text{ev}^{-1}(\mm_\eb)$ is
defined to be the inverse image of $\mm_\eb$ under
$\text{ev}_{0,1,\eb}$, so that the restriction of the evaluation
morphism,
$$
\text{ev}: \Kgnb{0,1}^o((\mm,\jj),\eb) \to \mm_\eb,
$$
is smooth and parameterizes only free $1$-pointed maps with
irreducible domain.

\begin{prop} \label{prop-psi} \marpar{prop-psi}
  For every connected, compact K\"{a}hler manifold $(\mm,\jj,\oom)$,
  for every $\jj$-irreducible, $\text{ev}$-dominant class $\eb$, the
  restriction of $\psi$ to $\Kgnb{0,1}^o((\mm,\jj),\eb)$ is
  $\text{ev}$-ample.  Equivalently, for every $x\in \mm_\eb$, the
  fiber $F_{\eb,x}$ of $\text{ev}$ over $x$ is a complex projective
  manifold on which the restriction of $\psi$ is ample.
\end{prop}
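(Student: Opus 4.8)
The two formulations in the statement are equivalent: the restricted morphism $\text{ev}:\Kgnb{0,1}^o((\mm,\jj),\eb)\to\mm_\eb$ is proper (a base change of the proper morphism $\text{ev}$ along the open immersion $\mm_\eb\hookrightarrow\mm$) and is smooth by the very definition of $\mm_\eb$, hence flat; for a proper flat morphism, relative ampleness of a line bundle is equivalent, by a standard criterion, to ampleness of its restriction to each fiber. So the plan is to fix $x\in\mm_\eb$ and show that $F:=F_{\eb,x}$ is a smooth complex projective variety on which $\psi$ restricts to an ample line bundle. This splits into two steps: first, projectivity of $F$, extracted from the relative projectivity of the stable-map space over the rational quotient (Proposition \ref{prop-frq}); second, ampleness of $\psi|_F$, deduced from the positivity already recorded in Proposition \ref{prop-NK} via the Nakai--Moishezon criterion.

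For the first step I would choose the rational quotient $\phi:\mm\supseteq\mm^o\to Q^o\subseteq Q$ exactly as in the proof of Proposition \ref{prop-free}, so that $\mm_\eb\subseteq\mm^o$, the morphism $\mm^o\to Q^o$ is submersive and relatively projective, and $\Kgnb{0,0}((\mm,\jj),\eb)^o$ is identified with the relative moduli space $\Kgnb{0,0}(\mm^o/Q^o,\eb)$, whose coarse space is projective over $Q^o$. Put $q:=\phi(x)\in Q^o$. By the defining property of the rational quotient every $\eb$-curve meeting $\mm^o$ lies in a fiber of $\phi$; a curve through $x\in\mm^o$ meets $\mm^o$ at $x$ and $\phi(x)=q$, so it lies in $\phi^{-1}(q)$. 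Hence every point of $F=\text{ev}^{-1}(x)$ lies in $\Kgnb{0,1}((\mm,\jj),\eb)^o$ and maps to $q$ under the structure morphism to $Q^o$, so $F$ is a closed analytic subspace of the fiber of $\Kgnb{0,1}(\mm^o/Q^o,\eb)$ over $q$. That fiber has projective coarse space, so $F$ is projective; by Proposition \ref{prop-free} it is also smooth, hence a smooth complex projective variety, of pure dimension $m_\eb$.

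For the second step I would apply the Nakai--Moishezon criterion on the projective variety $F$. Let $V\subseteq F$ be any irreducible closed subvariety of dimension $r\geq 1$ and let $\nu:\wt{V}\to V$ be its normalization, so $\wt{V}$ is compact of pure dimension $r$. The composite $\zeta:\wt{V}\xrightarrow{\nu}V\hookrightarrow F\hookrightarrow\Kgnb{0,1}^o((\mm,\jj),\eb)$ has $\text{ev}\circ\zeta$ constant equal to $x$ (since $\zeta$ factors through $F=\text{ev}^{-1}(x)$) and is generically finite onto its closed analytic image $V$, so Proposition \ref{prop-NK} gives $\int_{\wt{V}}\zeta^*c_1(\psi)^r>0$. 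As $\nu$ has degree one and $\zeta^*\psi=\nu^*(\psi|_V)$, this reads $\int_V c_1(\psi|_V)^r>0$. Applying this to every irreducible closed $V\subseteq F$, including $V=F$, the line bundle $\psi|_F$ is Nakai--Moishezon positive, hence ample on the projective variety $F$. Since $x\in\mm_\eb$ was arbitrary, $\psi$ is $\text{ev}$-ample on $\Kgnb{0,1}^o((\mm,\jj),\eb)$.

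The main obstacle is the first step, the projectivity of $F=F_{\eb,x}$. It is \emph{not} enough to know, from Corollary \ref{cor-NK}, that $F$ lies in Fujiki class $C$: the Nakai--Moishezon criterion can fail for line bundles on Moishezon (even Fujiki-class-$C$) manifolds, and exhibiting a projective bimeromorphic model of $F$ does not make $F$ itself projective. Projectivity has to come from the relative projectivity of $\Kgnb{0,0}(\mm^o/Q^o,\eb)$ over $Q^o$ in Proposition \ref{prop-frq}, together with the fact that $\eb$-curves through a point of $\mm_\eb$ stay inside a single fiber of the rational quotient. The routine care needed is to ensure that the fixed point $x$ lies in the open $\mm^o$ on which $\phi$ is a submersion --- which is precisely the inclusion $\mm_\eb\subseteq\mm^o$ verified inside the proof of Proposition \ref{prop-free} --- and that the relevant fiber over $q=\phi(x)$ of the relative stable-map space really is projective rather than merely proper.
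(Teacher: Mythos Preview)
Your second step (Nakai--Moishezon via Proposition \ref{prop-NK}) matches the paper exactly. The gap is in the first step: the inclusion $\mm_\eb\subseteq\mm^o$ that you rely on is not established. The open $\mm_\eb$ is defined intrinsically (smoothness of $\text{ev}$ and irreducible-domain fibers), whereas $\mm^o$ records where the rational quotient $\phi$ is submersive and all $\eb$-curves are fibered; there is no reason the first condition forces the second. The sentence in the proof of Proposition \ref{prop-free} you cite (``$\mm_\eb$ equals $\mm'_\eb\setminus Z'$'') is being used there only to exhibit a nonempty open \emph{inside} $\mm_\eb$; read carefully, the argument gives $\mm'_\eb\setminus Z'\subseteq\mm_\eb$, not the reverse containment you need. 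So for an arbitrary $x\in\mm_\eb$ you have no mechanism to place $F_{\eb,x}$ inside a projective fiber of the relative moduli space over $Q^o$, and your projectivity claim for \emph{every} $x$ is unjustified.

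The paper closes exactly this gap, and by a genuinely different device. It only asserts projectivity of $F_{\eb,x}$ for $x$ in the dense open $\mm_\eb\cap\mm_Q^{\text{sm}}$, where your argument does go through. For the remaining points of $\mm_\eb$ it argues by deformation: the smooth proper family $\text{ev}:\Kgnb{0,1}^o((\mm,\jj),\eb)\to\mm_\eb$ has all fibers in Fujiki class $C$ (Corollary \ref{cor-NK}), so Hodge numbers are locally constant; then Popovici's theorem on limits of projective manifolds forces every fiber $F_{\eb,x}$ to be Moishezon. The Nakai--Moishezon criterion is then applied in its Moishezon (not projective) form, using the positivity from Proposition \ref{prop-NK}, to conclude that $\psi$ is ample on every fiber---whence, a posteriori, every fiber is projective. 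The missing idea in your attempt is precisely this deformation step via \cite{Popovici}.
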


\begin{proof}
  By openness of ampleness, $\text{ev}$-relative ampleness of the
  restriction of $\psi$ is equivalent to ampleness of $\psi$ on every
  fiber $F_{\eb,x}$ of $\text{ev}$.

\mni
Let $Q\to \text{Barlet}(\mm,\jj)$ be the rational quotient of
$(\mm,\jj)$.  Then the open $\mm_Q^{\text{sm}}$ is dense in $\mm$.  By
Proposition \ref{prop-free}, also $\mm_\eb$ is a dense open in $\mm$.
Thus, the intersection $\mm_\eb\cap \mm_Q^{\text{sm}}$ is a dense open
in $\mm$.

\mni
For every $x\in \mm_Q^{\text{sm}} \cap \mm_\eb$, every map
parameterized by $F_{\eb,x}$ has irreducible domain and $\text{ev}$ is
smooth at this genus-$0$, $1$-pointed map, i.e., the map is a free map
from $\CC\PP^1$.  Moreover, the image contains $x$, a point of
$\mm_Q^o$.  Thus, these free maps all have image in the fiber $\mm_q$
of $\phi$ over $q=\phi(x)$.  So $F_{\eb,x}$ is an open and closed
subspace of the space of stable maps in $\mm_q$.  Also $\mm_q$ is
projective by Proposition \ref{prop-frq}.  Thus, $F_{\eb,x}$ is a
projective manifold as well: the morphism from $F_{\eb,x}$ to the Chow
scheme of $\mm_q$ is finite, and the components of the Chow scheme of
a projective scheme are themselves projective.  For all
$x\in \mm_\eb$, by Corollary \ref{cor-NK}, the fiber $F_{\eb,x}$ is a
compact, complex manifold in Fujiki class C.  Thus, the standard
results of Hodge theory still apply for the proper, holomorphic
submersion,
$$
\text{ev}:\Kgnb{0,1}^o((\mm,\jj),\eb) \to \mm_\eb.
$$
In particular, the Hodge numbers $h^{0,1}$ of the fibers are constant.
By \cite{Popovici}, since the fiber $F_{\eb,x}$ is projective for the
general $x\in \mm_\eb$, the fiber is Moishezon for every
$x\in \mm_\eb$.  Finally, by Proposition \ref{prop-NK}, the
restriction of $\psi$ satisfies the hypotheses of the Nakai-Moishezon
criterion for ampleness of the holomorphic invertible sheaf $\psi$ on
the compact, Moishezon manifold $F_{\eb,x}$.  Therefore $\psi$ is
ample on $F_{\eb,x}$.
\end{proof}

\begin{cor} \label{cor-psi} \marpar{cor-psi}
  With the same hypotheses as in Proposition \ref{prop-psi}, for every
  integer $r\geq 0$, for every $x\in \mm_\eb$, the restriction to
  $F_{\eb,x}$ of the pushforward
  $\mc{E}_r:=\pi_*([\pi^*\psi(\ul{\sigma})]^{\otimes r})$ is a locally
  free sheaf of rank $r+1$ with a natural surjection to
  $\OO_{F_{\eb,x}}$ (evaluation at the marked point). The kernel of
  this surjection is a locally free sheaf of rank $r$ that is ample
  when $r\geq 1$.  The restriction to $F_{\eb,x}$ of the twist
  $\psi\otimes \mc{E}_r$ is an ample locally free sheaf of rank $r+1$.
\end{cor}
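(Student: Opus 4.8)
\noindent
The plan is to derive all three parts by restricting the conclusions of Proposition \ref{prop-pushforward} to a fiber and then running one short induction. First I would apply Proposition \ref{prop-pushforward} to the family $\pi = \pi_\eb^o:\Cc_\eb^o\to \Kgnb{0,1}^o((\mm,\jj),\eb)$ with its section $\sigma=\sigma_\eb^o$ from Notation \ref{notat-free}; over $\mm_\eb$ the parameterized domains are smooth and irreducible, so $\pi$ is a proper holomorphic submersion with connected genus-$0$ fibers admitting the section $\sigma$, and the proposition applies. Since the formation of $\mc{E}_r = \pi_*\mc{F}_r$ commutes with arbitrary base change, restricting the base to the fiber $F_{\eb,x}=\text{ev}^{-1}(x)\hookrightarrow \Kgnb{0,1}^o((\mm,\jj),\eb)$ shows that $\mc{E}_r|_{F_{\eb,x}}$ is locally free of rank $r+1$ and that the ``evaluation along $\sigma$'' short exact sequence
$$
0\to \psi|_{F_{\eb,x}}\otimes \mc{E}_{r-1}|_{F_{\eb,x}} \to \mc{E}_r|_{F_{\eb,x}} \to \OO_{F_{\eb,x}} \to 0
$$
remains exact on $F_{\eb,x}$. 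The last map is the natural surjection asserted in (1), and its kernel $\psi|_{F_{\eb,x}}\otimes \mc{E}_{r-1}|_{F_{\eb,x}}$ is locally free of rank $r$; this settles (1) and the locally-free-of-rank-$r$ part of (2).

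\noindent
For the ampleness assertions I would first observe that (2) and (3) are the same statement up to reindexing: the kernel in (2) attached to the parameter $r\geq 1$ is $\psi|_{F_{\eb,x}}\otimes \mc{E}_{r-1}|_{F_{\eb,x}}$, which is exactly the bundle whose ampleness (3) asserts for the parameter $r-1$. So it suffices to prove that $\psi|_{F_{\eb,x}}\otimes \mc{E}_s|_{F_{\eb,x}}$ is ample for every $s\geq 0$, and I would argue by induction on $s$. The base case $s=0$ is immediate: $\mc{E}_0 = \pi_*\OO_{\Cc_\eb^o}$ is the trivial line bundle because $\pi$ has connected genus-$0$ fibers, so the bundle in question is just $\psi|_{F_{\eb,x}}$, which is ample by Proposition \ref{prop-psi} (and $F_{\eb,x}$ is a complex projective manifold by that same proposition). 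For the inductive step $s\geq 1$, I would tensor the short exact sequence above (taking $r=s$) by the ample line bundle $\psi|_{F_{\eb,x}}$ to obtain
$$
0\to \psi|_{F_{\eb,x}}^{\otimes 2}\otimes \mc{E}_{s-1}|_{F_{\eb,x}} \to \psi|_{F_{\eb,x}}\otimes \mc{E}_s|_{F_{\eb,x}} \to \psi|_{F_{\eb,x}} \to 0.
$$
By the induction hypothesis $\psi|_{F_{\eb,x}}\otimes \mc{E}_{s-1}|_{F_{\eb,x}}$ is ample, hence so is its further twist $\psi|_{F_{\eb,x}}^{\otimes 2}\otimes \mc{E}_{s-1}|_{F_{\eb,x}}$ by an ample line bundle; and $\psi|_{F_{\eb,x}}$ is ample. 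Since on a projective manifold an extension of an ample vector bundle by an ample vector bundle is again ample (a standard closure property of ample bundles, cf. Hartshorne's work on ample vector bundles), the middle term $\psi|_{F_{\eb,x}}\otimes \mc{E}_s|_{F_{\eb,x}}$ is ample. This closes the induction and gives (3); feeding $s=r-1$ into (3) then yields the ampleness in (2).

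\noindent
I do not anticipate a genuine obstacle, since all the substance is already in Proposition \ref{prop-pushforward} (the structure of the $\mc{E}_r$ and the kernel exact sequence, with base-change compatibility) and in Proposition \ref{prop-psi} (projectivity of $F_{\eb,x}$ and ampleness of $\psi$ on it), and the only external inputs are the elementary facts that the tensor of an ample bundle with an ample line bundle is ample and that extensions of ample bundles are ample. The one point that genuinely needs care is that every ampleness statement must be made after restriction to a fiber $F_{\eb,x}$ with $x\in\mm_\eb$: ampleness is meaningless on the noncompact total space $\Kgnb{0,1}^o((\mm,\jj),\eb)$, so at each step one must invoke the base-change compatibility of Proposition \ref{prop-pushforward} to identify the restricted sheaves before applying ampleness of $\psi$ and the closure properties above.
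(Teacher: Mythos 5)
Your proposal is correct and follows essentially the same route as the paper: restrict the short exact sequence of Proposition \ref{prop-pushforward} to the fiber $F_{\eb,x}$, take the base case from ampleness of $\psi$ in Proposition \ref{prop-psi}, and run the induction by twisting the extension by $\psi$ and invoking closure of ampleness under tensoring with an ample line bundle and under extensions. Your explicit remarks on base-change compatibility and on the reindexing that identifies the kernel statement in (2) with the twist statement in (3) are just slightly more careful packaging of the argument the paper gives.
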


\begin{proof}
  By Proposition \ref{prop-pushforward}, the pushforward is a locally
  free sheaf of rank $r+1$ with a natural surjection to the structure
  sheaf.  Ampleness of the kernel and ampleness of the twist by $\psi$
  are proved by induction on $r$.  When $r=0$, then $\mc{E}_0$ equals
  the structure sheaf, so that the twist by $\psi$ equals $\psi$.
  This is ample by the previous result.  Thus, assume that $r\geq 1$,
  and assume that the result is true for $r-1$.

\mni
The kernel of the natural surjection equals
$\psi \otimes \mc{E}_{r-1}$, and this is ample by the induction
hypothesis.  Since $\mc{E}_r$ is an extension of the structure sheaf
by an ample locally free sheaf, also $\psi\otimes \mc{E}_r$ is an
extension of $\psi$ by the twist of an ample locally free sheaf by
$\psi$.  By the previous result, $\psi$ is ample.  The tensor product
of an ample locally free sheaf by an ample invertible sheaf is ample.
Finally, an extension of an ample sheaf by an ample sheaf is ample.
Altogether, $\psi\otimes \mc{E}_r$ is ample.  Thus the result holds by
induction on $r$.
\end{proof}

\begin{proof}[Proof of Theorem \ref{thm-simpFano}]
  By definition,
  $$
  f_\eb = \langle \tau_{m_\eb}(\eta_X) \rangle^{\mm,\oom}_{0,1,\eb} =
  \text{GW}^{\mm,\oom}_{0,1,\eb}(q^*c_1(\psi_1)^{m_\eb} \smile
  \text{pr}_1^*(\eta_\mm)) =
  \text{GW}^{\mm,\oom}_{0,1,0,\eb}(q^*c_1(\psi_1)^{m_\eb}).
  $$
  Thus, if $\eb$ is symplectically free, then also $\eb$ is
  symplectically pseudo-free.  Similarly, if $\eb$ is symplectically
  free, then the pairing of $\text{ev}_{0,1,\eb}^*(\eta_\mm)$ with the
  virtual fundamental class is nonzero, so that $\text{ev}_{0,1,\eb}$
  is surjective (else choose a representative for the Poincar\'{e}
  dual homology class of $\eta_\mm$ that is contained in the
  complement of the image).  By Sard's Theorem, for a proper,
  holomorphic map between compact complex analytic varieties, the
  image in the target of the submersive locus in the domain is an open
  (possibly empty) that is the complement of a closed analytic
  subvariety of the target.  If $\eb$ is free, then the submersive
  locus of $\text{ev}_{0,1,\eb}$ is nonempty, so that the image of the
  submersive locus is a dense open subset of $\mm$.  Since
  $\text{ev}_{0,1,\eb}$ is proper, the image of $\text{ev}_{0,1,\eb}$
  is closed, and it contains this dense open.  Therefore
  $\text{ev}_{0,1,\eb}$ is surjective, i.e., $\eb$ is
  $\text{ev}$-dominant.
 
\mni
Next, assume that $\eb$ is $\text{ev}$-dominant and $\jj$-irreducible.
By Proposition \ref{prop-free}, $\eb$ is free.  The goal is to prove
that $\eb$ is symplectically free (closing the circle for all notions
of ``free'' for $\jj$-irreducible classes).  Also, the goal is to
prove the formula for the Gysin class
$\pi_*u^*(D_0\smile \dots \smile D_r)$ in $H^{2r}(F_{\eb,x};\RR)$.  By
Proposition \ref{prop-free}, the open subspace $\mm_\eb$ is a dense
open whose complement is a complex analytic subspace.  In particular,
this open is nonempty.  By definition, for every $x\in \mm_\eb$, the
fiber $F_{\eb,x}$ is a compact complex manifold of dimension $m_\eb$.
By Proposition \ref{prop-psi}, the restriction of $\psi$ on this fiber
is ample.  Also, for $D_i\in H^2(\mm;\RR), i=0,\dots,r$, the pullback
of each $D_i$ by $u_\eb^o$ is a class whose further pullback by
$\sigma_\eb^o$ vanishes when restricted to $F_{\eb,x}$, since this is
also the pullback by the (constant) evaluation morphism.  Thus,
Corollary \ref{cor-psi1} gives the formula for the Gysin pushforward
of the cup product of the pullbacks.  It remains only to prove that
$\eb$ is a symplectically free class.

\mni
Since the restriction of $\psi$ is ample, the following integral is
positive,
$$
\int_{F_{\eb,x}} c_1(\psi)^{m_\eb}.
$$
By the projection formula, this equals the cap product of
$c_1(\psi)^{m_\eb}$ with the pushforward to
$H^*(\Kgnb{0,1}((\mm,\jj),\eb);\QQ)$ of the fundamental class of
$F_{\eb,x}$.  Since $F_{\eb,x}$ is a regular fiber of the evaluation
morphism,
$$
\text{ev}:\Kgnb{0,1}((\mm,\jj),\eb) \to \mm,
$$
the pushforward of the fundamental class equals the cap product of
$\text{ev}^*\eta_\mm$ with the virtual fundamental class of
$\Kgnb{0,1}((\mm,\jj),\eb)$.  Thus, the positive integral equals the
gravitational descendant,
$$
\langle \tau_{m_\eb}(\eta_\mm) \rangle^{\mm,\oom}_{0,\eb} =
\int_{[\Kgnb{0,1}((\mm,\jj),\oom)]^{\text{vir}}}
c_1(\psi)^{m_\eb}\smile \text{ev}^*\eta_\mm.
$$ 
Therefore $\eb$ is a symplectically free class.  

\mni
The next goal is to prove that the set of $\oom$-degrees of
$\text{ev}$-dominant classes is discrete.  Thus, there exists an
$\oom$-minimal, hence $\jj$-irreducible, element of $\snef{\oom}(\mm)$
if and only if $\snef{\oom}(\mm)$ is nonzero.  Thus, assume that there
exists an $\text{ev}$-dominant class.  Then the fibers $\mm_q$ of the
rational quotient are nontrivial.  For a very general fiber $\mm_q$,
every genus-$0$ stable map whose image intersects $\mm_q$ factors
through $\mm_q$.  In particular, every $\text{ev}$-dominant class in
$\mm$ is the pushforward from $\mm_q$ of an $\text{ev}$-dominant class
in $\mm_q$.  Thus, to prove discreteness of the $\oom$-degrees of
$\text{ev}$-dominant class in $\mm$, it is equivalent to prove
discreteness of the $\oom$-degrees of $\text{ev}$-dominant classes in
$\mm_q$.

\mni
Every general fiber $\mm_q$ is a connected projective manifold that is
rationally connected.  Thus the restriction of $\oom$ is $\RR$-ample,
i.e., it equals a linear combination with positive real coefficients
of ample integer divisors, say
$$
r_1 [D_1] + \dots + r_\ell [D_\ell], \ \ 0 < r_1 \leq \dots \leq r_\ell.
$$
For every real number $R>0$, for the integer part $N=\lfloor R/r_1
\rfloor$, there are at most $N^\ell/\ell!$ ordered $\ell$-tuples of
non-negative integers $(n_1,\dots,n_\ell)$ with $r_1n_1+\dots +r_\ell
n_\ell$ bounded by $R$.  Thus, the set of effective curve classes in
$\mm_q$ with $\oom$-degree bounded by $R$ is finite.  Thus, the set of
$\oom$-degrees of $\text{ev}$-dominant curve classes is discrete.

\mni
By discreteness of the $\oom$-degrees of $\text{ev}$-dominant classes,
for every $\text{ev}$-dominant class $\eb$, there are at most finitely
many $\text{ev}$-dominant classes $\eb'$ with $\eb$ equal to
$\eb'+\eb''$ for a class $\eb''$ that is a (nonzero) sum of classes of
genus-$0$ maps (not necessarily $\text{ev}$-dominant).  In particular,
one of these finitely many classes $\eb'$ has minimal $\oom$-degree,
and thus that class is a $\jj$-irreducible, $\text{ev}$-dominant
class, which is then symplectically free.  Thus, every
$\text{ev}$-dominant class $\eb$ is the sum of a $\jj$-irreducible,
symplectically free class $\eb'$ and a sum (possibly zero) of classes
of genus-$0$ maps (not necessarily $\text{ev}$-dominant).  In
particular, if $\snef{\oom}(\mm)$ is nonzero, then there exists a
$\jj$-irreducible, symplectically free class $\eb$.

\mni
Finally, for every $\jj$-irreducible, symplectically free $\eb$, for
every $q\in \mm_\eb$, by Proposition \ref{prop-psi}, the restriction
of $\psi$ to the fiber $F=\text{ev}^{-1}(\{q\})$ is ample.  Also $F$
has pure dimension $m_\eb$.  Assume that $m_\eb>0$.  Then by
\cite{dJS3}, the $\psi$-degree of the first Chern class of $F$ equals
the second gravitational descendant,
$$
s_{\eb} = s_{\eb}(\mm) := \lt \langle \tau_{m_\eb-1}(\eta_m),
  \text{ch}_2(T^{1,0}_{\mm,\oom}) +
  \frac{m_\eb}{2(m_\eb+2)^2}c_1(T^{1,0}_{\mm,\oom})^2
  \rt\rangle^{\mm,\oom}_{0,\eb}.
$$
By the asymptotic Riemann-Roch formulas of \cite{KollarMatsusaka},
\cite{Luo}, and \cite{Matsusaka}, even if $\psi$ is merely big and
nef, there is an asymptotic formula for $d\gg 0$,
$$
h^0(F,\psi^{\otimes d}) 
= \frac{f_\eb}{m_\eb!}d^{m_\eb} +
  \frac{s_\eb}{2(m_\eb-1)!} d^{m_\eb-1} + \dots =
  \frac{f_\eb d^{m_\eb}}{m_\eb!}\lt( 1 + \frac{m_\eb q_\eb}{2d} +
  \dots \rt).
$$
\end{proof}


\section{Proof of Theorem \ref{thm-surf}} \label{sec-Mori}
\marpar{sec-Mori}

\mni
Let $(\mm,\jj,\oom)$ be a connected, compact K\"{a}hler manifold.
Denote the rational quotient by
$$
Q\to \text{Barlet}(\mm), \ \ \mm\supseteq \mm^o \xrightarrow{\phi}
Q^o\subseteq Q.    
$$

\begin{proof}[Proof of (1) of Theorem \ref{thm-surf}]
  If $\snef{\oom}(\mm)$ is nonzero, then by Theorem
  \ref{thm-simpFano}, the rational quotient $\phi$ has positive fiber
  dimension.

\mni
Since $Q$ is finite over the Barlet space, it is in Fujiki class $C$.
Thus, if $Q$ is not a point, then there exists a nonzero element in
$H^2(Q;\QQ)$.  The pullback in $H^2(\mm;\QQ)$ is orthogonal to every
homology class $\eb$ of a curve contained in a fiber of the rational
quotient.  In particular, it is orthogonal to every connected tree of
rational curves such that at least one component is a general free
curve.  Thus, the class is orthogonal to every symplectically
pseudo-free class.  When these classes span $H_2(\mm;\QQ)$, then the
pullback is zero.  Therefore $Q$ is a point.
\end{proof}

\mni
The remainder of Theorem \ref{thm-surf} follows from the next three
propositions.

\begin{prop} \label{prop-mainA} \marpar{prop-mainA}
  For every $\oom$-minimal symplectically free class $\eb$ that is
  symplectically $2$-free, for every point $x\in \mm_\eb$, at least
  one connected component of $F_{\eb,x}$ is uniruled.  Moreover, each
  surface in $\mm$ swept out by a rational curve $R$ in $F_{\eb,x}$ is
  a rational surface on which the $\eb$-curves parameterized by $R$
  move in the pencil $R$.  Conversely, if $F_{\eb,x}$ is weakly Fano,
  then $\eb$ is symplectically $2$-free.
\end{prop}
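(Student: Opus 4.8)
The plan is to combine the identity, recorded in the proof of Theorem \ref{thm-simpFano} (following \cite{dJS3}), that for $x\in\mm_\eb$ the smooth projective fiber $F=F_{\eb,x}$, of pure dimension $m_\eb$, satisfies
$$
s_\eb\;=\;\int_F c_1(T_F)\smile c_1(\psi)^{m_\eb-1},
$$
with the ampleness of $\psi$ on $F$ from Proposition \ref{prop-psi}; note that an $\oom$-minimal symplectically free class is automatically $\jj$-irreducible and $\text{ev}$-dominant by Theorem \ref{thm-simpFano}, so this machinery applies. For the uniruledness assertion, write $F$ as the disjoint union of its components $F'$, each smooth projective of dimension $m_\eb$, with $\psi|_{F'}$ ample and $c_1(T_F)|_{F'}=c_1(T_{F'})$; then $s_\eb>0$ forces $\int_{F'}c_1(T_{F'})\smile c_1(\psi)^{m_\eb-1}>0$ for some $F'$. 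For $k\gg0$ the very ample system $|\psi^{\otimes k}|$ on $F'$ yields, by Bertini, an irreducible complete-intersection curve $C$ through a general point of $F'$ with $-K_{F'}\cdot C=k^{m_\eb-1}\int_{F'}c_1(T_{F'})\smile c_1(\psi)^{m_\eb-1}>0$ (when $m_\eb=1$ take $C=F'$, forcing $F'\cong\CC\PP^1$). So $F'$ carries a covering family of curves of positive anticanonical degree and is uniruled by Mori's theorem, as recalled in Section \ref{ssec-surf}.

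Next I would treat the surfaces. Given a rational curve $R\subset F_{\eb,x}$, let $\zeta:\wt R\to R\subset\Kgnb{0,1}^o((\mm,\jj),\eb)$ be the normalization, so $\wt R\cong\CC\PP^1$ and $\zeta$ is generically injective onto the closed analytic subspace $R$. Pulling back the universal family of Notation \ref{notat-free} along $\zeta$ produces $\pi_\zeta:\Cc_\zeta\to\wt R$ with section $\sigma_\zeta$ and universal map $u_\zeta:\Cc_\zeta\to\mm$. Over $\Kgnb{0,1}^o((\mm,\jj),\eb)$ the universal curve is a $\CC\PP^1$-bundle (its fibers are the smooth irreducible domains of free maps, and $\sigma_\eb^o$ is a section), so $\Cc_\zeta$ is a Hirzebruch surface. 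Proposition \ref{prop-NK}, with $\So=\wt R$ compact of pure dimension $1$ and $\zeta$ generically finite to its image, gives $\int_{\wt R}\zeta^*c_1(\psi)>0$, shows $\Cc_\zeta$ has pure dimension $2$, and shows $u_\zeta$ is generically finite; hence $S:=u_\zeta(\Cc_\zeta)\subset\mm$ is an irreducible surface, and its field of meromorphic functions embeds into that of $\Cc_\zeta$, which is purely transcendental over $\CC$. Thus $S$ is unirational, hence rational by Castelnuovo's theorem. The images under $u_\zeta$ of the fibers of the ruling $\pi_\zeta$ are exactly the $\eb$-curves parameterized by $R$; they form a pencil parameterized by $R$, all passing through the single point $x=u_\zeta(\sigma_\zeta(\wt R))$. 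Combined with the uniruledness of some $F'$, this produces the covering family of rational surfaces.

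For the converse, assume $F=F_{\eb,x}$ is weakly Fano. Then $F$ is positive-dimensional, so $m_\eb=\dim_\CC F>0$, and $-K_F=c_1(T_F)$ is nef and big; writing $-K_F\equiv A+E$ with $A$ ample and $E$ effective $\QQ$-divisors and using that $c_1(\psi)^{m_\eb-1}$ is represented by a moving irreducible curve (Proposition \ref{prop-psi}), we get
$$
s_\eb=\int_F c_1(T_F)\smile c_1(\psi)^{m_\eb-1}=A\cdot c_1(\psi)^{m_\eb-1}+E\cdot c_1(\psi)^{m_\eb-1}>0,
$$
the second term being nonnegative; the same conclusion holds whenever $-K_F$ is merely nef and numerically nonzero. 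Since $\eb$ is symplectically free we have $f_\eb>0$, so $m_\eb>0$ together with $s_\eb>0$ exhibits $\eb$ as symplectically $2$-free.

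The step I expect to be the main obstacle is the surfaces step: translating the abstract motion of the rational curve $R$ inside the moduli space $F_{\eb,x}$ into genuine control of the image surface $S\subset\mm$ — in particular that $S$ is two-dimensional, rational, and carries the $\eb$-curves as an honest pencil rather than a constant family. This is exactly where the finiteness and positivity output of Proposition \ref{prop-NK} is indispensable, and where one must verify that the pulled-back universal curve is a bona fide $\CC\PP^1$-bundle (using that $\mm_\eb$ parameterizes only maps with smooth irreducible domain), so that $\Cc_\zeta$ is a rational surface.
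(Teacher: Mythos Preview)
Your proof is correct and follows essentially the same route as the paper: both directions hinge on the identity $s_\eb=\int_F c_1(T_F)\smile c_1(\psi)^{m_\eb-1}$ combined with ampleness of $\psi$ on $F$, then Bertini plus Mori's theorem for uniruledness, and Proposition~\ref{prop-NK}/Corollary~\ref{cor-NK} to show the swept-out surface is rational. The only discrepancy is terminological: the paper's ``weakly Fano'' means $c_1(T_F)$ is nonzero and \emph{pseudo-effective} (to match Theorem~\ref{thm-surf}, part~3), weaker than your nef-and-big reading---but your argument extends at once, since $c_1(\psi)^{m_\eb-1}$ lies in the interior of the movable cone of curves and hence pairs strictly positively with any nonzero pseudo-effective class.
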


\begin{proof}
  By Proposition \ref{prop-free}, the $\eb$-generic locus $\mm_\eb$ is
  a dense open whose complement is a proper closed analytic subspace
  of $\mm$.  For the inverse image open
  $\Kgnb{0,1}^o((\mm,\jj),\eb) := \text{ev}^{-1}(\mm_\eb)$, the
  restricted evaluation map,
  $$
  \text{ev}:\Kgnb{0,1}^o((\mm,\jj),\eb) \to \mm_\eb,
  $$
  is a proper, smooth morphism of complex analytic spaces.  Every
  fiber has pure dimension $m_\eb$, and the restriction of $\psi$ is
  $\text{ev}$-relatively ample.  Thus, the $\text{ev}$-relative spaces
  of genus-$0$, $1$-pointed stable maps to fibers of $\text{ev}$ with
  specified curve class are themselves proper over
  $\Kgnb{0,1}^o((\mm,\jj),\eb)$ (for the evaluation map).  To prove
  uniruledness of at least one connected component of the fiber
  $F_{\eb,x}$ of $\text{ev}$ over \textbf{every} point $x\in \mm_\eb$,
  it suffices to prove uniruledness for general $x\in \mm_\eb$.

\mni
Denote by $Q\to \text{Barlet}(\mm,\jj)$ the rational quotient of
$(\mm,\jj)$.  Denote by $\phi^{\text{sm}}$ the associated
maximally-extended proper, smooth morphism of complex analytic spaces
defined on dense open subspaces of $\mm$, resp, of $Q$,
$$
\lt( \mm \supseteq \rt)
\mm_Q^{\text{sm}} 
\xrightarrow{\phi} Q^{\text{sm}} \lt( \subseteq Q \rt),
$$ 
such that all fibers of $\phi^{\text{sm}}$ are rationally connected
and every free rational curve that intersects $\mm_Q^{\text{sm}}$ is
contained in a fiber of $\phi^{\text{sm}}$.  The restriction of $\oom$
to the dense open $\mm_Q^{\text{sm}}$ is K\"{a}hler.  Since the fibers
of $\phi^{\text{sm}}$ are also rationally connected, compact, complex
manifolds, they are projective, i.e., $\phi^{\text{sm}}$ is (weakly)
projective.

\mni
For each point $q\in Q^{\text{sm}}$, for each point $x$ in the fiber
$\mm_q = \phi^{-1}(q)$, the fiber $F_{\eb,x}$ of $\text{ev}$ equals
the fiber over $x$ of
$$
\text{ev}_q:\Kgnb{0,1}((\mm_q,\jj_q),\eb) \to \mm_q,
$$
since every free curve in $\mm$ that contains $x$ is contained in
$\mm_q$.  Since $\mm_q$ is a fiber of the proper morphism
$\phi^{\text{sm}}$, there is a normal bundle short exact sequence of
holomorphic vector bundles on $\mm_q$,
$$
0 \to T^{1,0}_{\mm_q,\jj_q} \to T^{1,0}_{\mm,\jj}|_{\mm_q} \to
\OO_{\mm_q}^{\text{dim}(Q)} \to 0,
$$
so that $\text{ch}_r$ of the tangent bundle of $\mm_q$ equals the
restriction of $\text{ch}_r$ of the tangent bundle of $\mm$ for every
$r\geq 1$.

\mni
By \cite{dJS3}, the relative first Chern class of $\text{ev}_q$ equals
$$
-\text{ev}_q^*c_1(T^{1,0}_{\mm_q,\jj_q}) -\psi +
\pi_*u^*\lt(\text{ch}_2(T^{1,0}_{\mm_q,\jj_q})  +
\frac{1}{2(m_\eb+2)}c_1(T^{1,0}_{\mm_q,\jj_q})^2 \rt).
$$ 
The first summand vanishes when restricted to a fiber of
$\text{ev}_q$.  By Corollary \ref{cor-psi1}, we have a divisor class
relation,
$$
-\psi + \pi_*u^*\lt( \frac{1}{2(m_\eb+2)} c_1(T^{1,0}_{\mm_q,\jj_q})^2\rt) =
\pi_*u^*\lt(\frac{m_\eb}{2(m_\eb+2)^2} c_1(T^{1,0}_{\mm_q,\jj_q})^2\rt),
$$
i.e.,
$$
\pi_*u^*\lt( c_1(T^{1,0}_{\mm_q,\jj_q})^2\rt) = \langle
c_1(T^{1,0}_{\mm_q,\jj_q}),\eb \rangle^2 \psi.
$$
Thus, for every $x\in \mm_q$, the restriction to the fiber $F_{\eb,x}$
of $\text{ev}_q$ of the relative first Chern class of $\text{ev}_q$
gives the first Chern class of the fiber,
$$
c_1(T^{1,0}_{F_{\eb,x}}) = \pi_*u^*\lt(\text{ch}_2(T^{1,0}_{\mm_q,\jj_q})  + 
\frac{m_\eb}{2(m_\eb+2)^2}c_1(T^{1,0}_{\mm_q,\jj_q})^2 \rt). 
$$
Since $\psi$ is ample on $F_{\eb,x}$, there exists an integer $d> 0$
such that $\psi^{\otimes d}$ is very ample.  Thus, $F_{\eb,x}$ is
covered by smooth, projective curves $A$ that are complete
intersections of $m_\eb-1$ divisors in the linear system of the very
ample invertible sheaf $\psi^{\otimes d}$.  The total degree on $A$ of
the restriction of the first Chern class equals
$$
\langle c_1(T^{1,0}_{F_{\eb,x}}), [A] \rangle = d^{m_\eb-1}
\int_{F_{\eb,x}} c_1(\psi)^{m_\eb-1} \smile c_1(T^{1,0}_{F_{\eb,x}}) =
$$
$$
d^{m_\eb-1}\lt\langle \tau_{m_\eb-1}(\eta_\mm), \text{ch}_2(T^{1,0}_{\mm_q,\jj_q})  + 
\frac{m_\eb}{2(m_\eb+2)^2}c_1(T^{1,0}_{\mm_q,\jj_q})^2 \rt\rangle^{\mm,\oom}_{0,\eb}.
$$
In particular, the total degree on $A$ of the first Chern class is
positive if and only if $s_\eb$ is positive.

\mni
If the fiber $F_{\eb,x}$ is ``Fano in the weak sense'' that the first
Chern class equals a nonzero, pseudo-effective divisor class, then the
intersection number of that divisor class with the moving curve class
$[A]$ is strictly positive.  Then $s_\eb$ is also positive.

\mni
Conversely, if $s_\eb$ is positive, then the first Chern class of
$F_{\eb,x}$ has positive degree on at least one connected component of
$A$.  By Bertini's Connectedness Theorem, each connected component of
$A$ is the intersection of $A$ with a connected component of
$F_{\eb,x}$.  By Mori's theorem, \cite{Mori}, this connected component
of $F_{\eb,x}$ is uniruled.  By Corollary \ref{cor-NK}, for every
rational curve $R$ in $F_{\eb,x}$, the union in $\mm$ of the curves
parameterized by $R$ is a surface that is the image of a generically
finite morphism from a $\CC\PP^1$-bundle over $R$.  Such a unirational
surface is itself a rational surface.  Moreover, the fiber $\CC\PP^1$
over each point of $R$ moves in a pencil of such curves on the surface
by varying the point of $R$.
\end{proof}

\begin{prop} \label{prop-mainA.5} \marpar{prop-mainA.5}
  For every minimal symplectically free class $\eb$, if $\eb$ is
  symplectically ruling, then $F_{\eb,x}$ is a singleton for every
  point $x\in \mm_\eb$.  The corresponding genus-$0$, $\eb$-curve
  indexed by $x\in \mm_\eb$ move in a pencil of rational curves on a
  rational surface. Thus $\mm$ is swept out by rational surfaces.
  Conversely, if $F_{\eb,x}$ is a singleton for a general
  $x\in \mm_{\eb}$, and if the corresponding genus-$0$, $\eb$-curve
  moves on a rational surface, then $\eb$ is symplectically ruling.
\end{prop}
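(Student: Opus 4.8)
\medskip\noindent
\textbf{Overview of the plan.}
The strategy is to read a conic-bundle structure off of the conditions $m_\eb=0$ and $f_\eb=1$, and then to transport rational curves coming from a second symplectically pseudo-free class into that conic bundle in order to build a covering family of rational surfaces; the converse is obtained by reversing these steps. Throughout, ``minimal'' means $\oom$-minimal, so $\eb$ is $\jj$-irreducible (as in the proof of Theorem \ref{thm-simpFano}), and being symplectically free it is $\text{ev}$-dominant, so that Propositions \ref{prop-free} and \ref{prop-psi} and Theorem \ref{thm-simpFano} all apply to $\eb$.

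\medskip\noindent
\textbf{The singleton fibre.}
Because $\eb$ is symplectically ruling, $m_\eb=0$; hence for every $x\in\mm_\eb$ the fibre $F_{\eb,x}$ of $\text{ev}$ is, by Theorem \ref{thm-simpFano} and Proposition \ref{prop-psi}, a smooth projective variety of pure dimension $0$, i.e.\ a finite reduced set of points. Thus $\text{ev}\colon\Kgnb{0,1}^o((\mm,\jj),\eb)\to\mm_\eb$ is a proper, smooth morphism with finite fibres between complex manifolds, hence a finite étale cover of the connected manifold $\mm_\eb$, of some constant degree $N$. Choosing the point representing $\eta_\mm$ inside $\mm_\eb$ and using that over the smooth locus $\mm_\eb$ the virtual class of $\Kgnb{0,1}((\mm,\jj),\eb)$ (which has expected dimension $\dim_\CC\mm$) restricts to the honest fundamental class, I get $N=\int_{[\Kgnb{0,1}((\mm,\jj),\eb)]^{\text{vir}}}\text{ev}^*\eta_\mm=\langle\tau_0(\eta_\mm)\rangle^{\mm,\oom}_{0,\eb}=f_\eb$. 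Since $\eb$ is symplectically ruling, $f_\eb=1$, so the cover has degree one: $\text{ev}$ is an isomorphism over $\mm_\eb$ and $F_{\eb,x}$ is a singleton.

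\medskip\noindent
\textbf{Producing the rational surfaces.}
Identifying $\mm_\eb$ with its image under $\text{ev}^{-1}$ in $\Kgnb{0,1}((\mm,\jj),\eb)$, the forgetful map $\pi$ gives $\mm_\eb$ a $\CC\PP^1$-bundle structure over an open $W^o$ of $\Kgnb{0,0}((\mm,\jj),\eb)$ of dimension $\dim_\CC\mm-1$, whose fibres are carried onto the $\eb$-curves. Now I would invoke that $\eb$ is symplectically ruling: by the description of $\snep{\oom}(\mm)$ recalled before Theorem \ref{thm-simpFano}, there is a $\jj$-irreducible, $\jj$-free rational curve $R\colon\CC\PP^1\to\mm$ of a class $\eb_1\notin\QQ\eb$ through a general point $x\in\mm_\eb$, meeting $\mm_\eb$ densely. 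The composite $R^{-1}(\mm_\eb)\to\mm_\eb\cong\Kgnb{0,1}^o((\mm,\jj),\eb)\xrightarrow{\ \pi\ }\Kgnb{0,0}((\mm,\jj),\eb)$ is non-constant---otherwise the image of $R$ would lie in a single $\eb$-curve $C\cong\CC\PP^1$, forcing $\eb_1\in\QQ\eb$---so its closure is a rational curve $B$ through the point representing the $\eb$-curve $C_x$. Pulling the universal curve back along $B$ yields a surface $H:=\Kgnb{0,1}((\mm,\jj),\eb)\times_{\Kgnb{0,0}((\mm,\jj),\eb)}B$, which is a birationally ruled, hence rational, surface over $B\cong\CC\PP^1$, together with $v:=\text{ev}|_H\colon H\to\mm$. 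Its image $S$ is an irreducible surface (two-dimensional since the conics genuinely move), $v$ is generically finite onto $S$, so $S$ is unirational and therefore rational, and $x\in S$. The fibres of $H\to B$ map onto a pencil of rational curves sweeping out $S$, one of which is $C_x$; thus $C_x$ moves in a pencil on the rational surface $S$. Since a free $\eb_1$-curve passes through a general point of $\mm$ and $\mm_\eb$ is dense, this construction applies for $x$ in a dense subset of $\mm$, and the resulting surfaces form a constructible subfamily of $\text{Barlet}(\mm,\jj)$ whose closure in a compact component stays proper over $\mm$ and dominant; hence $\mm$ is swept out by rational surfaces.

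\medskip\noindent
\textbf{The converse, and the main obstacle.}
If $F_{\eb,x}$ is a singleton for general $x\in\mm_\eb$, then since it has pure dimension $m_\eb$ we get $m_\eb=0$, and running the degree computation above backwards gives $f_\eb=1$. What remains---and this is the genuine difficulty---is to extract from the hypothesis ``$C_x$ moves on a rational surface $S$'' a symplectically pseudo-free class of $\mm$ that is $\QQ$-linearly independent from $\eb$. The plan is: pass to a resolution $\wt{S}\to S$ carrying a fibration $\wt{S}\to\CC\PP^1$ whose general fibre maps finitely onto an $\eb$-curve (the base is $\CC\PP^1$ because $\wt S$ is rational, and the fibration exists because a rational surface swept by the $\eb$-curves is birational to a Hirzebruch surface), take a section of this fibration, twist it heavily by fibres to obtain a covering family of free rational curves on $\wt S$, and show that the pushforward to $\mm$ of the class of such a curve is symplectically pseudo-free (it is realized by free $\jj$-holomorphic rational curves through a general point, so by the Kollár--Ruan criterion and Theorem \ref{thm-simpFano}) and is not proportional to $\eb$. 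The hard part is exactly this last independence-and-freeness check: one must rule out that $v_*\colon H_2(\wt S;\QQ)\to H_2(\mm;\QQ)$ collapses onto $\QQ\eb$, which is precisely where ``moves on a rational surface'' (rather than the $\eb$-conics moving over a non-uniruled base, as for $\CC\PP^1\times A$) has to be used, together with a little care about the map $\wt S\to\mm$ possibly being ramified. In the forward direction the analogous step costs nothing, since the independent class is handed to us by the definition of symplectically ruling.
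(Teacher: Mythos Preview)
Your forward direction is close to the paper's and essentially correct. The singleton-fibre computation matches exactly. For the surfaces, you take a free curve in $\mm$ of some class $\eb_1\notin\QQ\eb$ and project to $\Kgnb{0,0}((\mm,\jj),\eb)$; the paper instead builds the meromorphic contraction $\epsilon\colon\mm\dashrightarrow R$ to the Barlet space (whose general fibres are the $\eb$-curves), notes that the pushforward $\epsilon_*\gamma$ of the given symplectically pseudo-free $\gamma\notin\QQ\eb$ is $\text{ev}$-dominant on $R$, applies Theorem~\ref{thm-simpFano} to $R$ to obtain a \emph{minimal} free curve $C\subset R$, and pulls back $C$. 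Your shortcut assumes a $\jj$-irreducible free curve of class not in $\QQ\eb$ through a general point, but the definition of ``symplectically ruling'' only hands you a symplectically pseudo-free $\gamma$, which need be neither $\jj$-irreducible nor free; descending to $R$ and invoking Theorem~\ref{thm-simpFano} there is precisely what manufactures the required irreducible free curve.

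The converse is where there is a real gap, and the paper's argument is substantially different from your sketch. You propose to extract the second pseudo-free class from a section of a \emph{single} rational surface $\wt S\to\CC\PP^1$. Two obstructions: first, ``free on $\wt S$'' says nothing about freeness in $\mm$, and in any case freeness is not enough---symplectically pseudo-free means a nonzero Gromov--Witten invariant with a point insertion, which Koll\'ar--Ruan delivers only for $\jj$-\emph{irreducible} free classes, and there is no reason the pushforward of your section class is $\jj$-irreducible in $\mm$; second, the section class can jump as the surface varies, so there is no canonical candidate class. The paper avoids all of this by again working through $\epsilon\colon\mm\dashrightarrow R$. From the covering by rational surfaces it deduces $R$ is uniruled, takes a minimal free class $\gamma_R$ on $R$, and writes down an explicit positive invariant $\langle\eta_R,[\oom_R]^{\smile 2},\dots,[\oom_R]^{\smile 2}\rangle^{R}_{0,\gamma_R}$, realized by finitely many curves $C\subset R$ through a general $q$. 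Each preimage $\Sigma=\epsilon^{-1}(C)$ is a conic bundle over $C$; Tsen's theorem gives a section class $\gamma$. The key step is then a direct positivity of a specific GW invariant of $\mm$ in class $\gamma$, with insertions $\eta_\mm$, copies of $\epsilon^*([\oom_R]^{\smile 2})$, and copies of $[\oom]^{\smile 2}$: using the MRC fibration of $R$ the paper shows every contributing stable map must land in one of the finitely many $\Sigma$'s, and inside each $\Sigma$ the count is positive by a linear-system argument. Finally $\epsilon_*\gamma=\gamma_R\neq 0$ gives $\QQ$-independence from $\eb$. So the ``independence-and-freeness check'' you correctly flag as the hard part is resolved not on a single surface but by lifting an enumerative identity from $R$ to $\mm$.
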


\begin{proof}
  By Proposition \ref{prop-free}, the $\eb$-generic locus $\mm_\eb$ is
  a dense open whose complement is a proper closed analytic subspace
  of $\mm$.  Then $m_\eb$ equals $0$ and $f_\eb$ equals $1$ if and
  only if for every $x\in \mm_\eb$, the fiber $F_{\eb,x}$ is a
  singleton set.  In this case, the restriction of the universal curve
  over $\Kgnb{0,1}^o(\mm,\jj,\eb) = \mm_\eb$ is a family of $1$-cycles
  in $\mm$ parameterized by $\mm_\eb$.  This family of $1$-cycles
  defines a meromorphic map from $\mm$ to the Barlet space of $\mm$,
  $$
  R\to \text{Barlet}(\mm), \ \ \mm\supseteq \mm_\eb
  \xrightarrow{\epsilon} R^o \subseteq R.
  $$
  As the normalization of the image of a meromorphic map between
  compact, complex analytic spaces in Fujiki's class, the image $R$ is
  an irreducible, normal, compact, complex analytic space in Fujiki's
  class.  Up to blowing up $R$, assume that the MRC fibration of $R$
  is everywhere holomorphic.  By construction, the general fibers of
  $\epsilon$ are the genus-$0$, $\eb$-curves in $\mm$ parameterized by
  $\mm_\eb$.  In particular, the only $\text{ev}_1$-dominant curve
  classes that are contracted by $\epsilon$ are in the $\QQ$-span of
  $\eb$.

\mni
If there exists a symplectically pseudo-free class $\gamma$ that is
not in the $\QQ$-span of $\eb$, then the image class in $R$ is
$\text{ev}_1$-dominant.  By Part 1 of Theorem \ref{thm-simpFano}, the
K\"{a}hler manifold $R$ has a minimal free class (which is also
symplectically free).  For a general point of $R$, for a minimal free
curve $C$ containing that point, the inverse image of $C$ under
$\epsilon$ is a rational surface in $\mm$ on which the unique
genus-$0$, $\eb$-curve moves in a pencil (indexed by $C$).

\mni
Conversely, assume that a general fiber of $\epsilon$ moves in a
pencil on a rational surface.  The goal is to construct a
symplectically pseudo-free class $\gamma$ on $\mm$ that is
$\QQ$-linearly independent from $\eb$.  In fact, we will construct a
symplectically pseudo-free class $\gamma$ such that the pushforward
class $\gamma_R = \epsilon_*\gamma$ is nonzero.

\mni
Since $\mm$ is covered by rational surfaces, and every pair of points
of a rational surface is contained in a chain of rational curves in
that rational surface, not all rational curves of this surface can
equal fibers of $\phi$.  Thus, the image $R$ is uniruled by free
curves.  By the proof of Theorem \ref{thm-simpFano}, there exists a
minimal free curve class $\gamma_R$ on $R$.  Denote by $\oom_R$ a
K\"{a}hler class on $R$.  By the original proof of Theorem
\ref{thm-KR1}, for
$m_{\gamma,R} = \langle c_1(T^{1,0}_R),\gamma_R \rangle - 2$, we have
positivity of the Gromov-Witten invariant
$$
\langle \eta_R, [\oom_R]\smile [\oom_R], \dots, [\oom_R]\smile
[\oom_R] \rangle^{R,\oom_R}_{0,\gamma_R},
$$   
with $m_{\gamma,R}$ insertions of $[\oom_R]\smile [\oom_R]$.  More
precisely, when restricted to the fiber of the MRC fibration of $R$
containing a general point $q$, the class $[\oom_R]$ is an
$\RR_{>0}$-linear combination of ample $\ZZ$-divisor classes, so that
we can represent the restriction of $[\oom_R]\smile[\oom_R]$ to the
fiber as an $\RR_{>0}$-linear combination of Poincar\'{e} duals of
homology classes of moving, codimension-$2$ complex analytic
subvarieties of the fiber.  Thus, choosing these subvarieties
generically, the Gromov-Witten invariant is an $\RR_{>0}$-linear
combination of the homology classes of finitely many finite subsets of
$\Kgnb{0,m_{\eb,R}+1}((Q,\jj_R),\gamma_R)$, each of which equals the
finite set of all genus-$0$, $\gamma_R$-curves containing $q$ and
intersecting each of $m_{\gamma,R}$ specified codimension-$2$ closed,
analytic subvarieties of the MRC fiber of $q$.

\mni
For each genus-$0$, $\gamma_R$-curve $C$ in one of these finite sets,
the inverse image $\Sigma$ of $C$ under $\epsilon$ is a conic bundle
over $C$.  By Tsen's Theorem, this conic bundle admits holomorphic
sections.  For every class $\gamma$ of a section, the pushforward
under $\gamma$ equals $\gamma_R$.  Since the image of the MRC
fibration of $R$ is not uniruled, the composition of $\phi$ and the
MRC fibration of $R$ is an almost holomorphic fibration of $\mm$.  In
particular, every chain of genus-$0$ curves in $\mm$ that intersects
$\epsilon^{-1}(q)$ maps under $\phi$ to a chain of genus-$0$ curves in
the MRC fiber of $q$.  Thus, every chain $C'$ of genus-$0$ curves
whose total class equals $\gamma$ maps under $\epsilon$ to a chain of
genus-$0$ curves whose total class equals $\gamma_R$.  If the chain
$C'$ intersects $\epsilon^{-1}(q)$ as well as the $\epsilon$-inverse
images of the specified collection of codimension-$2$ complex analytic
subvarieties of the MRC fiber of $q$, then the image of $C'$ equals
$C$.  Thus, $C'$ is divisor in $\Sigma$.

\mni
For a divisor class $\gamma$ in $\Sigma$ of a complete linear system
$\PP^{n_\gamma}$ of moving section curves, for a general point
$x\in \epsilon^{-1}(q)$ and for $n_\gamma-1$ additional auxiliary
points $y_2,\dots,y_{n_\gamma}$, there is a unique curve $D$ in the
complete linear system that contains $x$ and the points $y_i$, and
$\epsilon$ maps $D$ isomorphically to $C$.  Of course the restriction
of $\oom$ to $\Sigma$ is an $\RR_{>0}$-linear combination of ample
divisor classes.  Thus, the restriction of $[\oom]\smile[\oom]$ to
$\Sigma$ is represented by an $\RR_{>0}$-linear combination of
Poincar\'{e} duals of homology classes of general points of $\Sigma$.
Altogether, we have positivity of the degree of the Gromov-Witten
invariant,
$$
\langle \eta_\mm, \epsilon^*([\oom_R]\smile
[\oom_R]),\dots,\epsilon^*([\oom_R]\smile[\oom_R]), [\oom]\smile
[\oom],\dots, [\oom]\smile [\oom]\rangle^{\mm,\oom}_{0,\gamma}
$$
with $m_{\gamma,R}$ of the insertions
$\phi^*([\oom_R]\smile [\oom_R])$ and with $n_\gamma-1$ of the
insertions $[\oom]\smile[\oom]$.  Indeed, the homology class of this
Gromov-Witten invariant equals a $\RR_{>0}$-linear combination of
finitely many homology classes of finite subsets of
$\Kgnb{0,m_{\gamma,R} + n_{\gamma}}((\mm,\jj),\gamma)$ parameterizing
genus-$0$, $\gamma$-curves $D$ in $\mm$ that (1) map to one of the
finitely many genus-$0$, $\gamma_R$-curves in $R$ that contains $x$,
itself mapping to $q$, that (2) intersects the inverse images under
$\epsilon$ of each of the $m_{\gamma,R}$ specified codimension-$2$
complex analytic subvarieties, and that (3) contains each of the
$n_\gamma-1$ general points $y_i$ in $\Sigma$.  Thus, $\gamma$ is a
symplectically pseudo-free class, and the pushforward
$\gamma_R=\epsilon_*\gamma$ is nonzero, so that $\gamma$ is
$\QQ$-linearly independent from $\eb$.  Therefore $\eb$ is
symplectically ruling.
\end{proof}

\mni
Part 3 of Theorem \ref{thm-surf} follows by the converse direction of
Proposition \ref{prop-mainA}.  Similarly, if $Z$ is symplectically
equivalent to $\mm$, then Part 2 of Theorem \ref{thm-surf} follows
from Proposition \ref{prop-mainA} and \ref{prop-mainA.5}.  Finally, if
$Z$ is symplectically deformation equivalent to $\mm$, then the cone
of symplectically free classes of $Z$ equals the cone of
symplectically free classes of $\mm$.  Assume that this cone is
nonzero, and assume that every indecomposable element of this common
cone is symplectically $2$-free or symplectically ruling (both of
these properties depend only on the symplectic deformation class).
Then for the symplectic form $\oom'$ on $Z$, there exists an
$\oom'$-minimal class in the symplectically free cone, by Theorem
\ref{thm-simpFano}.  Since this is a $\oom'$-minimal class, in
particular it is an indecomposable class of the cone.  Thus, this
$\oom'$-minimal class is either symplectically $2$-free or
symplectically ruling.  By Propositions \ref{prop-mainA} and
\ref{prop-mainA.5}, the K\"{a}hler manifold $Z$ is swept out by
rational surfaces.


\section{Proof of Theorem \ref{thm-PG}} \label{sec-GP2}  
\marpar{sec-GP2}

\mni
The following propositions adapt to projective homogeneous varieties
of arbitrary Picard rank some results valid in Picard rank $1$ from
\cite[Sections 14 and 15]{dJHS}.  For every connected, complex Lie
group $G$, the \textbf{solvable radical} $R_{\text{solv}}(G)$,
resp. the \textbf{unipotent radical} $R_{\text{uni}}(G)$, is the
maximal connected, normal, solvable, complex Lie subgroup of $G$,
resp. it is the kernel $R_{\text{uni}}(G)$ in $R_{\text{solv}}(G)$ of
the initial homomorphism of complex Lie groups from
$R_{\text{solv}}(G)$ to a group of multiplicative type,
$$
\chi: R_{\text{solv}}(G) \twoheadrightarrow M(G) \cong \CC^\times
\times \dots \times \CC^\times. 
$$
There exists a connected, complex Lie subgroup $L(G)$ such that the
induced homomorphism $L(G)\to G/R_{\text{uni}}(G)$ is an isomorphism.
This is a \textbf{Levi factor} of $G$, and $L(G)$ is unique up to
conjugation by $R_{\text{uni}}(G)$.  The group $G$ is
\textbf{semisimple} if $R_{\text{solv}}(G)$ is trivial.  This is true
for the commutator subgroup $[L(G),L(G)]$, which is called a
\textbf{semisimple factor} of $G$.

\mni
Let $G$ be a connected, simply connected, semisimple, complex Lie
group.  Let $B\subset G$ be a \textbf{Borel subgroup} $B$, i.e., a
connected, solvable, complex Lie subgroup of $G$ that is maximal with
these properties.  Let $T\subset B$ be a \textbf{maximal torus}, i.e.,
a connected, Abelian, complex Lie subgroup of $B$ of multiplicative
type $\CC^\times \times \dots \times \CC^\times$ that is maximal with
these properties.  Denote by $N_G(T)$ the normalizer of $T$ in $G$,
and denote by $W=W_{G,T}$ the associated \textbf{Weyl group}, i.e.,
the finite quotient subgroup $N_G(T)/T$.  Via conjugation, $W$ acts on
the set of all subgroups $H$ of $G$ that contain $T$.  Denote by $W_H$
the stabilizer subgroup of $H$ in $W$.  In particular, $W_B$ is the
trivial subgroup.

\mni
The set of \textbf{positive simple roots} is (naturally bijective to)
the subset $\Delta=\Delta_{G,T,B}$ of $W_{G,T}$ of all elements
$s \in W$ such that the complex Lie subgroup $H_s$ generated by $B$
and $s B s^{-1}$ has either $\textbf{SL}_2$ or $\textbf{PGL}_2$ for
its semisimple factor.  The positive simple roots give generators of
$W$ each having order $2$, and the minimal word length of an element
with respect to the generating set $\Delta$ is the \textbf{Coxeter
  length} of the element.

\mni
Let $P$ be a connected, proper, complex Lie subgroup of $G$ containing
$B$.  The quotient complex manifold $Y=G/P$ is projective.  For every
$W_P$-double coset $[w]\in W_P\ W / W_P$, the corresponding
\textbf{Schubert cell} is $(PwP)/P$ in $G/P$.  The closure of this
Schubert cell is a \textbf{Schubert variety}, and the homology class
is a \textbf{Schubert class} in $H_*(G/P;\ZZ)$.  The natural (left)
$G$-action on $Y$ induces a diagonal $G$-action on $Y\times Y$.

\begin{thm}[Bruhat Decomposition] \label{thm-Bruhat} \marpar{thm-Bruhat}
  There are finitely many $G$-orbits on $Y\times Y$, each of the form
  $G\cdot(E_w\times \{P/P\})$ for a unique $P$-orbit $E_{P,w} = PwP/P$
  in $Y=G/P$, indexed by $W_P$-double cosets
  $[w]\in W_P\backslash W/W_P$.  Each Schubert cell $E_{P,w}$ is a
  locally closed subvariety that is algebraically isomorphic to affine
  space $\CC^\ell$, where $\ell$ is the least Coxeter length of a
  representative of the double $W_P$-coset.  The homology classes of
  the Schubert varieties $\overline{E}_{P,w}$ form an additive basis
  for $H_*(G/P;\ZZ)$ that is self-dual under Poincar\'{e} duality.
\end{thm}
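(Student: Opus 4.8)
The plan is to reduce everything to the classical Bruhat decomposition $G = \bigsqcup_{w\in W} BwB$ of the simply connected semisimple group $G$ and then keep careful track of the parabolic $P$, in the spirit of \cite[Sections 14 and 15]{dJHS} (which treats Picard rank one). First I would set up notation: let $I\subseteq \Delta$ be the set of simple reflections lying in $P$, so $W_P=\langle I\rangle$ and $P=\bigsqcup_{w\in W_P}BwB$. The initial step is the cell decomposition of $Y=G/P$ itself: projecting the Bruhat decomposition gives $Y=\bigsqcup_{[w]\in W/W_P} BwP/P$, and for the minimal-length representative $w$ of a coset — equivalently of the corresponding $W_P$-double coset, since the minimal double-coset representative is minimal in both its left and right $W_P$-cosets — the product map from the unipotent subgroup $U_w:=U\cap wU^-w^{-1}$, a product of the $\ell(w)$ root subgroups for the positive roots made negative by $w^{-1}$, identifies the Schubert cell $E_{P,w}:=BwP/P$ algebraically with $\CC^{\ell(w)}$. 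Its closure $\overline{E}_{P,w}$ is the Schubert variety, a union of lower Schubert cells in the Bruhat order.

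\mni
For the orbit count on $Y\times Y$ I would use the ``slice'' $Y\times\{o\}$, $o=P/P$. The diagonal $G$-action is transitive on the second factor with stabilizer $P$ of $o$, so every $G$-orbit on $Y\times Y$ meets this slice, and $(x,o)$, $(x',o)$ lie in one $G$-orbit exactly when $x,x'$ lie in one $P$-orbit of $Y$. Thus $G\backslash(Y\times Y)$ is in bijection with $P\backslash Y=P\backslash G/P$, which by the Bruhat decomposition and $P=BW_PB$ is in bijection with $W_P\backslash W/W_P$; in particular there are finitely many orbits. The $P$-orbit $PwP/P$ attached to a double coset is a union of $B$-orbit Schubert cells, and the $G$-orbit $G\cdot(Bw'P/P\times\{o\})$ is independent of which Schubert cell $Bw'P/P\subseteq PwP/P$ one picks; write it $G\cdot(E_{P,w}\times\{P/P\})$, taking $E_{P,w}$ to be the cell of the minimal representative so the dimension count $\CC^{\ell}$ holds with $\ell$ the least Coxeter length in the double coset.

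\mni
For the homology statement, since $Y$ is the disjoint union of affine cells $E_{P,w}\cong\CC^{\ell(w)}$, all of even real dimension, filtering $Y$ by the closed skeleta $Y_k=\bigcup_{\ell(w)\leq k}\overline{E}_{P,w}$ gives relative homology groups $H_*(Y_k,Y_{k-1})$ concentrated in even degree; the spectral sequence of the filtration then degenerates for parity reasons, so $H_*(Y;\ZZ)$ is free abelian on the fundamental classes of the Schubert varieties $\overline{E}_{P,w}$ (one per $B$-orbit, i.e.\ indexed by $W/W_P$), with $H_{\mathrm{odd}}(Y;\ZZ)=0$. For self-duality under Poincar\'{e} duality I would bring in the opposite Borel $B^-=w_0Bw_0$ and the opposite Schubert varieties $\overline{B^-vP/P}$, of codimension $\ell(v)$: the key classical fact is that $\overline{E}_{P,w}\cap\overline{B^-vP/P}$ is empty unless $v\leq w$ in the Bruhat order and, when $\ell(v)=\ell(w)$, is the single reduced point $wP/P$ — transversality there being visible from the root-subgroup descriptions of the two tangent spaces at $wP/P$, which are spanned by complementary sets of root vectors. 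Hence the Schubert basis and the opposite-Schubert basis are dual under the intersection pairing; and since $B$ and $B^-$ are conjugate in $G$, left translation by a conjugating element is a biholomorphism of $Y$ carrying Schubert varieties to opposite Schubert varieties, so the opposite-Schubert classes are again Schubert classes, up to an explicit involution of $W/W_P$. Therefore the intersection form is a permutation matrix in the Schubert basis, i.e.\ that basis is self-dual under Poincar\'{e} duality.

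\mni
I expect the main obstacle to be this last step: showing the intersection of complementary-dimensional opposite Schubert varieties is a single \emph{reduced} point rather than merely nonempty — equivalently, that the two tangent spaces at $wP/P$ are genuinely complementary — and identifying the duality involution on $W/W_P$ correctly. The remaining ingredients — affineness of the Bruhat cells, the identity $P=BW_PB$, the slice argument for the $G$-orbits, and the even-cell homology computation — are either classical input that I am assuming or routine bookkeeping on top of it.
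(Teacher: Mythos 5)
The paper states Theorem \ref{thm-Bruhat} as classical background and supplies no proof at all, so there is nothing internal to compare your argument against; what you have written is the standard proof, and it is correct in all essentials. The reduction of $G$-orbits on $Y\times Y$ to $P$-orbits on $Y$ via the slice $Y\times\{P/P\}$, the identification $P\backslash G/P \cong W_P\backslash W/W_P$, the parametrization of the cell of a minimal coset representative by $U\cap wU^-w^{-1}\cong \CC^{\ell(w)}$, the even-cell computation of $H_*$, and the duality with opposite Schubert varieties are all the expected ingredients. One point in your favor worth highlighting: the theorem as printed conflates the $P$-orbits $PwP/P$ (indexed by $W_P\backslash W/W_P$, which control the $G$-orbits on $Y\times Y$ but are in general \emph{not} affine spaces --- e.g.\ the open $P$-orbit in $\PP^2=\SL{3}/P$ is $\PP^2$ minus a point) with the $B$-orbits $BwP/P$ (indexed by $W/W_P$, which are the affine cells and whose closures give the homology basis). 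You silently but correctly disentangle these, taking $E_{P,w}$ to be the $B$-cell of the minimal double-coset representative for the affine-space claim and using all of $W/W_P$ for the homology basis; read literally with $E_{P,w}=PwP/P$, the affineness and basis assertions of the theorem would fail. Your flagged concern --- that complementary-dimensional opposite Schubert varieties meet in a single reduced point --- is the genuinely nontrivial classical input for self-duality, and the tangent-space argument at $wP/P$ you sketch (both varieties are smooth there, with tangent spaces spanned by complementary sets of root vectors) is the standard way to close it.
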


\mni
In particular, the $1$-dimensional Schubert varieties $\eb_i$ are in
bijection with the positive simple roots in
$\Delta_P := \Delta \setminus (\Delta \cap W_P)$, and the
corresponding Schubert classes form an additive basis for
$H_2(G/P;\ZZ)$. The Schubert classes of codimension-$1$ Schubert
varieties form a dual basis $D_i$ for $H^2(G/P;\ZZ)$.

\begin{prop} \label{prop-simp} \marpar{prop-simp}
  For $G/P$, the pseudo-effective cone equals the effective cone equals
  the base-point free cone equals the nef cone, and all of these equal
  the free $\ZZ_{\geq 0}$-semigroup with simplicial generators $D_i$.
  Dually, the Mori cone equals the movable cone equals the free
  $\ZZ_{\geq 0}$-semigroup with simplicial generators $\eb_i$.  The
  symplectically free cone equals the Mori cone, so that $G/P$ is
  integrally simplically Fano.  Also the $\eb_i$-generic locus equals
  all of $G/P$.
\end{prop}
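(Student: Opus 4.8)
The plan is to reduce every assertion to two standard features of $Y=G/P$: the Bruhat decomposition (Theorem \ref{thm-Bruhat}), which presents the codimension-one Schubert classes $D_i$ as a $\ZZ$-basis of $H^2(Y;\ZZ)$ dual to the $\ZZ$-basis of Schubert line classes $\eb_i$ of $H_2(Y;\ZZ)$; and the homogeneity of $Y$, which makes $T^{1,0}_{Y,\jj}$ a quotient of the trivial bundle $\mfg\otimes\OO_Y$ via the infinitesimal $G$-action (surjective since $G$ acts transitively), and makes every space $\Kgnb{0,n}((Y,\jj),\eb)$, together with its forgetful and evaluation maps, $G$-equivariant.

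\mni
First I would pin down the divisor cones. Each $D_i$ is the pullback of the ample generator of $\text{Pic}(G/P_i)\cong\ZZ$ under the projection $Y\to G/P_i$ to the maximal-parabolic quotient, hence $D_i$ is globally generated; so the simplicial cone $\sum_i\RR_{\geq 0}D_i$ lies inside the base-point-free cone, inside the nef cone, inside the pseudo-effective cone. For the reverse inclusion I would use that the $G$-translates of the type-$i$ Schubert line $\ell_i$ (a $\CC\PP^1\subset Y$ through the base point, of class $\eb_i$) already cover $Y$, as $G$ acts transitively: for any effective divisor $D$, a translate $C$ of $\ell_i$ passing through a general point of $Y$ is not contained in $\text{Supp}(D)$, so $\langle D,\eb_i\rangle = D\cdot C\geq 0$ for every $i$, which, since $\{\eb_i\}$ is the dual basis, forces $D\in\sum_i\ZZ_{\geq 0}D_i$; taking closures gives $\mathrm{pseff}\subseteq\sum_i\RR_{\geq 0}D_i$. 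Hence all four divisor cones equal the free simplicial semigroup on the $D_i$, integrally isomorphic to $\ZZ_{\geq 0}^{b_2(Y)}$. Dualizing against the basis $\eb_i$, the Mori cone is $\sum_i\RR_{\geq 0}\eb_i$; and since each $\eb_i$ moves in the covering family of its $G$-translates, the cone of movable curve classes is caught between $\sum_i\RR_{\geq 0}\eb_i$ and the Mori cone, so it equals both.

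\mni
Next I would show $\nef{\jj}(Y)=\nee{}{\jj}(Y)=\sum_i\ZZ_{\geq 0}\eb_i$. Because $T^{1,0}_{Y,\jj}$ is globally generated, so is $u^*T^{1,0}_{Y,\jj}$ for every non-constant $u:\CC\PP^1\to Y$; that pullback is therefore semi-ample and $u$ is $\jj$-free, so each $\eb_i$ is $\jj$-free (and indeed every $\jj$-effective class is a sum of $\jj$-free classes). Conversely, nefness of the $D_i$ gives $\langle D_i,[C]\rangle\geq 0$ for every $\jj$-holomorphic curve $C$, so $\nee{}{\jj}(Y)\subseteq\sum_i\ZZ_{\geq 0}\eb_i$, and the three semigroups coincide, a free $\ZZ_{\geq 0}$-semigroup with simplicial generators $\eb_i$. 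Each $\eb_i$ is also $\jj$-irreducible: it is the primitive generator of an extremal ray of $\nee{}{\jj}(Y)$, hence indecomposable, so a fortiori not a nonzero element of $\nef{\jj}(Y)$ plus a nonzero sum of classes of $\jj$-holomorphic spheres. As $-K_Y$ is ample, $Y$ is Fano, so $Y$ is already integrally simplicially Fano in the sense of Definition \ref{defn-simpFano}. By Theorem \ref{thm-simpFano}(1), each $\jj$-irreducible $\jj$-free class is symplectically free, so $\sum_i\ZZ_{\geq 0}\eb_i\subseteq\snef{\oom}(Y)$; conversely a symplectically free class is $\jj$-effective, since a nonzero Gromov--Witten invariant on a K\"ahler manifold forces an honest $\jj$-holomorphic stable map of that class, so $\snef{\oom}(Y)\subseteq\nee{}{\jj}(Y)=\sum_i\ZZ_{\geq 0}\eb_i$. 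Thus the symplectically free cone equals the Mori cone.

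\mni
Finally, for the $\eb_i$-generic locus: the covering property of $\ell_i$ shows $\eb_i$ is $\text{ev}$-dominant, and $\eb_i$ is $\jj$-irreducible, so Proposition \ref{prop-free} gives that the $\eb_i$-generic open subset of $Y$ is nonempty and dense. But that open subset is $G$-invariant, since it is cut out by $G$-equivariant data: $\Kgnb{0,1}((Y,\jj),\eb_i)$, its evaluation map to $Y$, its boundary, its non-submersive locus, and the locus of maps with reducible domain are all $G$-invariant. A nonempty $G$-invariant Zariski open in the homogeneous space $Y$ is all of $Y$, so the $\eb_i$-generic locus equals $G/P$. There is no substantial obstacle in any of this; the one step requiring care is the passage between the symplectic bookkeeping ($\jj$-irreducibility, symplectic freeness, the semigroup $\snef{\oom}$) and the classical Schubert-calculus picture, which is exactly what Theorem \ref{thm-simpFano}(1) supplies --- everything else is standard homogeneous-space geometry.
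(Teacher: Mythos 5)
Your proposal is correct and follows essentially the same route as the paper: homogeneity plus the Bruhat basis duality to identify all the divisor and curve cones with the simplicial semigroups on $D_i$ and $\eb_i$, Theorem \ref{thm-simpFano}(1) applied to the $\jj$-irreducible simplicial generators to get symplectic freeness, and $G$-invariance of the dense open $\eb_i$-generic locus to conclude it is all of $G/P$. The only cosmetic difference is that you establish nefness of effective divisors by moving the curve (a translate of $\ell_i$ off the support), whereas the paper moves the divisor (its $G$-translates give a base-point-free linear system); both rest on the same transitivity of the $G$-action.
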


\begin{proof}
  First of all, for every effective divisor (possibly empty), the
  $G$-translates of the divisor form a base-point free subset of the
  complete linear system.  Thus, the complete linear system of every
  effective divisor is base-point free, i.e., the associated invertible
  sheaf is globally generated.  Thus, the divisor class is nef.
  Dually, the Mori cone equals the movable cone.  In particular, every
  pseudo-effective divisor class has nonnegative intersections with
  every $1$-dimensional Schubert variety $\eb_i$.  Thus, the divisor
  class equals a nonnegative linear combination of the dual basis
  $D_i$.  Therefore the pseudo-effective cone equals the free
  $\ZZ_{\geq 0}$-semigroup generated by the classes $[D_i]$ which
  equals the base-point free cone.  Dually, the Mori cone is the free
  $\ZZ_{\geq 0}$-semigroup generated by the classes $\eb_i$.

\mni
Every symplectically free class is symplectically pseudo-free, which
is $\text{ev}_1$-dominant, and hence is an effective curve class.
Thus the symplectically free cone is contained in the Mori cone.  To
prove these cones are equal for $G/P$, it suffices to prove that every
$\eb_i$ is symplectically free.  By the $G$-translation action, every
effective curve is $\text{ev}_1$-dominant.  Since the classes $\eb_i$
are the simplicial generators of the Mori cone, these classes are
$\jj$-irreducible.  Thus, by Theorem \ref{thm-simpFano}, every $\eb_i$
is symplectically free.  Since the $\eb_i$-free locus is a dense
Zariski open that is $G$-invariant, it equals the entire homogeneous
variety $G/P$.
\end{proof}

\begin{prop} \label{prop-strong} \marpar{prop-strong}
  Every contraction of $G/P$ is $G$-equivariant, of the form
  $G/P\to G/Q$ for a parabolic subgroup $Q$ of $G$ containing $P$ that
  is uniquely determined by the classes $\eb_i$ contracted to points,
  or equivalently, by the positive simple roots in $\Delta_P\cap W_Q$.
  A class $\eb_i$ with $m_{\eb_i}$ equal to $0$ is symplectically
  ruling unless $G/P$ equals $\CC\PP^1$.  The contraction that
  contracts precisely the symplectically ruling classes $\eb_i$ is a
  Zariski locally trivial fiber bundle.  In particular, it is
  submersive.
\end{prop}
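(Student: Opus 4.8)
The plan is to establish the three assertions in turn, using Proposition~\ref{prop-simp} (the Mori cone $\neb{}{\jj}(G/P)_{\RR}$ and the nef cone of $G/P$ are simplicial, and every nef divisor on $G/P$ is base-point free) as the main engine, supplemented in one place by standard structure theory of $G/P$. For a subset $I$ of the index set of the extremal generators $\eb_i$, let $Q_I\supseteq P$ be the parabolic subgroup whose Levi has simple roots $(\Delta\cap W_P)\cup\{\alpha_i:i\in I\}$; the standard bijection between parabolics containing $P$ and subsets of $\Delta_P$ shows $Q_I$ is well-defined, accounts for all such parabolics, and satisfies $\Delta_P\cap W_{Q_I}=\{s_i:i\in I\}$. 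The projection $\pi_I:G/P\to G/Q_I$ is $G$-equivariant with connected homogeneous fibers $Q_I/P$, and (from the Bruhat decomposition and Proposition~\ref{prop-simp}) a curve of $G/P$ is $\pi_I$-contracted exactly when its class lies in the face $\sum_{i\in I}\RR_{\geq 0}\eb_i$. Now let $f:G/P\to Z$ be any surjective morphism to a normal space with connected fibers; then $f_{*}\OO_{G/P}=\OO_Z$, the fibers of $f$ are rationally connected, and the $f$-contracted curve classes form the face $D^{\perp}\cap\neb{}{\jj}(G/P)_{\RR}$ with $D=f^{*}(\text{ample})$, which by simpliciality equals $\sum_{i\in I}\RR_{\geq 0}\eb_i$ for a unique $I$. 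Let $D'$ be the pullback to $G/P$ of an ample divisor on $G/Q_I$: it is nef, hence base-point free by Proposition~\ref{prop-simp}, it is trivial on exactly the curves in that face, hence trivial on the $f$-fibers, so (the $f$-fibers being rationally connected) it descends to an ample divisor on $Z$; thus the fibration defined by $D'$ is $f$, but it is manifestly $\pi_I$, so $f\cong\pi_I$ and $Z\cong G/Q_I$. Hence every contraction is $G$-equivariant, with $Q$ determined by the contracted $\eb_i$, equivalently by $\Delta_P\cap W_Q$.

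For the second assertion, assume $G/P\not\cong\CC\PP^1$ and fix $i$ with $m_{\eb_i}=0$. By the first step the $\eb_i$-curves through a point $x$ lie in the fiber $Q_i/P$ of $\pi_i:G/P\to G/Q_i$; since the normal bundle of a fiber of $\pi_i$ in $G/P$ is trivial, a minimal rational curve of $Q_i/P$ has the same anticanonical degree $m_{\eb_i}+2=2$ in $Q_i/P$ as in $G/P$. A rational homogeneous variety of Picard number one other than $\CC\PP^1$ has minimal rational curves of anticanonical degree at least $3$ (equivalently, its variety of minimal rational tangents is positive-dimensional); hence $Q_i/P\cong\CC\PP^1$, so there is exactly one $\eb_i$-curve through $x$, namely this fiber. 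Since $\eb_i$ is $\jj$-irreducible, symplectically free, and $\eb_i$-generic on all of $G/P$ by Proposition~\ref{prop-simp}, this count is the genuine descendant invariant, so $f_{\eb_i}=1$. If $b_2(G/P)=1$ then $Q_i=G$ and the above gives $G/P=Q_i/P\cong\CC\PP^1$, contrary to hypothesis, so $b_2(G/P)\geq 2$. Pick $j\neq i$: the class $\eb_j$ is symplectically free (Proposition~\ref{prop-simp}), hence symplectically pseudo-free---take $n=0$, $u=c_1(\psi)^{m_{\eb_j}}$, $v=1$ in Definition~\ref{defn-sfree}, so that $\text{GW}^{G/P,\oom}_{0,1,\eb_j}(u\otimes\eta_{G/P})=f_{\eb_j}>0$---and it is $\QQ$-linearly independent from $\eb_i$. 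By Definition~\ref{defn-sfree}, $\eb_i$ is symplectically ruling.

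For the third assertion, a symplectically ruling class has $m_{\eb_i}=0$ by Definition~\ref{defn-sfree}, so the set $I$ of indices of symplectically ruling classes satisfies $I=\{i:m_{\eb_i}=0\}$ unless $G/P\cong\CC\PP^1$, in which case $I=\emptyset$; in either case the contraction of precisely the symplectically ruling classes is $\pi_I:G/P\to G/Q_I$ from the first step (the identity when $I=\emptyset$). Over the big Bruhat cell $R_{\text{uni}}(Q_I^{-})\cong\AAA^N$ of $G/Q_I$, the map $(u,xP)\mapsto uxP$ is an isomorphism $R_{\text{uni}}(Q_I^{-})\times(Q_I/P)\xrightarrow{\sim}\pi_I^{-1}(\text{big cell})$; translating by elements of $G$ covers $G/Q_I$ by Zariski open sets over each of which $\pi_I$ is the trivial bundle with fiber $Q_I/P$. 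Therefore $\pi_I$ is a Zariski-locally trivial fiber bundle, and in particular a holomorphic submersion, so $G/P$ satisfies the hypothesis imposed on $\nn$ in Theorem~\ref{thm-CI}.

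The main obstacle is the structure-theoretic input in the second step---that a rational homogeneous variety of Picard number one other than $\CC\PP^1$ has pseudo-index at least $3$---which is the only place we use more about $G/P$ than Proposition~\ref{prop-simp}; it is classical, following from the classification of Picard-rank-one $G/P$, or from identifying the variety of minimal rational tangents with the closed orbit of the isotropy group (a single point only for $\CC\PP^1$). The remaining delicate point, uniqueness of the contraction attached to a face of the Mori cone, is handled by base-point-freeness of nef divisors on $G/P$, and the local-triviality step over the big cell is routine.
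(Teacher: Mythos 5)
Your argument is correct, but the middle step is carried out by a genuinely different route than the paper's. For the first assertion both arguments rest on simpliciality of the Mori cone and the fact that a contraction is determined by its contracted face; you are somewhat more careful than the paper in identifying an arbitrary contraction with the projection $\pi_I:G/P\to G/Q_I$ (the rigidity step), while the paper simply observes that the $G$-action on the complete linear system makes the contraction $G$-equivariant. The real divergence is in the second assertion. The paper argues softly: for $m_{\eb_i}=0$ the evaluation map $\text{ev}_1:\Kgnb{0,1}(G/P,\eb_i)\to G/P$ is a finite unbranched covering of the simply connected $G/P$, hence an isomorphism, so $f_{\eb_i}=1$ and the $\CC\PP^1$-bundle $\Kgnb{0,1}\to\Kgnb{0,0}$ \emph{is} the contraction; it then invokes Proposition \ref{prop-mainA.5} to conclude $\eb_i$ is symplectically ruling. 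You instead identify the fiber $Q_i/P$ as a Picard-rank-one homogeneous space whose minimal curve has anticanonical degree $2$ and appeal to the classical fact that such a space other than $\CC\PP^1$ has pseudo-index at least $3$ (positive-dimensional variety of minimal rational tangents); this forces $Q_i/P\cong\CC\PP^1$ and $f_{\eb_i}=1$, after which you verify the definition of ``symplectically ruling'' directly by exhibiting $\eb_j$, $j\neq i$, as a symplectically pseudo-free class independent of $\eb_i$. Both routes are valid: the paper's covering-space argument is self-contained and avoids any structure theory beyond simple connectedness, whereas yours imports a (true, classical) classification-type input but in exchange gives a cleaner, definition-level verification of the ruling property without detouring through Proposition \ref{prop-mainA.5}. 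For the third assertion you and the paper do the same thing, except that you make explicit the big-cell trivialization of $G/P\to G/Q_I$, which the paper leaves implicit. The one place to tighten your write-up: in the uniqueness step of the first assertion, the descent of $D'$ to $Z$ is most safely phrased via the rigidity lemma (two contractions with the same contracted face factor through each other) rather than via rational connectedness of the fibers of an as-yet-unidentified $f$, which you assert before it is available.
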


\begin{proof}
  The translation action of $G$ on $G/P$ induces a $G$-action on the
  projective linear system of any divisor. (In fact, since $G$ is
  assumed to be simply connected, this lifts uniquely to a linear
  action of $G$ on the vector space of global sections of the
  associated invertible sheaf.)  Thus, the projective contraction of
  $G$ given by the complete linear system is $G$-equivariant.  Thus
  the contraction is $G/P\to G/Q$ where $Q$ is the stabilizer group of
  the image under the contraction of the special point $x=P/P$ of
  $G/P$.  Every projective contraction is uniquely determined by the
  curve classes that are contracted, and these classes form an
  extremal face of the Mori cone.  Since the Mori cone is simplicial,
  this extremal face equals the span of the extremal rays contained in
  the cone, i.e., the span of the classes $\eb_i$ contracted.
  Finally, by the Bruhat decomposition for $G/Q$, the contracted
  curves $\eb_i$ correspond to the positive simple roots that are in
  $\Delta_P$ yet not in $\Delta_Q$, i.e., the intersection
  $\Delta_P\cap W_Q$.

\mni
For every class $\eb_i$ with $m_{\eb_i}=0$, since the $\eb_i$-free
locus equals all of $G/P$, the evaluation morphism,
$$
\text{ev}_1:\Kgnb{0,1}(G/P,\eb_i) \to G/P,
$$
is surjective and everywhere smooth of relative dimension
$m_{\eb_i}=0$, i.e., it is a finite unbranched covering.  Yet $G/P$ is
simply connected.  Thus the $\CC\PP^1$-bundle structure,
$$
\Kgnb{0,1}(G/P,\eb_i) \to \Kgnb{0,0}(G/P,\eb_i),
$$
induces a $\CC\PP^1$-bundle contraction of $G/P$, necessarily of the
form $G/P \to G/P_i$ where $P_i$ is the minimal parabolic containing
$P$ such that $\eb_i$ is not contained in $\Delta_{P_i}$.  In
particular, every curve whose class is in the extremal ray spanned by
$\eb_i$ is contracted in $G/P_i$, so that the $\eb_i$-curves are
precisely the fibers of this contraction.  Therefore, the degree
$f_{\eb_i}$ equals $1$; there is a unique $\eb_i$-curve containing
each point $x$ of $G/P$, namely the fiber of the contraction
containing $x$.  If $\text{dim}(G/P)\geq 2$, then also
$\text{dim}(G/P_i)\geq 1$.  Since $G/P_i$ is uniruled, by the proof of
Theorem \ref{thm-surf}, the class $\eb_i$ is symplectically ruling.

\mni
Let $D$ be the semi-ample divisor class that equals the sum of $D_j$
over all $j$ with $\eb_j$ not equal to a symplectically ruling class.
The $G$-equivariant contraction of the corresponding complete linear
system, say $\pi:Y\to \ol{Y}$, is of the form $G/P \to G/Q_i$. Thus it
is a Zariski locally trivial fiber bundle.  In particular, $H_2$ of
the fiber equals the kernel of the pushforward map,
$$
H_2(G/P;\ZZ) \to H_2(G/Q_i;\ZZ).
$$
By the construction of $G/Q_i$, this kernel is precisely the span of
those $\eb_i$ that are symplectically ruling.  Thus, $G/P$ is strongly
simplicially Fano.
\end{proof}

\begin{prop} \label{prop-2free} \marpar{prop-2free}
  Assume that $\text{dim}(G/P)\geq 2$.  Every $\eb_i$ that is not
  symplectically ruling is symplectically $2$-free.  In fact, both
  $\Kgnb{0,1}(G/P,\eb_i)$ and $\Kgnb{0,2}(G/P,\eb_i)$ have only
  finitely many $G$-orbits, both of these are connected, K\"{a}hler
  manifolds, and the first Chern class of each is represented by a
  nonzero, effective divisor.
\end{prop}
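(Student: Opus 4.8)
The plan is to bootstrap from the converse half of Theorem \ref{thm-surf}(3) after reducing to rational homogeneous varieties of Picard number one, where the relevant moduli spaces are understood via \cite[Sections 14--15]{dJHS}. Since $\dim(G/P)\geq 2$ excludes $G/P\cong\CC\PP^1$, Proposition \ref{prop-strong} shows that every simplicial generator $\eb_i$ with $m_{\eb_i}=0$ is symplectically ruling; hence an $\eb_i$ that is \emph{not} symplectically ruling has $m_{\eb_i}\geq 1$. By Proposition \ref{prop-simp}, $\eb_i\in\snef{\oom}(G/P)$ and, as a simplicial generator of the Mori cone, $\eb_i$ is $\jj$-irreducible. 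So by Theorem \ref{thm-surf}(3) it suffices to show that a general fiber $F=F_{\eb_i,x}$ of $\text{ev}\colon\Kgnb{0,1}(G/P,\eb_i)\to G/P$ has $c_1(T^{1,0}_F)$ nonzero and pseudo-effective; I aim to show $F$ is rational homogeneous, so that $c_1(T^{1,0}_F)$ is in fact ample.

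Next I would reduce to Picard number one. Let $\pi_i\colon G/P\to G/P_i$ be the elementary contraction of the ray $\RR_{>0}\eb_i$ from Proposition \ref{prop-strong}; by minimality of $P_i$ it contracts exactly the classes in $\ZZ_{\geq 0}\eb_i$, so each fiber $F':=\pi_i^{-1}(q)$ is a rational homogeneous variety of Picard number one with Mori cone $\ZZ_{\geq 0}\eb_i$ and $\dim F'=m_{\eb_i}+1$, and $\eb_i$ is its minimal (``line'') class. Since $\pi_i$ is the Zariski-locally trivial associated bundle $G\times_{P_i}F'$ and every $\eb_i$-curve in $G/P$ is contracted by $\pi_i$, hence contained in a single fiber, one gets $\Kgnb{0,n}(G/P,\eb_i)=G\times_{P_i}\Kgnb{0,n}(F',\eb_i)$ as a Zariski-locally trivial bundle over $G/P_i$ with fiber $\Kgnb{0,n}(F',\eb_i)$, and the evaluation morphism restricts over each $F'$ to that of $\Kgnb{0,1}(F',\eb_i)$; in particular $F_{\eb_i,x}$ computed in $G/P$ equals $F_{\eb_i,x}$ computed in $F'$. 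The normal bundle sequence $0\to T^{1,0}_{F'}\to T^{1,0}_{G/P}|_{F'}\to\OO_{F'}^{\dim(G/P_i)}\to 0$ shows $\text{ch}_r(T^{1,0}_{G/P})|_{F'}=\text{ch}_r(T^{1,0}_{F'})$ for $r\geq 1$, so $m_{\eb_i}$, $f_{\eb_i}$, $s_{\eb_i}$ agree for $G/P$ and $F'$ — the Gromov--Witten-level shadow of this reduction.

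Then, for the Picard-number-one variety $F'$ and its line class $\eb_i$, I would adapt \cite[Sections 14--15]{dJHS}: $\Kgnb{0,1}(F',\eb_i)$ and $\Kgnb{0,2}(F',\eb_i)$ are connected smooth projective varieties on which the semisimple part $G'$ of a Levi factor of $P_i$ acts with only finitely many orbits, each carries a nonzero effective anticanonical divisor, and every fiber of $\text{ev}\colon\Kgnb{0,1}(F',\eb_i)\to F'$ — the variety of lines through a point, i.e.\ the variety of minimal rational tangents — is itself rational homogeneous, hence Fano. Combined with the reduction, a general $\text{ev}$-fiber $F_{\eb_i,x}$ for $G/P$ is rational homogeneous, so $c_1(T^{1,0}_{F_{\eb_i,x}})$ is ample, hence nonzero and pseudo-effective, and Theorem \ref{thm-surf}(3) gives that $\eb_i$ is symplectically $2$-free. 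The global claims for $G/P$ follow from $\Kgnb{0,n}(G/P,\eb_i)=G\times_{P_i}\Kgnb{0,n}(F',\eb_i)$: both $\Kgnb{0,1}$ and $\Kgnb{0,2}$ are connected smooth projective, hence K\"ahler; $G$ acts with finitely many orbits (one on $G/P_i$, finitely many of $P_i$ on each fiber); and with $-K=-K_{\text{rel}}+\pi_i^*(-K_{G/P_i})$, the term $\pi_i^*(-K_{G/P_i})$ is effective (pullback of an ample class from a flag variety) while $-K_{\text{rel}}$ is effective, its pushforward being the $G$-homogeneous bundle $G\times_{P_i}H^0(\Kgnb{0,n}(F',\eb_i),-K)$, which has global sections over $G/P_i$ by Borel--Weil; so $-K$ is effective and nonzero.

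The main obstacle is the Picard-number-one input: proving that $\Kgnb{0,1}(F',\eb_i)$ and $\Kgnb{0,2}(F',\eb_i)$ are smooth with finitely many $G'$-orbits and that the variety of lines through a point is rational homogeneous. Uniformly this rests on the structure of minimal rational curves and their varieties of minimal rational tangents on Picard-number-one rational homogeneous spaces, precisely what is carried over from \cite[Sections 14--15]{dJHS} (and can otherwise be verified case by case with Schubert cells). Everything else — the reduction through $\pi_i$, the appeal to Theorem \ref{thm-surf}(3), and the bundle arguments for the global statements — is formal.
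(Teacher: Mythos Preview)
Your overall strategy matches the paper's: reduce to the Picard-rank-$1$ fiber $F'=P_i/P$ of the extremal contraction $G/P\to G/P_i$, establish that the relevant moduli spaces are almost homogeneous with effective anticanonical class, deduce that a general $\text{ev}$-fiber is weakly Fano, and invoke the converse direction of Theorem~\ref{thm-surf}(3). The bundle decomposition $\Kgnb{0,n}(G/P,\eb_i)\cong G\times_{P_i}\Kgnb{0,n}(F',\eb_i)$ is correct and useful.

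The execution differs in two places worth noting. First, for finitely many $G$-orbits and for connectedness the paper argues directly rather than deferring to \cite{dJHS}: since $\eb_i$-curves are lines in the very ample embedding by $D_i$, the two-point evaluation $\Kgn{0,2}(G/P,\eb_i)\hookrightarrow (G/P)^2$ is injective, and Bruhat on $(G/P)^2$ gives finitely many orbits immediately; connectedness then comes from a short Schubert-class argument showing every component of $F_{\eb_i,x}$ parameterizes the Schubert curve $\eb_i$ itself. This avoids needing the VMRT to be rational homogeneous, which is a substantially heavier input than what the statement requires.

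Second, your Borel--Weil step for effectivity of $-K$ on the total space is not justified as written: that the fiber $H^0(\Kgnb{0,n}(F',\eb_i),-K)$ is nonzero does not by itself force the associated $G$-homogeneous vector bundle over $G/P_i$ to have global sections, and Borel--Weil in the form you invoke concerns line bundles with dominant weight. The paper's argument is both simpler and cleaner: once there is a dense $G$-orbit, the infinitesimal action $\mu:\mf{g}\otimes\OO\to T$ is generically surjective, so for a general subspace $V\subset\mf{g}$ of the right dimension the degeneracy locus of $\mu|_V$ is an effective divisor representing $c_1$; it is nonempty because an almost-homogeneous projective variety is unirational with nontrivial tangent bundle. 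Restricting this divisor to a general $\text{ev}$-fiber (using generic flatness) gives the weakly Fano conclusion directly, with no appeal to the VMRT classification. Replacing your Borel--Weil paragraph with this degeneracy-locus argument closes the gap.
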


\begin{proof}
  Every $\eb_i$-curve is contained in a fiber $Z=gP_i/P$ of the
  associated extremal contraction $G/P\to G/P_i$ of $\eb_i$.  This
  fiber, in turn, is a projective homogeneous space of the semisimple
  Levi factor $G_i$ of $P_i$, which has Picard rank $1$; say $G_i/R_i$
  for a maximal parabolic subgroup $R_i$ of $G_i$.  The ample
  generator of the Picard group $D_i$ of $G_i/R_i$ has intersection
  pairing $1$ with $\eb_i$.  Since $D_i$ is effective, it is base-point
  free.  Since $D_i$ is ample, the associated contraction of $G_i/R_i$
  is finite.  Since it is $G_i$-equivariant, the contraction is
  everywhere smooth, i.e., it is a finite covering map.  However,
  every projective homogeneous space, including the image of the
  contraction, is simply connected.  Thus, the contraction is an
  isomorphism, i.e., $D_i$ is very ample.  Since $\eb_i$-curves have
  intersection number $1$ with $D_i$, these curves are mapped to lines
  under the projective embedding of the complete linear system of
  $D_i$.

\mni
Every line in projective space is uniquely determined by two distinct
points on the line.  Denote by $\sigma$ the diagonal section of the
projection $\Phi_k$ forgetting the $\text{k}^{\text{th}}$ marked
point, for $k=1,2,$,
$$
\Phi_k:\Kgnb{0,2}(G/P,\eb_i) \to \Kgnb{0,1}(G/P,\eb_i), \ \
\sigma:\Kgnb{0,1}(G/P,\eb_i) \to \Kgnb{0,2}(G/P,\eb_i).
$$
Denote by $\Kgn{0,2}(G/P,\eb_i)$ the open complement in
$\Kgnb{0,2}(G/P,\eb_i)$ of the image of $\sigma$.  Since every line is
uniquely determined by two distinct points on that line, the following
evaluation morphism is injective,
$$
\text{ev}_2:\Kgn{0,2}(G/P,\eb_i) \to G/P\times G/P.
$$
This morphism is also $G$-equivariant.

\mni
By the Bruhat decomposition, $G/P\times G/P$ has only finitely many
$G$-orbits.  Thus, also the $G$-invariant subvariety
$\Kgn{0,2}(G/P,\eb_i)$ has only finitely many $G$-orbits.  Since
$\Phi_k$ is $G$-equivariant and surjective, also
$\Kgnb{0,1}(G/P,\eb_i)$ has only finitely many $G$-orbits.  Thus, the
image of $\sigma$ has only finitely many $G$-orbits.  Altogether, both
$\Kgnb{0,2}(G/P,\eb_i)$ and $\Kgnb{0,1}(G/P,\eb_i)$ have only finitely
many $G$-orbits.

\mni
Since the $\eb_i$-free locus equals all of $G/P$, every $\eb_i$-curve
is free, so that both of $\Kgnb{0,2}(G/P,\eb_i)$ and
$\Kgnb{0,1}(G/P,\eb_i)$ are smooth, i.e., they are compact, K\"{a}hler
manifolds (possibly disconnected).  Also, the first evaluation
morphism
$$
\text{ev}_1\circ \Phi_1:\Kgnb{0,2}(G/P,\eb_i) \to
\Kgnb{0,1}(G/P,\eb_i) \to G/P, \ \ (C,q_1,q_2,u:C\to G/P) \mapsto u(q_1),
$$
is generically smooth (by Sard's Theorem), and $G$-equivariant, hence
everywhere smooth.  Since $G/P$ is simply connected, the restriction
of the evaluation morphism to every connected component of
$\Kgnb{0,2}(G/P,\eb_i)$ has smooth, connected fibers.  For that
connected component, denote by $F_{\eb_i,x}$ the fiber in that
connected component of the evaluation morphism over the special point
$x=P/P$.  This is a smooth, connected K\"{a}hler manifold with an
induced action of $P$.  The second evaluation morphism,
$$
\text{ev}_1\circ \Phi_2:\Kgnb{0,2}(G/P,\eb_i) \to
\Kgnb{0,1}(G/P,\eb_i) \to G/P, \ \ (C,q_1,q_2,u:C\to G/P) \mapsto u(q_1),
$$
maps $F_{\eb_i,x}$ $P$-equivariantly to an irreducible, closed,
$P$-invariant subset $C_{\eb_i,x}$ of $G/P$.  As with the Bruhat
decomposition on all of $G/P$, the homology of $C_{\eb_i,x}$ is a free
$\ZZ$-module on the Schubert classes of those Schubert varieties
$\overline{E}_{P,w}$ that happen to intersect $C_{\eb_i,x}$ (and thus
are contained in $C_{\eb_i,x}$).  In particular,
$H_2(C_{\eb_i,x};\ZZ)$ equals the free $\ZZ$-module on those
$1$-dimensional Schubert varieties $\eb_j$ that are contained in
$C_{\eb_i,x}$.  Since $C_{\eb_i,x}$ is a union of curves in the
homology class $\eb_i$, the image of the morphism,
$$
H_2(C_{\eb_i,x};\ZZ)\to H_2(G/P;\ZZ),
$$
contains the class of $\eb_i$.  Thus, among the free generators
$\eb_j$ of $H_2(C_{\eb_i,x};\ZZ)$, which also are linearly independent
in $H_2(G/P;\ZZ)$, there must be the class $\eb_i$.  Therefore, some
$\eb_i$-curve parameterized by $F_{\eb_i,x}$, i.e., containing
$x=P/P$, also contains a point of $\eb_i$ different from $x$.  Since
every $\eb_i$-curve is uniquely determined by two distinct points on
that curve, one of the $\eb_i$-curves parameterized by $F_{\eb_i,x}$
is the Schubert variety $\eb_i$.  Since every connected component of
the fiber over $x$ of $\text{ev}_1\circ \Phi_1$ contains the special
point $[\eb_i]$, there is only one connected component.  Therefore the
compact, K\"{a}hler manifolds $\Kgnb{0,0}(G/P,\eb_i)$,
$\Kgnb{0,1}(G/P,\eb_i)$, and $\Kgnb{0,2}(G/P,\eb_i)$ are each
connected.

\mni
Associated to the natural $G$-action on the connected, K\"{a}hler
manifold $\Kgnb{0,r}(G/P,\eb_i)$, there is an induced map from the Lie
algebra $\mf{g}$ of $G$ to the tangent bundle,
$$
\mu:\mf{g}\otimes_{\CC}\OO_{\Kgnb{0,r}(Y,\eb_i)} \to
T_{\Kgnb{0,r}(Y,\eb_i)}. 
$$
For each of $r=1,2,$ since there are only finitely many $G$-orbits
each of which is a locally closed subvariety, there is a unique open
$G$-orbit on $\Kgnb{0,r}(Y,\eb_i)$.  Thus, $\mu$ is generically
surjective.

\mni
For a general choice of a $\CC$-linear subspace $V$ of $\mf{g}$ of
dimension equal to the dimension of $\Kgnb{0,r}(Y,\eb_i)$, the
restriction $t_V$ of $t$ to $V\otimes_{\CC}\OO_{\Kgnb{0,r}(Y,\eb_i)}$
is generically an isomorphism.  Thus, the first Chern class of
$\Kgnb{0,r}(Y,\eb_i)$ is represented by the effective divisor that is
the degeneracy locus $\mc{D}$ of $t_V$.  Since $\Kgnb{0,r}(Y,\eb_i)$
has an almost homogeneous action of $G$, this is a projective variety
that is unirational.  Hence the tangent bundle is not trivial, so that
$\mc{D}$ is nonempty.

\mni
Consider the evaluation morphism,
$$
\text{ev}_1:\Kgnb{0,1}(Y,\eb_i) \to Y.
$$
The restriction of $\text{ev}_1$ to $\mc{D}$ is dominant.  By
Grothendieck's Generic Freeness Theorem, i.e., by ``generic
flatness'', this morphism is flat when restricted over a particular
dense Zariski open subset of $Y$.  Thus, for a general $x\in Y$, the
intersection of the fiber $F_{\eb_i,x}$ of $\text{ev}$ with $\mc{D}$
is a nonempty divisor in $F_{\eb_i,x}$ that represents the first Chern
class of $F_{\eb_i,i}$, i.e., $F_{\eb_i,x}$ is weakly Fano.  By the
converse direction of Theorem \ref{thm-surf}, the class $\eb_i$ is
symplectically $2$-free.  Since this holds for every simplicial
generator $\eb_i$ that is not symplectically ruling, the projective
homogeneous variety $G/P$ is symplectically $2$-free.
\end{proof}

\mni
Returning to the proof of Theorem \ref{thm-PG}, Part 1 follows from
Proposition \ref{prop-simp}.  Theorem \ref{thm-surf} and Proposition
\ref{prop-simp} give Part 2.  Finally, Theorem \ref{thm-surf} and
Proposition \ref{prop-2free} give Part 3.


\section{Proof of Proposition \ref{prop-fiber} and Theorem
  \ref{thm-CI}} \label{sec-CI} \marpar{sec-CI}

\begin{proof}[Proof of Proposition \ref{prop-fiber}]
  \textbf{1.}
  By Part 1 of Theorem \ref{thm-simpFano}, for every fiber type Fano
  manifold, the Mori cone $\snef{\oom}(\mm)$ equals $\nef{\jj}(\mm)$,
  and this Mori cone is dual to the K\"{a}hler cone.  Conversely,
  since $\snef{\oom}(\mm)$ is contained in the Mori cone, if this cone
  is dual to the K\"{a}hler cone, then it equals the Mori cone.  In
  particular, since $\snef{\oom}(\mm)$ is generated by classes on
  which $c_1(T^{1,0}_{\mm,\oom})$ has positive degree, this implies
  that $\mm$ is Fano.  The Mori cone of a Fano manifold is rational
  polyhedral.  By Part 1 of Theorem \ref{thm-simpFano}, the extremal
  rays are symplectically free.

\mni
\textbf{2.}
The only way that a fiber type Fano manifold can fail to be integrally
fiber type Fano is if the minimal free class $\eb_i$ generating an
extremal, fiber type contraction is a multiple $r>1$ of some integral
homology class $\gamma$.  But then this forces
$m_{\eb_i}+2 = \langle c_1(T^{1,0}_{\mm,\oom}),\eb_i \rangle$ to be
$r$ times the integer $\langle c_1(T^{1,0}_{\mm,\oom}),\gamma\rangle$,
which in turn is divisible by the Fano index (by Poincar\'{e}
duality).  Thus, the Fano pseudo-index is strictly greater than the
Fano index.  Conversely, if the fiber type Fano manifold is integrally
fiber type, then for the Poincar\'{e} dual homology class $\gamma$ to
the ample generator of the Picard $[H]$ group of the fiber, the class
$\gamma$ is represented by a free curve class (which is even
symplectically free).  The Fano index $i$ of the fiber is the unique
integer such that the restriction of $c_1(T^{1,0}_{\mm,\oom})$ equals
$i[H]$.  Since $\langle [H],\gamma\rangle$ equals $1$, the Fano
pseudo-index $\langle c_1(T^{1,0}_{\mm,\oom}),\gamma \rangle$ equals
the Fano index $i$.
\end{proof}

\mni
I am very grateful to the anonymous reader who recommended replacing
moving lemmas in an earlier draft by a more natural argument in the
proof of Theorem \ref{thm-CI}.  The argument used here to avoid moving
lemmas is inspired by \cite[Chapter 3]{Minoccheri}.

\begin{defn}\label{defn-CI} \marpar{defn-CI}
  Let $(Y,\OO_Y)$ be a compact, complex analytic space of pure
  dimension $n$.  Let $b$ be an integer with $1\leq b \leq n$.  A
  \textbf{flag of pseudodivisors} of length $b$ is an ordered
  $b$-tuple of nested, closed analytic subspaces,
  $$
  ((Z_1,\OO_{Z_1}),\dots,(Z_b,\OO_{Z_b})),
  $$
  i.e., closed immersions of complex analytic spaces,
  $$
  (e_j,e_j^{\#}):(Z_j,\OO_{Z_j}) \hookrightarrow (Z_{j-1},\OO_{Z_{j-1}}),
  $$
  for $j=1,\dots,b$, with $(Z_0,\OO_{Z_0})$ defined to equal
  $(Y,\OO_Y)$, together with an assignment to every $j=0,\dots,b-1$ of
  an ordered pair
  $$
  (\Ll_j,s_j:\OO_{Z_j}\to \Ll_j),
  $$
  of an invertible $\OO_{Z_j}$-module $\Ll_j$ together with a global
  section whose associated zero scheme in $(Z_j,\OO_{Z_j})$ equals
  $(Z_{j+1},\OO_{Z_{j+1}})$.  The flag is \textbf{reduced} if every
  $Z_j$ is a reduced complex analytic space.  The flag is
  \textbf{co-Stein} if for every $j=0,\dots,b-1$, the open subspace
  $Z_j^o := Z_j\setminus Z_{j+1}$ is a Stein analytic space.  The flag
  is \textbf{ample} if $\Ll_j$ is ample on $Z_j$ for every
  $j=0,\dots,b-1$.  The flag is \textbf{locally complete intersection}
  if every $(Z_j,\OO_{Z_j})$ is local complete intersection of pure
  dimension $n-j$.
\end{defn}

\begin{lem} \label{lem-CI} \marpar{lem-CI}
  Every ample flag of pseudodivisors is co-Stein.  For every ample
  flag of pseudodivisors, if $(Y,\OO_Y)$ is a local complete
  intersection, then the flag is locally complete intersection if and
  only if $(Z_b,\OO_{Z_b})$ has pure dimension $n-b$.  In this case,
  for every $c=0,\dots,n-b+1$, if the singular locus of
  $(Z_b,\OO_{Z_b})$ has dimension $\leq n-b-c$, then for every
  $i=0,\dots,b$, also the singular locus of $(Z_j,\OO_{Z_j})$ has
  dimension $\leq n-j-c$.  In particular, if $(Z_b,\OO_{Z_b})$ is
  reduced, then the flag is reduced.
\end{lem}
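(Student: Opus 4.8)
The plan is to push everything down to the analytic local rings $\OO_{Z_j,p}$ and invoke three standard facts: (a) Krull's Hauptidealsatz, so that passing from $Z_j$ to a pseudodivisor $Z_{j+1}=V(s_j)$ lowers local dimension by at most one; (b) in a Cohen--Macaulay, in particular regular, local ring, a finite sequence of elements is a regular sequence as soon as the quotient has the minimal possible dimension; (c) if $f$ is a non-zero-divisor of a Noetherian local ring $A$ and $A/fA$ is regular, then $A$ is regular. I would also use that a section of an ample line bundle on a positive-dimensional projective variety has nonempty zero locus, and Serre's criterion ($R_0+S_1\Rightarrow$ reduced). Part 1 is independent: $Z_j$ is compact with $\Ll_j$ ample, so for $N\gg0$ the complete linear system $|\Ll_j^{\otimes N}|$ realizes $Z_j$ as a closed analytic subspace of $\PP^m=\PP(H^0(Z_j,\Ll_j^{\otimes N})^\vee)$ with $\OO(1)|_{Z_j}\cong\Ll_j^{\otimes N}$, and $s_j^{\otimes N}$ becomes a linear form cut out by a hyperplane $H$ (or else $Z_j^o=\varnothing$); since $V(s_j)=V(s_j^{\otimes N})$ as sets, $Z_j^o=Z_j\cap(\PP^m\setminus H)$ is a closed analytic subspace of an affine space, hence Stein.

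For Part 2, ``lci flag $\Rightarrow Z_b$ of pure dimension $n-b$'' is immediate from the definition. Conversely, suppose $Z_b$ has pure dimension $n-b$. By (a) and induction on $j$, starting from $Y=Z_0$ of pure dimension $n$, every irreducible component of $Z_j$ has dimension $\ge n-j$. For the opposite bound I would argue by contradiction: if a component $V$ of some $Z_j$ with $j<b$ had $\dim V>n-j$, I propagate it down the flag, setting $V_j:=V$ and, for $k\ge j$, $V_{k+1}:=V_k$ if $V_k\subseteq Z_{k+1}$ and otherwise letting $V_{k+1}$ be an irreducible component of $V_k\cap Z_{k+1}$, which (since $\Ll_k|_{V_k}$ is ample and $s_k$ does not vanish identically on $V_k$) is a nonempty proper Cartier divisor in the irreducible $V_k$, so of dimension $\dim V_k-1$. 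Then $V_k\subseteq Z_k$ and $\dim V_k\ge\dim V-(k-j)$, so $\dim V_b>n-b$, contradicting purity of $Z_b$. Hence every $Z_j$ has pure dimension $n-j$. Finally, at $p\in Z_j$ I present $\OO_{Y,p}$ as $\OO_{\CC^m,p}/(g_1,\dots,g_r)$ with $r=m-n$ (possible since $\OO_{Y,p}$ is a complete intersection ring of dimension $n$) and adjoin lifts $\widetilde f_0,\dots,\widetilde f_{j-1}$ of local equations for $s_0,\dots,s_{j-1}$: then $\OO_{Z_j,p}=\OO_{\CC^m,p}/(g_1,\dots,g_r,\widetilde f_0,\dots,\widetilde f_{j-1})$ is a quotient of the Cohen--Macaulay ring $\OO_{\CC^m,p}$ of dimension $n-j=m-(r+j)$, so by (b) the generators form a regular sequence and $\OO_{Z_j,p}$ is a complete intersection ring. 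Thus the flag is locally complete intersection.

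For Part 3, assume the flag is lci, so each $Z_j$ is lci, hence Cohen--Macaulay, of pure dimension $n-j$. Since $\dim Z_{j+1}<\dim Z_j$ forces the local equation $f_j$ outside every associated prime of $\OO_{Z_j,p}$ (no embedded primes), $Z_{j+1}=V(f_j)$ is an effective Cartier divisor in $Z_j$; moreover, by (c) applied to $\OO_{Z_{j+1},p}=\OO_{Z_j,p}/(f_j)$, one gets $\text{Sing}(Z_j)\cap Z_{j+1}\subseteq\text{Sing}(Z_{j+1})$. Now I induct downward on $j$: given $\dim\text{Sing}(Z_{j+1})\le n-(j+1)-c$ and a component $V$ of $\text{Sing}(Z_j)$, either $V\subseteq Z_{j+1}$, so $V\subseteq\text{Sing}(Z_{j+1})$ and $\dim V\le n-(j+1)-c\le n-j-c$; or $V\cap Z_{j+1}$ is a proper Cartier divisor in $V$, nonempty when $\dim V\ge1$ by ampleness and lying in $\text{Sing}(Z_{j+1})$, so $\dim V\le\dim(V\cap Z_{j+1})+1\le n-j-c$ (a $0$-dimensional $V$ is harmless since $n-j-c\ge0$ for $j\le b-1$). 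Hence $\dim\text{Sing}(Z_j)\le n-j-c$. For Part 4: a reduced $Z_b$ is generically smooth, so $\dim\text{Sing}(Z_b)\le n-b-1$; Part 3 with $c=1$ gives $\dim\text{Sing}(Z_j)\le n-j-1$, so each $Z_j$ is $R_0$ (no component lies in its singular locus) and $S_1$ (being Cohen--Macaulay), hence reduced by Serre's criterion, and the flag is reduced.

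I expect the main obstacle to be the propagation step in Part 2: bounding $\dim Z_j$ from above while knowing only that $Z_b$ is pure-dimensional forces one to track a hypothetical oversized component through the entire flag, and the argument succeeds precisely because a pseudodivisor lowers dimension by at most one and because ampleness keeps the intersections from becoming empty. Once the $Z_j$ are known to have the correct dimension, fact (b) upgrades this to ``local complete intersection'' mechanically, and Part 3 is then a short induction resting on the Cartier-divisor dimension estimate together with fact (c).
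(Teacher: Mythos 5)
Your proof is correct and follows essentially the same route as the paper: the complement of an ample divisor in a projective variety is Stein, Krull's Hauptidealsatz plus nonemptiness of ample zero loci pins down the pure dimension of each $Z_j$ (your propagation of an oversized component is the paper's induction on $b$ in different clothing), and the singular-locus bound comes from the same hypersurface-section argument. You are in fact slightly more careful than the paper at two points it leaves implicit — invoking the regular-sequence criterion in a Cohen--Macaulay local ring to conclude lci once the dimensions are right, and deducing the reduced case via Serre's criterion with $c=1$ (the paper's ``$c=n-b+1$'' appears to be a slip) — so no changes are needed.
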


\begin{proof}
  For each $j=0,\dots,b-1$, denote by $Z'_j$ the union of all
  irreducible components of $Z_j$ that are not completely contained in
  $Z_{j+1}$.  By hypothesis, $Z'_j$ is a complex projective algebraic
  variety and $Z'_j\cap Z_{j+1}$ is an ample hypersurface in $Z'_j$.
  Thus, the open complement $Z_j^o = Z'_j\setminus (Z'_j\cap Z_{j+1})$
  is a Stein analytic space.  Thus, the flag is co-Stein.

\mni
The local complete intersection result is proved by induction on $b$.
When $b=0$, there is nothing to prove.  Thus, by way of induction,
assume that $b\geq 1$ and assume the result is true for $b-1$.

\mni
For $j=1,\dots,b$, by the Principal Ideal Theorem, every irreducible
component of $Z_j$ has dimension $\geq n-j$.  For $j=1,\dots,b-1$, if
an irreducible component of $Z_j$ has dimension $d_j \geq 1$, then the
zero scheme $Z_{j+1}$ is nonempty and has dimension $\geq d_j-1$.
Thus, since $n-(b-1)$ is $\geq 1$, if there is any irreducible
component of $Z_{b-1}$ that has dimension $d_{b-1} \geq n-(b-1) + 1$,
then also $Z_b$ has an irreducible component of dimension
$\geq d_{b-1} -1 \geq n-b+1$.  This contradicts the hypothesis that
$Z_b$ has pure dimension $n-b$.  Therefore, by contradiction, also
$Z_{b-1}$ has pure dimension $n-(b-1)$.  By the induction hypothesis,
for every $j=1,\dots,b$, the zero scheme $Z_j$ has pure dimension
$n-j$.

\mni
Similarly, if the singular locus of $Z_{b-1}$ has dimension
$\geq n-(b-1)-c+1$, which is $\geq 2$ since $c\leq n-b+1$, then the
intersection of $Z_b$ with that singular locus is nonempty and has
dimension $\geq n-b-c+1 > n-b-c$.  That contradicts the hypothesis
that the singular locus of $(Z_b,\OO_{Z_b})$ has dimension
$\leq n-b-c$.  Therefore, again by induction, for every $j=1,\dots,b$
the zero scheme of $Z_j$ has dimension $\leq n-j-c$.  In particular,
setting $c$ equal to $n-b+1$, if $(Z_b,\OO_{Z_b})$ is reduced, then
also every $(Z_i,\OO_{Z_i})$ is reduced.
\end{proof}

\begin{thm}\cite[Theorems 3.2.1, 3.4.1]{HammLe} \label{thm-LHT}
  \marpar{thm-LHT}
  For every compact, complex analytic space $(Y,\OO_Y)$ of pure
  dimension $n$, for every reduced, co-Stein flag of $b\leq n$
  pseudodivisors in $(Y,\OO_Y)$, for every $j=1,\dots,b$, the relative
  homotopy group $\pi_i(Z_{j-1},Z_j)$ is zero for $i\leq n-j$.  By the
  Hurewicz Theorem, also $H_i(Z_{j-1},Z_j;\ZZ)$ is zero for
  $i\leq n-j$.  Thus, each induced homomorphism,
  $$
  \pi_i(Z_j) \to \pi_i(Z_{j-1}), \ \
  H_i(Z_j;\ZZ) \to H_i(Z_{j-1};\ZZ)
  $$
  is an isomorphism for every $i<n-j$, and is an epimorphism for $i=n-j$.
\end{thm}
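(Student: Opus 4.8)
\mni
The plan is to reduce the whole statement, step by step, to a single Lefschetz hyperplane theorem for each consecutive pair $(Z_{j-1},Z_j)$, and to prove that by (stratified) Morse theory on a strictly plurisubharmonic exhaustion. First I would note that everything except the vanishing $\pi_i(Z_{j-1},Z_j)=0$ for $i\leq n-j$ is formal. Since complex analytic spaces are triangulable, each $Z_j$ has the homotopy type of a CW complex, so once the pair $(Z_{j-1},Z_j)$ is $(n-j)$-connected the relative Hurewicz theorem — in the version that requires no simple-connectivity of $Z_j$, which still forces $H_i(Z_{j-1},Z_j;\ZZ)=0$ for all $i\leq n-j$ — gives the homology vanishing; and then the long exact homotopy and homology sequences of the pair $(Z_{j-1},Z_j)$ give that $\pi_i(Z_j)\to\pi_i(Z_{j-1})$ and $H_i(Z_j;\ZZ)\to H_i(Z_{j-1};\ZZ)$ are isomorphisms for $i<n-j$ and epimorphisms for $i=n-j$. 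Thus it remains, for each $j$, to show that $(Z_{j-1},Z_j)$ — with $Z_{j-1}$ compact reduced of pure dimension $d:=n-j+1$ and $Z_j$ the zero scheme on $Z_{j-1}$ of the section $s_{j-1}$ of $\Ll_{j-1}$ — is $(d-1)$-connected.

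\mni
For the single step I would use the co-Stein hypothesis in the only place it enters: for $j'=j-1$ it says precisely that $Z_{j-1}^o:=Z_{j-1}\setminus Z_j$ is a Stein analytic space, hence (Grauert) carries a strictly plurisubharmonic exhaustion function $\varphi$; one may take $\varphi$ to be a small modification of $-\log\|s_{j-1}\|_h^2$ for a Hermitian metric $h$ on $\Ll_{j-1}$, which is plurisubharmonic on $Z_{j-1}^o$, bounded below since $Z_{j-1}$ is compact, and tends to $+\infty$ along $Z_j$. The closed superlevel sets $V_c:=\{\varphi\geq c\}$ then form a cofinal family of neighborhoods of $Z_j$ in $Z_{j-1}$; for $c\gg0$, by the local conic structure of analytic sets, $V_c$ deformation retracts onto $Z_j$, while $V_{\min\varphi}=Z_{j-1}$. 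Choose a Whitney stratification of $Z_{j-1}$ compatible with $Z_j$ and with $\mathrm{Sing}(Z_{j-1})$ (legitimate because everything is a reduced complex analytic space) and a generic small perturbation of $\varphi$ that is a stratified Morse function. The key geometric input — the Andreotti--Frankel estimate, valid in the singular setting through Goresky--MacPherson's stratified Morse theory and Hamm's vanishing theorems for Stein spaces — is that on each stratum the critical points of $\varphi$ have Morse index $\leq d$. Consequently, as $c$ decreases and $V_c$ grows, each critical value crossing attaches cells of real dimension $2d-(\text{index})\geq d$. Hence $Z_{j-1}=V_{\min\varphi}$ is obtained from a neighborhood of $Z_j$ by attaching only cells of dimension $\geq d$, so $\pi_i(Z_{j-1},Z_j)=0$ for $i<d$, i.e. for $i\leq n-j$, as wanted.

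\mni
The hard part is genuinely the singular, not-necessarily-complete-intersection generality: smooth Morse theory is unavailable, and one must control the index of a plurisubharmonic function restricted to a possibly singular complex-analytic stratum and near the singular loci of both $Z_{j-1}$ and $Z_j$, as well as the homotopy type of a neighborhood of $Z_j$ there. This is exactly the content of the cited results: \cite[Theorem 3.2.1]{HammLe} is the local statement near the bad points, and \cite[Theorem 3.4.1]{HammLe} globalises it, combining stratified Morse theory with the classical facts on the homotopical dimension of Stein spaces. The reducedness hypothesis is what makes the Whitney stratification (and the triangulability used for Hurewicz) legitimate, and the co-Stein hypothesis is what supplies the strictly plurisubharmonic exhaustion whose convexity forces the index bound; note that Lemma \ref{lem-CI} shows ample flags are automatically co-Stein, so this is the right level of generality. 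The remaining work — composing the $b$ single-step statements into the uniform iso/epi ranges in $j$, and checking relative Hurewicz applies with no simple-connectivity assumption — is routine once the single-step connectivity is in hand. In practice, since the statement is word for word \cite[Theorems 3.2.1, 3.4.1]{HammLe}, I would simply cite it; the sketch above is the route one would follow to reprove it.
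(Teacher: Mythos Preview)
Your proposal is correct and takes essentially the same approach as the paper: both reduce each step $(Z_{j-1},Z_j)$ to a direct application of the Hamm--L\^{e} Lefschetz hyperplane theorem, invoking the co-Stein hypothesis to ensure the complement is Stein and then citing \cite[Theorems 3.2.1, 3.4.1]{HammLe}. Your sketch of the underlying stratified Morse theory is more expansive than the paper's two-sentence citation, but the route is identical; note only that, like the paper's proof (which silently invokes ``pure dimension $n-j+1$''), your index bound tacitly uses $\dim_{\CC}Z_{j-1}=n-j+1$, which is guaranteed in the paper's applications via Lemma~\ref{lem-CI} rather than by the stated hypotheses alone.
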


\begin{proof}
  By hypothesis, every inclusion $Z_{j}\hookrightarrow Z_{j-1}$ is a
  hypersurface whose open complement is Stein and local complete
  intersection of pure dimension $n-j+1$.  Thus, the result follows
  from \cite[Theorems 3.2.1 and 3.4.1]{HammLe}.
\end{proof}

\mni
Let $(\nn,\jj_{\nn},\oom_{\nn})$ be a compact K\"{a}hler manifold that
has a submersive contraction of ruling classes,
$$
\pi:\nn \to \nn'.
$$

\begin{prop} \label{prop-Serre} \marpar{prop-Serre}
  The fibers $Y_q$ of $\pi$ are connected and simply connected.  Thus,
  for every submanifold $\mm'$ of $\nn'$ and its inverse image
  $\mm = \pi^{-1}(\mm')$, the Serre spectral sequence for $H_2$
  degenerates to a short exact sequence,
  $$
  0 \to H_2(\nn_q;\ZZ) \to H_2(\mm;\ZZ) \to H_2(\mm';\ZZ)\to 0.
  $$
\end{prop}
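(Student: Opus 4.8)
The plan is to analyze the fibers of $\pi$, run the homology Leray--Serre spectral sequence of the restricted bundle $\mm\to\mm'$, and then show the relevant monodromy is trivial. I would first record that each fiber $\nn_q=\pi^{-1}(q)$ is connected (this is part of Definition~\ref{defn-qtt}) and argue that it is rationally connected, hence simply connected. Indeed, $\pi$ is the contraction of the face of $\neb{}{\jj}(\nn)$ spanned by the ruling classes $\eb_i$, each of anticanonical degree $2$, so by the relative Kleiman criterion (in the K\"ahler category) the class $-K_{\nn/\nn'}$ is $\pi$-ample and every smooth fiber $\nn_q$ is Fano; equivalently, $\nn_q$ is covered by the ruling conics, whose classes span $H_2(\nn_q;\QQ)$, so the MRC fibration of $\nn_q$ must contract every rational curve and therefore has a point as base. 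Either way, $\nn_q$ is a rationally connected compact K\"ahler manifold, hence projective by Proposition~\ref{prop-frq}(1), hence simply connected by the theorem of Campana and Koll\'ar--Miyaoka--Mori.

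\mni
Second, since $\nn$ is compact, $\pi$ is a proper holomorphic submersion with connected fibers, so by Ehresmann's theorem it is a locally trivial $C^\infty$-fiber bundle; its restriction $\pi|_\mm:\mm\to\mm'$ over the submanifold $\mm'\subseteq\nn'$ is then a fiber bundle with fiber $\nn_q$. I would then invoke the homology Leray--Serre spectral sequence $E^2_{p,\ell}=H_p(\mm';\mcH_\ell)\Rightarrow H_{p+\ell}(\mm)$ for the local system $\mcH_\ell:q\mapsto H_\ell(\nn_q;\ZZ)$. Because $\pi_1(\nn_q)=0$ we have $\mcH_1=0$, so in total degree $\le 2$ the only nonzero terms are $E^2_{p,0}=H_p(\mm';\ZZ)$ for $p=0,1,2$ and $E^2_{0,2}=H_0(\mm';\mcH_2)=(H_2(\nn_q;\ZZ))_{\pi_1(\mm')}$, the coinvariants. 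Every differential into or out of these terms has a zero source or target, so the spectral sequence already degenerates there and produces a short exact sequence
$$
0\to (H_2(\nn_q;\ZZ))_{\pi_1(\mm')}\to H_2(\mm;\ZZ)\to H_2(\mm';\ZZ)\to 0,
$$
in which the first arrow is induced by the inclusion $\nn_q\hookrightarrow\mm$.

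\mni
Finally I would show the monodromy $\pi_1(\mm')\to\mathrm{Aut}(H_2(\nn_q;\ZZ))$ is trivial, so that the coinvariants coincide with $H_2(\nn_q;\ZZ)$ and the sequence becomes the asserted one. This action factors through $\pi_1(\nn')$, and since $\nn\to\nn'$ has simply connected fibers we have $\pi_1(\nn)\xrightarrow{\sim}\pi_1(\nn')$, so it is enough to see that the monodromy of $\nn\to\nn'$ on $H_2(\nn_q;\ZZ)$ is trivial. The composite $H_2(\nn_q;\ZZ)\to H_2(\mm;\ZZ)\to H_2(\nn;\ZZ)$ is invariant under this monodromy and, by Definition~\ref{defn-qtt}, has image the fixed subgroup $\Lambda$ that is the $\ZZ$-span of $\ner{\jj}(\nn)$; moreover $H_2(\nn_q;\ZZ)\to\Lambda$ is an isomorphism, being surjective by Definition~\ref{defn-qtt} while $H_2(\nn_q;\ZZ)$ is torsion free ($\nn_q$ rationally connected) and $H_2(\nn_q;\QQ)\to H_2(\nn;\QQ)$ is injective since $\nn_q$ is a fiber of a face contraction (equivalently, $b_2(\nn_q)$ equals the rank of $\Lambda$). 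Hence every class in $\pi_1(\mm')$ acts as the identity on $H_2(\nn_q;\ZZ)$, the coinvariants equal $H_2(\nn_q;\ZZ)$, and the exact sequence is the one in the statement.

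\mni
I expect the genuine obstacle to be this last step: justifying carefully, in the full K\"ahler generality of the proposition, that $\pi$ contracts exactly the ruling face, so that $H_2(\nn_q;\ZZ)\to H_2(\nn;\ZZ)$ is injective with image $\Lambda$ and the $H_2$-monodromy vanishes, together with the input from the K\"ahler minimal model program needed to know the fibers are Fano (hence rationally connected and simply connected). Once those geometric facts about contractions of ruling classes are in place, the spectral-sequence bookkeeping and the final identification of the exact sequence are routine.
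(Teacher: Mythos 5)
Your overall route coincides with the paper's: the fibers are connected by Definition \ref{defn-qtt}, they are rationally connected because a K\"ahler class pulled back from a nontrivial rational quotient would pair to zero with all of $H_2(\nn_q;\ZZ)$ (generated by ruling, hence free, curve classes), contradicting Poincar\'e duality; simple connectedness then follows from Campana's theorem, and the short exact sequence is read off from the Serre spectral sequence of $\mm\to\mm'$. The paper does not pass through any relative Kleiman criterion or Fano-ness of the fibers; your ``either way'' fallback is exactly the paper's argument, and in the generality of Definition \ref{defn-qtt} (where $\pi$ is not assumed to be a Mori face contraction) the Kleiman detour is harder to justify than the argument it replaces.

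Two points in your spectral-sequence step are genuine gaps. First, it is not true that every differential into or out of the relevant terms has zero source or target: simple connectedness of the fiber kills $E^2_{2,1}=H_2(\mm';\mcH_1)=0$, but the transgression $d_3:E^3_{3,0}=H_3(\mm';\ZZ)\to E^3_{0,2}=(H_2(\nn_q;\ZZ))_{\pi_1(\mm')}$ remains, and its vanishing is part of what must be proved; injectivity of $H_2(\nn_q;\ZZ)\to H_2(\mm;\ZZ)$ is equivalent to trivial monodromy \emph{together with} $d_3=0$. Second, your monodromy argument rests on the claim that $H_2(\nn_q;\ZZ)$ is torsion free because $\nn_q$ is rationally connected; this is false in general --- the Artin--Mumford unirational conic-bundle threefolds have $2$-torsion in $H_2$, and fibers of a contraction of ruling classes are precisely of conic-bundle type --- so your reduction of monodromy triviality to ``the image equals $\Lambda$'' does not close. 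In fairness, the paper's own proof is terser still: it asserts the degeneration outright once the fibers are connected and simply connected, addressing neither the coinvariants nor the transgression. You correctly isolated injectivity of $H_2(\nn_q;\ZZ)\to H_2(\nn;\ZZ)$ as the crux, but the justification you offer for it is not valid as written.
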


\begin{proof}
  Since $\pi$ is a contraction, the fibers are connected.  By
  hypothesis, the morphism $\pi$ is everywhere submersive.  Thus,
  $\nn_q$ is a connected, compact, K\"{a}hler manifold.

\mni
Now we repeat the proof of Part 1 of Theorem \ref{thm-surf}.  The
K\"{a}hler manifold $\nn_q$ has a rational quotient.  Either $\nn_q$
is rationally connected or the the pullback of a K\"{a}hler form from
the target of the rational quotient is a nonzero element of $H^2$.
Since $H_2(\nn_q;\ZZ)$ is generated by classes of free rational
curves, each of which is contracted by the rational quotient, the
pairing of this element of $H^2$ is zero with all of $H_2(\nn_q;\ZZ)$,
contradicting Poincar\'{e} duality.  Thus, the rational quotient is a
point, i.e., $\nn_q$ is rationally connected.  Every connected,
compact, K\"{a}hler manifold that is rationally connected is also
simply connected, \cite{C91}.  Therefore, every fiber $\nn_q$ is
simply connected.

\mni
Since the fibers of $\pi$ are connected and simply connected, the
portion of the Serre spectral sequence for the homology $\mm$
computing $H_2(\mm;\ZZ)$ degenerates to the stated short exact
sequence.
\end{proof}

\mni
As in Definition \ref{defn-CI}, let
$((Z_1,\OO_{Z_i}),\dots,(Z_c,\OO_{Z_c}))$ and
$((\Ll_0,s_0),\dots,(\Ll_{c-1},s_{c-1}))$ be a flag of pseudodivisors
in a K\"{a}hler manifold $\nn'$.  Denote $Z_c$ by $\mm'$.

\begin{prop} \label{prop-Lefschetz} \marpar{prop-Lefschetz}
  If the flag of pseudodivisors is ample and locally complete
  intersection, and if $\mm'$ a reduced complex analytic space with
  $\text{dim}(\mm') \geq 3$, then the pushforward map on homology is
  an isomorphism,
  $$
  H_2(\mm';\ZZ) \xrightarrow{\cong} H_2(\nn';\ZZ).
  $$
  For a submersive contraction $\pi$ as in Proposition
  \ref{prop-Serre}, for $\mm:= \pi^{-1}(\mm')$, also the following
  pushforward map is an isomorphism,
  $$
  H_2(\mm;\ZZ) \xrightarrow{\cong} H_2(\nn;\ZZ).
  $$
\end{prop}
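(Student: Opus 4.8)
The plan is to obtain the first isomorphism from the Hamm--L\^{e} Lefschetz-type Theorem \ref{thm-LHT}, applied term by term along the flag, and then to derive the second isomorphism from the first together with the Serre spectral sequences of Proposition \ref{prop-Serre} and a five-lemma argument. First I would check that Theorem \ref{thm-LHT} applies. Take $Y=\nn'$, which is compact (as the image of the compact $\nn$) and, being normal and connected, is of pure dimension $n:=\dim_\CC(\nn')$. By Lemma \ref{lem-CI} an ample flag of pseudodivisors is automatically co-Stein; and since the last member $Z_c=\mm'$ is reduced while the flag is ample and locally complete intersection, the last assertion of Lemma \ref{lem-CI} gives that every $Z_j$ is reduced, i.e.\ the flag is reduced. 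Hence Theorem \ref{thm-LHT} applies to this flag of length $c$ in $\nn'$.

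For each $j=1,\dots,c$, Theorem \ref{thm-LHT} then gives that the inclusion-induced map $H_i(Z_j;\ZZ)\to H_i(Z_{j-1};\ZZ)$ is an isomorphism whenever $i<n-j$. For $i=2$ the tightest constraint occurs at $j=c$, where it reads $2<n-c=\dim_\CC(\mm')$; this is exactly the hypothesis $\dim_\CC(\mm')\geq 3$, and for $j<c$ one has $n-j>n-c\geq 3>2$ as well. Therefore every map in the chain $H_2(\mm';\ZZ)=H_2(Z_c;\ZZ)\to H_2(Z_{c-1};\ZZ)\to\cdots\to H_2(Z_0;\ZZ)=H_2(\nn';\ZZ)$ is an isomorphism, and its composite, which is the pushforward induced by $\mm'\hookrightarrow\nn'$, is an isomorphism. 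This proves the first claim.

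For the second claim, since $\pi:\nn\to\nn'$ is a proper holomorphic submersion it is a locally trivial fiber bundle by Ehresmann's theorem, so its restriction $\pi|_\mm:\mm=\pi^{-1}(\mm')\to\mm'$ is the pullback bundle, with the same fiber $\nn_q$; by Proposition \ref{prop-Serre} this fiber is connected and simply connected. The degeneration argument of Proposition \ref{prop-Serre} depends only on the fiber (not on smoothness of the base), so applied both to $\pi|_\mm$ over $\mm'$ and to $\pi$ over $\nn'$ it yields a commutative ladder of short exact sequences
$$
\begin{CD}
0 @>>> H_2(\nn_q;\ZZ) @>>> H_2(\mm;\ZZ) @>>> H_2(\mm';\ZZ) @>>> 0 \\
@. @| @VVV @VVV @. \\
0 @>>> H_2(\nn_q;\ZZ) @>>> H_2(\nn;\ZZ) @>>> H_2(\nn';\ZZ) @>>> 0,
\end{CD}
$$
in which the left vertical map is the identity (for $q\in\mm'$ the fiber $\nn_q$ sits compatibly inside $\mm$ and inside $\nn$), the middle is the pushforward induced by $\mm\hookrightarrow\nn$, and the right is the isomorphism from the first claim; commutativity of the squares is the naturality of the Serre spectral sequence for the base-change square $(\mm\hookrightarrow\nn,\ \mm'\hookrightarrow\nn')$. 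By the (short) five lemma the middle map $H_2(\mm;\ZZ)\to H_2(\nn;\ZZ)$ is an isomorphism. The only delicate points are confirming that the whole flag is reduced so that Theorem \ref{thm-LHT} is available --- which is precisely the last sentence of Lemma \ref{lem-CI} --- and the naturality of the two Serre spectral sequences in the last step; everything else is bookkeeping of the connectivity bound $i<n-j$, the binding case being $j=c$, which is where $\dim_\CC(\mm')\geq 3$ is used.
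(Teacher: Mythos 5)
Your proof is correct and takes essentially the same route as the paper: the first isomorphism comes from the Hamm--L\^{e} theorem (Theorem \ref{thm-LHT}) applied link by link along the flag, and the second from the commutative ladder of short exact sequences supplied by Proposition \ref{prop-Serre} together with the five (equivalently, snake) lemma. Your write-up is in fact slightly more careful than the paper's, since you explicitly verify via Lemma \ref{lem-CI} that the flag is reduced and co-Stein, pin down the binding dimension inequality $2<n-c=\dim_{\CC}(\mm')$, and note that the Serre spectral sequence argument does not require the base $\mm'$ to be smooth.
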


\begin{proof}
  The first isomorphism follows from the Lefschetz hyperplane theorem,
  Theorem \ref{thm-LHT}.  Thus, in the commutative diagram of short
  exact sequences,
  $$
  \begin{CD}
    0 @>>> H_2(F;\ZZ) @>>> H_2(\mm;\ZZ) @>>> H_2(\mm';\ZZ) @>>> 0 \\
   & &  @VVV  @VVV  @VVV \\
      0 @>>> H_2(F;\ZZ) @>>> H_2(\nn;\ZZ) @>>> H_2(\nn';\ZZ) @>>> 0 \\
  \end{CD}
  $$
  the first and the third vertical homomorphisms are isomorphisms.  By
  the Snake Lemma, also the middle vertical homomorphism is an
  isomorphism.
\end{proof}

\begin{prop} \label{prop-pseudofree} \marpar{prop-pseudofree}
  For every compact K\"{a}hler manifold $\nn$, for every codimension
  $c$ complex submanifold $\mm$ that is a complete intersection
  $\nn_1\cap \dots \cap \nn_c$ of nef divisors $\nn_j$ in $\nn$, every
  free class in $\mm$ pushes forward to a free class in $\nn$.  In
  particular, every decomposition of a $\jj$-irreducible
  $\text{ev}$-dominant class on $\nn$ as a (possibly zero) sum of
  genus-$0$ classes and the pushforward of an $\text{ev}$-dominant
  class on $\mm$ is a trivial decomposition, i.e., the
  $\text{ev}$-dominant class on $\nn$ equals the pushforward of the
  $\text{ev}$-dominant class on $\mm$.
\end{prop}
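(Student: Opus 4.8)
The plan is to prove the two assertions separately: the first by a normal bundle computation on $\CC\PP^1$, and the second by feeding that into Theorem \ref{thm-simpFano} together with the definition of $\jj$-irreducibility.

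First I would treat the pushforward of free classes. Let $\iota:\mm\hookrightarrow\nn$ be the inclusion, let $u:\CC\PP^1\to\mm$ be a $\jj$-free map, so that $u^*T^{1,0}_{\mm,\jj}$ is semi-ample, equivalently globally generated, on $\CC\PP^1$, and set $v=\iota\circ u$ and $\eb=v_*[\CC\PP^1]$. Since $\mm$ is the transverse complete intersection of $\nn_1,\dots,\nn_c$, the embedding is regular with normal bundle $\bigoplus_{j=1}^c\mcO_\nn(\nn_j)|_\mm$, so pulling the normal bundle sequence back along $u$ gives a short exact sequence of holomorphic vector bundles on $\CC\PP^1$,
$$
0 \to u^*T^{1,0}_{\mm,\jj} \to v^*T^{1,0}_{\nn,\jj} \to \bigoplus_{j=1}^c v^*\mcO_\nn(\nn_j) \to 0.
$$
Each line bundle $v^*\mcO_\nn(\nn_j)$ has degree $\langle[\nn_j],\eb\rangle\geq 0$ since $\nn_j$ is nef and $\eb$ is effective, hence is globally generated on $\CC\PP^1$, and $u^*T^{1,0}_{\mm,\jj}$ is globally generated by hypothesis. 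The point is then that an extension of globally generated bundles on $\CC\PP^1$ is globally generated: after twisting the sequence by $\mcO_{\CC\PP^1}(-1)$ and recalling that a bundle $E$ on $\CC\PP^1$ is globally generated iff $H^1(\CC\PP^1,E(-1))=0$, the cohomology sequence, together with the vanishing of $H^2$ of a coherent sheaf on a curve, forces $H^1(\CC\PP^1,v^*T^{1,0}_{\nn,\jj}(-1))=0$. Hence $v$ is $\jj$-free and $\eb=\iota_*(u_*[\CC\PP^1])$ is a free class on $\nn$.

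Next I would take a $\jj$-irreducible, $\text{ev}$-dominant class $\eb$ on $\nn$ together with a decomposition $\eb=\delta+\iota_*\gamma$, where $\delta$ is a (possibly zero) sum of classes of $\jj$-holomorphic spheres on $\nn$ and $\gamma$ is $\text{ev}$-dominant on $\mm$. Since $\gamma$ is $\text{ev}$-dominant, $\mm$ is uniruled, so Theorem \ref{thm-simpFano} applied to $\mm$ lets me write $\gamma=\gamma'+\gamma''$ with $\gamma'$ a $\jj$-irreducible, symplectically free (hence $\jj$-free) class on $\mm$ and $\gamma''$ a sum of classes of $\jj$-holomorphic spheres on $\mm$. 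By the first assertion $\iota_*\gamma'$ is a free class on $\nn$, hence $\text{ev}$-dominant by Theorem \ref{thm-simpFano}, and it is nonzero because it is represented by a non-constant holomorphic curve. Since the pushforward of a $\jj$-holomorphic sphere in $\mm$ is one in $\nn$, this exhibits
$$
\eb = \iota_*\gamma' + (\delta + \iota_*\gamma'')
$$
as a decomposition of $\eb$ into a nonzero $\text{ev}$-dominant class and a sum of classes of $\jj$-holomorphic spheres; by $\jj$-irreducibility of $\eb$ the second term vanishes in $H_2(\nn;\ZZ)$. Finally, $\delta$ and $\iota_*\gamma''$ are nonnegative combinations of classes of holomorphic spheres, hence pair nonnegatively with a K\"ahler class on $\nn$; since their sum is zero, each pairs to zero with that class, so each is a sum of classes of constant maps. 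In particular $\delta=0$, i.e. $\eb=\iota_*\gamma$.

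I expect the normal bundle step to be the crux: the substance is that global generation propagates from the outer terms to the middle term of the pulled-back normal bundle sequence, which is precisely where the nef hypothesis on the $\nn_j$ enters. The remainder is bookkeeping with Theorem \ref{thm-simpFano}; the one point requiring a little care is deducing $\delta=0$ from the vanishing of $\delta+\iota_*\gamma''$ in homology, which uses that a nonnegative combination of effective curve classes of zero K\"ahler degree is zero.
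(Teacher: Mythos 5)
Your proof is correct and takes essentially the same route as the paper's: the normal bundle sequence of the complete intersection, with nefness of the $\nn_j$ giving semipositivity of the normal bundle restricted to the rational curve, followed by decomposing the $\text{ev}$-dominant class on $\mm$ into a free class plus sphere classes and invoking $\jj$-irreducibility of the class on $\nn$. You supply somewhat more detail than the paper (the $H^1(E(-1))=0$ criterion for global generation on $\CC\PP^1$ and the K\"{a}hler-degree argument forcing the residual term to vanish), but the substance is identical.
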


\begin{proof}
  This is well-known.  The normal bundle of $\mm$ in $\nn$ equals the
  restriction to $\mm$ of the direct sum of invertible sheaves
  $\bigoplus_j \OO_{\nn}(\ul{\nn}_j)$.  Since each of the divisors
  $\nn_j$ is nef, the restriction of each of these invertible sheaves
  to every irreducible curve has nonnegative degree.  Thus, the
  restriction to a free curve in $\mm$ of the normal bundle is a
  direct sum of invertible sheaves of nonnegative degrees, i.e., it is
  semi-positive.  Since the restrictions to the free curve of both the
  tangent bundle of $\mm$ and the normal bundle are semi-positive, also
  the restriction of the tangent bundle of $\nn$ is semi-positive.
  Thus, the curve is free in $\nn$.

  \mni
  Every $\text{ev}$-dominant class on $\mm$ is a sum of a free class
  on $\mm$ and a (possibly zero) sum of genus $0$ curves.  By the
  previous paragraph, the pushforward to $\nn$ of the free class is
  free on $\nn$, hence $\text{ev}$-dominant.  Thus, if the class on
  $\nn$ is a $\jj$-irreducible $\text{ev}$-dominant class, then it
  equals the pushforward of the free class on $\mm$.
\end{proof}

\mni
Let $(\nn,\jj_{\nn},\oom_{\nn})$ be a K\"{a}hler manifold.  As in
Definition \ref{defn-CI}, let
$((Z_1,\OO_{Z_i}),\dots,(Z_c,\OO_{Z_c}))$ and
$((\Ll_0,s_0),\dots,(\Ll_{c-1},s_{c-1}))$ be a flag of pseudodivisors
in $\nn$.  Denote $Z_c$ by $\mm$.  Let $\eb$ be a curve class on $\nn$
that is in the pushforward of $H_2(\mm;\ZZ)$ so that every integer,
$$
m_j := \langle c_1(\Ll_j), \eb \rangle,
$$
is defined.  Define $m_\eb(Y,\mm):=\sum_{j=1}^c m_j$, and define
$m^+_\eb(Y,\mm):=\sum_{j=1}^c \max(m_j,0)$.  Recall the notation for
the evaluation morphism,
$$
\text{ev}_{0,1,\eb}:\Kgnb{0,1}(\nn,\eb) \to \nn.
$$

\begin{lem} \label{lem-pseudo} \marpar{lem-pseudo}
  Assume that the fiber of $\text{ev}_{0,1,\eb}$ over a point $x$ of
  $\mm$ is disjoint from the boundary.  For every $j=1,\dots,c$, if
  $m_j \leq 0$, then the fiber $F^{Z_j}_{\eb,x}$ for $Z_j$ equals the
  fiber $F^{Z_{j-1}}_{\eb,x}$ for $Z_{j-1}$.  If $m_j \geq 0$, then
  $F^{Z_j}_{\eb,x}$ arises from an ample flag of $m_j$ pseudodivisors
  in $F^{Z_{j-1}}_{\eb,x}$ whose sequence of invertible sheaves equals
  $$
  (\psi^{\otimes 1},\psi^{\otimes 2},\dots,\psi^{\otimes m_j}).
  $$
  If $m^+_\eb(\nn,\mm) \leq m_\eb(\nn)$, then $F^{\mm}_{\eb,x}$ is
  nonempty, and every irreducible component has dimension
  $\geq m_\eb(\nn)-m^+_\eb(\nn,\mm)$.
\end{lem}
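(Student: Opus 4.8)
The plan is to analyze the flag one step at a time, passing from $Z_{j-1}$ to $Z_j$, and to translate the condition ``a parametrized map factors through $Z_j$'' into the identical vanishing of a tautological section on the evaluation fibre, using the hypothesis that forces every domain to be $\CC\PP^1$ together with the cohomology of invertible sheaves on $\CC\PP^1$. Throughout I would take $x$ in the dense open $\nn_\eb$ (Proposition \ref{prop-free}), which is harmless in the applications and is the substance of the displayed hypothesis; by Proposition \ref{prop-psi} the fibre $F^\nn_{\eb,x}$ is then a smooth projective variety of pure dimension $m_\eb(\nn)$ on which $\psi$ is ample, so $\psi$ restricts to an ample invertible sheaf on every closed analytic subspace, in particular on each $F^{Z_j}_{\eb,x}$ and on each member of the flags constructed below. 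Over $F^{Z_{j-1}}_{\eb,x}$ let $(\Cc \xrightarrow{\pi} F^{Z_{j-1}}_{\eb,x},\ \sigma,\ u)$ be the universal $1$-pointed curve; it is a $\CC\PP^1$-bundle, and the obstruction to a point lying in $F^{Z_j}_{\eb,x}$ is the identical vanishing of the tautological section $u^*s_{j-1} \in H^0(\Cc, u^*\Ll_{j-1})$. Since $u\circ\sigma$ is the constant map to $x \in \mm \subseteq Z_j = Z(s_{j-1})$, this section vanishes along $\sigma$; moreover $u^*\Ll_{j-1}$ has relative degree $m_j$ over $F^{Z_{j-1}}_{\eb,x}$.

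If $m_j \leq 0$, then $u^*\Ll_{j-1}(-\underline\sigma)$ has relative degree $\leq -1$, so by cohomology and base change $\pi_*(u^*\Ll_{j-1}(-\underline\sigma)) = 0$; hence $u^*s_{j-1}$ vanishes fibrewise, every parametrized map factors through $Z_j$, and $F^{Z_j}_{\eb,x} = F^{Z_{j-1}}_{\eb,x}$. Now suppose $m_j \geq 1$ and write $m = m_j$. For $k = 1,\dots,m+1$ set $\mcE^k := \pi_*\bigl(u^*\Ll_{j-1}(-k\underline\sigma)\bigr)$; each $u^*\Ll_{j-1}(-k\underline\sigma)$ has relative degree $m-k \geq -1$, so $R^1\pi_*$ of it vanishes, $\mcE^k$ is locally free of rank $m-k+1$, its formation commutes with base change, and $\mcE^{m+1} = 0$. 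Pushing forward the sequences $0 \to u^*\Ll_{j-1}(-(k+1)\underline\sigma) \to u^*\Ll_{j-1}(-k\underline\sigma) \to u^*\Ll_{j-1}(-k\underline\sigma)|_\sigma \to 0$, and using that $\sigma$ is a section with normal bundle $N_{\sigma/\Cc} = \sigma^*\omega_\pi^\vee = \psi^\vee$ together with $\sigma^*u^*\Ll_{j-1} = (u\circ\sigma)^*\Ll_{j-1} \cong \OO$ (the map being constant), one identifies the graded pieces $\mcE^k/\mcE^{k+1} \cong \psi^{\otimes k}$ for $k = 1,\dots,m$. (Equivalently, by Lemma \ref{lem-psi} one has $u^*\Ll_{j-1} \cong [\pi^*\psi(\underline\sigma)]^{\otimes m}$ on $\Cc$, and this filtration is built from the extensions of Proposition \ref{prop-pushforward}.)

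With this in hand I would build the flag. The section $u^*s_{j-1}$, vanishing along $\sigma$, pushes to $\pi_*(u^*s_{j-1}) \in H^0(F^{Z_{j-1}}_{\eb,x}, \mcE^1)$. Let $t_1 \in H^0(W_0, \psi^{\otimes 1})$ be its image in $\mcE^1/\mcE^2$, where $W_0 := F^{Z_{j-1}}_{\eb,x}$, and put $W_1 := Z(t_1)$; on $W_1$ the section lifts uniquely to a section of $\mcE^2|_{W_1}$ whose image $t_2 \in H^0(W_1, \psi^{\otimes 2})$ cuts out $W_2 := Z(t_2) \subseteq W_1$; iterating produces $t_k \in H^0(W_{k-1}, \psi^{\otimes k})$ and $W_k := Z(t_k)$ for $k = 1,\dots,m$. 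After $m$ steps the section lies in $\mcE^{m+1}|_{W_m} = 0$, so $u^*s_{j-1}$ vanishes identically on the curves over $W_m$; since $\pi_*$ commutes with base change this says exactly that $W_m$ parametrizes the maps factoring through $Z_j$, i.e. $W_m = F^{Z_j}_{\eb,x}$. Thus $F^{Z_j}_{\eb,x}$ arises from the flag of pseudodivisors $(W_1,\dots,W_m)$ in $F^{Z_{j-1}}_{\eb,x}$ with invertible sheaves $(\psi^{\otimes 1}, \psi^{\otimes 2}, \dots, \psi^{\otimes m})$, and it is ample because $\psi$ is ample on $F^\nn_{\eb,x}$ hence on every $W_{k-1}$. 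Concatenating these flags over $j = 1,\dots,c$ exhibits $F^\mm_{\eb,x} = F^{Z_c}_{\eb,x}$ as cut out of $F^\nn_{\eb,x} = F^{Z_0}_{\eb,x}$ by an ample flag of total length $\sum_{j:\,m_j\geq 0} m_j = m^+_\eb(\nn,\mm)$, the steps with $m_j \leq 0$ contributing nothing. Since $F^\nn_{\eb,x}$ is projective of pure dimension $m_\eb(\nn)$ and $m^+_\eb(\nn,\mm) \leq m_\eb(\nn)$, I would conclude by the usual bookkeeping: each cut lowers every irreducible component's dimension by at most $1$ by Krull's principal ideal theorem, and any section of an ample invertible sheaf on a projective analytic space of positive dimension vanishes somewhere, so the cut is nonempty as long as the current space still has positive dimension. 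Just before the $k$-th cut every component has dimension $\geq m_\eb(\nn) - (k-1) \geq m_\eb(\nn) - m^+_\eb(\nn,\mm) + 1 \geq 1$, so no component is ever lost; hence $F^\mm_{\eb,x}$ is nonempty and every irreducible component has dimension $\geq m_\eb(\nn) - m^+_\eb(\nn,\mm)$. (The same also follows from Lemma \ref{lem-CI}.)

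The step I expect to be the main obstacle is the bundle identification of the second paragraph: checking cleanly, and compatibly with arbitrary base change, that the vanishing-order filtration of $\pi_*(u^*\Ll_{j-1}(-\underline\sigma))$ has graded pieces exactly $\psi^{\otimes 1},\dots,\psi^{\otimes m}$, and that the resulting flag recovers $F^{Z_j}_{\eb,x}$ as a complex analytic space and not merely up to a thickening. This is precisely where one must keep track of which higher direct images vanish — so that $\pi_*$ stays exact on the relevant short exact sequences and commutes with restriction to the $W_k$ — and where the identity $N_{\sigma/\Cc} = \psi^\vee$ together with the triviality of $\sigma^*u^*\Ll_{j-1}$ (constancy of $u\circ\sigma$ on the evaluation fibre) enter.
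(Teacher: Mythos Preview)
Your proof is correct and follows essentially the same route as the paper's own argument: identify $u^*\Ll_{j-1}$ with $[\pi^*\psi(\underline\sigma)]^{\otimes m_j}$ via Lemma \ref{lem-psi}, use the filtration of its pushforward with graded pieces $\psi^{\otimes 1},\dots,\psi^{\otimes m_j}$ (the paper invokes Proposition \ref{prop-pushforward} and Corollary \ref{cor-psi} for this), build the pseudodivisor flag from the successive images of the tautological section, and then concatenate and use ampleness of $\psi$ for the dimension bound. Your treatment is in fact more explicit than the paper's---you handle the $m_j\le 0$ case and the iterative zero-scheme construction $W_0\supseteq W_1\supseteq\cdots$ in detail, and you correctly flag the mild strengthening $x\in\nn_\eb$ needed for ampleness---but the underlying strategy is identical.
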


\begin{proof}
  Denote $F^{Z_{j-1}}_{\eb,x}$ by $\So$, denote the universal family
  of curves over $\So$ by $\pi:\Cc\to \So$.  Denote the constant
  section of $\pi$ by $\sigma:\So \to \Cc$.  Denote the evaluation
  morphism by
  $$
  u:\Cc \to Z_{j-1}.
  $$
  Then $u^*s_j$ is a global section of $u^*\Ll_j$ that evaluates to
  zero under restriction to $\sigma(\So)$.  By Corollary
  \ref{cor-psi1}, $u^*\Ll_j$ is isomorphic to
  $\mc{F}_{m_j} = [\pi^*\psi(\ul{\sigma})]^{\otimes m_j}$.  Thus,
  $u^*s_j$ is a global section of the pushforward
  $\mc{E}_{m_j} = \pi_*\mc{F}_{m_j}$ that evaluates to zero in the
  structure sheaf.  By the proof of Corollary \ref{cor-psi}, the
  kernel of evaluation is an ample locally free sheaf of rank $m_j$,
  and it admits a filtration by locally free sheaves whose associated
  graded sheaves are $(\psi^{\otimes 1},\dots,\psi^{\otimes m_j})$.
  The images of $u^*s_j$ in this associated graded sheaves define a
  sequence of pseudodivisors whose common zero locus is
  $F^{Z_j}_{\eb,x}$.

\mni
Concatenating these sequences of pseudodivisors, and using the fact
that the zero scheme of an ample invertible sheaf on a proper analytic
space of dimension $\geq 1$ is nonempty, if $m_\eb(\nn,\mm)$ is less
than the dimension $m_\eb(\nn)$ of $F^{\nn}_{\eb,x}$, then also
$F^\mm_{\eb,x}$ is nonempty of dimension
$\geq m_\eb(\nn) - m_\eb(\nn,\mm)$.
\end{proof}

\begin{prop} \label{prop-chern2} \marpar{prop-chern2}
  With hypotheses as in Lemma \ref{lem-pseudo}, also assume that $\mm$
  is a codimension-$c$ intersection of nef hypersurfaces $\nn_j$ in
  $\nn$, so that also the hypotheses of Proposition
  \ref{prop-pseudofree} hold and $m_\eb^{+}(\nn,\mm)$ equals
  $m_\eb(\nn,\mm)$.

\noindent
\textbf{1.}
The subspace $F^{\mm}_{\eb,x}$ is nonempty for general $x\in \mm$ if
and only if $m_{\eb}(\nn,\mm) \leq m_{\eb}(\nn)$.

\noindent
\textbf{2.}
In this case, for $x\in \mm$ general, $F^{\mm}_{\eb,x}$ is smooth of
pure dimension $m_{\eb}(\mm) = m_{\eb}(\nn) - m_{\eb}(\nn,\mm)$ and
parameterizes only free, irreducible maps to $\mm$.

\noindent
\textbf{3.}
Considered as a closed subspace of $F^{\nn}_{\eb,x}$, the manifold
$F^{\mm}_{\eb,x}$ arises from an ample, locally complete intersection
flag of pseudodivisors as in Lemma \ref{lem-pseudo}.

\noindent
\textbf{4.}
The pushforward to $F^{\nn}_{\eb,x}$ of the pullback to
$F^{\mm}_{\eb,x}$ of each cycle class $w$ equals
$w\cdot \prod_j(m_j!)  c_1(\psi)^{m_\eb(\nn,\mm)}$.  Also, the
pushforward to $F^{\nn}_{\eb,x}$ of the first Chern class of the
normal sheaf of $F^{\mm}_{\eb,x}$ in $F^{\nn}_{\eb,x}$ equals
$$
\lt(\sum_{j=1}^c(m_j(1+m_j)/2)c_1(\psi)\rt) \cdot
\prod_{j=1}^c(m_j!) c_1(\psi)^{m_\eb(\nn,\mm)}.
$$
\end{prop}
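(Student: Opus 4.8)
The plan is to reduce everything to Lemma \ref{lem-pseudo} together with Lemma \ref{lem-CI}, the one substantial new ingredient being a dimension count for $F^{\mm}_{\eb,x}$ over a general $x\in\mm$. First I would fix the ambient picture. Since $\eb$ is $\jj$-irreducible and $\text{ev}$-dominant on $\nn$, Propositions \ref{prop-free} and \ref{prop-psi} give, for general $x\in\mm$, that $F^{\nn}_{\eb,x}$ is a smooth complex projective variety of pure dimension $m_\eb(\nn)$, disjoint from the boundary of $\Kgnb{0,1}(\nn,\eb)$, parameterizing only free maps with irreducible domain, and carrying $\psi$ as an ample class. Because each $\nn_j$ is nef, every $m_j=\langle[\nn_j],\eb\rangle$ is $\geq0$, so $m^+_\eb(\nn,\mm)=m_\eb(\nn,\mm)$ and Lemma \ref{lem-pseudo} applies: concatenating the ample flags of the successive steps $j=1,\dots,c$, the closed subspace $F^{\mm}_{\eb,x}\subseteq F^{\nn}_{\eb,x}$ is the common zero locus of an ample flag of $m_\eb(\nn,\mm)$ pseudodivisors whose invertible sheaves are, in order, $\psi^{\otimes1},\dots,\psi^{\otimes m_1},\psi^{\otimes1},\dots,\psi^{\otimes m_2},\dots,\psi^{\otimes m_c}$, and it is nonempty with every irreducible component of dimension $\geq m_\eb(\nn)-m_\eb(\nn,\mm)$ whenever $m_\eb(\nn,\mm)\leq m_\eb(\nn)$. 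Adjunction for the complete intersection gives $\langle c_1(T_\mm),\eb\rangle=\langle c_1(T_\nn),\eb\rangle-\sum_j m_j$, i.e. $m_\eb(\mm)=m_\eb(\nn)-m_\eb(\nn,\mm)$; so this lower bound reads $\dim\geq m_\eb(\mm)$, and it already supplies the implication ``$\Leftarrow$'' of Part 1.

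The core of the argument is the matching upper bound: for general $x\in\mm$ the subspace $F^{\mm}_{\eb,x}$ has pure dimension exactly $m_\eb(\mm)$, is smooth, and parameterizes only free maps with irreducible domain. Granting this, Part 2 follows, and so does ``$\Rightarrow$'' of Part 1, since if $m_\eb(\nn,\mm)>m_\eb(\nn)$ the predicted dimension $m_\eb(\mm)$ is negative, forcing $F^{\mm}_{\eb,x}=\emptyset$ for general $x$. To prove it, the nonemptiness above makes $\eb$ an $\text{ev}$-dominant class on $\mm$, so some irreducible component $M$ of $\Kgnb{0,1}(\mm,\eb)$ dominates $\mm$ under the evaluation morphism; by Proposition \ref{prop-frq} we may work over the dense open of $\mm$ where this moduli space is relatively projective over the base of the rational quotient, so that generic smoothness applies. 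The inclusion $\Kgnb{0,1}(\mm,\eb)\hookrightarrow\Kgnb{0,1}(\nn,\eb)$ identifies the boundary of the former with its intersection with the boundary of the latter, and $F^{\nn}_{\eb,x}$ is boundary-disjoint for general $x$, so $F^{\mm}_{\eb,x}$ is disjoint from the boundary of $\Kgnb{0,1}(\mm,\eb)$ and no boundary component dominates $\mm$ (this is also the content of Proposition \ref{prop-pseudofree}: a dominating boundary component would present $\eb$ on $\nn$ as the pushforward of an $\text{ev}$-dominant class on $\mm$ plus a nonzero genus-$0$ class). Thus the general member of any dominating component $M$ has irreducible domain, and as the tautological morphism $\mathcal{C}_M\to\mm$ from the universal curve is dominant it is generically submersive (characteristic zero), which forces the evaluation $H^0(\CC\PP^1,u^*T_\mm)\otimes_\CC\OO_{\CC\PP^1}\to u^*T_\mm$ to be generically surjective for the general member, hence $u^*T_\mm$ to be globally generated, i.e. $u$ to be free. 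Therefore $\Kgnb{0,1}(\mm,\eb)$ is smooth of the expected dimension $d^{\mm}_{0,1,\eb}=m_\eb(\mm)+\dim_\CC\mm$ along a dense open of $M$, so $\dim M=d^{\mm}_{0,1,\eb}$; by generic smoothness of $\text{ev}|_M$, for general $x$ the locus $\text{ev}^{-1}(x)\cap M$ is smooth of pure dimension $m_\eb(\mm)$ and parameterizes only free, irreducible maps, and $F^{\mm}_{\eb,x}$ is the union of these over the finitely many dominating components. I expect this paragraph to be the main obstacle: upgrading the a priori lower bound from the Principal Ideal Theorem to the equality $\dim F^{\mm}_{\eb,x}=m_\eb(\mm)$ --- equivalently, freeness of the general member of a dominating family --- is the one genuinely nontrivial point.

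With $F^{\mm}_{\eb,x}$ of pure dimension $m_\eb(\nn)-m_\eb(\nn,\mm)$ in hand, Part 3 is immediate from Lemma \ref{lem-CI}: the ambient $F^{\nn}_{\eb,x}$ is smooth, hence a local complete intersection; the concatenated flag is ample; and its last term $F^{\mm}_{\eb,x}$ has pure dimension $\dim F^{\nn}_{\eb,x}-m_\eb(\nn,\mm)$; so the flag is locally complete intersection. Part 4 is then bookkeeping with the Projection Formula. As the flag is a regular sequence of effective Cartier divisors, for the closed immersion $\iota:F^{\mm}_{\eb,x}\hookrightarrow F^{\nn}_{\eb,x}$ the class $\iota_*(1)$ is the product of the classes of the flag members, and the member in step $(j,k)$ has class $k\,c_1(\psi)$ since its invertible sheaf is $\psi^{\otimes k}$; hence $\iota_*(1)=\big(\prod_{j=1}^c\prod_{k=1}^{m_j}k\big)c_1(\psi)^{\sum_j m_j}=\big(\prod_{j=1}^c m_j!\big)c_1(\psi)^{m_\eb(\nn,\mm)}$, and $\iota_*\iota^*w=w\smile\iota_*(1)$ yields the first displayed identity. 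Likewise the normal sheaf $N_{F^{\mm}_{\eb,x}/F^{\nn}_{\eb,x}}$ is filtered with successive quotients the restrictions of $\psi^{\otimes1},\dots,\psi^{\otimes m_1},\dots,\psi^{\otimes m_c}$, so $c_1(N)=\big(\sum_{j=1}^c\sum_{k=1}^{m_j}k\big)\iota^*c_1(\psi)=\big(\sum_{j=1}^c m_j(1+m_j)/2\big)\iota^*c_1(\psi)$; pushing forward with $\iota_*\iota^*c_1(\psi)=c_1(\psi)\smile\iota_*(1)$ gives the second displayed identity.
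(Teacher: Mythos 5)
Your proposal is correct and follows essentially the same route as the paper: Lemma \ref{lem-pseudo} for nonemptiness and the concatenated ample flag, freeness of the general member of a dominating family to pin the dimension of $F^{\mm}_{\eb,x}$ at the expected value $m_\eb(\mm)=m_\eb(\nn)-m_\eb(\nn,\mm)$, Lemma \ref{lem-CI} for the locally-complete-intersection property, and the projection formula for Part 4. The only difference is one of emphasis: the paper states tersely that for general $x$ the curves in $F^{\mm}_{\eb,x}$ are free and irreducible, whereas you spell out the generic-submersivity argument behind that assertion, which is exactly the right justification.
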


\begin{proof}
  If $m_{\eb}(\nn,\mm) \leq m_{\eb}(\nn)$, then by Lemma
  \ref{lem-pseudo}, $F^{\mm}_{\eb,x}$ is nonempty for every
  $x\in \mm$.  Conversely, assume that $F^{\mm}_{\eb,x}$ is nonempty
  for general $x\in \mm$.  Since $F^{\nn}_{\eb,x}$ parameterizes only
  irreducible curves, the same holds for the subspace
  $F^{\mm}_{\eb,x}$.  Thus, the curves parameterized by
  $F^{\mm}_{\eb,x}$ are all free, irreducible curves.  Therefore
  $F^{\mm}_{\eb,x}$ is a manifold of pure dimension equal to the
  expected dimension
  $m_\eb(\mm)=\langle c_1(T^{1,0}_{\mm,\jj}),\eb \rangle -2$.  By
  adjunction applied to the flag of pseudodivisors defining $\mm$ in
  $\nn$, $m_\eb(\mm)$ equals $m_\eb(\nn)-m_\eb(\nn,\mm)$.  Since this
  integer is nonnegative, $m_\eb(\nn,\mm) \leq m_\eb(\nn)$.

\mni
By Proposition \ref{prop-pseudofree}, the subspace $F^{\mm}_{\eb,x}$
is contained in the maximal open subspace of $F^{\nn}_{\eb,x}$ that
parameterizes free curves, and this open has pure dimension equal to
$m_{\eb}(\nn)$.  By Lemma \ref{lem-pseudo}, the ample flag of
pseudodivisors in $F^{\nn}_{\eb,x}$ from Lemma \ref{lem-pseudo} is
locally complete intersection, i.e., each pseudo-divisor is an honest
divisor.  Pullback of cycles to a divisor follows by pushforward is
simply cup product with the first Chern class of the divisor.
Iterating through all divisors in the flag gives the formula for the
pushforward to $F^{\nn}_{\eb,x}$ of the pullback of cycles to
$F^{\mm}_{\eb,x}$.  Finally, since $F^{\mm}_{\eb,x}$ arises from a
locally complete intersection flag of divisors, the conormal sheaf has
an associated filtration whose associated graded sheaves are the
invertible sheaves that equal the restriction of the duals of the
invertible sheaves from the flag of pseudodivisors.  By the Whitney
sum formula, the first Chern class of the conormal sheaf equals the
sum of all of the first Chern classes of these restrictions.  This
gives the second formula.
\end{proof}

\begin{proof}[Proof of Theorem \ref{thm-CI}.] \textbf{1.}
  By Proposition \ref{prop-Lefschetz}, the pushforward map on $H_2$ is
  an isomorphism.

  \mni \textbf{2.}
  First, by Proposition \ref{prop-pseudofree}, the pushforward to
  $\nn$ of the free cone of $\mm$ is contained in the free cone of
  $\nn$.  The goal is to characterize when the pushforward cone is the
  entire free cone of $\nn$.

\mni
Let $\eb_i\in H_2(\nn,\ZZ)$ be a primitive generator of an extremal
ray of $\nef{\jj}(\nn)_{\RR}$.  By hypothesis, $\eb_i$ is free:
$\eb_i$ is one of the extremal rays of the free cone.  Since $\eb_i$
is a primitive generator of an extremal ray, it is also
$\jj$-irreducible in the Mori cone.  Thus, every $\eb_i$-curve in
$\nn$ is irreducible, i.e., the hypothesis of Lemma \ref{lem-pseudo}
holds.  By Proposition \ref{prop-chern2}, this class is the
pushforward of a free curve class from $\mm$ if and only if
$m_{\eb_i}(\nn,\mm) \leq m_{\eb_i}(\nn)$, in which case $m_{\eb}(\mm)$
equals $m_{\eb_i}(\nn) - m_{\eb_i}(\nn,\mm)$.  Thus, the pushforward
map identifies free cones if and only if every
$m_{\eb_i}(\nn,\mm)\leq m_{\eb_i}(\nn)$.  In this case, by Part 4 of
Proposition \ref{prop-chern2}, also $f_{\eb_i}(\mm)$ equals
$f_{\eb_i}(\nn)\cdot f_{\eb_i}(\nn,\mm)$.  Finally, if
$m_{\eb_i}(\mm)$ is positive, then Part 4 of Proposition
\ref{prop-chern2} also gives the identities,
$$
q_{\eb_i}(\mm) = q_{\eb_i}(\nn) - q_{\eb_i}(\nn,\mm), \ \
s_{\eb_i}(\mm) = f_{\eb_i}(\nn,\mm)\cdot \left(
  s_{\eb_i}(\nn)-s_{\eb_i}(\nn,\mm) \right).
$$

\mni
\textbf{3.}
This follows from the identities above and Part 2 of Theorem
\ref{thm-surf}.
\end{proof}

\mni
\textbf{Acknowledgments.}
Part of this work was supported by NSF Grants DMS-0846972 and
DMS-1405709, as well as a Simons Foundation Fellowship. I am grateful
to Yujiro Kawamata, Gang Tian, and Chenyang Xu for inviting me to
speak about this work during the wonderful conference, ``Stability,
Boundedness and Fano Varieties''.  I am grateful to Mark McLean, Zhiyu
Tian, and especially to Aleksey Zinger for patiently explaining the
fundamentals of Gromov-Witten theory in symplectic geometry and giving
me useful feedback on the exposition.  I thank Ljudmila Kamenova for
help about Stein spaces.  I thank Cinzia Casagrande who helped me with
Question \ref{ques-simpFano} and the references on this question.  I
thank the referees for their feedback.  I particularly thank the
referee who pointed out that the positive hypersurfaces in Theorem
\ref{thm-CI} need not be moving.

\bibliography{my}

\begin{thebibliography}{KMM92}

\bibitem[AC12]{AC}
Carolina Araujo and Ana-Maria Castravet.
\newblock Polarized minimal families of rational curves and higher {F}ano
  manifolds.
\newblock {\em Amer. J. Math.}, 134(1):87--107, 2012.

\bibitem[AC13]{AC2}
Carolina Araujo and Ana-Maria Castravet.
\newblock Classification of 2-{F}ano manifolds with high index.
\newblock In {\em A celebration of algebraic geometry}, volume~18 of {\em Clay
  Math. Proc.}, pages 1--36. Amer. Math. Soc., Providence, RI, 2013.

\bibitem[Bar75]{Barlet}
Daniel Barlet.
\newblock Espace analytique r\'eduit des cycles analytiques complexes compacts
  d'un espace analytique complexe de dimension finie.
\newblock In {\em Fonctions de plusieurs variables complexes, II (S\'em. Fran\c
  cois Norguet, 1974--1975)}, pages 1--158. Lecture Notes in Math., Vol. 482.
  Springer, Berlin, 1975.

\bibitem[Cam91]{C91}
F.~Campana.
\newblock On twistor spaces of the class {$\mathcal C$}.
\newblock {\em J. Differential Geom.}, 33(2):541--549, 1991.

\bibitem[Cam92]{Ca}
F.~Campana.
\newblock Connexit\'e rationnelle des vari\'et\'es de {F}ano.
\newblock {\em Ann. Sci. \'Ecole Norm. Sup. (4)}, 25(5):539--545, 1992.

\bibitem[Cas08]{Casagrande}
Cinzia Casagrande.
\newblock Quasi-elementary contractions of {F}ano manifolds.
\newblock {\em Compos. Math.}, 144(6):1429--1460, 2008.

\bibitem[DeL15]{DeLand}
Matt DeLand.
\newblock Relatively very free curves and rational simple connectedness.
\newblock {\em J. Reine Angew. Math.}, 699:1--33, 2015.

\bibitem[dJHS11]{dJHS}
A.~J. de~Jong, Xuhua He, and Jason~Michael Starr.
\newblock Families of rationally simply connected varieties over surfaces and
  torsors for semisimple groups.
\newblock {\em Publ. Math. Inst. Hautes \'Etudes Sci.}, (114):1--85, 2011.

\bibitem[dJS06]{dJS10}
A.~J. de~Jong and J.~Starr.
\newblock Low degree complete intersections are rationally simply connected.
\newblock 2006.

\bibitem[dJS07]{dJS9}
A.~J. de~Jong and Jason Starr.
\newblock Higher {F}ano manifolds and rational surfaces.
\newblock {\em Duke Math. J.}, 139(1):173--183, 2007.

\bibitem[dJS17]{dJS3}
A.~J. de~Jong and J.~Starr.
\newblock Divisor classes and the virtual canonical bundle formula for genus
  $0$ maps.
\newblock In F.~Bogomolov, B.~Hassett, and Y.~Tschinkel, editors, {\em Geometry
  over Nonclosed Fields}, pages 97--126. Springer, 2017.

\bibitem[Dou66]{Douady}
Adrien Douady.
\newblock Le probl\`eme des modules pour les sous-espaces analytiques compacts
  d'un espace analytique donn\'{e}.
\newblock {\em Ann. Inst. Fourier (Grenoble)}, 16(fasc. 1):1--95, 1966.

\bibitem[Dru16]{Druel}
St\'{e}phane Druel.
\newblock On {F}ano varieties whose effective divisors are numerically
  eventually free.
\newblock {\em Math. Res. Lett.}, 23(3):771--804, 2016.

\bibitem[Fuj82]{Fujiki}
Akira Fujiki.
\newblock On the {D}ouady space of a compact complex space in the category
  \textit{C}.
\newblock {\em Nagoya Math. J.}, 85:189--211, 1982.

\bibitem[GHS03]{GHS}
Tom Graber, Joe Harris, and Jason Starr.
\newblock Families of rationally connected varieties.
\newblock {\em J. Amer. Math. Soc.}, 16(1):57--67 (electronic), 2003.

\bibitem[Has10]{HassettWA2}
Brendan Hassett.
\newblock Weak approximation and rationally connected varieties over function
  fields of curves.
\newblock In {\em Vari\'et\'es rationnellement connexes: aspects
  g\'eom\'etriques et arithm\'etiques}, volume~31 of {\em Panor. Synth\`eses},
  pages 115--153. Soc. Math. France, Paris, 2010.

\bibitem[HLR08]{HuLiRuan}
Jianxun Hu, Tian-Jun Li, and Yongbin Ruan.
\newblock Birational cobordism invariance of uniruled symplectic manifolds.
\newblock {\em Invent. Math.}, 172(2):231--275, 2008.

\bibitem[HT90]{HammLe}
Helmut~A. Hamm and L{\^e}~D{\~u}ng Tr{\'a}ng.
\newblock Rectified homotopical depth and {G}rothendieck conjectures.
\newblock In {\em The {G}rothendieck {F}estschrift, {V}ol.\ {II}}, volume~87 of
  {\em Progr. Math.}, pages 311--351. Birkh\"auser Boston, Boston, MA, 1990.

\bibitem[HT06]{HT06}
Brendan Hassett and Yuri Tschinkel.
\newblock Weak approximation over function fields.
\newblock {\em Invent. Math.}, 163(1):171--190, 2006.

\bibitem[KM83]{KollarMatsusaka}
J.~Koll\'{a}r and T.~Matsusaka.
\newblock Riemann-{R}och type inequalities.
\newblock {\em Amer. J. Math.}, 105(1):229--252, 1983.

\bibitem[KMM92]{KMM}
J{\'a}nos Koll{\'a}r, Yoichi Miyaoka, and Shigefumi Mori.
\newblock Rationally connected varieties.
\newblock {\em J. Algebraic Geom.}, 1(3):429--448, 1992.

\bibitem[Kol98]{KLow}
J\'anos Koll\'ar.
\newblock Low degree polynomial equations: arithmetic, geometry and topology.
\newblock In {\em European {C}ongress of {M}athematics, {V}ol.\ {I}
  ({B}udapest, 1996)}, volume 168 of {\em Progr. Math.}, pages 255--288.
  Birkh\"auser, Basel, 1998.

\bibitem[LT98]{LiTian}
Jun Li and Gang Tian.
\newblock Virtual moduli cycles and {G}romov-{W}itten invariants of general
  symplectic manifolds.
\newblock In {\em Topics in symplectic {$4$}-manifolds ({I}rvine, {CA}, 1996)},
  First Int. Press Lect. Ser., I, pages 47--83. Int. Press, Cambridge, MA,
  1998.

\bibitem[Luo89]{Luo}
Tie Luo.
\newblock Riemann-{R}och type inequalities for nef and big divisors.
\newblock {\em Amer. J. Math.}, 111(3):457--487, 1989.

\bibitem[Mat91]{Matsusaka}
T.~Matsusaka.
\newblock On numerically effective divisors with positive self-intersection
  numbers.
\newblock {\em J. Fac. Sci. Univ. Tokyo Sect. IA Math.}, 38(1):165--183, 1991.

\bibitem[McD09]{McDuff}
Dusa McDuff.
\newblock Hamiltonian {$S^1$}-manifolds are uniruled.
\newblock {\em Duke Math. J.}, 146(3):449--507, 2009.

\bibitem[McL14]{McLean}
Mark McLean.
\newblock Symplectic invariance of uniruled affine varieties and log {K}odaira
  dimension.
\newblock {\em Duke Math. J.}, 163(10):1929--1964, 2014.

\bibitem[Min16]{Minoccheri}
Cristian Minoccheri.
\newblock On the arithmetic of weighted complete intersections of low degree.
\newblock 2016.

\bibitem[MM82]{MoriMukai}
Shigefumi Mori and Shigeru Mukai.
\newblock Classification of {F}ano {$3$}-folds with {$B_{2}\geq 2$}.
\newblock {\em Manuscripta Math.}, 36(2):147--162, 1981/82.

\bibitem[Mor79]{Mori}
Shigefumi Mori.
\newblock Projective manifolds with ample tangent bundles.
\newblock {\em Ann. of Math. (2)}, 110(3):593--606, 1979.

\bibitem[Ou18]{Ou}
Wenhao Ou.
\newblock Fano varieties with {${\rm Nef}(X)={\rm Psef}(X)$} and {$\rho
  (X)=\dim X-1$}.
\newblock {\em Manuscripta Math.}, 157(3-4):551--587, 2018.

\bibitem[Pop13]{Popovici}
Dan Popovici.
\newblock Deformation limits of projective manifolds: {H}odge numbers and
  strongly {G}auduchon metrics.
\newblock {\em Invent. Math.}, 194(3):515--534, 2013.

\bibitem[Rua99]{RuanVirt}
Yongbin Ruan.
\newblock Virtual neighborhoods and pseudo-holomorphic curves.
\newblock In {\em Proceedings of 6th {G}\"okova {G}eometry-{T}opology
  {C}onference}, volume~23, pages 161--231, 1999.

\bibitem[Tia12]{Zhiyu}
Zhiyu Tian.
\newblock Symplectic geometry of rationally connected threefolds.
\newblock {\em Duke Math. J.}, 161(5):803--843, 2012.

\bibitem[Voi08]{VoisinRC}
Claire Voisin.
\newblock Rationally connected 3-folds and symplectic geometry.
\newblock {\em Ast\'{e}risque}, (322):1--21, 2008.
\newblock G\'{e}om\'{e}trie diff\'{e}rentielle, physique math\'{e}matique,
  math\'{e}matiques et soci\'{e}t\'{e}. II.

\bibitem[Wi{\' s}91]{Wis91}
Jaros{\l a}w~A. Wi{\' s}niewski.
\newblock On contractions of extremal rays of {F}ano manifolds.
\newblock {\em J. Reine Angew. Math.}, 417:141--157, 1991.

\bibitem[Zhu17]{Zhu}
Yi~Zhu.
\newblock Homogeneous space fibrations over surfaces.
\newblock {\em Journal of the Institute of Mathematics of Jussieu}, page
  1–35, 2017.

\bibitem[Zin11]{Zinger}
Aleksey Zinger.
\newblock A comparison theorem for {G}romov-{W}itten invariants in the
  symplectic category.
\newblock {\em Adv. Math.}, 228(1):535--574, 2011.

\end{thebibliography}
\bibliographystyle{alpha}

\end{document}